\def\R{\mathbb{R}}
\def\Z{\mathbb{Z}}
\def\P{\mathbb{P}}
\def\E{\mathbb{E}}
\def\cA{\mathcal{A}}
\def\cB{\mathcal{B}}
\def\N{\mathcal{N}}
\def\cD{\mathcal{D}}
\def\cF{\mathcal{F}}
\def\cG{\mathcal{G}}
\def\cM{\mathcal{M}}
\def\cP{\mathcal{P}}
\def\cS{\mathcal{S}}
\def\cT{\mathcal{T}}
\def\b{\mathbf{b}}
\def\e{\mathbf{e}}
\def\bE{\mathbf{E}}
\def\u{\mathbf{u}}
\def\v{\mathbf{v}}
\def\r{\mathbf{r}}
\def\x{\mathbf{x}}
\def\y{\mathbf{y}}
\def\H{\mathbf{H}}
\def\I{\mathbf{I}}
\def\J{\mathbf{J}}
\def\M{\mathbf{M}}
\def\U{\mathbf{U}}
\def\V{\mathbf{V}}
\def\W{\mathbf{W}}
\def\X{\mathbf{X}}
\def\bC{\mathbf{C}}
\def\bR{\mathbf{R}}
\def\sP{\mathsf{P}}
\def\btheta{\boldsymbol{\theta}}
\def\bbeta{\boldsymbol{\eta}}
\def\boldeta{\boldsymbol{\eta}}
\def\bOmega{\boldsymbol{\Omega}}
\def\eps{\varepsilon}
\def\beps{\boldsymbol{\varepsilon}}
\def\d{\mathrm{d}}
\def\event{\mathcal{E}}
\def\L{\mathrm{L}}
\def\1{\mathbf{1}}
\newcommand{\floor}[1]{\left\lfloor #1 \right\rfloor}
\newcommand{\ceil}[1]{\left\lceil #1 \right\rceil}
\newcommand{\pnorm}[2]{\lVert #1\rVert_{#2}}
\newcommand{\bigpnorm}[2]{\big\lVert#1\big\rVert_{#2}}
\newcommand{\biggpnorm}[2]{\bigg\lVert#1\bigg\rVert_{#2}}
\DeclareMathOperator{\diag}{Diag}
\DeclareMathOperator{\Tr}{Tr}
\DeclareMathOperator{\op}{op}
\def\op{\mathrm{op}}
\def\deps{\partial_\eps|_{\eps = 0}}
\def\Prob{\mathbb{P}}
\def\der{\d}
\newtheorem{theorem}{Theorem}[section]
\newtheorem{lemma}[theorem]{Lemma}
\newtheorem{proposition}[theorem]{Proposition}
\newtheorem{corollary}[theorem]{Corollary}
\theoremstyle{definition}
\newtheorem{assumption}[theorem]{Assumption}
\newtheorem{remark}[theorem]{Remark}
\title{Dynamical mean-field analysis of adaptive Langevin diffusions:
Propagation-of-chaos and convergence of the linear response}
\author{Zhou Fan\thanks{Department of Statistics and Data Science, Yale University}, Justin Ko\thanks{Department of Statistics and Actuarial Science, University of Waterloo}, Bruno Loureiro\thanks{Departement d'Informatique,  École Normale Supérieure, PSL \& CNRS}, Yue M.\ Lu\thanks{Departments of Electrical Engineering and Applied Mathematics, Harvard University}, Yandi Shen\thanks{Department of Statistics and Data Science, Carnegie Mellon University}}
\date{}
\begin{document}

\maketitle

\begin{abstract}
Motivated by an application to empirical Bayes learning in
high-dimensional regression, we study a class of Langevin diffusions in a system
with random disorder, where the drift coefficient is driven by a parameter that
continuously adapts to the empirical distribution of the realized process up to
the current time. The resulting dynamics take the form of a stochastic
interacting particle system having both a McKean-Vlasov type interaction and a pairwise interaction defined by the random disorder.
We prove a propagation-of-chaos result, showing that in the large
system limit over dimension-independent time horizons, the empirical
distribution of sample paths of the Langevin process converges to a
deterministic limit law that is described by dynamical mean-field theory. This
law is characterized by a system of dynamical fixed-point equations for the
limit of the drift parameter and for the correlation and response kernels of
the limiting dynamics. Using a dynamical cavity argument, we verify that these
correlation and response kernels arise as the asymptotic limits of the averaged
correlation and linear response functions of single coordinates of the system.
These results enable an asymptotic analysis of an empirical Bayes
Langevin dynamics procedure for learning an unknown prior parameter in a
linear regression model, which we develop in a companion paper.
\end{abstract}

\setcounter{tocdepth}{2}
\tableofcontents

\section{Introduction}

Let $\btheta=(\theta_1,\ldots,\theta_d) \in \R^d$
be a system of $d$ interacting particles, evolving
according to a stochastic dynamics
\begin{equation}\label{eq:dynamicsnull}
\d\btheta^t=\Big[{-}\beta\X^\top \X\btheta^t+\big(s(\theta_j^t,\widehat\alpha^t)\big)_{j=1}^d\Big]\d t+\sqrt{2}\,\d \b^t,\qquad
\frac{\d}{\d t}\widehat\alpha^t=\cG\Big(\widehat\alpha^t,\frac{1}{d}\sum_{j=1}^d
\delta_{\theta_j^t}\Big).
\end{equation}
Here $\X \in \R^{n \times d}$ is a matrix of random disorder,
and $s(\,\cdot\,,\widehat\alpha^t):\R \to \R$ in the drift coefficient 
is a nonlinear function
driven by a stochastic time-dependent parameter $\widehat\alpha^t \in \R^K$ that
adapts to the past history $\{\btheta^s\}_{s \in [0,t]}$. (We defer formal
definitions and conditions for the functions $s(\cdot)$ and $\cG(\cdot)$ to
Section \ref{sec:result}.) We will study the pathwise convergence of the
empirical measure $\frac{1}{d}\sum_{j=1}^d \delta_{\theta_j^t}$ and of the
parameter $\widehat\alpha^t$ to deterministic limits as $n,d \to \infty$ at a fixed rate, in
this model (\ref{eq:dynamicsnull}) and in a closely related statistical model.

In the setting of $\beta=0$, i.e.\ with no random disorder,
the dynamics (\ref{eq:dynamicsnull}) take a
pathwise-exchangeable form as studied classically by
\cite{kac1956foundations,mckean1966class},
where the evolution $\d\theta_j^t$ of each $j^\text{th}$ particle depends on the remaining particles only via the empirical law
$\frac{1}{d}\sum_{j=1}^d \delta_{\{\theta_j^s\}_{s \in [0,t]}}$. The
convergence of this law in the asymptotic limit $d \to \infty$, together with a resulting asymptotic
decoupling of low-dimensional marginals of $\{\btheta^s\}_{s \in [0,t]}$,
is commonly referred to as propagation-of-chaos. We refer to the classical
monographs \cite{gartner1988mckean,sznitman1991topics} for a detailed treatment
of such models, and to \cite{chaintron2022propagation,chaintron2021propagation}
and \cite{lacker2023hierarchies,lacker2023sharp} for modern surveys and examples
of recent quantitative convergence results.

The study of propagation-of-chaos for dynamics with random disorder ($\beta \neq 0$) 
has also a separate and rich
development in the literature, using techniques of dynamical mean-field theory (DMFT).
DMFT was initially developed to study Langevin dynamics in the
soft Sherrington-Kirkpatrick (SK) model
\cite{sompolinsky1981dynamic,sompolinsky1982relaxational} and related
spherical p-spin models in spin glass theory \cite{crisanti1993spherical, cugliandolo1993analytical, cugliandolo1994out},
and relied on deep but non-rigorous techniques of the dynamical cavity method
\cite{mezard1987spin,agoritsas2018out} and generating functional methods
\cite{de1978dynamics,kurchan1992supersymmetry,cugliandolo2002dynamics} of
statistical physics. In recent years, DMFT has been applied to shed insight
into the learning dynamics in an increasingly wide range of 
statistical and machine learning models, including matrix and tensor PCA
\cite{mannelli2019passed,sarao2019afraid,sarao2020marvels, sarao2021analytical, liang2023high},
phase retrieval and generalized linear models
\cite{agoritsas2018out,mignacco2021stochasticity,sarao2020complex, han2024gradient}, Gaussian
mixture classification \cite{mignacco2021dynamical, mignacco2022effective}, and deep neural networks
\cite{bordelon2022self,dandi2024benefits,bordelon2024infinite,bordelon2024dynamical,montanari2025dynamical}.

Pioneering work of
\cite{benarous1995large,grunwald1996sanov,benarous1997symmetric,guionnet1997averaged}
established the first mathematical formalizations of DMFT in variants of the SK
model, in the forms of large deviations principles for the empirical
distributions of sample paths. Mathematical results for spherical models were
subsequently obtained in \cite{benarous2001aging,benarous2006cugliandolo}, and
universality of such results with respect to the law of the disorder in
\cite{dembo2021diffusions,dembo2021universality}.
Recently, \cite{celentano2021high} developed a different and innovative new approach to formalizing DMFT via time discretization and
reduction to Approximate Message Passing schemes \cite{bolthausen2014iterative,bayati2011dynamics,javanmard2013state}, and applied this
to derive a DMFT limit for gradient flow dynamics
in statistical multi-index models. A related strategy via iterative
Gaussian conditioning was developed in \cite{gerbelot2024rigorous}, which
extended the results of \cite{celentano2021high} to a class of discrete-time
Langevin and stochastic gradient dynamics. Non-asymptotic analyses of the
entrywise behavior of such dynamics were obtained in \cite{han2024entrywise}.

In this work, we will prove a DMFT approximation for the dynamics
(\ref{eq:dynamicsnull}), which has both the above elements of a
pathwise-exchangeable interaction driven by the empirical law, as well as a
pairwise interaction driven by random disorder. Our motivation is the study of
a variant of Langevin dynamics for posterior sampling in a statistical linear
model
\[\y=\X\btheta^*+\beps, \qquad \theta_j^* \overset{iid}{\sim}
g(\,\cdot\,,\alpha^*), \qquad \eps_i \overset{iid}{\sim} \N(0,\sigma^2),\]
where the regression coefficients of interest $\theta_1^*,\ldots,\theta_d^*$
are distributed according to a prior law $g(\,\cdot\,,\alpha^*)$ that has
an unknown parameter $\alpha^* \in \R^K$.
To implement empirical Bayes learning of $\alpha^*$
\cite{spence2022flexible,mukherjee2023mean,kim2024flexible},
a Langevin dynamics method was introduced in
\cite{fan2023gradient}\footnote{\cite{fan2023gradient} proposed a
nonparametric variant of this method, and we simplify our discussion here to a
parametric formulation} which,
from an initial estimate or guess $\widehat\alpha^0$, evolves a prior parameter
estimate
\begin{equation}\label{eq:dynamicsposterior1}
\frac{\d}{\d t}\widehat\alpha^t=\frac{1}{d}\sum_{j=1}^d \nabla_\alpha \log
g(\theta_j^t,\widehat\alpha^t)
\end{equation}
based on the coordinates of a coupled Langevin diffusion
\begin{equation}\label{eq:dynamicsposterior2}
\d\btheta^t=\nabla_{\btheta}\Big({-}\frac{1}{2\sigma^2}
\|\y-\X\btheta^t\|_2^2+\sum_{j=1}^d \log g(\theta_j^t,\widehat\alpha^t)\Big)\d
t+\sqrt{2}\,\d\b^t
\end{equation}
that samples from the posterior law
$\sP(\btheta \mid \X,\y)$. Such dynamics comprise a minor extension of
(\ref{eq:dynamicsnull}) (which motivates our choice to study a disorder matrix
having the covariance form $\X^\top \X$), and we state in
(\ref{eq:langevin_sde}--\ref{eq:gflow}) of Section \ref{sec:result}
an extended general dynamics that encompasses this application. We defer a
more detailed discussion and analysis of this specific empirical Bayes procedure 
to our companion paper \cite{paper2}, focusing in this work
on the formalization of the limiting dynamics in a general context.

We summarize the main contributions of our paper as follows:
\begin{enumerate}
\item Adapting and building upon the methods of \cite{celentano2021high},
we prove a DMFT limit for the dynamics (\ref{eq:dynamicsnull}) (with a natural
extension to the dynamics (\ref{eq:langevin_sde}--\ref{eq:gflow}) to follow).
This will take the form of almost-sure convergence to certain deterministic
limits, as $n,d \to \infty$, 
\[\{\widehat \alpha^t\}_{t
\in [0,T]} \to \{\alpha^t\}_{t \in [0,T]},
\quad \frac{1}{d}\sum_{j=1}^d \delta_{\{\theta_j^t\}_{t \in [0,T]}}
\overset{W_2}{\to} \sP(\{\theta^t\}_{t \in [0,T]}),
\quad \frac{1}{n}\sum_{i=1}^n \delta_{\{\eta_i^t\}_{t \in [0,T]}}
\overset{W_2}{\to} \sP(\{\eta^t\}_{t \in [0,T]})\]
for the sample path of $\widehat\alpha^t$ and for the empirical laws of sample
paths of $\btheta^t$ and $\bbeta^t=\X\btheta^t$.

Each limit $\sP(\{\theta^t\}_{t \in [0,T]})$ and $\sP(\{\eta^t\}_{t \in
[0,T]})$ represents the law of a univariate stochastic process, which is
driven by the above limit $\{\alpha^t\}_{t \in [0,T]}$ for the evolving drift
parameter, an additional Gaussian process representing the mean field,
and an integrated response.
These Gaussian processes and integrated responses are
described by correlation and response kernels $C_\theta,C_\eta,R_\theta,R_\eta$,
where
\[\{\alpha^t\},C_\theta,C_\eta,R_\theta,R_\eta\]
are defined self-consistently from the laws
$\sP(\{\theta^t\}_{t \in [0,T]})$ and $\sP(\{\eta^t\}_{t \in [0,T]})$ via a
system of dynamical fixed-point equations.
We establish that this dynamical fixed point is unique in a certain
domain of functions with exponential growth.

\item We show that the dynamics (\ref{eq:dynamicsnull})
admit a well-defined notion of a linear response function $R_{AB}(t,s)$
for a class of observables $A,B:\R^d \to \R$, where $R_{AB}(t,s)$
represents a linear response of $A(\btheta^t)$ to a perturbation of the
drift coefficient at a previous time $s$ by $\nabla B(\btheta^s)$.

We then verify that the above DMFT correlation and response kernels
$C_\theta,C_\eta,R_\theta,R_\eta$ arise as the mean-field limits
of averages of the correlation and linear response
functions for the ``single-particle'' coordinate observables
$A(\btheta),B(\btheta)=\theta_j$ and $A(\btheta),B(\btheta)=[\X\btheta-\y]_i$
of the high-dimensional system.
\end{enumerate}

Our methods and analyses in the first contribution above follow the approach
of \cite{celentano2021high}. We incorporate into the dynamical fixed-point
system a deterministic limit $\{\alpha^t\}_{t \in [0,T]}$ for the trajectory of
the stochastic drift parameter $\{\widehat\alpha^t\}_{t \in [0,T]}$, extend the
analyses to encompass processes with more irregular sample paths resulting from the
additional Brownian diffusion term $\d \b^t$ in the dynamics, and simplify the
approach in \cite{celentano2021high} for embedding a discrete-time DMFT system
into a continuous-time limit.

Our second contribution above is, to our knowledge, novel in the mathematical
literature on DMFT (although anticipated by statistical physics derivations
of the DMFT equations). To understand $R_\theta,R_\eta$ as asymptotic limits
of averaged single-particle linear response functions, we formalize a
dynamical cavity calculation that analyzes the response of a single
coordinate $\theta_j^t$ to a perturbation of $\theta_j^s$ at a preceding time
$s$, via a DMFT approximation of the cavity system with this coordinate left
out. This result will be important to our companion
work \cite{paper2}, allowing us to transfer a fluctuation-dissipation theorem
\cite{kubo1966fluctuation} from the
high-dimensional dynamics to the DMFT correlation and response
kernels $C_\theta,C_\eta,R_\theta,R_\eta$. This will then allow us to carry
out an analysis of the long-time behavior of the DMFT equations in an approximately
time-translation-invariant setting, and to show convergence of the prior
parameter estimate $\widehat\alpha^t$ in the above empirical Bayes dynamics to
a stationary point of a replica-symmetric limit for the model free
energy.

\subsection*{Acknowledgments}
This research was initiated during the ``Huddle on Learning and Inference from
Structured Data'' at ICTP Trieste in 2023. We'd like to thank the huddle
organiers Jean Barbier, Manuel S\'aenz, Subhabrata Sen, and Pragya Sur for
their hospitality and many helpful discussions. We are also grateful to Pierfrancesco Urbani, Francesca Mignacco, Emanuele Troiani, and Andrea Montanari for useful discussions. Z. Fan was supported in part by NSF DMS-2142476 and a Sloan Research Fellowship. J. Ko was supported in part by the Natural Sciences and Engineering Research Council of Canada (NSERC), the Canada Research Chairs programme, and the Ontario Research Fund [RGPIN-2020-04597, DGECR-2020-00199]. B. Loureiro was supported by the French government, managed by the National Research Agency (ANR), under the France 2030 program with the reference ``ANR-23-IACL-0008'' and the Choose France - CNRS AI Rising Talents program. Y. M. Lu was supported in part by the Harvard FAS Dean's Competitive Fund for Promising Scholarship and by a Harvard College Professorship.

\subsection*{Notational conventions}
Constants $C,C',c,c'>0$ are independent of the dimensions $n,d$ unless otherwise specified, and
may depend on the time horizon $T$, dimension $K$ of the drift parameter, and
scalar parameter $\beta \in \R$.

In a separable and complete normed vector space $(\cM,\|\cdot\|)$, for
any $p \geq 1$, $\cP_p(\cM)$ is the space of probability
distributions $\sP$ on $(\cM,\|\cdot\|)$ such that $\E_{\xi \sim
\sP}\|\xi\|^p<\infty$, $W_p(\cdot)$ is the Wasserstein-$p$ metric on
$\cP_p(\cM)$, and $\sP_n \overset{W_p}{\to} \sP$ denotes $W_p(\sP_n,\sP) \to 0$
as $n \to \infty$. For a
random variable $\xi$ in $\cM$, we will use $\sP(\xi)$ to denote its law. 
For a vector $\x \in \cM^n$,
$\widehat\sP(\x)=\frac{1}{n}\sum_{i=1}^n \delta_{x_i} \in \cP_p(\cM)$ (for any $p \geq 1$) denotes
the empirical distribution of the coordinates $x_1,\ldots,x_n$ of $\x$.

On a Euclidean space $\R^d$, $\|\cdot\|$ without subscript is, by convention,
the Euclidean (i.e.\ $\ell_2$) norm. 
$C([0,T],\R)$ is the space of continuous
functions $f:[0,T] \to \R$ equipped with the norm of uniform convergence
$\|f\|_\infty=\sup_{t \in [0,T]} |f(t)|$.
$\Z_+=\{0,1,2,\ldots\}$ denotes the nonnegative integers, and $\R_+=[0,\infty)$
denotes the nonnegative reals. For a function $f:\R^d \to \R$, $\nabla
f(\x) \in \R^d$ and $\nabla^2 f(\x) \in \R^{d \times d}$ are its gradient and
Hessian at $\x$. For $f:\R^d \to \R^m$, $\d f(\x) \in \R^{d \times m}$ is its
derivative at $\x$. $\Tr M$, $\|M\|_\op$, and $\|M\|_F$ are the matrix
trace, Euclidean operator norm, and Frobenius norm.

\section{Model and main results}\label{sec:result}

\subsection{Model and dynamics}

Let $\X \in \R^{n \times d}$ be a random matrix with independent entries,
and $\y=\X\btheta^*+\beps \in \R^n$ the observations of a linear model
with regression design $\X$, regression coefficients
$\btheta^* \in \R^d$, and noise $\beps \in \R^n$. 

Let $s:\R \times \R^K \to \R$ be a Lipschitz drift function,
$\cG:\R^K \times \cP_2(\R) \to \R^K$ a Lipschitz
gradient map (where $\cP_2(\R)$ is the space of probability measures on
$\R$ with finite second moment), $\{\b^t\}_{t \geq 0}$ a
standard Brownian motion on $\R^d$, and $\beta \in \R$ a scalar parameter.
We will study the dynamics
\begin{align}
\d\btheta^t&=\Big[{-}\beta\,\X^\top(\X\btheta^t-\y)
+ \big(s(\theta^t_j, \widehat\alpha^t)\big)_{j=1}^d\Big]\d t+\sqrt{2}\,\d\b^t
\label{eq:langevin_sde}\\
\d\widehat\alpha^t&=\cG\Big(\widehat\alpha^t,\frac{1}{d}\sum_{j=1}^d
\delta_{\theta_j^t}\Big)\d t\label{eq:gflow}
\end{align}
with initial conditions
\begin{equation}\label{eq:langevin_init}
(\btheta^0,\widehat \alpha^0) \in \R^d \times \R^K.
\end{equation}
This encompasses the general dynamics (\ref{eq:dynamicsnull}) and the
application (\ref{eq:dynamicsposterior1}--\ref{eq:dynamicsposterior2})
under a unified model:
Specializing to $\btheta^*=0$ and $\beps=0$ (hence $\y=0$) recovers
(\ref{eq:dynamicsnull}), while specializing to $\beta=\sigma^{-2}$,
$s(\theta,\alpha)=\partial_\theta \log g(\theta,\alpha)$,
and $\cG(\alpha,\sP)=\E_{\theta \sim \sP}[\nabla_\alpha \log g(\theta,\alpha)]$
recovers (\ref{eq:dynamicsposterior1}--\ref{eq:dynamicsposterior2}). 
We will refer to these general dynamics (\ref{eq:langevin_sde}--\ref{eq:gflow})
as an adaptive Langevin diffusion.

We impose the following assumptions on the components of the above
model and dynamics throughout this work.

\begin{assumption}[Model and initial conditions]\label{assump:model}
\phantom{}
\begin{enumerate}[(a)]
\item (Asymptotic scaling)
$\lim_{n,d \to \infty} \frac{n}{d}=\delta \in (0,\infty)$.
\item (Random design)
$\X=(x_{ij}) \in \R^{n \times d}$ has independent entries satisfying
$\E x_{ij}=0$, $\E x_{ij}^2=\frac{1}{d}$, and $\|\sqrt{d}x_{ij}\|_{\psi_2} \leq
C$ for a constant $C>0$ where $\|\cdot\|_{\psi_2}$ is the sub-gaussian norm.
\item (Linear model and initial conditions) $\btheta^0,\btheta^*,\beps$ are independent of $\X$, and
$\y=\X\btheta^*+\beps$. For some probability distributions
$\sP(\theta^*,\theta^0)$ and $\sP(\eps)$
having finite moment generating functions in a neighborhood of 0,
and for each fixed $p \geq 1$, the entries of $\btheta^0,\btheta^*,\beps$ satisfy the
Wasserstein-$p$ convergence almost surely as $n,d \to \infty$,
\begin{equation}\label{eq:thetaepsdistribution}
\frac{1}{d}\sum_{j=1}^d \delta_{(\theta^\ast_j,\theta_j^0)} \overset{W_p}{\to}
\sP(\theta^*,\theta^0),
\qquad \frac{1}{n}\sum_{i=1}^n \delta_{\eps_i} \overset{W_p}{\to} \sP(\eps).
\end{equation}
For a deterministic parameter $\alpha^0 \in \R^K$, almost surely 
$\lim_{n,d \to \infty} \widehat \alpha^0=\alpha^0$.
\end{enumerate}
\end{assumption}

\begin{assumption}[Drift function]\label{assump:prior}
$s:\R \times \R^K \to \R$ is twice
continuously-differentiable, and
for some constant $C>0$ and all
$(\theta,\alpha) \in \R \times \R^K$,
\begin{equation}\label{eq:s_smooth}
|s(\theta,\alpha)| \leq C(1+|\theta|+\|\alpha\|_2),
\quad \|\nabla_{(\theta,\alpha)} s(\theta,\alpha)\|_2,
\|\nabla_{(\theta,\alpha)}^2 s(\theta,\alpha)\|_\op \leq C.
\end{equation}
\end{assumption}

\begin{assumption}[Gradient map]\label{assump:gradient}
Let $\widehat\sP(\btheta)=d^{-1}\sum_{j=1}^d \delta_{\theta_j}$ denote
the empirical distribution of coordinates of $\btheta \in \R^d$,
and let $\cG_k:\R^K \to \R$
be the $k^\text{th}$ component of $\cG$. 
\begin{enumerate}[(a)]
\item For some constant $C>0$ and all $(\alpha,\sP) \in \R^K \times \cP_2(\R)$,
\begin{equation}\label{eq:G_smooth1}
\|\cG(\alpha,\sP)\|_2 \leq C(1+ \|\alpha\|_2+\E_{\sP}[\theta^2]^{1/2}),
\quad \|\cG(\alpha,\sP)-\cG(\alpha',\sP')\|_2 \leq
C\big(\|\alpha-\alpha'\|_2+W_2(\sP,\sP')\big).
\end{equation}
\item For each $k=1,\ldots,K$,
$(\btheta,\alpha) \mapsto \cG_k(\alpha,\widehat\sP(\btheta))$ is twice
continuously-differentiable, and
for some constant $C>0$ and
all $(\btheta,\alpha) \in \R^d \times \R^K$,
\begin{equation}\label{eq:G_smooth2}
\pnorm{\nabla_{\alpha} \cG_k(\alpha,\widehat\sP(\btheta))}{2}
\leq C,
\qquad \sqrt{d}\,\pnorm{\nabla_{\btheta} \cG_k(\alpha,\widehat\sP(\btheta))}{2} \leq
C,
\end{equation}
\begin{equation}\label{eq:G_smooth3}
\max\Big(d\,\pnorm{\nabla_{\btheta}^2 \cG_k(\alpha,\widehat\sP(\btheta))}{\op},\,
\sqrt{d}\,\pnorm{\nabla_{\btheta}\nabla_{\alpha}
\cG_k(\alpha,\widehat\sP(\btheta))}{\op},\,\pnorm{\nabla^2_{\alpha}
\cG_k(\alpha,\widehat\sP(\btheta))}{\op}\Big) \leq C.
\end{equation}
\end{enumerate}
\end{assumption}

Viewing (\ref{eq:langevin_sde}--\ref{eq:gflow}) as a joint
diffusion of $(\btheta^t,\widehat\alpha^t)$ on $\R^{d+K}$, we remark that
(\ref{eq:s_smooth}) and (\ref{eq:G_smooth2}) imply that the
drift function of this joint diffusion is Lipschitz with respect to the
Euclidean norm. Then there exists a unique
solution $\{\btheta^t,\widehat \alpha^t\}_{t \geq 0}$ 
to (\ref{eq:langevin_sde}--\ref{eq:gflow}) with initial condition
(\ref{eq:langevin_init}) that is
adapted to the filtration $\cF_t:=\cF(\{\b^s\}_{s \in
[0,t]},\btheta^0,\widehat\alpha^0,\X,\btheta^*,\beps)$,
which will be the process of interest in our main results.

\subsection{Existence and uniqueness of the DMFT fixed point}

In this section we define the DMFT limit for the preceding dynamics 
(\ref{eq:langevin_sde}--\ref{eq:gflow}). Let
$\delta=\lim_{n,d \to \infty} \frac{n}{d}$ and $\alpha^0=\lim_{n,d \to \infty}
\widehat \alpha^0$ be as in Assumption \ref{assump:model}. Let
\[(\theta^*,\theta^0) \sim \sP(\theta^*,\theta^0)\]
denote scalar variables with the distribution
(\ref{eq:thetaepsdistribution}). Let
\[\{b^t\}_{t \geq 0}, \qquad \{u^t\}_{t \geq 0}\]
be univariate mean-zero Gaussian processes independent of each other
and of $(\theta^*,\theta^0)$, where $\{b^t\}_{t \geq 0}$ is a standard Brownian
motion and $\{u^t\}_{t \geq 0}$ has a correlation kernel $C_\eta(\cdot)$ on
$[0,\infty)$, defined
self-consistently below. Let
\[\{\alpha^t\}_{t \geq 0}\]
be a deterministic 
continuous process on $\R^K$, also defined self-consistently below.
We consider univariate processes $\{\theta^t\}_{t \geq 0}$ and
$\{\frac{\partial \theta^t}{\partial u^s}\}_{t \geq s \geq 0}$ adapted to the
filtration
\[\cF_t^\theta:=\cF(\{b^s\}_{s \in [0,t]},\{u^s\}_{s \in [0,t]},\theta^*,\theta^0)\]
(in the sense that $\theta^t$ and $\frac{\partial
\theta^t}{\partial u^s}$ for all $s \in [0,t]$ are $\cF_t^\theta$-measurable),
defined by the stochastic differential equations
\begin{align}
\d\theta^t&=\Big[{-}\delta\beta(\theta^t-\theta^*) +
s(\theta^t,\alpha^t)
+\int_0^t R_\eta(t, s)(\theta^{s} - \theta^\ast)\d s + u^t\Big]\d t +
\sqrt{2}\,\d b^t
\text{ with } \theta^t\big|_{t=0}=\theta^0,\label{def:dmft_langevin_cont_theta}\\
\d\Big(\frac{\partial\theta^t}{\partial u^s}\Big) &=
\bigg[
{-}\bigg(\delta\beta-\partial_\theta s(\theta^t,\alpha^t)\bigg)\frac{\partial \theta^{t}}{\partial u^s} + \int_s^{t}
R_\eta(t,s')\frac{\partial \theta^{s'}}{\partial u^s}\d s'\bigg]\d t
\text{ with } \frac{\partial\theta^t}{\partial u^s}\bigg|_{t=s}=1
\label{def:response_theta}.
\end{align}
We clarify that $\frac{\partial \theta^t}{\partial u^s}$
is a notation for a univariate process
on $t \in [s,\infty)$, defined via (\ref{def:response_theta})
for each $s \geq 0$.

Consider also
\[\eps \sim \sP(\eps), \qquad (w^*,\{w^t\}_{t \geq 0}),\]
where $\eps$ is a scalar variable with the distribution
(\ref{eq:thetaepsdistribution}), and
$(w^*,\{w^t\}_{t \geq 0})$ is a univariate mean-zero Gaussian process indexed by
$\{*\} \cup [0,\infty)$, independent of $\eps$ and
with a correlation kernel $C_\theta(\cdot)$ on $\{*\} \cup [0,\infty)$ also
defined self-consistently below. We consider univariate processes
$\{\eta^t\}_{t \geq 0}$ and
$\{\frac{\partial \eta^t}{\partial w^s}\}_{t \geq s \geq 0}$ 
adapted to the filtration
\[\cF_t^\eta:=\cF(\{w^s\}_{s \leq t},w^*,\eps),\]
defined by the integral equations
\begin{align}
\eta^t &= {-}\beta\int_0^t R_\theta(t,s)\big(\eta^s + w^* -
\eps\big)\d s - w^t,\label{def:dmft_langevin_cont_eta}\\
\frac{\partial \eta^t}{\partial w^s} &= \beta\Big[{-}\int_{s}^{t}
R_\theta(t,s')\frac{\partial \eta^{s'}}{\partial w^s}\d s' +
R_\theta(t,s)\Big]\label{def:response_g}.
\end{align}
Again $\frac{\partial \eta^t}{\partial u^s}$ is a notation for a univariate
process on $t \in [s,\infty)$, defined by (\ref{def:response_g}) for each
$s \geq 0$.

The centered Gaussian processes $\{u^t\}_{t \geq 0}$ and
$(w^*,\{w^t\}_{t \geq 0})$ above have correlation kernels
\begin{equation}\label{def:dmft_covuw}
\E[u^t u^s]=C_\eta(t,s),
\qquad \E[w^t w^s]=C_\theta(t,s),
\qquad \E[w^t w^*]=C_\theta(t,*),
\qquad \E[(w^*)^2]=C_\theta(*,*).
\end{equation}
Denoting by $\sP(\theta^t)$ the law of $\theta^t$ solving
(\ref{def:dmft_langevin_cont_theta}),
the above deterministic process $\{\alpha^t\}_{t \geq 0}$ and correlation/response kernels
$C_\eta,C_\theta,R_\eta,R_\theta$ are defined for all $t \geq s \geq 0$
self-consistently by
\begin{equation}
\frac{\d}{\d t} \alpha^t = \cG(\alpha^t,\sP(\theta^t))
\text{ with } \alpha^t\big|_{t=0}=\alpha^0,\label{def:dmft_langevin_alpha}
\end{equation}
\begin{equation}\label{def:CRfixedpoint}
\begin{gathered}
C_\theta(t,s)=\E[\theta^t \theta^s],
\quad C_\theta(t,*)=\E[\theta^t \theta^*],
\quad C_\theta(*,*)
=\E[(\theta^*)^2],\\
C_\eta(t,s)=\delta\beta^2\,\E[(\eta^t + w^*-\eps)(\eta^s + w^*-\eps)],\\
R_\theta(t,s)=\E\Big[\frac{\partial \theta^t}{\partial u^s}\Big],
\quad R_\eta(t,s)=\delta\beta\,\E\Big[\frac{\partial \eta^t}{\partial w^s}\Big].
\end{gathered}
\end{equation}
We note that the above process
$\{\frac{\partial \eta^t}{\partial w^s}\}_{t \geq s \geq 0}$ defined by
(\ref{def:response_g})
is in fact deterministic, but we keep the expectation defining
$R_\eta(t,s)$ for symmetry of notation.

The equations (\ref{def:dmft_langevin_alpha}--\ref{def:CRfixedpoint}) should
be understood as fixed-point equations for
$\alpha,C_\theta,C_\eta,R_\theta,R_\eta$,
where the laws of the processes $\{\theta^t,\frac{\partial
\theta^t}{\partial u^s},u^t\}_{t \geq s \geq 0}$
and $\{\eta^t,\frac{\partial \eta^t}{\partial w^s},w^t\}_{t \geq s \geq 0}$ 
defining (\ref{def:dmft_langevin_alpha}--\ref{def:CRfixedpoint}) 
are in turn defined by $\alpha,C_\theta,C_\eta,R_\theta,R_\eta$
via (\ref{def:dmft_langevin_cont_theta}--\ref{def:dmft_covuw}).
For each fixed time horizon $T>0$, let $\cS(T)$ be a space of functions
\[(\alpha,C_\theta,C_\eta,R_\theta,R_\eta) \equiv
\{\alpha^t,C_\theta(t,s),C_\theta(t,*),C_\theta(*,*),C_\eta(t,s),R_\theta(t,s),
R_\eta(t,s)\}_{0 \leq s \leq t \leq T}\]
having at most exponential growth, and $\cS(T)^\text{cont}$ a subset of
continuous such functions, whose precise
definitions we defer to Section \ref{sec:functionspace} to follow.
The following result establishes existence and uniqueness of a fixed point to
(\ref{def:dmft_langevin_alpha}--\ref{def:CRfixedpoint}) in this space
$\cS(T)^\text{cont}$.

\begin{theorem}\label{thm:dmftsolexists}
Under Assumptions \ref{assump:model}, \ref{assump:prior}, and
\ref{assump:gradient}, for any fixed $T>0$: 
\begin{enumerate}[(a)]
\item For any 
$(\alpha,C_\theta,C_\eta,R_\theta,R_\eta) \in \cS(T)$
and any realization of the mean-zero Gaussian processes
$\{u^t\}_{t \geq 0}$ and $(w^*,\{w^t\}_{t \geq 0})$ satisfying
(\ref{def:dmft_covuw}) (independent of $(\theta^*,\theta^0,\{b^t\}_{t \geq 0})$
and $\eps$ respectively), there exist unique
solutions to (\ref{def:dmft_langevin_cont_theta}--\ref{def:response_theta})
and (\ref{def:dmft_langevin_cont_eta}--\ref{def:response_g}) adapted to 
$\{\cF_t^\theta\}_{t \in [0,T]}$ and $\{\cF_t^\eta\}_{t \in [0,T]}$
for times
$0 \leq s \leq t \leq T$.
\item There exists a unique fixed point
$(\alpha,C_\theta,C_\eta,R_\theta,R_\eta) \in \cS(T)$ satisfying
(\ref{def:dmft_langevin_alpha}--\ref{def:CRfixedpoint}) for the solution of
part (a). This fixed point belongs to $\cS(T)^\text{cont}$, and in
particular $\{\alpha^t\}_{t \geq 0}$ is a deterministic
continuous process on $\R^K$.
\end{enumerate}
\end{theorem}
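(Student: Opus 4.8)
My plan is to establish part (a) via the classical theory of linear Volterra equations and part (b) by exhibiting the self-consistency equations as a contraction on $\cS(T)^\text{cont}$ under an exponentially weighted norm, concluding with the Banach fixed point theorem. For part (a), fix $(\alpha,C_\theta,C_\eta,R_\theta,R_\eta)\in\cS(T)$ and a realization of the driving Gaussian processes. Each of (\ref{def:dmft_langevin_cont_theta}--\ref{def:response_theta}) and (\ref{def:dmft_langevin_cont_eta}--\ref{def:response_g}) is \emph{linear} in its unknown with a Volterra memory term whose kernel ($R_\eta$ or $R_\theta$) is bounded on the compact set $[0,T]^2$ because it lies in $\cS(T)$. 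In (\ref{def:dmft_langevin_cont_theta}), Assumption \ref{assump:prior} makes $\theta\mapsto s(\theta,\alpha^t)$ globally Lipschitz uniformly in $t$, and $\sqrt{2}\,b^t$ is a fixed inhomogeneity; a pathwise Picard iteration in $C([0,T],\R)$ (for a.e.\ realization of $b,u,\theta^0,\theta^*$) converges with uniqueness by Grönwall, and the construction is adapted to $\{\cF_t^\theta\}$. The response (\ref{def:response_theta}) is then a linear integro-differential equation with bounded coefficient $-(\delta\beta-\partial_\theta s(\theta^t,\alpha^t))$, solved the same way for each $s$, and (\ref{def:dmft_langevin_cont_eta}--\ref{def:response_g}) are linear Volterra integral equations of the second kind handled identically; the linear-growth bound (\ref{eq:s_smooth}) on $s$, the finite moment generating functions of $\theta^0,\theta^*,\eps$ in Assumption \ref{assump:model}, and Grönwall give $\sup_{t\le T}\E[(\theta^t)^2],\ \sup_{t\le T}\E[(\eta^t)^2]<\infty$ with at most exponential dependence on $T$.

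For part (b), let $\Phi$ send $(\alpha,C_\theta,C_\eta,R_\theta,R_\eta)$ to the tuple obtained by solving the processes of part (a) and reading off the right-hand sides of (\ref{def:dmft_langevin_alpha}--\ref{def:CRfixedpoint}). First one checks that $\Phi$ maps $\cS(T)$ into itself with the growth constants fixed in Section \ref{sec:functionspace}: the output correlation kernels are controlled by $\E[(\theta^t)^2]$ and $\E[(\eta^t)^2]$, the output response kernels by $\E[\partial\theta^t/\partial u^s]$ and $\partial\eta^t/\partial w^s$, and $\alpha^t$ by the linear growth of $\cG$ in Assumption \ref{assump:gradient}(a), all at most exponential in $t$ by the Grönwall bounds of part (a); and since $t\mapsto\theta^t,\eta^t$ are sample-continuous, $t\mapsto\sP(\theta^t)$ is $W_2$-continuous, and the responses are jointly continuous in $(t,s)$, $\Phi$ maps $\cS(T)^\text{cont}$ into itself, a closed subset of the complete space $\cS(T)$ under the weighted norm. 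To get a contraction there, equip $\cS(T)^\text{cont}$ with weights $e^{-\lambda t}$ on $\alpha$, $e^{-\lambda(t+s)}$ on the correlation entries, and $e^{-\lambda(t-s)}$ on the response entries, with $\lambda$ large. Given two inputs, couple the driving Gaussian processes on a common space (bounding the $L^2$-distance of the drivers by the difference of the input correlation kernels), run a Grönwall estimate on the differences of the solutions of part (a) using the Lipschitzness of $s,\partial_\theta s,\cG$ (the memory terms contributing factors that are $O(1/\lambda)$ for $\lambda$ large), and feed the result into (\ref{def:dmft_langevin_alpha}--\ref{def:CRfixedpoint}) via bilinear bounds such as $|C_\theta(t,s)-\tilde C_\theta(t,s)|\le\|\theta^t\|_{L^2}\|\theta^s-\tilde\theta^s\|_{L^2}+\|\tilde\theta^s\|_{L^2}\|\theta^t-\tilde\theta^t\|_{L^2}$. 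For $\lambda$ large enough (depending only on $T,K,\beta,\delta$ and the model constants) this makes $\Phi$ a $\tfrac12$-contraction, and Banach's theorem gives the unique fixed point in $\cS(T)^\text{cont}\subseteq\cS(T)$, which in particular has $\{\alpha^t\}$ deterministic and continuous.

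\textbf{Main obstacle.} The crux is the coupled stability estimate, and within it the quantitative dependence of the \emph{law} of $\theta^t$ on the covariance kernel $C_\eta$ of its driving process $u$: a coupling controls $W_2$ between the laws only through the Bures--Wasserstein distance of the covariances, and Powers--St\o rmer gives $W_2^2\le\|C_\eta-\tilde C_\eta\|_\text{tr}$, i.e.\ a square-root loss at the level of the kernels, so closing the contraction requires extra structure --- that $u$ enters (\ref{def:dmft_langevin_cont_theta}) additively, that the $\eta$-recursion (\ref{def:dmft_langevin_cont_eta}--\ref{def:response_g}) is \emph{linear} in its Gaussian input so that $C_\eta$ depends Lipschitz-continuously on $(C_\theta,R_\theta)$ (a useful asymmetry), and, if needed, passing through a time-discretized system as in \cite{celentano2021high}. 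A secondary difficulty is arranging a single weight parameter $\lambda$ that simultaneously tames all five coupled kernel objects --- especially the response kernels, which re-enter both the drift and the memory terms and may themselves grow exponentially when $\delta\beta<0$.
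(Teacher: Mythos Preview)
Your part (a) is essentially the paper's argument (Lemmas~\ref{lem:sub_theta_solution}--\ref{lem:sub_eta_solution}): Lipschitz drift plus bounded Volterra kernel, Picard--Gr\"onwall pathwise.

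For part (b), your high-level strategy---a contraction under an exponentially weighted metric, then Banach---is exactly the paper's. But the ``main obstacle'' you flag is real for the metric you have chosen, and the paper resolves it not by exploiting linearity or discretizing but by a different \emph{choice of metric} on the covariance kernels. You measure $C_\eta^1$ against $C_\eta^2$ by pointwise kernel differences and then try to bound the $L^2$-distance of coupled Gaussian drivers by that, which costs a square root via Powers--St{\o}rmer. The paper instead \emph{defines} the distance between covariance kernels as the coupling infimum
\[
d(C_\eta^1,C_\eta^2)=\inf_{\{u_1^t\}\sim C_\eta^1,\ \{u_2^t\}\sim C_\eta^2}\ \sup_{t\in[0,T]} e^{-\lambda t}\sqrt{\E(u_1^t-u_2^t)^2},
\]
and similarly for $C_\theta$. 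With this metric the loop closes Lipschitzly: on input you take an (approximately) optimal coupling so the driver $L^2$-distance \emph{equals} the metric; on output you do not go back to kernel entries but instead build a centered Gaussian process whose second moments match those of the coupled solutions $(\theta_1^t,\theta_2^t)$, which realizes a coupling and hence upper-bounds the output metric by $\sup_t e^{-\lambda t}\sqrt{\E(\theta_1^t-\theta_2^t)^2}$. No square-root loss anywhere. This is the missing idea in your proposal.

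Two smaller points where the paper differs from your sketch. First, rather than a single map $\Phi$, the paper factors through two half-maps $\cT_{\eta\to\theta}$ and $\cT_{\theta\to\eta}$ and proves an asymmetric estimate: $\cT_{\eta\to\theta}$ has modulus $\le\varepsilon$ for any $\varepsilon$ once $\lambda$ is large (because every term in the $\theta$-equation carries a time integral), while $\cT_{\theta\to\eta}$ is only $C$-Lipschitz (because $R_\theta$ enters $R_\eta$ without an extra integral); the composition is then contractive. Your all-at-once scheme with weights $e^{-\lambda(t-s)}$ on responses would have to handle this asymmetry directly, and the weight $e^{-\lambda t}$ the paper uses on responses is what makes the bookkeeping clean. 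Second, your Banach argument only yields uniqueness in $\cS(T)^{\text{cont}}$; the paper upgrades to uniqueness in all of $\cS(T)$ by observing that $\cT_{\eta\to\theta}$ already maps $\cS_\eta$ into $\cS_\theta^{\text{cont}}$ (Lemma~\ref{lem:Trange_eta_2_theta}), so any fixed point in $\cS(T)$ automatically lies in $\cS(T)^{\text{cont}}$.
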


The proof of Theorem \ref{thm:dmftsolexists} is given in Section
\ref{sec:existence}.
We will call the components of Theorem \ref{thm:dmftsolexists}(a--b) the unique
solution of the DMFT system
(\ref{def:dmft_langevin_cont_theta}--\ref{def:CRfixedpoint}).

\subsection{The dynamical mean-field approximation}

The following is the first main result of our work,
showing that the preceding solution to the DMFT system
describes the limit of $\{\widehat \alpha^t\}_{t \in [0,T]}$
and empirical distributions of coordinates of $\{\btheta^t\}_{t \in [0,T]}$
and $\{\bbeta^t\}_{t \in [0,T]} \equiv \{\X\btheta^t\}_{t \in [0,T]}$ solving
(\ref{eq:langevin_sde}--\ref{eq:gflow}), for fixed time horizons $T>0$
in the limit $n,d \to \infty$.

\begin{theorem}\label{thm:dmft_approx}
Suppose Assumptions \ref{assump:model}, \ref{assump:prior}, and
\ref{assump:gradient} hold. Denote
\[\boldeta^t=\X\btheta^t, \qquad \boldeta^*=\X\btheta^*\]
let $\theta^*$, $\eps$, $\eta^*={-}w^*$, and $\{\theta^t,\eta^t,\alpha^t\}_{t \in
[0,T]}$ be the components of the unique solution to the DMFT system
(\ref{def:dmft_langevin_cont_theta}--\ref{def:CRfixedpoint}) given by
Theorem \ref{thm:dmftsolexists}, and let $\sP(\cdot)$ denote the law of these components. Then for each fixed $T>0$,
almost surely as $n,d \to \infty$,
\begin{enumerate}[(a)]
\item $(\widehat \alpha^t)_{t \in [0,T]} \to (\alpha^t)_{t \in [0,T]}$
in $C([0,T],\R^K)$.
\item In the sense of Wasserstein-2 convergence over $\R \times C([0,T],\R)$ and
$\R \times \R \times C([0,T],\R)$,
\[\frac{1}{d}\sum_{j=1}^d \delta_{\theta_j^*,\{\theta_j^t\}_{t \in [0,T]}}
\overset{W_2}{\to} \sP(\theta^*,\{\theta^t\}_{t \in [0,T]}),
\qquad \frac{1}{n}\sum_{i=1}^n \delta_{\eta_i^*,\eps_i,\{\eta_i^t\}_{t \in [0,T]}}
\overset{W_2}{\to} \sP(\eta^*,\eps,\{\eta^t\}_{t \in [0,T]}).\]
\end{enumerate}
\end{theorem}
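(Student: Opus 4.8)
The plan is to follow the time-discretization-and-AMP strategy of \cite{celentano2021high}, organized as a three-way comparison between the high-dimensional dynamics (\ref{eq:langevin_sde}--\ref{eq:gflow}), a time-discretized surrogate, and a discrete-time DMFT system, followed by an $h\to 0$ limit linking the discrete and continuous DMFT. \textbf{Step 1 (Euler discretization of the dynamics).} Fix a step $h>0$ and set $t_k=kh$ for $0\le k\le\lceil T/h\rceil$. Replace (\ref{eq:langevin_sde}--\ref{eq:gflow}) by the Euler--Maruyama recursion
\[
\btheta^{k+1}=\btheta^k+h\Big[{-}\beta\,\X^\top(\X\btheta^k-\y)+\big(s(\theta_j^k,\widehat\alpha^k)\big)_{j=1}^d\Big]+\sqrt{2h}\,\g^k,\qquad \widehat\alpha^{k+1}=\widehat\alpha^k+h\,\cG\big(\widehat\alpha^k,\widehat\sP(\btheta^k)\big),
\]
with $\g^k\sim\N(0,\I_d)$ independent across $k$. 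Using the Lipschitz bounds of Assumptions \ref{assump:prior} and \ref{assump:gradient}, a high-probability bound $\|\X\|_\op\le C$ following from Assumption \ref{assump:model}, and a Gr\"onwall estimate, one shows that for every $\epsilon>0$ there is $h_0>0$ such that for all $h<h_0$ the piecewise-linear interpolation of $\{\btheta^k,\widehat\alpha^k\}$ approximates the solution of (\ref{eq:langevin_sde}--\ref{eq:gflow}) to within $\epsilon$ in $C([0,T],\R^K)$ (for $\widehat\alpha$) and in $W_2$ over $\R\times C([0,T],\R)$ and $\R\times\R\times C([0,T],\R)$ (for the empirical laws of $\btheta$- and $\boldeta$-paths), \emph{uniformly} over all large $n,d$ almost surely; all constants here are dimension-free.

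\textbf{Step 2 (AMP representation and state evolution).} The discretized recursion alternates multiplications by $\X$ and $\X^\top$: writing $\boldeta^k=\X\btheta^k$, the sequence $(\btheta^0,\boldeta^0,\btheta^1,\boldeta^1,\dots)$ is an Approximate Message Passing iteration on the bipartite structure of $\X$, with coordinatewise nonlinearities determined by $s(\cdot)$, the accumulated memory terms, the side information $(\boldeta^*,\beps)$ entering through $\y=\X\btheta^*+\beps$, and the low-dimensional adaptive variable $\widehat\alpha^k$. Treating $\widehat\alpha^k$ as an auxiliary variable on which the nonlinearities depend, one applies the AMP state evolution theorem for non-separable pseudo-Lipschitz nonlinearities (in the form used in \cite{celentano2021high}) to obtain, almost surely as $n,d\to\infty$ (with almost-sure convergence following from the quantitative concentration in the state evolution bound), convergence of the joint empirical distributions of $(\theta_j^*,\theta_j^0,\dots,\theta_j^k)$ and of $(\eta_i^*,\eps_i,\eta_i^0,\dots,\eta_i^k)$ to explicit limiting laws, with the Onsager correction terms producing in the limit the integrated-response terms $\int R_\eta$ and $\int R_\theta$. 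Simultaneously $\widehat\sP(\btheta^k)$ converges in $W_2$ to a deterministic law, so by Lipschitzness of $\cG$ and an induction on $k$ propagating a quantitative error through the $\lceil T/h\rceil$ steps, $\widehat\alpha^k$ converges to a deterministic limit $\alpha^{(h),k}$. The resulting limits satisfy a closed recursion --- the \emph{discrete DMFT} --- which is exactly the step-$h$ Euler discretization of the fixed-point system (\ref{def:dmft_langevin_cont_theta}--\ref{def:CRfixedpoint}).

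\textbf{Step 3 (discrete DMFT $\to$ continuous DMFT) and Step 4 (assembly).} Working in the function space $\cS(T)$, one shows the discrete DMFT fixed point converges, as $h\to 0$, to the unique continuous solution of Theorem \ref{thm:dmftsolexists}: the discrete DMFT is a consistent discretization, and the contraction/stability estimates on functions of at most exponential growth underlying the uniqueness in Theorem \ref{thm:dmftsolexists} upgrade consistency to convergence. The new point relative to \cite{celentano2021high} is that this embedding must accommodate the $\sqrt{2}\,\d b^t$ term: the discrete state-evolution recursion converges to a genuine SDE limit (\ref{def:dmft_langevin_cont_theta}), not an ODE, so one carries the discrete processes together with their driving Gaussian increments and proves tightness and $W_2$-convergence over path space, using boundedness of the response kernels on $[0,T]$. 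For the assembly, given $\epsilon>0$, choose $h$ along a fixed countable sequence accumulating at $0$ (so the relevant almost-sure statements hold simultaneously) making the Step 1 discretization error and the Step 3 discrete-to-continuous DMFT error each below $\epsilon$; for that $h$, Step 2 gives a.s.\ convergence of the discrete dynamics to the discrete DMFT as $n,d\to\infty$; the triangle inequality then bounds the $\limsup$ distance between the high-dimensional quantities and the continuous DMFT solution by $O(\epsilon)$ a.s., and $\epsilon\to 0$ along the sequence yields the theorem.

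\textbf{Main obstacle.} The crux is the simultaneous, dimension-uniform control of four error sources --- Euler error in the dynamics, AMP fluctuations, concentration of the adaptive parameter $\widehat\alpha^t$, and Euler error in the DMFT --- compounded over $\Theta(1/h)$ steps. The adaptive parameter is the genuinely new ingredient: since the AMP nonlinearity at step $k$ depends on $\widehat\alpha^k$, which is itself a functional of the empirical laws at steps $0,\dots,k-1$, the state-evolution induction must carry a quantitative bound on $|\widehat\alpha^k-\alpha^{(h),k}|$ forward in $k$ without blowup. Managing this propagation, together with the $h\to 0$ embedding in the presence of the Brownian term, is the heart of the argument; the remaining ingredients are, by comparison, technical extensions of \cite{celentano2021high}.
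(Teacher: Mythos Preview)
Your proposal is correct and follows the same three-step time-discretization strategy as the paper (Euler discretization of the dynamics, AMP state evolution for the discrete system, and convergence of the discrete DMFT to the continuous DMFT, assembled by the triangle inequality). Two organizational differences are worth flagging. First, the paper uses piecewise-\emph{constant} rather than piecewise-linear embeddings of the discrete processes into continuous time, which it advertises as simpler than the embedding in \cite{celentano2021high}; this is cosmetic. Second, and more substantively, for the adaptive parameter the paper does \emph{not} run the AMP state evolution with $\widehat\alpha^k$-dependent nonlinearities and propagate a quantitative error on $|\widehat\alpha^k-\alpha^{(h),k}|$ through the induction, as you suggest. Instead it introduces an auxiliary \emph{non-adaptive} discrete dynamics in which $\widehat\alpha^k$ is replaced by the deterministic DMFT sequence $\alpha_\gamma^k$ from the outset; this reduces the AMP step to a standard iteration with fixed Lipschitz nonlinearities (no non-separable state evolution needed), and a separate $\ell_2$ Gr\"onwall induction then shows $d^{-1}\|\btheta_\gamma^t-\tilde\btheta_\gamma^t\|^2\to 0$ and $\widehat\alpha_\gamma^t\to\alpha_\gamma^t$. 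Your direct approach should also work, but the paper's decoupling is cleaner and avoids invoking non-separable AMP machinery.
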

The proof of Theorem \ref{thm:dmft_approx} is given in Section
\ref{sec:dmft_approx}.
For ease of interpretation, we record here two corollaries of this
result. The first clarifies an implication of the above Wasserstein-2
convergence in terms of the convergence of pseudo-Lipschitz test functions of
finite-dimensional marginals of the processes.

\begin{corollary}\label{cor:dmft_approx}
In the setting of Theorem \ref{thm:dmft_approx},
for any fixed $m \geq 1$ and times
$t_1,\ldots,t_m \in [0,T]$, and for any pseudo-Lipschitz test functions
$f_\theta:\R^{m+1} \to \R$ and $f_\eta:\R^{m+2} \to \R$
(i.e.\ satisfying $|f(x)-f(y)| \leq C\|x-y\|_2(1+\|x\|_2+\|y\|_2)$),
almost surely as $n,d \to \infty$,
\begin{equation}\label{eq:dmftapproximplication}
\begin{aligned}
\frac{1}{d}\sum_{j=1}^d f_\theta(\theta_j^*,\theta_j^{t_1},\ldots,\theta_j^{t_m})
&\to \E f_\theta(\theta^*,\theta^{t_1},\ldots,\theta^{t_m})\\
\frac{1}{n}\sum_{i=1}^n f_\eta(\eta_i^*,\eps_i,\eta_i^{t_1},\ldots,\eta_i^{t_m})
&\to \E f_\eta(\eta^*,\eps,\eta^{t_1},\ldots,\eta^{t_m})
\end{aligned}
\end{equation}
where the expectations on the right side are under the joint laws of
the solution to the DMFT system.
\end{corollary}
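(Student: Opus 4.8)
The plan is to derive Corollary \ref{cor:dmft_approx} from Theorem \ref{thm:dmft_approx}(b) by a purely soft argument that combines two standard facts: (i) evaluation of a path at finitely many times is a Lipschitz map of the path space, and (ii) Wasserstein-2 convergence on a finite-dimensional Euclidean space implies convergence of pseudo-Lipschitz test-function averages. The deduction is deterministic given the almost-sure event of Theorem \ref{thm:dmft_approx}, so the conclusion inherits the ``almost surely'' qualifier.

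First I would fix $m \geq 1$ and $t_1,\ldots,t_m \in [0,T]$ and introduce the evaluation maps $\pi_\theta:\R \times C([0,T],\R) \to \R^{m+1}$ sending $(\vartheta^*,\{\vartheta^t\}_{t \in [0,T]})$ to $(\vartheta^*,\vartheta^{t_1},\ldots,\vartheta^{t_m})$, and the analogous $\pi_\eta:\R \times \R \times C([0,T],\R) \to \R^{m+2}$. Equipping the path spaces with their product norms, so that $\|\vartheta\|_\infty$ (as well as $|\vartheta^*|$ and $|\eps|$) is controlled by the ambient norm, the elementary bound $\sum_{k=1}^m |\vartheta^{t_k}-\tilde\vartheta^{t_k}|^2 \leq m\|\vartheta-\tilde\vartheta\|_\infty^2$ shows that $\pi_\theta$ and $\pi_\eta$ are Lipschitz with a constant $L$ depending only on $m$. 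Since an $L$-Lipschitz map is an $L$-contraction for $W_2$ (apply the map to an optimal coupling) and sends an empirical measure $\frac1d\sum_j \delta_{x_j}$ to $\frac1d\sum_j \delta_{\pi(x_j)}$, the Wasserstein-2 convergence in Theorem \ref{thm:dmft_approx}(b) yields, on the same almost-sure event and as $n,d \to \infty$,
\[
\frac{1}{d}\sum_{j=1}^d \delta_{(\theta_j^*,\theta_j^{t_1},\ldots,\theta_j^{t_m})} \overset{W_2}{\to} \sP(\theta^*,\theta^{t_1},\ldots,\theta^{t_m}),
\qquad
\frac{1}{n}\sum_{i=1}^n \delta_{(\eta_i^*,\eps_i,\eta_i^{t_1},\ldots,\eta_i^{t_m})} \overset{W_2}{\to} \sP(\eta^*,\eps,\eta^{t_1},\ldots,\eta^{t_m})
\]
as laws on $\R^{m+1}$ and $\R^{m+2}$ respectively.

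Finally I would invoke the standard fact that $W_2$-convergence $\mu_d \overset{W_2}{\to}\mu$ on $\R^k$ implies $\int f\,\d\mu_d \to \int f\,\d\mu$ for every pseudo-Lipschitz $f$: choosing couplings $(\xi_d,\xi)$ of $(\mu_d,\mu)$ with $\E\|\xi_d-\xi\|_2^2 \to 0$, the pseudo-Lipschitz bound and Cauchy--Schwarz give $|\E f(\xi_d)-\E f(\xi)| \leq C\,\|\xi_d-\xi\|_{L^2}\,(1+\|\xi_d\|_{L^2}+\|\xi\|_{L^2})$, which vanishes because $\|\xi_d-\xi\|_{L^2} \to 0$ while $\|\xi_d\|_{L^2}$ stays bounded ($W_2$-convergence forces convergence of second moments). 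Applying this with $f=f_\theta$ and with $f=f_\eta$ to the two displayed limits produces exactly (\ref{eq:dmftapproximplication}). There is no substantive obstacle here; the only points requiring a modicum of care are the Lipschitz-composition bookkeeping for the projections and the uniform second-moment bound, both of which are immediate from the stated hypotheses.
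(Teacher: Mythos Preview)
Your proof is correct and takes essentially the same approach as the paper: the paper observes that a pseudo-Lipschitz function of $(\theta^*,\theta^{t_1},\ldots,\theta^{t_m})$ is automatically pseudo-Lipschitz on the path space $\R \times C([0,T],\R)$, and then invokes the characterization of $W_2$-convergence via pseudo-Lipschitz test functions (Villani, Theorem~6.9). Your version separates this into ``project via a Lipschitz evaluation map, then test against pseudo-Lipschitz functions on $\R^{m+1}$,'' which is just the same argument run in two steps.
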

\begin{proof}
Any pseudo-Lipschitz function $(\theta^*,\theta^{t_1},\ldots,\theta^{t_m})
\mapsto f_\theta(\theta^*,\theta^{t_1},\ldots,\theta^{t_m})$ is also a
pseudo-Lipschitz function of the full sample path
$(\theta^*,\{\theta^t\}_{t \in [0,T]}) \in \R \times C([0,T],\R)$. Thus the
first statement of (\ref{eq:dmftapproximplication}) follows from Theorem
\ref{thm:dmft_approx}(b) and the characterization of Wasserstein-$p$ convergence
in \cite[Definition 6.8 and Theorem 6.9]{villani2008optimal}, and the second
statement follows similarly.
\end{proof}

The second corollary asserts an asymptotic decoupling of the finite-dimensional
marginal distributions of $(\btheta^*,\{\btheta^t\}_{t \in [0,T]})$ in a
coordinate-exchangeable setting, which is the usual notion of
propagation-of-chaos for interacting particle systems.

\begin{corollary}
In the setting of Theorem \ref{thm:dmft_approx}, suppose in addition that
$(\btheta^*,\btheta^0) \in \R^{d \times
2}$ and $\X \in \R^{n \times d}$ are both invariant in law under permutations
of the coordinates $\{1,\ldots,d\}$.

Fix any $J \geq 1$, and let
$\sP(\theta_{1:J}^*,\{\theta_{1:J}^t\}_{t \in [0,T]})$ denote the joint law of
sample paths
$(\theta_j^*,\{\theta^t_j\}_{t \in [0,T]}) \in \R \times C([0,T],\R)$ for
$j=1,\ldots,J$. Let $\sP(\theta^*,\{\theta^t\}_{t \in [0,T]})^{\otimes J}$
denote the $J$-fold product of the limit law in Theorem
\ref{thm:dmft_approx}(b). Then as $n,d \to \infty$, in the sense of weak
convergence,
\[\sP(\theta_{1:J}^*,\{\theta_{1:J}^t\}_{t \in [0,T]})
\to \sP(\theta^*,\{\theta^t\}_{t \in [0,T]})^{\otimes J}.\]
\end{corollary}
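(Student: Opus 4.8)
The plan is to deduce this corollary directly from the Wasserstein-$2$ convergence of the empirical distribution of sample paths established in Theorem~\ref{thm:dmft_approx}(b), combined with the coordinate-exchangeability hypothesis. The key observation is that, under exchangeability, the $J$-dimensional marginal law $\sP(\theta_{1:J}^*,\{\theta_{1:J}^t\}_{t\in[0,T]})$ coincides with the expected law of $J$ samples drawn \emph{without replacement} from the random empirical measure $\widehat\sP_d := \frac1d\sum_{j=1}^d \delta_{\theta_j^*,\{\theta_j^t\}_{t\in[0,T]}}$. For $J$ fixed and $d\to\infty$, sampling without replacement from $\widehat\sP_d$ is asymptotically equivalent (in total variation) to sampling i.i.d.\ from $\widehat\sP_d$, so it suffices to show that the expected $J$-fold product $\E[\widehat\sP_d^{\otimes J}]$ converges weakly to $\sP(\theta^*,\{\theta^t\}_{t\in[0,T]})^{\otimes J}$.

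First I would set up the standard equivalence: let $\mu := \sP(\theta^*,\{\theta^t\}_{t\in[0,T]})$ be the limit law on $\mathcal{M}:=\R\times C([0,T],\R)$, and recall from Theorem~\ref{thm:dmft_approx}(b) that $\widehat\sP_d \overset{W_2}{\to}\mu$ almost surely, hence in particular $\widehat\sP_d \to \mu$ weakly almost surely. Second, I would argue that weak convergence $\widehat\sP_d\to\mu$ (a.s., or even just in probability) implies $\widehat\sP_d^{\otimes J}\to\mu^{\otimes J}$ weakly on $\mathcal{M}^J$: this is immediate because for bounded continuous $\phi_1,\dots,\phi_J$ on $\mathcal{M}$ one has $\int \prod_k \phi_k \, d\widehat\sP_d^{\otimes J} = \prod_k \int \phi_k\, d\widehat\sP_d \to \prod_k \int \phi_k\, d\mu$, and products of such coordinate functions are convergence-determining for product measures on Polish spaces. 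Third, taking expectations and using bounded convergence (the $\phi_k$ are bounded), $\E\big[\int\prod_k\phi_k\,d\widehat\sP_d^{\otimes J}\big]\to \prod_k\int\phi_k\,d\mu$, i.e.\ $\E[\widehat\sP_d^{\otimes J}]\to\mu^{\otimes J}$ weakly.

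Finally I would close the gap between sampling with and without replacement. Under the exchangeability assumption on $(\btheta^*,\btheta^0)$ and $\X$, the joint dynamics (\ref{eq:langevin_sde}--\ref{eq:gflow}) are invariant under relabeling coordinates $\{1,\dots,d\}$ (the term $\X^\top(\X\btheta^t-\y)$ and the coordinatewise drift $s(\theta_j^t,\widehat\alpha^t)$ both transform equivariantly, and the empirical-measure-driven evolution of $\widehat\alpha^t$ is permutation-invariant), so the law of $(\btheta^*,\{\btheta^t\}_{t\in[0,T]})$ is exchangeable across coordinates. Hence $\sP(\theta_{1:J}^*,\{\theta_{1:J}^t\}_{t\in[0,T]})$ equals the law of a uniform random size-$J$ ordered subsample (without replacement) of the coordinates of $(\btheta^*,\{\btheta^t\}_{t\in[0,T]})$, averaged over the randomness of the system. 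For fixed $J$, the total-variation distance between the without-replacement sample from $\widehat\sP_d$ and the i.i.d.\ sample from $\widehat\sP_d$ is at most $\binom{J}{2}/d \to 0$ deterministically (a birthday-bound estimate), uniformly in the realization; integrating this bound over the system randomness shows $\sP(\theta_{1:J}^*,\{\theta_{1:J}^t\}_{t\in[0,T]})$ and $\E[\widehat\sP_d^{\otimes J}]$ have total-variation distance $\to 0$, and combining with the previous step yields the claimed weak convergence.

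I expect the main obstacle to be purely bookkeeping rather than conceptual: one must be careful that $\mathcal{M}=\R\times C([0,T],\R)$ is indeed a Polish space (it is, being a product of Polish spaces) so that the standard facts about weak convergence of product measures and the Portmanteau theorem apply, and one must state precisely the exchangeability transfer from the initial data to the full sample path, which follows from uniqueness of the strong solution to (\ref{eq:langevin_sde}--\ref{eq:gflow}) noted after Assumption~\ref{assump:gradient} together with the permutation-equivariance of the drift. No quantitative rates are needed, so the birthday-bound step is the only computation and it is elementary.
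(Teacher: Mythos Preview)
Your proof is correct and follows essentially the same route as the paper: the paper observes exchangeability of the full-path law and then invokes \cite[Proposition 2.2]{sznitman1991topics}, which states precisely that for exchangeable systems, weak convergence of the empirical measure to a deterministic limit in probability is equivalent to convergence of the $J$-marginals to the $J$-fold product. Your argument (with/without-replacement comparison via the birthday bound, plus dominated convergence for $\E[\widehat\sP_d^{\otimes J}]$) is exactly the standard proof of that proposition, so you have simply unrolled the citation.
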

\begin{proof}
Under the stated assumptions and the definition of the process
(\ref{eq:langevin_sde}--\ref{eq:gflow}),
the law of $(\btheta^*,\{\btheta^t\}_{t \in
[0,T]}) \in (\R \times C([0,T],\R))^d$ remains invariant under permutations of
the coordinates $\{1,\ldots,d\}$. Then the stated result is equivalent to
convergence of the empirical law $\frac{1}{d}\sum_{j=1}^d
\delta_{\theta_j^*,\{\theta_j^t\}_{t \in [0,T]}}$ to
$\sP(\theta^*,\{\theta^t\}_{t \in [0,T]})$ weakly in probability
(c.f.\ \cite[Proposition 2.2]{sznitman1991topics}), and this is implied by
Theorem \ref{thm:dmft_approx}(b).
\end{proof}

We clarify that $\sP(\theta_{1:J}^*,\{\theta_{1:J}^t\}_{t \in [0,T]})$ in this
statement refers to the law over all randomness including that of
$\btheta^*,\btheta^0$ and the disorder $\X$. It would be interesting to also study propagation-of-chaos phenomena conditional on parts of this
randomness, and we leave such investigations to future work.

\subsection{Interpretation of the DMFT correlation and response}


Fixing $\X,\btheta^*,\beps$ and $\y=\X\btheta^*+\beps$,
define the coordinate observables
\begin{equation}\label{eq:coordfuncs}
e_j(\btheta)=\theta_j,
\qquad x_i(\btheta)=\sqrt{\delta}\beta([\X\btheta]_i-y_i).
\end{equation}
Fixing also the initial conditions $\x=(\btheta^0,\widehat\alpha^0)$,
for each pair $A,B \in \{e_1,\ldots,e_d,x_1,\ldots,x_n\}$, define
\[\{R_{AB}^\x(t,s)\}_{0 \leq s \leq t}\]
as a response function for the joint dynamics
(\ref{eq:langevin_sde}--\ref{eq:gflow}) that satisfies the following condition: 
For any continuous bounded function
$h:[0,\infty) \to \R$ and any $\eps>0$, consider the perturbed dynamics
\begin{align*}
\d\btheta^{t,\eps}&=\Big[{-}\beta\X^\top(\X\btheta^{t,\eps}-\y)
+\eps h(t)\nabla_{\btheta} B(\btheta^{t,\eps},\widehat \alpha^{t,\eps})
+\big(s(\theta_j^{t,\eps},\widehat\alpha^{t,\eps})\big)_{j=1}^d
\Big]\d t+\sqrt{2}\,\d\b^t\\
\d\widehat\alpha^{t,\eps}&=\cG\Big(\widehat\alpha^{t,\eps},
\frac{1}{d}\sum_{j=1}^d \delta_{\theta_j^{t,\eps}}\Big)\d t
\end{align*}
with the same initial condition $(\btheta^{0,\eps},\widehat
\alpha^{0,\eps})=\x$. Denote the expectation conditional on $\X,\btheta^*,\beps$
and $\x=(\btheta^0,\widehat\alpha^0)$ as $\langle
f(\{\btheta^t,\widehat\alpha^t\}_{t \geq 0}) \rangle_\x$. Then for any $t>0$,
\begin{equation}\label{eq:responsedef}
\lim_{\eps \to 0} \frac{1}{\eps}\Big(\langle A(\btheta^{t,\eps},\widehat
\alpha^{t,\eps}) \rangle_\x
-\langle A(\btheta^t,\widehat \alpha^t) \rangle_\x
\Big)=\int_0^t R_{AB}^\x(t,s)h(s)\,ds.
\end{equation}
Thus $R_{AB}^\x(t,s)$ may be understood as the linear response of the observable
$A(\btheta)$ at time $t$ to a perturbation of the Langevin
potential by $B(\btheta)$ at a preceding time $s$. Existence of such a response function for smooth bounded observables in uniformly elliptic and hypoelliptic diffusions has been shown in \cite{dembo2010markovian,chen2020mathematical}.
We verify in Proposition \ref{prop:responsegeneral} that the arguments of \cite{chen2020mathematical} may be extended to show also the existence of a response function $R_{AB}^\x(t,s)$ satisfying (\ref{eq:responsedef}) in our adaptive Langevin diffusion,
for a class of unbounded and
Lipschitz observables including all
$A,B \in \{e_1,\ldots,e_d,x_1,\ldots,x_n\}$.

Let $\{\btheta^t,\widehat \alpha^t\}_{t \geq 0}$ be the
solution to (\ref{eq:langevin_sde}--\ref{eq:gflow}) with the given
initial condition $\x=(\btheta^0,\widehat\alpha^0)$ of
Assumption \ref{assump:model},
and define the corresponding correlation and response matrices
\begin{equation}\label{eq:matrixcorrresponse}
\begin{gathered}
\bC_\theta(t,s)=\Big(\big\langle e_j(\btheta^t)e_k(\btheta^s) \big\rangle_\x
\Big)_{j,k=1}^d, \quad
\bC_\theta(t,*)=\Big(\big\langle e_j(\btheta^t)e_k(\btheta^*) \big\rangle_\x
\Big)_{j,k=1}^d, \quad
\bR_\theta(t,s)=\Big(R_{e_je_k}^\x(t,s)\Big)_{j,k=1}^d,\\
\bC_\eta(t,s)=\Big(\big\langle x_j(\btheta^t)x_k(\btheta^s) \big\rangle_\x \Big)_{j,k=1}^n,
\quad
\bR_\eta(t,s)=\Big(R_{x_jx_k}^\x(t,s)\Big)_{j,k=1}^n
\end{gathered}
\end{equation}
for the above coordinate observables
$e_1,\ldots,e_d,x_1,\ldots,x_n$.
The following is the second main result of our work, showing that
the correlation and response kernels $C_\theta,C_\eta,R_\theta,R_\eta$ defining
the DMFT limit in Theorem \ref{thm:dmft_approx} are the almost-sure limits of
the normalized traces of these matrices, i.e.\ the correlation and
self-responses of the observables $e_j$ and $x_i$ averaged
across coordinates $j=1,\ldots,d$ and $i=1,\ldots,n$.

\begin{theorem}\label{thm:dmftresponse}
Suppose Assumptions \ref{assump:model}, \ref{assump:prior}, and
\ref{assump:gradient} hold, and $(\theta,\alpha) \mapsto \nabla_{(\theta,\alpha)}^2 s(\theta,\alpha)$ and $(\btheta,\alpha) \mapsto \nabla_{(\btheta,\alpha)}^2 \cG_k(\alpha,\widehat \sP(\btheta))$ are uniformly H\"older-continuous for each $k=1,\ldots,K$.
Let $C_\theta,C_\eta,R_\theta,R_\eta$ be the correlation and response
kernels of the solution to the DMFT system
(\ref{def:dmft_langevin_cont_theta}--\ref{def:CRfixedpoint}) given by
Theorem \ref{thm:dmftsolexists}.
Then for any fixed $t \geq s \geq 0$, almost surely as $n,d \to \infty$,
\[\begin{gathered}
d^{-1}\Tr \bC_\theta(t,s) \to C_\theta(t,s),
\qquad d^{-1}\Tr \bC_\theta(t,*) \to C_\theta(t,*),
\qquad n^{-1}\Tr \bC_\eta(t,s) \to C_\eta(t,s), \\
d^{-1}\Tr \bR_\theta(t,s) \to R_\theta(t,s),
\qquad n^{-1}\Tr \bR_\eta(t,s) \to R_\eta(t,s).
\end{gathered}\]
\end{theorem}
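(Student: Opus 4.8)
The plan is to reduce the statement about traces of the correlation and response matrices to the finite-dimensional marginal convergence already established in Theorem \ref{thm:dmft_approx}, via a dynamical cavity argument. First I would treat the correlation statements, which are the easier half: the diagonal entry $\langle e_j(\btheta^t)e_j(\btheta^s)\rangle_\x$ is the conditional expectation $\E[\theta_j^t \theta_j^s \mid \X,\btheta^*,\beps,\x]$, so $d^{-1}\Tr \bC_\theta(t,s)$ differs from $\frac1d\sum_j \theta_j^t\theta_j^s$ only by a conditional-vs-unconditional discrepancy. I would argue that this discrepancy vanishes by a concentration estimate for the Langevin SDE --- the drift is Lipschitz, so $\frac1d\sum_j \theta_j^t\theta_j^s$ concentrates around its conditional mean with a variance that is $O(1/d)$ uniformly over the (fixed) times involved --- and then apply Corollary \ref{cor:dmft_approx} with the pseudo-Lipschitz test function $f_\theta(\theta^*,\theta^{t},\theta^{s})=\theta^t\theta^s$ to identify the limit as $\E[\theta^t\theta^s]=C_\theta(t,s)$. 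The $\bC_\theta(t,*)$ and $\bC_\eta(t,s)$ cases are identical, using the test functions $\theta^t\theta^*$ and $\delta\beta^2(\eta^t+w^*-\eps)(\eta^s+w^*-\eps)$ and the definitions in \eqref{eq:coordfuncs} and \eqref{def:CRfixedpoint}.

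The response statements require more work, since $R^\x_{e_je_j}(t,s)$ is \emph{not} a function of the unperturbed trajectory but of its infinitesimal sensitivity. The key step is to establish a single-coordinate cavity representation: removing coordinate $j$ from the system $\btheta$ and coordinate $i=i(j)$ of $\bbeta=\X\btheta$, the remaining cavity system is itself an adaptive Langevin diffusion of the same form, to which Theorems \ref{thm:dmftsolexists}--\ref{thm:dmft_approx} apply, and the perturbation $\eps h(t)\nabla_\btheta e_j$ acts on $\theta_j^t$ as an external forcing whose propagated effect is governed by an SDE structurally identical to \eqref{def:response_theta}, with the cavity field playing the role of $u^t$ and the cavity response kernel playing the role of $R_\eta$. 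Carrying this out, I would show $R^\x_{e_je_j}(t,s) \to \E[\partial\theta^t/\partial u^s] = R_\theta(t,s)$ for a typical $j$, where the limiting $\partial\theta^t/\partial u^s$ solves exactly \eqref{def:response_theta}; the matching for $R^\x_{x_ix_i}(t,s) \to R_\eta(t,s)$ proceeds through the $\eta$-side equations \eqref{def:dmft_langevin_cont_eta}--\eqref{def:response_g} and the factor $\delta\beta$ in \eqref{def:CRfixedpoint}. To pass from the typical-coordinate statement to the normalized trace, I would use exchangeability (after symmetrizing over coordinates, or directly controlling $\frac1d\sum_j R^\x_{e_je_j}(t,s)$ as an average of uniformly bounded quantities by a law-of-large-numbers / concentration argument over the weakly dependent cavity systems) together with an \emph{a priori} bound $|R^\x_{e_je_j}(t,s)| \le C(t)$ uniform in $d$, which follows from differentiating the Langevin SDE in $\eps$ and a Gronwall estimate using the Lipschitz drift bounds of Assumptions \ref{assump:prior}--\ref{assump:gradient}. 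The extra uniform Hölder-continuity hypothesis on $\nabla^2 s$ and $\nabla^2\cG_k$ is what I expect to be needed to make the differentiated (variational) SDE well-posed and its solution stable under the cavity perturbation, i.e.\ to justify interchanging $\lim_{\eps\to0}$, $\lim_{n,d\to\infty}$, and the time-integral in \eqref{eq:responsedef}.

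The main obstacle is this interchange of limits for the response: unlike the correlation functions, $R^\x_{AB}$ is defined by a limit $\eps\to0$ that is not uniform in $d$ \emph{a priori}, and Theorem \ref{thm:dmft_approx} only controls the unperturbed system, so one cannot directly plug the perturbed dynamics into it. My strategy would be to differentiate under the expectation (legitimate by the $\psi_2$ moment and smoothness assumptions) to get a closed equation for the \emph{sensitivity process} $\partial\btheta^t/\partial\eps|_{\eps=0}$ --- a linear SDE driven by the Jacobian of the drift along the unperturbed trajectory --- and then prove a DMFT limit for the \emph{joint} system $(\btheta^t, \partial\btheta^t/\partial\eps)$. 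Because the sensitivity equation is linear with coefficients that are smooth functions of $\btheta^t$, this augmented system still fits (a mild extension of) the framework of Theorems \ref{thm:dmftsolexists}--\ref{thm:dmft_approx}, and its DMFT limit yields precisely the response equations \eqref{def:response_theta} and \eqref{def:response_g}; the Hölder hypothesis ensures the coefficients of the augmented system inherit enough regularity. Identifying the limit of $d^{-1}\Tr\bR_\theta$ with $R_\theta$ then follows from matching this augmented DMFT system term-by-term with the fixed-point definitions \eqref{def:response_theta}--\eqref{def:CRfixedpoint}.
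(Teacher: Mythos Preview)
Your treatment of the correlation kernels is essentially the paper's: apply Corollary~\ref{cor:dmft_approx} with the pseudo-Lipschitz test functions $\theta^t\theta^s$, $\theta^t\theta^*$, and $\delta\beta^2(\eta^t-\eta^*-\eps)(\eta^s-\eta^*-\eps)$, then pass the conditional expectation $\langle\cdot\rangle_\x$ through the limit by dominated convergence (your concentration route also works but is more than needed).

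The response half has a genuine gap. Your main strategy is to prove a DMFT limit for the augmented system $(\btheta^t,\partial\btheta^t/\partial\eps)$, claiming it ``still fits (a mild extension of) the framework of Theorems~\ref{thm:dmftsolexists}--\ref{thm:dmft_approx}.'' It does not: for $R^\x_{e_je_j}$ the perturbation $\eps h(t)\e_j$ is supported on a \emph{single} coordinate, so the sensitivity vector has initial data $\e_j$ and is not coordinate-exchangeable. To recover $d^{-1}\Tr\bR_\theta$ you need all $d$ such vectors, i.e.\ the matrix first-variation process $\V^t$ solving $\d\V^t=[\d u(\x^t)]\V^t\,\d t$ with $\V^0=\I$, whose $\theta$-block is driven by $\X^\top\X$ acting on a $d\times d$ matrix; there is no way to collapse this to the scalar DMFT of Section~\ref{sec:result}. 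Your first cavity sketch is closer in spirit but too vague to close the argument --- saying the cavity response kernel ``plays the role of $R_\eta$'' is precisely the statement that needs proof, and there is no bijection $j\mapsto i(j)$ between $\btheta$-coordinates and $\bbeta$-coordinates as you suggest.

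The paper's route is more hands-on and structurally different. It first discretizes with step $\gamma$. For the discrete system it expands $\partial_\eps|_{\eps=0}\theta_j^{t,(s,j),\eps}$ explicitly as $\gamma\e_j^\top\bOmega^{t-1}\cdots\bOmega^{s+1}\e_j$ plus lower-order terms, where $\bOmega^\ell=\I-\gamma\beta\X^\top\X+\gamma\operatorname{diag}(\partial_\theta s(\btheta^\ell,\widehat\alpha^\ell))$; a leave-one-out construction together with Hanson--Wright then shows the quadratic forms $\x_j^\top\X_{-j}\bOmega_{-j}^{t-1}\cdots\bOmega_{-j}^{k+1}\X_{-j}^\top\x_j$ concentrate around their normalized traces, and those traces are identified with the discrete DMFT responses $R_\theta^\gamma,R_\eta^\gamma$ by an induction on $t$ (Lemma~\ref{lem:discrete_time_response}). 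The passage from discrete to continuous response (Lemma~\ref{lem:semigroup_discre_error}) is where the H\"older hypothesis is actually used: it is needed to invoke Kunita's differentiability results for the flow $\x\mapsto\x^t(\x)$ (Proposition~\ref{prop:langevinsolution}), which in turn give the semigroup derivative estimates required to compare $P_s\partial_j P_{t-s}e_j$ with its discrete analogue --- not merely to make the variational SDE well-posed as you guessed.
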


The proof of Theorem \ref{thm:dmftresponse} is provided in Section
\ref{sec:dmftresponse}. We note that
the convergence of $d^{-1}\Tr \bC_\theta$ and $n^{-1}\Tr \bC_\eta$ is an
immediate consequence of Corollary \ref{cor:dmft_approx}.
The additional content of this theorem is
the convergence of $d^{-1}\Tr \bR_\theta$ and
$n^{-1}\Tr \bR_\eta$, which relies on an inductive analysis of dynamics at a
single particle level using a dynamical cavity argument.

\begin{remark}
By an argument similar to our proof of Theorem \ref{thm:dmftresponse}, one
may show that the DMFT response kernels $R_\theta(t,s)$ and $R_\eta(t,s)$ also
represent the limits of $d^{-1}\Tr \bR_\theta(t,s)$ and $n^{-1}\Tr
\bR_\eta(t,s)$ defined for a non-adaptive version of the dynamics
\[\d\tilde\btheta^t=\Big[{-}\beta\X^\top(\X\tilde\btheta^t-\y)
+\big(s(\tilde\theta^t_j,\alpha^t)\big)_{j=1}^d\Big]\d t+\sqrt{2}\,\d\b^t\]
which replaces the adaptively-evolving
drift parameter $\{\widehat\alpha^t\}_{t \geq 0}$ by its
deterministic DMFT limit $\{\alpha^t\}_{t \geq 0}$.
The response matrices $\bR_\theta,\bR_\eta$ for this non-adaptive dynamics
$\{\tilde \btheta^t\}_{t \geq 0}$
are different from those for the adaptive dynamics
(\ref{eq:langevin_sde}--\ref{eq:gflow}), in that a perturbation
in the adaptive system affects $\{\widehat\alpha^t\}_{t \geq s}$
whereas it does not change $\{\alpha^t\}_{t \geq s}$ in the non-adaptive
system. However, our result implies that the almost-sure limits of
$d^{-1}\Tr \bR_\theta$ and $n^{-1}\Tr\bR_\eta$ coincide for these two
dynamics, i.e.\ the propagation of the effect of the perturbation through
$\{\widehat\alpha^t\}$ is negligible in the large-$(n,d)$ limit.
\end{remark}

The remainder of this paper will prove the preceding results of Theorems
\ref{thm:dmftsolexists}, \ref{thm:dmft_approx}, and \ref{thm:dmftresponse}.

\section{Existence and uniqueness of the DMFT fixed point}\label{sec:existence}

In this section we prove Theorem \ref{thm:dmftsolexists}. We assume throughout Assumptions \ref{assump:model}, \ref{assump:prior}, and \ref{assump:gradient}.
Section \ref{sec:functionspace} defines
the spaces $\cS(T)$ and $\cS(T)^\text{cont}$ and proves Theorem \ref{thm:dmftsolexists}(a) on existence
and uniqueness of the processes
(\ref{def:dmft_langevin_cont_theta}--\ref{def:response_g}).
Section \ref{sec:contractivemapping} then proves
Theorem \ref{thm:dmftsolexists}(b) on existence and uniqueness of the dynamical
fixed point via a contractive mapping argument similar to that of \cite{celentano2021high}.

\subsection{The function spaces $\cS(T)$ and $\cS(T)^\text{cont}$}\label{sec:functionspace}

Let $\tau_*^2=\E{\theta^*}^2$ and $\sigma^2=\E \eps^2$, and let $C_0>0$
denote a constant larger than the constants $C>0$ of (\ref{eq:s_smooth})
and (\ref{eq:G_smooth1}).
Consider the following system of equations for functions
$\Phi_\alpha,\Phi_{C_\theta},\Phi_{C_\eta},\Phi_{R_\theta},\Phi_{R_\eta}$
on $[0,\infty)$:
\begin{align}
\frac{\d}{\d t} \Phi_{\alpha}(t)
&=4.1C_0(1+\Phi_{C_\theta}(t))+3C_0\Phi_\alpha(t)
\text{ with } \Phi_{\alpha}(0)=\|\alpha^0\|^2,\label{eq:aux_alpha}\\
\frac{\d}{\d t}\Phi_{C_\theta}(t)&=(6\delta^2\beta^2+18C_0^2+1.1)\Phi_{C_\theta}(t)
+6\int_0^t (t-s+1)^2 \Phi_{R_\eta}^2(t-s)\Phi_{C_\theta}(s)\d s\notag\\
&\hspace{0.2in}+6\Big(\delta^2\beta^2 \tau_*^2+3C_0^2+3C_0^2\Phi_\alpha(t)
+\int_0^t (t-s+1)^2\Phi_{R_\eta}^2(t-s)\d s \cdot \tau_*^2
+\Phi_{C_\eta}(t)\Big)+2\notag\\
&\text{ with }
\Phi_{C_\theta}(0)=\E(\theta^0)^2,\label{eq:aux_Ctheta}\\
\Phi_{C_\eta}(t)&= 2\delta\beta^2\Big[\frac{1}{\delta}\int_0^t (t-s+1)^2 \cdot
\Phi^2_{R_\theta}(t-s) \Phi_{C_\eta}(s)\d s + 2\Phi_{C_\theta}(t) + 2\tau_*^2 +
\sigma^2\Big],\label{eq:aux_Ceta}\\
\frac{\d}{\d t}\Phi_{R_\theta}(t)&=(\delta|\beta|+C_0)\Phi_{R_\theta}(t) + \int_0^t
\Phi_{R_\eta}(t-s)\Phi_{R_\theta}(s)\d s \text{ with }
\Phi_{R_\theta}(0)=1,\label{eq:aux_Rtheta}\\
\Phi_{R_\eta}(t)&=|\beta|\Big(\int_0^t \Phi_{R_\theta}(t-s)\Phi_{R_\eta}(s)\d s
+\delta|\beta|\Phi_{R_\theta}(t)\Big).\label{eq:aux_Reta}
\end{align}

\begin{lemma}\label{eq:lemma:auxfunctions}
The system (\ref{eq:aux_alpha}--\ref{eq:aux_Reta}) has a unique continuous
solution. Defining
\[E(\lambda)=\Big\{\text{functions } f:[0,\infty) \to [0,\infty) \text{ such that } \int_0^\infty e^{-\lambda s} f(s)\d
s<\infty\Big\},\]
for any sufficiently large constant $\lambda>0$, this solution satisfies
$\Phi_\alpha,\Phi_{C_\theta},\Phi_{C_\eta},\Phi_{R_\theta},\Phi_{R_\eta}
\in E(\lambda)$.
\end{lemma}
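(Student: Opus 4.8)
The plan is to read (\ref{eq:aux_alpha})--(\ref{eq:aux_Reta}) as a coupled system of Volterra integral equations of the second kind (integrating the three equations written in differential form), exploiting its triangular structure: the pair $(\Phi_{R_\theta},\Phi_{R_\eta})$ from (\ref{eq:aux_Rtheta})--(\ref{eq:aux_Reta}) decouples from $(\Phi_\alpha,\Phi_{C_\theta},\Phi_{C_\eta})$ in (\ref{eq:aux_alpha})--(\ref{eq:aux_Ceta}), which depends on the former only through the fixed convolution kernels $(u+1)^2\Phi_{R_\eta}(u)^2$ and $(u+1)^2\Phi_{R_\theta}(u)^2$. For a weight $\lambda>0$ I would work in $X_\lambda=L^1(\R_+,e^{-\lambda t}\,\d t)$ with $\|f\|_\lambda=\int_0^\infty e^{-\lambda t}|f(t)|\,\d t$, a Banach space and convolution Banach algebra ($\|f*g\|_\lambda\le\|f\|_\lambda\|g\|_\lambda$) in which convolution with the constant function $1$ (indefinite integration) gains a factor $1/\lambda$ and membership $g\in X_\lambda$ forces $\int_0^t g\le e^{\lambda t}\|g\|_\lambda$. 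Since each equation in integral form displays its $\Phi$ as a constant plus an indefinite integral of an $L^1_\loc$ function, a fixed point in $X_\lambda$ automatically coincides a.e.\ with a continuous function; nonnegativity of all components follows from nonnegativity of the data and kernels along the iteration.

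First I would solve the response subsystem. For fixed continuous $\Phi_{R_\theta}\ge 0$, (\ref{eq:aux_Reta}) is a linear Volterra equation for $\Phi_{R_\eta}$ with convolution kernel $|\beta|\Phi_{R_\theta}$, solved uniquely by $\mathcal{R}[\Phi_{R_\theta}]=\delta\beta^2\sum_{k\ge 0}|\beta|^k\Phi_{R_\theta}^{*(k+1)}$, which converges in $X_\lambda$ with $\|\mathcal{R}[\Phi_{R_\theta}]\|_\lambda\le\delta\beta^2\|\Phi_{R_\theta}\|_\lambda/(1-|\beta|\|\Phi_{R_\theta}\|_\lambda)$ whenever $|\beta|\|\Phi_{R_\theta}\|_\lambda<1$, and is nonnegative and continuous. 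Substituting into (\ref{eq:aux_Rtheta}) and integrating, $\Phi_{R_\theta}$ must be a fixed point of $\mathcal{K}[f](t)=1+\int_0^t[(\delta|\beta|+C_0)f(s)+(\mathcal{R}[f]*f)(s)]\,\d s$. The key observation is that in $\|\cdot\|_\lambda$ the only $O(1/\lambda)$ contribution to $\mathcal{K}[f]$ is the initial datum ($\|1\|_\lambda=1/\lambda$), while the linear and bilinear terms gain further powers of $1/\lambda$ from the outer integration; hence for $\lambda$ large, $\mathcal{K}$ maps the ball $\bar B=\{f\in X_\lambda:f\ge 0,\ \|f\|_\lambda\le 2/\lambda\}$ into itself and is a contraction there (on $\bar B$, $|\beta|\|f\|_\lambda\le 2|\beta|/\lambda<1$ controls $\mathcal{R}$ and its Lipschitz bound, the bilinear term is $O(\lambda^{-3})$ and the linear one $O(\lambda^{-2})$). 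Banach's theorem on the complete set $\bar B$ yields $\Phi_{R_\theta}\in X_\lambda$, hence $\Phi_{R_\eta}=\mathcal{R}[\Phi_{R_\theta}]\in X_\lambda$, both continuous, nonnegative, and --- from the integral form and $\int_0^t g\le e^{\lambda t}\|g\|_\lambda$ --- pointwise bounded by $Ce^{\lambda t}$.

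With $\Phi_{R_\theta},\Phi_{R_\eta}$ in hand, the kernels $k_\eta(u)=(u+1)^2\Phi_{R_\eta}(u)^2$ and $k_\theta(u)=(u+1)^2\Phi_{R_\theta}(u)^2$ are continuous of at most exponential growth, hence lie in $X_\mu$ for all large $\mu$. Solving (\ref{eq:aux_alpha}) explicitly to eliminate $\Phi_\alpha$ in terms of $\Phi_{C_\theta}$, and the linear Volterra equation (\ref{eq:aux_Ceta}) to eliminate $\Phi_{C_\eta}$ in terms of $\Phi_{C_\theta}$ (its resolvent kernel is again continuous of exponential growth), equation (\ref{eq:aux_Ctheta}) becomes a single linear Volterra integral equation for $\Phi_{C_\theta}$ whose operators are all convolutions with continuous, exponentially-bounded kernels (the exponential $e^{3C_0(\cdot)}$, the constant $1$, $k_\eta$, the resolvent of $2\beta^2 k_\theta$, and their convolutions). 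Standard linear Volterra theory gives a unique continuous solution on $[0,\infty)$; for $\lambda$ large the associated operator on $X_\lambda$ has norm below $1$ (each kernel contributes $O(1/(\lambda-a))$ for its growth rate $a$, or $O(1/\lambda)$ for $1$), so $\Phi_{C_\theta}\in X_\lambda$, and back-substitution recovers $\Phi_\alpha,\Phi_{C_\eta}\in X_\lambda$, each pointwise bounded by $Ce^{\lambda t}$. Uniqueness among all continuous solutions follows from a Gronwall argument on each $[0,T]$, using that every nonlinearity present (the products $\Phi_{R_\eta}^2\Phi_{C_\theta}$, $\Phi_{R_\theta}^2\Phi_{C_\eta}$, and the bilinear term $\mathcal{R}[\cdot]*(\cdot)$) is Lipschitz on bounded sets; and the pointwise bounds $\Phi(t)\le Ce^{at}$ just obtained (with $a$ the largest weight used) give $\int_0^\infty e^{-\lambda t}\Phi(t)\,\d t\le C/(\lambda-a)<\infty$ for all $\lambda>a$, i.e.\ the solution lies in $E(\lambda)$ for every sufficiently large $\lambda$.

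I expect the one genuine difficulty to be global-in-time existence for the bilinear subsystem (\ref{eq:aux_Rtheta})--(\ref{eq:aux_Reta}): a priori such a quadratic Volterra system could blow up in finite time, and a naive exponential-envelope bootstrap fails because it ignores the smoothing of the convolution. The device above circumvents this: in $\|\cdot\|_\lambda$ with $\lambda$ large the solution is pinned at size $O(1/\lambda)$ by the order-one initial condition (after one integration), and this smallness makes the inner resolvent $\mathcal{R}$ well-defined with a controlled Lipschitz constant \emph{and} turns the bilinear term into a genuine contraction; the resulting finite $X_\lambda$-bound then converts, via the integral equation, into the pointwise exponential bound that precludes blow-up. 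The remaining subsystem (\ref{eq:aux_alpha})--(\ref{eq:aux_Ceta}) is affine in its unknowns once the response kernels are fixed, so it poses no analogous issue.
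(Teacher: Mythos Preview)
Your proposal is correct and takes essentially the same approach as the paper: both exploit the triangular decoupling of the response pair $(\Phi_{R_\theta},\Phi_{R_\eta})$ from the remaining three functions, set up a contraction in the weighted space $L^1(\R_+,e^{-\lambda t}\,\d t)$ (equivalently, in Laplace transforms evaluated at large $\lambda$) on a ball of size $O(1/\lambda)$, and handle uniqueness by Gronwall on compact intervals. The only tactical difference is that you eliminate $\Phi_{R_\eta}$ (and later $\Phi_\alpha,\Phi_{C_\eta}$) via their resolvents to reduce to single-variable fixed points, whereas the paper runs the contraction directly on the pair $(L_\theta,L_\eta)$ and the triple $(L_\alpha,L_\theta,L_\eta)$ of Laplace transforms with a suitably weighted norm; both routes yield the same estimates.
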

\begin{proof}
Let $\Phi_\eta=(\Phi_{C_\eta},\Phi_{R_\eta})$,
$\Phi_\theta=(\Phi_\alpha,\Phi_{C_\theta},\Phi_{R_\theta})$, and
$\Phi=(\Phi_\eta,\Phi_\theta)$. For any two continuous solutions $\Phi$ and
$\tilde \Phi$, there exists some $M>0$ such that all components of both solutions are uniformly
bounded over $[0,T]$ by $M$. The above equations then imply
\[\|\Phi_\theta(t)-\tilde \Phi_\theta(t)\|
\leq \int_0^t C\|\Phi(s)-\tilde \Phi(s)\|\d s,
\quad \|\Phi_\eta(t)-\tilde \Phi_\eta(t)\|
\leq C\Big(\int_0^t \|\Phi(s)-\tilde \Phi(s)\|\d s
+\|\Phi_\theta(t)-\tilde \Phi_\theta(t)\|\Big)\]
for a constant $C>0$ depending on $M,T$. Applying Gronwall's lemma to the
second inequality shows
\begin{equation}\label{eq:gronwalltmp}
\sup_{s\in [0,t]} \|\Phi_\eta(s)-\tilde \Phi_\eta(s)\|
\leq C'\sup_{s \in [0,t]} \|\Phi_\theta(s)-\tilde \Phi_\theta(s)\|.
\end{equation}
Then applying this in the first inequality gives
\[\|\Phi_\theta(t)-\tilde \Phi_\theta(t)\|
\leq \int_0^t C''\sup_{r \in [0,s]} \|\Phi_\theta(r)-\tilde \Phi_\theta(r)\|\d s,\]
so Gronwall's lemma applied again shows $\Phi_\theta(t)=\tilde \Phi_\theta(t)$
for all $t \in [0,T]$. Then by (\ref{eq:gronwalltmp}), also $\Phi(t)=\tilde \Phi(t)$
for all $t \in [0,T]$, so any continuous solution to
(\ref{eq:aux_alpha}--\ref{eq:aux_Reta}) is unique.

It remains to show existence of a continuous solution with all components in
$E(\lambda)$.
Consider (\ref{eq:aux_Rtheta}--\ref{eq:aux_Reta}) as a mapping from
$\Phi_{R_\theta},\Phi_{R_\eta}$ on the right side to
$\tilde \Phi_{R_\theta},\tilde \Phi_{R_\eta}$ on the left side, i.e.
\begin{align*}
\tilde \Phi_{R_\theta}(t)&=1+\int_0^t \Big((\delta|\beta|+C_0)\Phi_{R_\theta}(t')
+\int_0^{t'}\Phi_{R_\eta}(t'-s)\Phi_{R_\theta}(s)\d s\Big)\d{t'},\\
\tilde \Phi_{R_\eta}(t)&=|\beta|\Big(\int_0^t \Phi_{R_\theta}(t-s)\Phi_{R_\eta}(s)\d s
+\delta|\beta|\Phi_{R_\theta}(t)\Big).
\end{align*}
If $\Phi_{R_\theta},\Phi_{R_\eta}$ are non-negative, then clearly so are
$\tilde \Phi_{R_\theta},\tilde \Phi_{R_\eta}$. Furthermore,
if $\Phi_{R_\theta},\Phi_{R_\eta} \in E(\lambda)$, then writing
$L_\theta(\lambda)=\int_0^\infty \Phi_{R_\theta}(s)e^{-\lambda s}\d s$ for
the Laplace transform of $\Phi_{R_\theta}$ and similarly writing $L_\eta,\tilde
L_\theta,\tilde L_\eta$ for those of $\Phi_{R_\eta},\tilde
\Phi_{R_\theta},\tilde \Phi_{R_\eta}$, taking Laplace transforms of
the above gives
\begin{equation}\label{eq:Laplacetransforms}
\begin{aligned}
\lambda \tilde L_\theta(\lambda)-1&=(\delta |\beta|+C_0)L_\theta(\lambda)
+L_\eta(\lambda)L_\theta(\lambda),\\
\tilde L_\eta(\lambda)&=|\beta| L_\theta(\lambda)L_\eta(\lambda)+\delta \beta^2
L_\theta(\lambda).
\end{aligned}
\end{equation}
This implies in particular that
$\tilde L_\theta(\lambda),\tilde L_\eta(\lambda)<\infty$, i.e.\
$\tilde \Phi_{R_\theta},\tilde \Phi_{R_\eta} \in E(\lambda)$.
For $\iota>0$, define further
\[E(\lambda,\iota)=\{(\Phi_{R_\theta},\Phi_{R_\eta}) \in E(\lambda) \times
E(\lambda):L_\theta(\lambda) \leq \iota,
\,L_\eta(\lambda) \leq (\delta\beta^2+1)\iota\}.\]
If $(\Phi_{R_\theta},\Phi_{R_\eta}) \in E(\lambda,\iota)$, then for $\iota>0$ sufficiently
small and $\lambda>0$ sufficiently large, this
implies $\tilde L_\theta(\lambda) \leq \lambda^{-1}(1+(\delta|\beta|+C_0)\iota
+(\delta \beta^2+1)\iota^2) \leq \iota$ and
$\tilde L_\eta(\lambda) \leq |\beta|(\delta\beta^2+1)\iota^2
+\delta \beta^2 \iota \leq (\delta\beta^2+1)\iota$, so
$(\tilde \Phi_{R_\theta},\tilde \Phi_{R_\eta}) \in E(\lambda,\iota)$.
For two pairs of inputs $(\Phi_{R_\theta}^1,\Phi_{R_\eta}^1),
(\Phi_{R_\theta}^2,\Phi_{R_\eta}^2) \in E(\lambda,\iota)$,
note also from the above that
\begin{align*}
|(\tilde \Phi_{R_\theta}^1-\tilde \Phi_{R_\theta}^2)(t)|
&\leq \int_0^t
\Big[(\delta|\beta|+C_0)|(\Phi_{R_\theta}^1-\Phi_{R_\theta}^2)(t')|\\
&\hspace{0.5in}+\int_0^{t'}
\Big(\Phi_{R_\eta}^1(t'-s)|(\Phi_{R_\theta}^1-\Phi_{R_\theta}^2)(s)|
+|(\Phi_{R_\eta}^1-\Phi_{R_\eta}^2)(t'-s)|\Phi_{R_\theta}^2(s)\Big)\d s\Big]\d
t',\\
|(\tilde \Phi_{R_\eta}^1-\tilde \Phi_{R_\eta}^2)(t)|
&\leq |\beta|\int_0^t \Big(\Phi_{R_\theta}^1(t-s)|(\Phi_{R_\eta}^1-\Phi_{R_\eta}^2)(s)|
+|(\Phi_{R_\theta}^1-\Phi_{R_\theta}^2)(t-s)|\Phi_{R_\eta}^2(s)\Big)\d s\\
&\hspace{0.5in}+\delta \beta^2|(\Phi_{R_\theta}^1-\Phi_{R_\theta}^2)(t)|.
\end{align*}
Denoting by $L_\theta^1,L_\theta^2,L_\theta^\Delta$ the Laplace
transforms of
$\Phi_{R_\theta}^1,\Phi_{R_\theta}^2,|\Phi_{R_\theta}^1-\Phi_{R_\theta}^2|$, and
defining similarly $L_\eta^1,L_\eta^2,L_\eta^\Delta$ for $\Phi_{R_\eta}$
and $\tilde L_\theta^1,\tilde L_\theta^2,\tilde L_\theta^\Delta,
\tilde L_\eta^1,\tilde L_\eta^2,\tilde L_\eta^\Delta$ for the outputs
$\tilde \Phi_{R_\theta},\tilde \Phi_{R_\eta}$, taking Laplace transforms gives
\begin{align*}
\tilde L_\theta^\Delta
&\leq \frac{\delta|\beta|+C_0}{\lambda}L_\theta^\Delta
+\frac{L_\eta^1}{\lambda}L_\theta^\Delta
+\frac{L_\theta^2(\delta\beta^2+1)}{\lambda}\frac{L_\eta^\Delta}{\delta\beta^2+1}\\
\frac{\tilde L_\eta^\Delta}{\delta\beta^2+1}
&\leq |\beta|L_\theta^1\frac{L_\eta^\Delta}{\delta\beta^2+1}
+\frac{|\beta|L_\eta^2}{\delta\beta^2+1}L_\theta^\Delta
+\frac{\delta\beta^2}{\delta\beta^2+1}L_\theta^\Delta.
\end{align*}
Thus, defining a metric on $E(\lambda,\iota)$ (with the usual identification of
functions that are equal Lebesgue-a.e.) given by
\begin{align*}
d((\Phi_{R_\theta}^1,\Phi_{R_\eta}^1),(\Phi_{R_\theta}^2,\Phi_{R_\eta}^2))
&=L_\theta^\Delta(\lambda)+(\delta\beta^2+1)^{-1}L_\eta^\Delta(\lambda)\\
&=\int_0^\infty |(\Phi_{R_\theta}^1-\Phi_{R_\theta}^2)(s)|e^{-\lambda s}\d s
+(\delta\beta^2+1)^{-1}\int_0^\infty
|(\Phi_{R_\eta}^1-\Phi_{R_\eta}^2)(s)|e^{-\lambda s}\d s,
\end{align*}
and applying the bounds $L_\theta(\lambda) \leq \iota$ and $L_\eta(\lambda)
\leq (\delta\beta^2+1)\iota$ on $E(\lambda,\iota)$, this shows that the mapping
$(\Phi_{R_\theta},\Phi_{R_\eta}) \mapsto (\tilde \Phi_{R_\theta},\tilde
\Phi_{R_\eta})$ is Lipschitz on $E(\lambda,\iota)$ 
with respect to $d(\cdot)$, with Lipschitz constant at most
\[\frac{\delta|\beta|+C_0}{\lambda}+\frac{2(\delta\beta^2+1)\iota}{\lambda}
+2|\beta|\iota+\frac{\delta\beta^2}{\delta\beta^2+1}.\]
This is less than 1
for $\iota>0$ sufficiently small and $\lambda>0$ sufficiently large, so
$(\Phi_{R_\theta},\Phi_{R_\eta}) \mapsto (\tilde \Phi_{R_\theta},\tilde
\Phi_{R_\eta})$ is a contraction on $(E(\lambda,\iota),d(\cdot))$. Since
$(E(\lambda,\iota),d(\cdot))$ is a closed subset of a weighted $L^1$-space, it
is complete, so by the Banach fixed-point theorem, there
exists a fixed point $(\Phi_{R_\theta},\Phi_{R_\eta})
\in E(\lambda,\iota) \subset E(\lambda) \times E(\lambda)$ of this mapping.
The functions $\Phi_{R_\theta},\Phi_{R_\eta}$ of this fixed point are
continuous since $\tilde \Phi_{R_\theta},\tilde \Phi_{R_\eta}$ of this mapping
are continuous. This establishes that there is a unique continuous solution to
(\ref{eq:aux_Rtheta}--\ref{eq:aux_Reta}) in $E(\lambda) \times E(\lambda)$.

Given this solution to (\ref{eq:aux_Rtheta}--\ref{eq:aux_Reta}), consider now
(\ref{eq:aux_alpha}--\ref{eq:aux_Ceta}) as a mapping from
$(\Phi_\alpha,\Phi_{C_\theta},\Phi_{C_\eta})$ on the right side to
$(\tilde \Phi_\alpha,\tilde \Phi_{C_\theta},\tilde \Phi_{C_\eta})$ on the left side.
Now let $L_\alpha(\lambda),L_\theta(\lambda),L_\eta(\lambda)$ denote the
Laplace transforms of $(\Phi_\alpha,\Phi_{C_\theta},\Phi_{C_\eta})$, and define
also the Laplace transforms
$K_\eta(\lambda)=\int_0^t (s+1)^2\Phi_{R_\eta}^2(s)e^{-\lambda s}\d s$
and $K_\theta(\lambda)=\int_0^t (s+1)^2\Phi_{R_\theta}^2(s)e^{-\lambda s}\d s$.
Choosing $\lambda$ large enough so that
$K_\eta(\lambda),K_\theta(\lambda)<\infty$,
if $\Phi_\alpha,\Phi_{C_\theta},\Phi_{C_\eta} \in E(\lambda)$, then
taking Laplace transforms of (\ref{eq:aux_alpha}--\ref{eq:aux_Ceta}) gives
\begin{align*}
\lambda \tilde L_\alpha(\lambda)-\|\alpha^0\|^2
&=\frac{4.1C_0}{\lambda}+4.1C_0L_\theta(\lambda)+3C_0L_\alpha(\lambda)\\
\lambda \tilde L_\theta(\lambda)-\E(\theta^0)^2
&=C_1L_\theta(\lambda)+6K_\eta(\lambda)L_\theta(\lambda)
+18C_0^2L_\alpha(\lambda)+\frac{6\tau_*^2}{\lambda}K_\eta(\lambda)
+6L_\eta(\lambda)+\frac{C_2}{\lambda}\\
\tilde L_\eta(\lambda)&=2\beta^2 K_\theta(\lambda)L_\eta(\lambda)
+4\delta\beta^2L_\theta(\lambda)+\frac{C_3}{\lambda}
\end{align*}
for some constants $C_1,C_2,C_3$ depending only on
$\delta,\beta,C_0,\sigma^2,\tau_*^2$. For small $\iota>0$, suppose further that
$(\Phi_\alpha,\Phi_{C_\theta},\Phi_{C_\eta}) \in E(\lambda,\iota)$ where
\[E(\lambda,\iota)=\{(\Phi_\alpha,\Phi_{C_\theta},\Phi_{C_\eta}) \in E(\lambda)
\times E(\lambda) \times E(\lambda):
L_\alpha(\lambda) \leq \iota,\,L_\theta(\lambda) \leq \iota,\,
L_\eta(\lambda) \leq (4\delta\beta^2+1)\iota\}.\]
Then, using that $\lim_{\lambda \to \infty} K_\theta(\lambda)=0$
and $\lim_{\lambda \to \infty} K_\eta(\lambda)=0$, for sufficiently large
$\lambda>0$ and small $\iota>0$, the above Laplace transform
equations imply $(\tilde \Phi_{\alpha},\tilde \Phi_{C_\theta},\tilde \Phi_{C_\eta})
\in E(\lambda,\iota)$. Furthermore, defining the metric
\begin{align*}
&d((\Phi_\alpha^1,\Phi_{C_\theta}^1,\Phi_{C_\eta}^1),
(\Phi_\alpha^2,\Phi_{C_\theta}^2,\Phi_{C_\eta}^2))
=L_\alpha^\Delta(\lambda)+L_\theta^\Delta(\lambda)+(4\delta\beta^2+1)L_\eta^\Delta(\lambda)\\
&=\int_0^\infty |(\Phi_\alpha^1-\Phi_\alpha^2)(s)|e^{-\lambda s}\d s
+\int_0^\infty |(\Phi_{C_\theta}^1-\Phi_{C_\theta}^2)(s)|e^{-\lambda s}\d s
+(4\delta\beta^2+1)\int_0^\infty |(\Phi_{C_\eta}^1-\Phi_{C_\eta}^2)(s)|e^{-\lambda s}\d s
\end{align*}
it may be verified as above that the mapping
$(\Phi_\alpha,\Phi_{C_\theta},\Phi_{C_\eta}) \mapsto
(\tilde \Phi_\alpha,\tilde \Phi_{C_\theta},\tilde \Phi_{C_\eta})$ is Lipschitz
on $(E(\lambda,\iota),d(\cdot))$, with Lipschitz constant at most
\[\frac{7.1C_0}{\lambda}+\frac{C_1+6K_\eta(\lambda)+18C_0^2+6(4\delta\beta^2+1)}{\lambda}+2\beta^2K_\theta(\lambda)+\frac{4\delta\beta^2}{4\delta\beta^2+1}.\]
For sufficiently large $\lambda>0$, this is again less than 1, so there exists a
unique fixed point $(\Phi_\alpha,\Phi_{C_\theta},\Phi_{C_\eta}) \in
E(\lambda,\iota) \subset E(\lambda) \times E(\lambda) \times E(\lambda)$ and
hence a unique continuous solution to (\ref{eq:aux_alpha}--\ref{eq:aux_Ceta}).
\end{proof}

Let $(\Phi_\alpha,\Phi_{C_\theta},\Phi_{C_\eta},\Phi_{R_\theta},\Phi_{R_\eta})$
be the above solution to (\ref{eq:aux_alpha}--\ref{eq:aux_Reta}).
For any $T>0$ and
finite set $D=\{d_1,\ldots,d_m\} \subset (0,T)$, we call
$[0,d_1),[d_1,d_2),\ldots,[d_m,T]$ the maximal intervals of $[0,T] \setminus D$.
Fixing $T>0$ and denoting
\[(\alpha,C_\theta,C_\eta,R_\theta,R_\eta) \equiv
\{\alpha^t,C_\theta(t,s),C_\theta(t,*),C_\theta(*,*),C_\eta(t,s),R_\theta(t,s),
R_\eta(t,s)\}_{0 \leq s \leq t \leq T},\]
we define the space $\cS \equiv \cS(T)$ in Theorem \ref{thm:dmftsolexists} as
\begin{equation}\label{eq:solutionspace}
\cS=\{(\alpha,C_\theta,C_\eta,R_\theta,R_\eta):
(R_\eta,C_\eta,\alpha) \in \cS_\eta,\,
(R_\theta,C_\theta,\alpha) \in \cS_\theta\}.
\end{equation}
Here $\cS_\eta \equiv \cS_\eta(T)$ is the collection of $(R_\eta,C_\eta,\alpha)$ such that,
for some (possibly empty) discontinuity set $D \subset (0,T)$ of at most
finite cardinality:
\begin{itemize}
\item $C_\eta$ is a positive-semidefinite covariance kernel on $[0,T]$
(identifying $C_\eta(s,t)=C_\eta(t,s)$) satisfying
\begin{align}\label{eq:Ceta_cond_1}
C_\eta(t,t) \leq \Phi_{C_\eta}(t) \text{ for all } 0 \leq t \leq T.
\end{align}
Furthermore, $C_\eta(t,s)$ is uniformly continuous over $s,t \in I$ 
for each maximal interval $I$ of $[0,T] \setminus D$, and satisfies
\begin{align}\label{eq:Ceta_cond_2}
\big|C_\eta(t,t) - 2C_\eta(t,s) + C_\eta(s,s)\big|
&\leq 3\beta^2\bigg[\Big(T^3\sup_{r \in [0,T]} \Phi_{R_\theta}'(r)^2
+T\sup_{r \in [0,T]} \Phi_{R_\theta}(r)^2\Big)
\sup_{r \in [0,T]} \Phi_{C_\eta}(r)\notag\\
&\hspace{0.5in}+\delta\Big(2T\sup_{r \in [0,T]} \Phi_{C_\theta}'(r)+4\Big)\bigg]
\cdot |t-s| \text{ for all } s,t \in I.
\end{align}
\item $R_\eta(t,s)$ satisfies
\begin{align}\label{eq:Reta_cond_1}
|R_\eta(t,s)| \leq \Phi_{R_\eta}(t-s) \text{ for all } 0 \leq s \leq t \leq T.
\end{align} 
Furthermore, $R_\eta(t,s)$ is uniformly continuous over $s \in I'$ and $t \in I$
for any two (possibly equal) maximal intervals $I,I'$ of $[0,T] \setminus D$.
\item $\alpha^t$ satisfies
\begin{equation}\label{eq:alpha_cond}
\|\alpha^t\|^2 \leq \Phi_\alpha(t) \text{ for all } 0 \leq t \leq T.
\end{equation}
and is uniformly continuous on each 
maximal interval $I$ of $[0,T] \setminus D$.
\end{itemize}
Similarly $\cS_\theta \equiv \cS_\theta(T)$ is the set of $(R_\theta,C_\theta,\alpha)$ such that
\begin{itemize}
\item $C_\theta$ is a positive-semidefinite covariance kernel on $\{*\} \cup [0,T]$
(identifying $C_\theta(s,t)=C_\theta(t,s)$ and $C_\theta(t,*)=C_\theta(*,t)$) satisfying
\begin{align}\label{eq:Ctheta_cond_1}
C_\theta(t,t) &\leq \Phi_{C_\theta}(t) \text{ for all } 0 \leq t \leq T.
\end{align}
Furthermore, $C_\theta(t,s)$ is uniformly continuous over $s,t \in I$ 
for each maximal interval $I$ of $[0,T] \setminus D$ and satisfies
\begin{align}\label{eq:Ctheta_cond_2}
\big|C_\theta(t,t)-2C_\theta(t,s)+C_\theta(s,s)\big|
\leq \Big(2T\sup_{r \in [0,T]} \Phi_{C_\theta}'(r)+4\Big)|t-s|
\text{ for all } s,t \in I,
\end{align}
and $C_\theta(t,*)$ is uniformly continuous over $t \in I$.
\item $R_\theta(t,s)$ satisfies
\begin{align}\label{eq:Rtheta_cond}
|R_\theta(t,s)| \leq \Phi_{R_\theta}(t-s) \text{ for all } 0 \leq s \leq t \leq
T.
\end{align}
Furthermore, $R_\theta(t,s)$ is uniformly continuous over $s \in I'$ and $t \in
I$ for any two (possibly equal) maximal intervals $I,I'$ of $[0,T] \setminus D$,
and satisfies
\begin{align}\label{eq:Rtheta_cont}
\big|R_\theta(t',s)-R_\theta(t,s)\big|
\leq \Big(\sup_{r \in [0,T]} \Phi_{R_\theta}'(r)\Big)|t'-t|
\text{ for each fixed } s \in [0,T] \text{ and all } t,t' \in [s,T] \cap I.
\end{align}
\item $\alpha^t$ satisfies (\ref{eq:alpha_cond}) and is
uniformly continuous on each maximal interval $I$ of $[0,T] \setminus D$.
\end{itemize}
We define
\[\cS^\text{cont}(T) \equiv \cS^\text{cont} \subset \cS,
\qquad \cS_\eta^\text{cont}(T)\equiv \cS_\eta^\text{cont} \subset \cS_\eta,
\qquad \cS_\theta^\text{cont}(T)\equiv \cS_\theta^\text{cont} \subset \cS_\theta\]
as the subsets of the above spaces where $D=\emptyset$,
i.e.\ the above continuity conditions hold on all of $[0,T]$.

\begin{remark}
By (\ref{eq:Ceta_cond_2}), letting $\{u^t\}_{t \in [0,T]}$ be a
mean-zero Gaussian process with covariance $C_\eta$, for any maximal interval
$I$ of $[0,T] \setminus D$, any $s,t \in I$, and some constant $C>0$,
\[\E(u^t-u^s)^4=3[\E(u^t-u^s)^2]^2 \leq C|t-s|^2.\]
Then Kolmogorov's continuity theorem (\cite[Theorem
2.9]{legall2016brownian}) implies that there exists a modification of
$\{u^t\}_{t \in [0,T]}$ that is
uniformly H\"older continuous on each such maximal interval $I$, and similarly
for $\{w^t\}_{t \in [0,T]}$ with covariance $C_\theta$ satisfying
(\ref{eq:Ctheta_cond_2}). We will always take $\{u^t\}$ and
$\{w^t\}$ to be the versions of these processes
that satisfy this H\"older continuity.
\end{remark}

Let us now establish existence and uniqueness of the solutions to
(\ref{def:dmft_langevin_cont_theta}--\ref{def:response_g}) given
$(\alpha,C_\theta,C_\eta,R_\theta,R_\eta) \in \cS$.

\begin{lemma}\label{lem:sub_theta_solution}
Fix any $T>0$, any $(R_\eta,C_\eta,\alpha) \in
\cS_\eta$, and any realizations of $\theta^0,\theta^*,\{b^t\}_{t \leq
T}$ and $\{u^t\}_{t \leq T}$. Then there exist
unique $\cF_t^\theta$-adapted processes $\{\theta^t\}_{t \leq T}$ and
$\{\frac{\partial \theta^t}{\partial u^s}\}_{s \leq t \leq T}$
solving (\ref{def:dmft_langevin_cont_theta}--\ref{def:response_theta}).
\end{lemma}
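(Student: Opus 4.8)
The plan is to condition on the fixed realizations of $\theta^0,\theta^*,\{b^t\}_{t\le T},\{u^t\}_{t\le T}$ and recognize (\ref{def:dmft_langevin_cont_theta}--\ref{def:response_theta}) as a pair of \emph{deterministic} Volterra integral equations, which I solve by a contraction-mapping argument. In integrated form (\ref{def:dmft_langevin_cont_theta}) reads
\[
\theta^t=\theta^0+\sqrt2\,b^t+\int_0^t\Big[{-}\delta\beta(\theta^r-\theta^*)+s(\theta^r,\alpha^r)+\int_0^rR_\eta(r,s')\big(\theta^{s'}-\theta^*\big)\d s'+u^r\Big]\d r,
\]
a closed equation for the path $\{\theta^t\}_{t\in[0,T]}$. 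Because $(R_\eta,C_\eta,\alpha)\in\cS_\eta$, the kernel $R_\eta$ is bounded by $\sup_{[0,T]}\Phi_{R_\eta}<\infty$ (finite by Lemma~\ref{eq:lemma:auxfunctions}) and jointly measurable — it is continuous on each of the finitely many rectangles tiling $[0,T]^2$ outside a null set determined by $D$ — while $\{\alpha^r\}$ is bounded via (\ref{eq:alpha_cond}) and measurable, and $b,u$ are continuous on $[0,T]$. Thus the bracketed drift is a measurable path-dependent functional, globally $(\delta|\beta|+C)$-Lipschitz in the current value $\theta^r$ by Assumption~\ref{assump:prior}, whose memory term has a bounded kernel.

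I would then define $\Psi$ on $C([0,T],\R)$ by replacing $\theta^r,\theta^{s'}$ on the right-hand side above by $\varphi(r),\varphi(s')$. First one checks $\Psi$ maps $C([0,T],\R)$ into itself: the integrand is bounded and measurable on $[0,T]$ (Fubini gives measurability of $r\mapsto\int_0^rR_\eta(r,s')(\varphi(s')-\theta^*)\d s'$), so its indefinite integral is continuous, and $b$ is continuous. Equipping $C([0,T],\R)$ with the weighted norm $\|\varphi\|_\lambda=\sup_{t\le T}e^{-\lambda t}|\varphi(t)|$, a direct estimate yields $\|\Psi\varphi-\Psi\psi\|_\lambda\le(C_1\lambda^{-1}+C_2\lambda^{-2})\|\varphi-\psi\|_\lambda$ with $C_1,C_2$ depending only on $T,\delta,\beta,C,\sup_{[0,T]}\Phi_{R_\eta}$; choosing $\lambda$ large makes $\Psi$ a contraction, and the Banach fixed-point theorem produces the unique continuous solution $\{\theta^t\}_{t\in[0,T]}$ on the whole interval (a parallel Gronwall bound on the Picard iterates shows they remain bounded, ruling out finite-time explosion).

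Given this $\theta$, the coefficient $r\mapsto\partial_\theta s(\theta^r,\alpha^r)$ is bounded by $C$ (Assumption~\ref{assump:prior}) and measurable, so for each fixed $s$ equation (\ref{def:response_theta}) for $y^t:=\partial\theta^t/\partial u^s$ on $t\in[s,T]$ becomes the \emph{linear} Volterra equation $y^t=1+\int_s^t[{-}(\delta\beta-\partial_\theta s(\theta^r,\alpha^r))y^r+\int_s^rR_\eta(r,s')y^{s'}\d s']\d r$ with bounded measurable coefficients, which I solve by the same weighted-norm contraction on $C([s,T],\R)$ to obtain a unique continuous $\{y^t\}_{t\in[s,T]}$. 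For adaptedness, I start the Picard iteration for $\theta$ from $\varphi_0(t)=\theta^0+\sqrt2\,b^t$: each iterate evaluated at $t$ is $\cF_t^\theta$-measurable, since $\Psi$ at time $t$ reads only $\{b^r\}_{r\le t},\{u^r\}_{r\le t},\theta^0,\theta^*$ and $\{\varphi(r)\}_{r\le t}$, and this is preserved under the uniform limit; then $y^t$ is a measurable functional of $\{\theta^r,\alpha^r\}_{r\le t}$ and hence $\cF_t^\theta$-measurable as well.

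The one genuine difficulty is that the memory term $\int_0^tR_\eta(t,s')(\theta^{s'}-\theta^*)\d s'$ makes (\ref{def:dmft_langevin_cont_theta}) non-Markovian, so standard SDE existence/uniqueness theorems do not directly apply — the pathwise reduction above is precisely what circumvents this. A minor secondary point is that $R_\eta$ and $\alpha$ are only continuous off the finite set $D$; but they stay bounded and jointly measurable there, and integrating in the time variable $r$ restores continuity of $\theta^t$ and $y^t$, so neither the mapping property of $\Psi$ nor the contraction estimate is affected.
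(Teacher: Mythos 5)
Your proof is correct, and the substance is the same as the paper's: in both cases the key facts are that the path-dependent drift functional is Lipschitz in the uniform norm of the path (with constant controlled by $\delta|\beta|$, the bound on $\partial_\theta s$ from Assumption~\ref{assump:prior}, and $\int_0^T\Phi_{R_\eta}$) and bounded when evaluated at the zero path, after which the response equation (\ref{def:response_theta}) is handled as a linear equation conditional on $\{\theta^t\}$. The difference is only in how that Lipschitz property is exploited: the paper keeps the SDE formulation and cites the classical existence/uniqueness theorem for diffusions with path-dependent Lipschitz drift (\cite[Theorem 11.2]{rogers2000diffusions}), whereas you observe that the noise is purely additive, so after conditioning on the realization of $\{b^t\}$ the equation is a deterministic Volterra-type integral equation, and you run the Picard/Banach contraction yourself with the weighted norm $\sup_t e^{-\lambda t}|\varphi(t)|$. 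Your route is self-contained and slightly more elementary (it also makes the $\cF_t^\theta$-adaptedness explicit via the Picard iterates, which the paper only asserts), at the cost of redoing an argument the paper outsources; it relies essentially on the additive-noise structure, which the cited theorem does not need. Your handling of the secondary points — measurability of $r\mapsto\int_0^r R_\eta(r,s')(\varphi(s')-\theta^*)\,\d s'$ despite the finite discontinuity set $D$, and the fact that integration in $r$ restores continuity of $\theta^t$ — is correct and matches what the paper's setup requires.
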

\begin{proof}
Consider the drift function
\[v(t,\{\theta^s\}_{s \leq t})
={-}\delta\beta(\theta^t-\theta^*)+s(\theta^t,\alpha^t)
+\int_0^t R_\eta(t,s)(\theta^s-\theta^*)\d s+u^t.\]
Conditioning on $\theta^0,\theta^*$ and $\{u^t\}$ and writing 0 for the process
$\theta^t \equiv 0$, we have (with probability 1 over
$\theta^0,\theta^*$ and $\{u^t\}$)
\begin{align*}
\sup_{t \in [0,T]} |v(t,0)|
\leq \delta|\beta\theta^*|+\sup_{t \in [0,T]}
|s(0,\alpha^t)|+\int_0^T \Phi_{R_\eta}(t)\d t \cdot
|\theta^*|+\sup_{t \in [0,T]} |u^t|<\infty.
\end{align*}
Furthermore, for all $t \in [0,T]$,
\begin{align*}
|v(t,\{\theta^s\}_{s \leq t})-v(t,\{\tilde \theta^s\}_{s \leq t})|
&\leq \Big(\delta|\beta|
+\sup_{(\theta,\alpha) \in \R \times \R^K}
|\partial_\theta s(\theta,\alpha)|+\int_0^T \Phi_{R_\eta}(s)\d s\Big)
\sup_{s \in [0,t]}|\theta^s-\tilde \theta^s|,
\end{align*}
showing under Assumption \ref{assump:prior} that
$\{\theta^s\}_{s \leq t} \mapsto v(t,\{\theta^s\}_{s \leq t})$ is
Lipschitz in the norm of uniform convergence, uniformly over $t \in [0,T]$.
Then existence and uniqueness of a solution
$\{\theta^t\}_{t \leq T}$ with $\theta^t|_{t=0}=\theta^0$
adapted to the filtration of $\{b^t\}_{t \geq 0}$
is classical, see e.g.\ \cite[Theorem 11.2]{rogers2000diffusions}.
This solution is a measurable function of $\theta^0$, $\theta^*$, and
$\{u^t\}$, and hence is also $\cF_t^\theta$-adapted.

Conditioning now on $\{\theta^t\}$, for any fixed $s \in [0,T]$, consider
\[v(t,\{x^{s'}\}_{s' \in [s,t]})
={-}\Big(\delta \beta-\partial_\theta s(\theta^t,\alpha^t)\Big)x^t
+\int_s^t R_\eta(t,s')x^{s'}\d s'.\]
This satisfies $v(t,0) \equiv 0$ for all $t \in [s,T]$ and
\begin{align*}
|v(t,\{x^{s'}\}_{s' \in [s,t]})-v(t,\{\tilde x^{s'}\}_{s' \in [s,t]})|
&\leq \Big(\delta|\beta|
+\sup_{(\theta,\alpha) \in \R \times \R^K}
|\partial_\theta s(\theta,\alpha)|+\int_0^T \Phi_{R_\eta}(t)\d t\Big)
\sup_{s' \in [s,t]}|x^{s'}-\tilde x^{s'}|,
\end{align*}
so $\{x^{s'}\}_{s' \in [s,t]} \mapsto v(t,\{x^{s'}\}_{s' \in [s,t]})$ is
also Lipschitz in the norm of uniform convergence,
uniformly over $t \in [s,T]$. Then again for each $s \in [0,T]$,
there exists a unique solution 
$\{\frac{\partial \theta^t}{\partial u^s}\}_{t \in [s,T]}$
with $\frac{\partial \theta^t}{\partial u^s}|_{t=s}=1$, which is adapted to the
filtration $\cF_t \equiv \cF(\{\theta^r\}_{r \in [s,t]})$ and hence also to
$\cF_t^\theta$, showing the lemma.
\end{proof}

\begin{lemma}\label{lem:sub_eta_solution}
Fix any $T>0$, any
$(R_\theta,C_\theta,\alpha) \in \cS_\theta$, and any realizations of $\eps$ and
$(w^*,\{w^t\}_{t \leq T})$. Then there exist
unique $\cF_t^\eta$-adapted processes $\{\eta^t\}_{t \leq T}$ and
$\{\frac{\partial \eta^t}{\partial w^s}\}_{s \leq t \leq T}$
solving (\ref{def:dmft_langevin_cont_eta}--\ref{def:response_g}).
\end{lemma}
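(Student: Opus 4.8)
The plan is to recognize both (\ref{def:dmft_langevin_cont_eta}) and (\ref{def:response_g}) as linear Volterra integral equations of the second kind. In contrast with the $\theta$-equations of Lemma~\ref{lem:sub_theta_solution}, there is no Brownian term here, so for fixed realizations of $\eps$ and $(w^*,\{w^t\}_{t \le T})$ the problem is a purely deterministic fixed-point problem, and the argument will be a direct contraction-mapping argument rather than an appeal to classical SDE theory.

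For the process $\{\eta^t\}_{t \le T}$, I would rewrite (\ref{def:dmft_langevin_cont_eta}) as $\eta = \cL\eta + f$, where $(\cL\eta)^t = {-}\beta\int_0^t R_\theta(t,s)\,\eta^s\,\d s$ and $f^t = {-}w^t - \beta(w^*-\eps)\int_0^t R_\theta(t,s)\,\d s$. Under $(R_\theta,C_\theta,\alpha) \in \cS_\theta$, the bound (\ref{eq:Rtheta_cond}) together with continuity of $\Phi_{R_\theta}$ gives $\sup_{0 \le s \le t \le T}|R_\theta(t,s)| \le \sup_{r \in [0,T]}\Phi_{R_\theta}(r) =: M < \infty$, and $\{w^t\}_{t \le T}$, being H\"older continuous on each of the finitely many maximal intervals of $[0,T]\setminus D$, is bounded on $[0,T]$; hence $f$ is a bounded measurable function. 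Thus $\cL$ maps the Banach space $\cB$ of bounded measurable $\cF_t^\eta$-adapted processes on $[0,T]$ into itself, the value at time $t$ depending only on $\{w^s,\eta^s\}_{s \le t}$ together with $w^*,\eps$. Equipping $\cB$ with the weighted norm $\|\eta\|_\lambda = \sup_{t \in [0,T]} e^{-\lambda t}|\eta^t|$, a short computation gives $\|\cL\eta - \cL\tilde\eta\|_\lambda \le (|\beta|M/\lambda)\|\eta - \tilde\eta\|_\lambda$, which is $<1$ for $\lambda$ large; the Banach fixed-point theorem then yields a unique solution in $\cB$, and $\cF_t^\eta$-adaptedness is preserved along the Picard iterates and hence in the limit. (Equivalently one may invoke the quasi-nilpotence of the Volterra operator $\cL$ and a Neumann-series expansion.) One also records along the way that, since $f$ is uniformly continuous on each maximal interval of $[0,T]\setminus D$ and $\cL\eta$ is continuous there whenever $\eta$ is bounded, the solution $\eta^t$ is itself uniformly continuous on each such interval.

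For the response $\{\tfrac{\partial\eta^t}{\partial w^s}\}_{s \le t \le T}$, I would fix $s \in [0,T]$ and observe that on $t \in [s,T]$ equation (\ref{def:response_g}) takes the form $x^t = {-}\beta\int_s^t R_\theta(t,s')\,x^{s'}\,\d s' + \beta R_\theta(t,s)$, again a linear Volterra equation of the second kind with bounded forcing and the same bounded kernel, so the identical contraction argument on $[s,T]$ produces a unique solution with $x^t|_{t=s}=1$. Since this process depends only on the deterministic kernel $R_\theta$, it is deterministic and a fortiori $\cF_t^\eta$-adapted.

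I do not expect a substantive obstacle: compared with Lemma~\ref{lem:sub_theta_solution} the absence of a diffusion term makes the argument strictly simpler, and the only bookkeeping is checking that the forcing term and the kernel are bounded, which is immediate from membership in $\cS_\theta$ and the chosen H\"older-continuous version of $\{w^t\}$. The one point requiring a little care is fixing the class in which uniqueness is asserted (bounded measurable, equivalently $L^1([0,T])$, processes suffice, since the Gronwall estimate $|\eta^t-\tilde\eta^t| \le |\beta|M\int_0^t|\eta^s-\tilde\eta^s|\,\d s$ forces equality).
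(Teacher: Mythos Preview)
Your approach is correct and essentially matches the paper's: both recognize (\ref{def:dmft_langevin_cont_eta})--(\ref{def:response_g}) as linear Volterra integral equations of the second kind with bounded kernel and apply standard existence/uniqueness (the paper invokes classical Volterra theory successively on each maximal interval of $[0,T]\setminus D$ using the piecewise continuity of $R_\theta$, whereas your global weighted-norm contraction only needs boundedness of the kernel --- a mild and equally valid variation). One harmless slip: from (\ref{def:response_g}) the value of the response at $t=s$ is $\beta R_\theta(s,s)$, not $1$, but this has no bearing on the existence/uniqueness argument.
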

\begin{proof}
Conditional on $\eps$ and $(w^*,\{w^t\})$, the equations
(\ref{def:dmft_langevin_cont_eta}--\ref{def:response_g}) are linear
Volterra integral equations for which the kernel $(s,t) \mapsto R_\theta(t,s)$
is continuous on each maximal interval $I$ of $[0,T] \setminus D$. Then, for
each maximal interval $I=[a,b)$, given the values of $\{\eta^t\}$ for
$t \in [0,a]$, existence and uniqueness of $\{\eta^t\}_{t \in [a,b)}$ is
classical and follows from e.g.\ \cite[Theorem 2.1.2]{brunner2004collocation}.
Applying this successively to each maximal interval $I$ shows existence
and uniqueness of $\{\eta^t\}$ over $t \in [0,T]$. A similar argument shows, for each fixed $s \in [0,T]$, the existence and uniqueness of
$\{\frac{\partial \eta^t}{\partial w^s}\}$ over $t \in [s,T]$. Here
$\frac{\partial \eta^t}{\partial w^s}$ is deterministic by its definition,
while $\eta^t$ is a measurable function of $\eps,w^*,\{w^s\}_{s \leq t}$ and
hence is adapted to $\cF_t^\eta$.
\end{proof}

\begin{proof}[Proof of Theorem \ref{thm:dmftsolexists}(a)]
This follows from Lemmas \ref{lem:sub_theta_solution}
and \ref{lem:sub_eta_solution}.
\end{proof}

\subsection{Contractive mapping}\label{sec:contractivemapping}

We fix $T>0$. For any $(R_\eta,C_\eta,\alpha) \in \cS_\eta$, define a map
$\cT_{\eta\rightarrow \theta}: (R_\eta,C_\eta,\alpha) \rightarrow
(R_\theta,C_\theta,\tilde \alpha)$ by 
\[\begin{gathered}
R_\theta(t,s)=\E\Big[\frac{\partial \theta^t}{\partial u^s}\Big], \quad
C_\theta(t,s)=\E[\theta^t\theta^s], \quad C_\theta(t,*)=\E[\theta^t\theta^\ast], \quad C_\theta(*,*)=\E[(\theta^\ast)^2],\\
\frac{\d}{\d t}\tilde \alpha^t=\cG(\tilde \alpha^t,\sP(\theta^t))
\text{ with } \tilde \alpha^t|_{t=0}=\alpha^0
\end{gathered}\]
where $\{\theta^t\}_{t\in[0,T]}$ and
$\{\frac{\partial \theta^t}{\partial u^s}\}_{0\leq s \leq t\leq T}$ are the
unique solutions to
(\ref{def:dmft_langevin_cont_theta}--\ref{def:response_theta}) given
$(R_\eta,C_\eta,\alpha)$ and $\theta^0,\theta^*,\{u^t\}$, guaranteed by Lemma \ref{lem:sub_theta_solution},
and $\sP(\theta^t)$ is the law of $\theta^t$.
Similarly, for any $(R_\theta,C_\theta,\tilde\alpha) \in \cS_\theta$, define a map
$\cT_{\theta\rightarrow\eta}: (R_\theta,C_\theta,\tilde\alpha) \rightarrow
(R_\eta,C_\eta,\alpha)$ 
by
\begin{align*}
R_\eta(t,s)=\delta\beta\,\E\Big[\frac{\partial \eta^t}{\partial w^s}\Big], \quad C_\eta(t,s)=\delta\beta^2\E[(\eta^t + w^\ast -
\eps)(\eta^s + w^\ast - \eps)],
\quad \alpha^t=\tilde \alpha^t
\end{align*}
where $\{\eta^t\}_{t\in[0,T]}$ and $\{\frac{\partial \eta^t}{\partial w^s}\}_{0\leq s < t \leq T}$ are the unique solutions to
(\ref{def:dmft_langevin_cont_eta}--\ref{def:response_g}) given
$(R_\theta,C_\theta,\tilde\alpha)$ and $\eps,w^*,\{w^t\}$,
guaranteed by Lemma \ref{lem:sub_eta_solution}.
Finally, define the composite maps
\begin{align}\label{eq:composite_map}
\cT_{\eta\rightarrow\eta}=\cT_{\theta\rightarrow\eta} \circ
\cT_{\eta\rightarrow\theta}, \qquad
\cT_{\theta\rightarrow\theta}=\cT_{\eta\rightarrow\theta} \circ
\cT_{\theta\rightarrow\eta}.
\end{align}
The rest of this subsection is divided into two parts:
\begin{itemize}
\item (\textbf{Part 1}) We show in Lemma \ref{lem:Trange_theta_2_eta} (resp. Lemma \ref{lem:Trange_eta_2_theta}) that $\cT_{\theta \rightarrow \eta}$ maps $\cS_\theta$ into $\cS_\eta$ (resp. $\cT_{\eta \rightarrow \theta}$ maps $\cS_\eta$ into $\cS_\theta$). 
\item (\textbf{Part 2}) We equip $\cS_\eta$ and $\cS_\theta$ with certain
metrics and derive the moduli-of-continuity of the maps $\cT_{\eta \rightarrow
\theta}$ and $\cT_{\theta \rightarrow \eta}$ in Lemmas
\ref{lem:modulus_eta_2_theta} and \ref{lem:modulus_theta_2_eta}, thereby
concluding that $\cT_{\eta\rightarrow\eta}$
and $\cT_{\theta\rightarrow\theta}$ in
(\ref{eq:composite_map}) are contractions under these metrics.
\end{itemize}

\begin{lemma}\label{lem:Trange_theta_2_eta}
$\cT_{\theta \rightarrow \eta}$ maps $\cS_\theta$ into $\cS_\eta$, and
$\cS_\theta^\text{cont}$ into $\cS_\eta^\text{cont}$.
\end{lemma}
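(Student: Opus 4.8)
The $\alpha$-component of the output is unchanged ($\alpha^t=\tilde\alpha^t$), so the growth bound (\ref{eq:alpha_cond}) and the uniform continuity on the maximal intervals of $[0,T]\setminus D$ are inherited verbatim from $(R_\theta,C_\theta,\tilde\alpha)\in\cS_\theta$; all of the work lies in verifying the conditions on $R_\eta$ and $C_\eta$. I would begin by recording two structural observations. Writing $Z^t:=\eta^t+w^*-\eps$, the definition of $\cT_{\theta\to\eta}$ gives $C_\eta(t,s)=\delta\beta^2\,\E[Z^tZ^s]$, so $C_\eta$ is positive-semidefinite (a nonnegative multiple of a Gram kernel in $L^2$), its diagonal is $C_\eta(t,t)=\delta\beta^2\,\E[(Z^t)^2]$, and its second difference is $C_\eta(t,t)-2C_\eta(t,s)+C_\eta(s,s)=\delta\beta^2\,\E[(Z^t-Z^s)^2]$; moreover $R_\eta(t,s)=\delta\beta\,\partial\eta^t/\partial w^s$ is deterministic, and subtracting (\ref{def:dmft_langevin_cont_eta}) shows $Z^t$ solves the linear Volterra equation $Z^t+\beta\int_0^tR_\theta(t,r)Z^r\,\d r=w^*-\eps-w^t$.

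The two growth bounds (\ref{eq:Reta_cond_1}) and (\ref{eq:Ceta_cond_1}) would both be obtained from the scalar Volterra comparison principle: if $\psi\ge0$ satisfies $\psi(t)\le g(t)+\int_0^tk(t,s)\psi(s)\,\d s$ with $k\ge0$, and $\Phi$ solves the corresponding equation with $\psi(0)\le\Phi(0)$, then $\psi\le\Phi$ on $[0,T]$ (iterate, using nonnegativity of the resolvent kernel of $k$). For $R_\eta$: inserting $|R_\theta(t,s)|\le\Phi_{R_\theta}(t-s)$ into (\ref{def:response_g}) shows that $\tau\mapsto\delta|\beta|\,\lvert\partial\eta^{s+\tau}/\partial w^s\rvert$ satisfies exactly the Volterra inequality whose corresponding equation is (\ref{eq:aux_Reta}), giving $\lvert R_\eta(t,s)\rvert\le\Phi_{R_\eta}(t-s)$. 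For $C_\eta(t,t)=\delta\beta^2\,\E[(Z^t)^2]$: taking second moments in the Volterra equation for $Z^t$, estimating $\E\big[(\int_0^tR_\theta(t,r)Z^r\,\d r)^2\big]\le\int_0^t(t-r+1)^2R_\theta(t,r)^2\,\E[(Z^r)^2]\,\d r$ by Cauchy--Schwarz with the weight $(t-r+1)^{-2}$ (whose integral over $[0,t]$ is at most $1$ — this is precisely the role of the weight $(t-s+1)^2$ in (\ref{eq:aux_Ceta})), and using $\E[(w^*-\eps-w^t)^2]\le\sigma^2+2\tau_*^2+2\Phi_{C_\theta}(t)$ (from $C_\theta(*,*)=\tau_*^2$ and (\ref{eq:Ctheta_cond_1})) together with $|R_\theta|\le\Phi_{R_\theta}$, one finds that $C_\eta(t,t)$ obeys the Volterra inequality whose corresponding equation is exactly (\ref{eq:aux_Ceta}); hence $C_\eta(t,t)\le\Phi_{C_\eta}(t)$.

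For the modulus bound (\ref{eq:Ceta_cond_2}) I would use $Z^t-Z^s=\eta^t-\eta^s$ together with the decomposition, valid for $s\le t$ in a maximal interval $I$,
\[
\eta^t-\eta^s={-}\beta\int_s^tR_\theta(t,r)Z^r\,\d r-\beta\int_0^s\big(R_\theta(t,r)-R_\theta(s,r)\big)Z^r\,\d r-(w^t-w^s),
\]
and bound $\E[(\cdot)^2]$ of the three pieces using, respectively, $|R_\theta|\le\Phi_{R_\theta}$ with the already-established $\E[(Z^r)^2]\le(\delta\beta^2)^{-1}\Phi_{C_\eta}(r)$, the Lipschitz-in-first-argument bound (\ref{eq:Rtheta_cont}) for $R_\theta$, and the second-difference bound (\ref{eq:Ctheta_cond_2}) for $C_\theta$; after multiplying by $\delta\beta^2$, the three contributions come out as $O(|t-s|^2)$, $O(|t-s|^2)$ and $O(|t-s|)$, and bounding $|t-s|\le T$ reproduces precisely the right-hand side of (\ref{eq:Ceta_cond_2}). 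Uniform continuity of $C_\eta$ on $I\times I$ then follows from Cauchy--Schwarz, $\lvert C_\eta(t,s)-C_\eta(t',s')\rvert\le\delta\beta^2\big(\E[(Z^t-Z^{t'})^2]\big)^{1/2}\big(\E[(Z^s)^2]\big)^{1/2}+\delta\beta^2\big(\E[(Z^{t'})^2]\big)^{1/2}\big(\E[(Z^s-Z^{s'})^2]\big)^{1/2}$, combined with the Hölder-$\tfrac12$ estimate $\E[(Z^t-Z^{t'})^2]\le C|t-t'|$ on $I$ obtained above (the forcing data $C_\theta(t,t),C_\theta(t,*)$ entering $Z^t$ are continuous on $I$ because $(R_\theta,C_\theta,\tilde\alpha)\in\cS_\theta$). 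Uniform continuity of $R_\eta(t,s)$ over $s\in I'$ and $t\in I$ follows from continuous dependence of the solution of the linear Volterra equation (\ref{def:response_g}) on its forcing $r\mapsto R_\theta(r,s)$ and on its kernel $R_\theta$ — both uniformly continuous over products of maximal intervals by hypothesis — propagated across the finitely many intervals cut out by $D$ via a Gronwall estimate. Taking $D=\emptyset$ throughout gives the claim $\cS_\theta^\text{cont}\to\cS_\eta^\text{cont}$.

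I expect the main obstacle to be bookkeeping rather than conceptual: the Cauchy--Schwarz weights and the accounting of constants must be arranged so that the Volterra inequalities produced for $C_\eta(t,t)$ and for $R_\eta$ coincide with the specific equations (\ref{eq:aux_Ceta}), (\ref{eq:aux_Reta}) — not merely with dominating ones — so that the comparison principle returns exactly the bounds built into the definition of $\cS_\eta$; and one must carefully track the (finitely many) discontinuities in $D$ through the continuity statements. Neither step is genuinely difficult, but both require care.
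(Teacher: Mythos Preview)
Your proposal is correct and follows essentially the same approach as the paper: the paper likewise writes $\xi^t=\eta^t+w^*-\eps$ (your $Z^t$), uses the identical Cauchy--Schwarz weighting $(t-r+1)^{-2}$ to produce the Volterra inequalities matching (\ref{eq:aux_Ceta}) and (\ref{eq:aux_Reta}), closes them by Gronwall, uses the same three-term decomposition of $\xi^t-\xi^s$ for (\ref{eq:Ceta_cond_2}), and handles the uniform continuity of $R_\eta$ by a Gronwall argument in the variables $(s,\tau)$ with the integration range split according to whether $s+r,s'+r$ lie in the same maximal interval of $[0,T]\setminus D$. Your anticipated obstacle is exactly right: the only delicate point is matching constants so the inequalities line up with the defining equations, and the paper's proof is precisely this bookkeeping.
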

\begin{proof}
(\textbf{Condition for $C_\eta$}) Define $\xi^t = \eta^t + w^\ast - \eps$, so
that
\begin{align*}
\xi^t = -\beta \int_0^t R_\theta(t,s)\xi^s \d s - w^t + w^\ast - \eps
\end{align*}
and $C_\eta(t,s)=\delta\beta^2\E[\xi^t\xi^s]$. Then by Cauchy-Schwarz,
\begin{equation}\label{eq:cauchyschwarztrick}
\begin{aligned}
C_\eta(t,t) &\leq 2\delta\beta^2\,\E\Big[\beta^2\Big(\int_0^t R_\theta(t,s)\xi^s\d s\Big)^2 +
(w^t - w^\ast + \eps)^2\Big]\\
&\leq 2\delta\beta^2\,\Big[\beta^2\int_0^t (t-s+1)^2 \cdot R_\theta(t,s)^2 \E(\xi^s)^2\d s
\cdot \int_0^t (t-s+1)^{-2}\d s + 2 C_\theta(t,t) + 2\tau_*^2 + \sigma^2\Big]\\
&\leq 2\delta\beta^2\Big[\frac{1}{\delta}\int_0^t (t-s+1)^2 \cdot \Phi^2_{R_\theta}(t-s) C_\eta(s,s)\d
s + 2 \Phi_{C_\theta}(t) + 2\tau_*^2 + \sigma^2\Big].
\end{aligned}
\end{equation}
Recalling the equation for $\Phi_{C_\eta}(\cdot)$ in (\ref{eq:aux_Ceta}),
\begin{align*}
\Phi_{C_\eta}(t) = 2\delta\beta^2\Big[\frac{1}{\delta}\int_0^t (t-s+1)^2 \cdot
\Phi^2_{R_\theta}(t-s) \Phi_{C_\eta}(s)\d s + 2\Phi_{C_\theta}(t) + 2\tau_*^2 + \sigma^2\Big],
\end{align*}
Gronwall's inequality implies that $C_\eta(t,t) \leq \Phi_{C_\eta}(t)$,
showing (\ref{eq:Ceta_cond_1}).

We now check (\ref{eq:Ceta_cond_2}) on each maximal interval $I$ of $[0,T]
\setminus D$ where $D \subset (0,T)$ is the discontinuity set of $\cS_\theta$. Note that
\begin{align*}
&C_\eta(t,t) - 2 C_\eta(t,s) + C_\eta(s,s)
=\delta\beta^2\E[(\xi^t - \xi^s)^2]\\
&=\delta\beta^2\E\Big[\Big({-}\beta\int_0^t R_\theta(t,r)\xi^r\d r + \beta\int_0^s
R_\theta(s,r)\xi^r\d r - w^t + w^s\Big)^2\Big]\\
&\leq 3\delta\beta^2\Big[\beta^2\E \Big(\int_0^s \big(R_\theta(t,r) -
R_\theta(s,r)\big) \xi^r \d r\Big)^2 + \beta^2\E\Big(\int_s^t R_\theta(t,r)\xi^r\d r\Big)^2 + \E(w^t - w^s)^2\Big].
\end{align*}
Using $\delta\beta^2\E(\xi^t)^2=C_\eta(t,t)\leq \Phi_{C_\eta}(t)$ established above, together with
the continuity conditions (\ref{eq:Rtheta_cont}) for $R_\theta$ and
(\ref{eq:Ctheta_cond_2}) for $C_\theta$, for any $s,t \in I$
it holds that
\begin{align*}
\E\Big(\int_0^s \big(R_\theta(t,r) - R_\theta(s,r)\big) \xi^r \d r\Big)^2 &\leq
s \int_0^s (R_\theta(t,r)-R_\theta(s,r))^2\E(\xi^r)^2\d r\\
&\leq \frac{T^2}{\delta\beta^2}
\Big(\sup_{r \in [0,T]} \Phi_{R_\theta}'(r)\Big)^2
\cdot \sup_{r \in [0,T]} \Phi_{C_\eta}(r) \cdot |t-s|^2,\\
\E\Big(\int_s^t R_\theta(t,r)\xi^r\d r\Big)^2 &\leq
(t-s)\int_s^t R_\theta(t,r)^2\E(\xi^r)^2\d r\\
&\leq \frac{1}{\delta\beta^2}\sup_{r \in [0,T]} \Phi_{R_\theta}(r)^2
\cdot \sup_{r \in [0,T]} \Phi_{C_\eta}(r) \cdot |t-s|^2,\\
\E(w^t - w^s)^2&=C_\theta(t,t) - 2C_\theta(t,s) + C_\theta(s,s) \leq
(2T\sup_{r \in [0,T]} \Phi_{C_\theta}'(r)+4) \cdot |t-s|.
\end{align*}
Combining these bounds shows (\ref{eq:Ceta_cond_2}) over $s,t \in I$.
Applying $|\E[\xi^s\xi^t-\xi^{s'}\xi^{t'}]|^2
\leq 2\E(\xi^s-\xi^{s'})^2\E(\xi^{t'})^2
+2\E(\xi^{s'})^2\E(\xi^t-\xi^{t'})^2$, this shows also that $C_\eta(t,s)$ is
uniformly continuous over $s,t \in I$. If $(C_\theta,R_\theta,\tilde\alpha) \in
\cS_\theta^{\text{cont}}$, then $D=\emptyset$ so this maximal interval is $I=[0,T]$.\\

\noindent (\textbf{Condition for $R_\eta$}) By definition,
$R_\eta(t,s) = \beta[{-}\int_s^t R_\theta(t,s')R_\eta(s',s)\d s' +
\delta\beta R_\theta(t,s)]$, hence
\begin{align*}
|R_\eta(t,s)| &\leq |\beta|\Big(\int_s^t |R_\theta(t,s')||R_\eta(s',s)|\d s' +
\delta|\beta R_\theta(t,s)|\Big)\\
&\leq |\beta|\Big(\int_0^{t-s} \Phi_{R_\theta}(t-s-s')|R_\eta(s+s',s)|\d s' +
\delta|\beta|\Phi_{R_\theta}(t-s)\Big).
\end{align*}
Recalling the equation for $\Phi_{R_\eta}$ in (\ref{eq:aux_Reta}),
\begin{align*}
\Phi_{R_\eta}(t-s) =|\beta|\Big(\int_0^{t-s}
\Phi_{R_\theta}(t-s-s')\Phi_{R_\eta}(s')\d s'
+\delta|\beta|\Phi_{R_\theta}(t-s)\Big),
\end{align*}
this implies for all $t \in  [s,T]$ that
$|R_\eta(t,s)|\leq \Phi_{R_\eta}(t-s)$, verifying
(\ref{eq:Reta_cond_1}). To show uniform continuity on each pair of maximal
intervals $I,I'$ defining $\cS_\theta$,
observe first that for any $s,s' \in I'$ and $\tau \geq 0$
for which $s+\tau,s'+\tau \in I$,
\begin{align*}
&|R_\eta(s'+\tau,s')-R_\eta(s+\tau,s)|\\
&\leq |\beta| \cdot \left|\int_s^{s+\tau} R_\theta(s+\tau,r)R_\eta(r,s)\d r
-\int_{s'}^{s'+\tau} R_\theta(s'+\tau,r)R_\eta(r,s')\d r\right|
+\delta\beta^2|R_\theta(s'+\tau,s')-R_\theta(s+\tau,s)|\\
&=|\beta| \int_0^\tau \Big|R_\theta(s+\tau,s+r)R_\eta(s+r,s)
-R_\theta(s'+\tau,s'+r)R_\eta(s'+r,s')\Big|\d r
+\delta\beta^2|R_\theta(s'+\tau,s')-R_\theta(s+\tau,s)|\\
&\leq |\beta|\int_0^\tau |R_\theta(s+\tau,s+r)| \cdot |R_\eta(s'+r,s')-R_\eta(s+r,s)| \d r\\
&\hspace{0.3in}+|\beta|\int_0^\tau |R_\theta(s+\tau,s+r)-R_\theta(s'+\tau,s'+r)| \cdot |R_\eta(s'+r,s')| \d r
+\delta\beta^2|R_\theta(s'+\tau,s')-R_\theta(s+\tau,s)|.
\end{align*}
Denoting by $o_{|s-s'|}(1)$ an error that converges to 0 uniformly in $\tau$
as $|s-s'| \to 0$, observe that the last term above is $o_{|s-s'|}(1)$ 
by the uniform continuity of $R_\theta$ on $I' \times I$.
For the second term, writing the range of integration as $[0,\tau]=A \cup B$
where $r \in A$ are the values for which $s+r,s'+r$ belong to a single maximal interval of $[0,T] \setminus D$ and $r \in B$ are the values for which $s+r,s'+r$ belong to two different maximal intervals, the integral over $r \in A$ is $o_{|s-s'|}(1)$ 
again by the continuity of $R_\theta$, while the integral over $r \in B$ is also $o_{|s-s'|}(1)$ by the boundedness of $R_\theta,R_\eta$ and the bound $|B| \leq C|s-s'|$ for the total length of $B$.
Putting this together,
\[|R_\eta(s'+\tau,s')-R_\eta(s+\tau,s)|
\leq C\int_0^\tau |R_\eta(s'+r,s')-R_\eta(s+r,s)| \d r
+o_{|s-s'|}(1).\]
Since $R_\eta(s',s')=R_\eta(s,s)$,
the above and Gronwall's inequality imply that 
\begin{equation}\label{eq:Retaconts}
|R_\eta(s'+\tau,s')-R_\eta(s+\tau,s)|=o_{|s-s'|}(1)
\end{equation}
uniformly in $\tau$. Now for any $s \in I'$ and $\tau' \geq
\tau \geq 0$ for which $s+\tau,s+\tau'\in I$,
\begin{align*}
|R_\eta(s+\tau',s)-R_\eta(s+\tau,s)|
&\leq |\beta|\int_s^{s+\tau} |R_\theta(s+\tau',r)-R_\theta(s+\tau,r)|
|R_\eta(r,s)|\d r\\
&\hspace{0.2in}+\int_{s+\tau}^{s+\tau'} |R_\theta(s+\tau',r)| \cdot
|R_\eta(r,s)|\d r+\delta |\beta| |R_\theta(s+\tau',s)-R_\theta(s+\tau,s)|,
\end{align*}
so the continuity of $R_\theta$
and boundedness of $R_\theta,R_\eta$ again imply that
\begin{equation}\label{eq:Retaconttau}
|R_\eta(s+\tau',s)-R_\eta(s+\tau,s)|=o_{|\tau-\tau'|}(1)
\end{equation}
uniformly in $s$.
The statements (\ref{eq:Retaconts}) and (\ref{eq:Retaconttau}) show
that $(s,\tau) \mapsto R_\eta(s+\tau,s)$ is uniformly
continuous over $\{(s,\tau):s \in I',\tau \geq 0,s+\tau \in I\}$, implying uniform
continuity of $(s,t) \mapsto R_\eta(t,s)$ over $(s,t) \in I' \times I$.
Again if $(C_\theta,R_\theta,\tilde\alpha) \in
\cS_\theta^{\text{cont}}$, then this continuity holds over all of $I=[0,T]$.\\

\noindent (\textbf{Condition for $\alpha$}) By definition, the mapping
$\tilde \alpha \mapsto \alpha$ under $\cT_{\theta \to \eta}$ is the identity,
so the required conditions for $\alpha$ hold by those assumed for $\tilde
\alpha$.
\end{proof}

\begin{lemma}\label{lem:Trange_eta_2_theta}
$\cT_{\eta\rightarrow \theta}$ maps $\cS_\eta$ into $\cS_\theta^\text{cont}$.
\end{lemma}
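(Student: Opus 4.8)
The plan is to mirror the proof of Lemma~\ref{lem:Trange_theta_2_eta}, checking in turn the defining conditions of $\cS_\theta$ for $C_\theta$, for $R_\theta$, and for $\tilde\alpha$, and to observe along the way that each produced object is in fact continuous on all of $[0,T]$, so that the image lies in $\cS_\theta^\text{cont}$ and not merely in $\cS_\theta$. The structural difference from Lemma~\ref{lem:Trange_theta_2_eta} is that $\{\theta^t\}$ solving (\ref{def:dmft_langevin_cont_theta}) is a genuine It\^o diffusion rather than a solution of a Volterra equation, so I would extract the a priori bounds from It\^o's formula — which produces the extra additive $2\,\d t$ from the quadratic variation of $\sqrt2\,b^t$, already accounted for by the $+2$ terms in (\ref{eq:aux_Ctheta}) — rather than from the weighted Cauchy--Schwarz estimate used in (\ref{eq:cauchyschwarztrick}). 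The reason the output is continuous on all of $[0,T]$ is that the (possibly discontinuous) inputs $R_\eta,C_\eta$ enter (\ref{def:dmft_langevin_cont_theta})--(\ref{def:response_theta}) only through Lebesgue integrals $\int_0^t R_\eta(t,r)(\cdots)\,\d r$ of bounded kernels and through the law of $\{u^t\}$ (which in turn affects $\theta^t$ only through $\int_0^t u^r\,\d r$); together with the smoothing from $\sqrt2\,b^t$, this makes the $\theta$-paths and response processes continuous in $t$ and the kernels $C_\theta,R_\theta$ continuous over the whole of $[0,T]$.

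\textbf{Condition on $C_\theta$.} Writing $\mu^t$ for the drift in (\ref{def:dmft_langevin_cont_theta}), It\^o's formula and standard a priori moment bounds give $\tfrac{\d}{\d t}\E[(\theta^t)^2]=2\E[\theta^t\mu^t]+2$. I would bound $2\E[\theta^t\mu^t]$ term by term, using $|s(\theta,\alpha)|\le C_0(1+|\theta|+\|\alpha\|_2)$ and $|\partial_\theta s|\le C_0$ from (\ref{eq:s_smooth}), $|R_\eta(t,r)|\le\Phi_{R_\eta}(t-r)$ with the weighted Cauchy--Schwarz split of (\ref{eq:cauchyschwarztrick}) on the memory term, $\E[(u^t)^2]=C_\eta(t,t)\le\Phi_{C_\eta}(t)$, $\E(\theta^*)^2=\tau_*^2$, $\|\alpha^t\|_2^2\le\Phi_\alpha(t)$, and Young's inequality, to control $\tfrac{\d}{\d t}\E[(\theta^t)^2]$ by the right-hand side of (\ref{eq:aux_Ctheta}) with $\Phi_{C_\theta},\Phi_{C_\eta},\Phi_\alpha$ replaced by $\E[(\theta^\cdot)^2],C_\eta(\cdot,\cdot),\|\alpha^\cdot\|_2^2$; the numerical constants in (\ref{eq:aux_Ctheta}) are calibrated precisely so this is possible. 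Since $\E[(\theta^0)^2]=\Phi_{C_\theta}(0)$ and the right side of (\ref{eq:aux_Ctheta}) has nonnegative coefficients, a Volterra--Gronwall comparison yields $C_\theta(t,t)\le\Phi_{C_\theta}(t)$, i.e.\ (\ref{eq:Ctheta_cond_1}); positive-semidefiniteness of $C_\theta$ on $\{*\}\cup[0,T]$ is automatic since it is a covariance kernel. For (\ref{eq:Ctheta_cond_2}) I would use $\theta^t-\theta^s=\int_s^t\mu^r\,\d r+\sqrt2(b^t-b^s)$ to obtain $\E[(\theta^t-\theta^s)^2]\le 2(t-s)\int_s^t\E[(\mu^r)^2]\,\d r+4(t-s)$, together with a direct estimate of the same type — now invoking the just-proved (\ref{eq:Ctheta_cond_1}) — showing $\E[(\mu^r)^2]\le\Phi_{C_\theta}'(r)$. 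Uniform continuity of $C_\theta(t,s)$ over $s,t\in[0,T]$ and of $C_\theta(t,*)$ over $t\in[0,T]$ then follows from this increment bound and Cauchy--Schwarz.

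\textbf{Condition on $R_\theta$.} Fixing $s$ and writing $\rho^t=\partial\theta^t/\partial u^s$ for the solution of (\ref{def:response_theta}) (which exists by Lemma~\ref{lem:sub_theta_solution}), whose coefficients are bounded by $\delta|\beta|+C_0$ and by $\Phi_{R_\eta}$, I would note that almost surely $|\rho^t|$ satisfies the integro-differential inequality obtained from (\ref{def:response_theta}) by taking absolute values — this is precisely (\ref{eq:aux_Rtheta}) with $=$ replaced by $\le$ and the same initial value $1$ — so by a comparison principle $|\rho^t|\le\Phi_{R_\theta}(t-s)$ pathwise. Taking expectations gives $|R_\theta(t,s)|\le\Phi_{R_\theta}(t-s)$, i.e.\ (\ref{eq:Rtheta_cond}), and integrating (\ref{def:response_theta}) in $t$ with this pathwise bound gives $|R_\theta(t',s)-R_\theta(t,s)|\le\int_t^{t'}\Phi_{R_\theta}'(r-s)\,\d r\le(\sup_r\Phi_{R_\theta}'(r))|t'-t|$, which is (\ref{eq:Rtheta_cont}). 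The remaining — and most delicate — point is uniform continuity in $s$: for $s<s'\le t$, boundedness of the integrand in (\ref{def:response_theta}) gives $|\rho^{s'}-1|=|\rho^{s'}-\rho^s|\le C|s'-s|$, and on $[s',t]$ the difference between the response processes started at times $s$ and $s'$ solves a linear integro-differential equation whose forcing term — coming from the differing lower limits $\int_s^{\cdot}$ versus $\int_{s'}^{\cdot}$ of the memory integral — is $O(|s'-s|)$, so Gronwall yields $|R_\theta(t,s)-R_\theta(t,s')|\le C|s-s'|$ uniformly in $t$. With (\ref{eq:Rtheta_cont}) this makes $(s,t)\mapsto R_\theta(t,s)$ jointly uniformly (indeed Lipschitz) continuous on $\{0\le s\le t\le T\}$, using only the boundedness of $R_\eta$, not its continuity.

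\textbf{Condition on $\alpha$, and conclusion.} The map sends $(R_\eta,C_\eta,\alpha)$ to the $\alpha^0$-initialized solution $\tilde\alpha^t$ of $\tfrac{\d}{\d t}\tilde\alpha^t=\cG(\tilde\alpha^t,\sP(\theta^t))$; since $\|\cG(\alpha,\sP)\|_2\le C_0(1+\|\alpha\|_2+\E_\sP[\theta^2]^{1/2})$ by (\ref{eq:G_smooth1}) and $t\mapsto\E[(\theta^t)^2]\le\Phi_{C_\theta}(t)$ is continuous, $\tilde\alpha$ is continuously differentiable on $[0,T]$, hence uniformly continuous there, and $\tfrac{\d}{\d t}\|\tilde\alpha^t\|_2^2=2\iprod{\tilde\alpha^t}{\cG(\tilde\alpha^t,\sP(\theta^t))}$ is bounded — via Cauchy--Schwarz, Young's inequality, and $\E[(\theta^t)^2]\le\Phi_{C_\theta}(t)$ — by the right-hand side of (\ref{eq:aux_alpha}), so Gronwall gives $\|\tilde\alpha^t\|_2^2\le\Phi_\alpha(t)$, i.e.\ (\ref{eq:alpha_cond}). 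Since every continuity assertion above holds on all of $[0,T]$, the image lies in $\cS_\theta^\text{cont}$. I expect the bulk of the actual work to be bookkeeping rather than conceptual: matching the term-by-term estimates to the precise constants in (\ref{eq:aux_alpha})--(\ref{eq:aux_Rtheta}) so the Volterra--Gronwall comparisons close, and dealing with the $s$-continuity of $R_\theta$ near the diagonal $t=s$ and the moving lower limit of the memory integral — this last being the main obstacle, though it is not hard given the a priori bounds supplied by Lemma~\ref{eq:lemma:auxfunctions}.
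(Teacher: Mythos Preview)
Your proposal is correct and follows essentially the same route as the paper: It\^o's formula plus a Volterra--Gronwall comparison with (\ref{eq:aux_Ctheta}) for $C_\theta$, the pathwise differential inequality on $|\partial\theta^t/\partial u^s|$ compared against (\ref{eq:aux_Rtheta}) for $R_\theta$, and the ODE estimate compared against (\ref{eq:aux_alpha}) for $\tilde\alpha$. The bookkeeping you anticipate is exactly what the paper does.

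The one genuine point of departure is your treatment of the $s$-continuity of $R_\theta$. You fix $t$ and compare $\partial\theta^t/\partial u^s$ with $\partial\theta^t/\partial u^{s'}$; since both responses are driven by the \emph{same} coefficient process $\partial_\theta s(\theta^\cdot,\alpha^\cdot)$ and the same kernel $R_\eta(t,\cdot)$, the difference solves a linear Volterra equation with $O(|s-s'|)$ initial data and forcing, and Gronwall gives a Lipschitz bound using only boundedness of $R_\eta$. The paper instead reparametrizes by $\tau=t-s$ and compares $\partial\theta^{s+\tau}/\partial u^s$ with $\partial\theta^{s'+\tau}/\partial u^{s'}$; this brings in $R_\eta(s+\tau,s+r)-R_\eta(s'+\tau,s'+r)$, so the paper must invoke uniform continuity of $R_\eta$ on each maximal interval and split the range of integration according to whether $s+r,s'+r$ lie in the same interval, obtaining only an $o_{|s-s'|}(1)$ bound. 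Your argument is cleaner here and yields a stronger (Lipschitz) conclusion, at no extra cost.
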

\begin{proof}
(\textbf{Condition for $C_\theta$}) To verify (\ref{eq:Ctheta_cond_1}), denote
\begin{align*}
v^t={-}\delta\beta(\theta^t-\theta^*) + s(\theta^t,\alpha^t)+\int_0^t
R_\eta(t, s)(\theta^{s}-\theta^*)\d s + u^t. 
\end{align*}
Applying Ito's formula to $(\theta^t)^2$ yields
\begin{equation}\label{eq:dCtt}
\frac{\d C_\theta(t,t)}{\d t}
=\frac{\d \E (\theta^t)^2}{\d t} = \E[2\theta^t v^t] + 2 \leq
1.1 \cdot \E(\theta^t)^2+\E(v^t)^2 + 2.
\end{equation}
(The bound holds with 1 in place of 1.1, and we enlarge this to 1.1 to
accommodate a later discretized version of this computation.)
Using an argument similar to (\ref{eq:cauchyschwarztrick}),
and letting $C_0>0$ be the constant defining
(\ref{eq:aux_alpha}--\ref{eq:aux_Reta}) which upper bounds
$C>0$ in (\ref{eq:s_smooth}) of Assumption \ref{assump:prior},
we may bound $\E(v^t)^2$ as
\begin{align}
\E(v^t)^2 &\leq 6\bigg[\delta^2\beta^2 \E(\theta^t)^2
+\delta^2\beta^2 \tau_*^2+\E[s(\theta^t,\alpha^t)^2]
+\E\Big(\int_0^t R_\eta(t,s) \theta^s \d s\Big)^2
+\E\Big(\int_0^t R_\eta(t,s) \theta^* \d s\Big)^2
+\E(u^t)^2\bigg]\notag\\
&\leq (6\delta^2\beta^2+18C_0^2)\E(\theta^t)^2
+6\int_0^t (t-s+1)^2 \Phi_{R_\eta}^2(t-s)\E(\theta^s)^2\d s\notag\\
&\hspace{1in}+6\Big(\delta^2\beta^2 \tau_*^2+3C_0^2+3C_0^2\Phi_\alpha(t)
+\int_0^t (t-s+1)^2\Phi_{R_\eta}^2(t-s)\d s \cdot \tau_*^2
+\Phi_{C_\eta}(t)\Big).\label{eq:vtbound}
\end{align}
Applying this to (\ref{eq:dCtt}) and
comparing with the equation for $\Phi_{C_\theta}$ from (\ref{eq:aux_Ctheta}),
\begin{align}
\frac{\d}{\d t}\Phi_{C_\theta}(t)&=(6\delta^2\beta^2+18C_0^2+1.1)\Phi_{C_\theta}(t)
+6\int_0^t (t-s+1)^2 \Phi_{R_\eta}^2(t-s)\Phi_{C_\theta}(s)\d s\label{eq:dPhiCtheta}\\
&\hspace{0.5in}+6\Big(\delta^2\beta^2 \tau_*^2+3C_0^2+3C_0^2\Phi_\alpha(t)
+\int_0^t (t-s+1)^2\Phi_{R_\eta}^2(t-s)\d s \cdot \tau_*^2
+\Phi_{C_\eta}(t)\Big)+2,\notag
\end{align}
we see that since $C_\theta(0,0)=\Phi_{C_\theta}(0)$, we have
$C_\theta(t,t) \leq \Phi_{C_\theta}(t)$.

Next we prove (\ref{eq:Ctheta_cond_2}) for all $0 \leq s \leq t \leq T$.
We have $\theta^t-\theta^s=\int_s^t v^r\,\d r + \sqrt{2}(b^t - b^s)$.
Then it holds that
\begin{align*}
C_\theta(t,t) - 2C_\theta(t,s) + C_\theta(s,s)
&= \E[(\theta^t - \theta^s)^2]
\leq 2|t-s| \int_s^t \E(v^r)^2\d r+4\E(b^t-b^s)^2\\
&\leq 2|t-s|^2 \sup_{r \in [0,T]} |\Phi_{C_\theta}'(r)|
+4|t-s| \leq \Big(2T\sup_{r \in [0,T]} \Phi_{C_\theta}'(r)+4\Big)|t-s|,
\end{align*}
where the second inequality compares (\ref{eq:vtbound}) to the definition of $\Phi_{C_\theta}'(t)$ in
(\ref{eq:dPhiCtheta}).
This verifies (\ref{eq:Ctheta_cond_2}).
As in the preceding argument for $C_\eta$, this condition
(\ref{eq:Ctheta_cond_2}) and Cauchy-Schwarz implies that $C_\theta(t,s)$
is uniformly continuous over all $0 \leq s \leq t \leq T$, and also that $C_\theta(t,*)$ is uniformly continuous over all $t \in [0,T]$.\\

\noindent(\textbf{Condition for $R_\theta$}) Let $\bar{R}_\theta(t,s)=\E|\frac{\partial \theta^t}{\partial u^s}|$ so that $|R_\theta(t,s)|\leq \bar{R}_\theta(t,s)$ by definition. Note that
\begin{align}\label{eq:theta_response_dt}
\notag\frac{\d}{\d t}\Big|\frac{\partial \theta^t}{\partial u^s}\Big| \leq
\Big|\frac{\d}{\d t}\frac{\partial \theta^t}{\partial u^s}\Big| &\leq
\big(\delta|\beta|+|\partial_\theta s(\theta^t,\alpha^t)|\big)\Big|\frac{\partial \theta^t}{\partial u^s}\Big| + \int_s^t |R_\eta(t,s')|\Big|\frac{\partial \theta^{s'}}{\partial u^s}\Big|\d s'\\
&\leq (\delta|\beta|+C_0)\Big|\frac{\partial \theta^t}{\partial u^s}\Big| + \int_s^t \Phi_{R_\eta}(t-s')\Big|\frac{\partial \theta^{s'}}{\partial u^s}\Big|\d s',
\end{align}
where $C_0>0$ is the constant defining (\ref{eq:aux_alpha}--\ref{eq:aux_Reta})
which upper bounds $C>0$ in (\ref{eq:s_smooth})
of Assumption \ref{assump:prior}. Taking expectation on both sides yields
\begin{align*}
\frac{\d}{\d t}\bar{R}_\theta(t,s)
\leq (\delta|\beta|+C_0)\bar{R}_\theta(t,s) + \int_0^{t-s}
\Phi_{R_\eta}(t-s-s')\bar{R}_{\theta}(s+s',s)\d s'.
\end{align*}
Recall the equation for $\Phi_{R_\theta}$ in (\ref{eq:aux_Rtheta}),
\begin{align*}
\frac{\d}{\d t}\Phi_{R_\theta}(t-s)=(\delta|\beta|+C_0)\Phi_{R_\theta}(t-s) +
\int_0^{t-s}
\Phi_{R_\eta}(t-s-s')\Phi_{R_\theta}(s')\d s'.
\end{align*}
Since $\bar{R}_\theta(s,s)=1=\Phi_{R_\theta}(0)$, this implies for all $t \in [s,T]$ that
$|R_\theta(t,s)|\leq \bar{R}_\theta(t,s) \leq \Phi_{R_\theta}(t-s)$,
verifying (\ref{eq:Rtheta_cond}) for all $0 \leq s \leq t \leq T$.

To show (\ref{eq:Rtheta_cont}) for all $0 \leq s \leq t \leq t' \leq T$,
observe that we have
\begin{align*}
|R_\theta(t',s)-R_\theta(t,s)|&=\bigg|\int_t^{t'} \E\Big[{-}\Big(\delta\beta-\partial_\theta
s(\theta^r,\alpha^r)\Big)\frac{\partial \theta^r}{\partial u^s}\Big]\d
r+\int_t^{t'}\Big(\int_s^r R_\eta(r,r')R_\theta(r',s)\d r'\Big)\d r\bigg|\\
&\leq \int_t^{t'} \Big((\delta|\beta|+C_0) \bar{R}_\theta(r,s)\d r
+\int_s^r \Phi_{R_\eta}(r-r')\bar R_\theta(r',s)\d r'\Big)\d r
\leq |t'-t| \cdot \sup_{r \in [0,T]} \Phi_{R_\theta}'(r),
\end{align*}
verifying (\ref{eq:Rtheta_cont}). In particular, this shows
continuity of $\tau \mapsto R_\theta(s+\tau,s)$ uniformly over $s \in [0,T]$ and $\tau \in [0,T-s]$. For
continuity in $s$, observe that
\begin{align*}
&\frac{\d}{\d \tau}\Big|\frac{\partial \theta^{s+\tau}}{\partial u^s}
-\frac{\partial \theta^{s'+\tau}}{\partial u^{s'}}\Big|\\
&\leq (\delta|\beta|+C_0)\Big|\frac{\partial \theta^{s+\tau}}{\partial u^s}
-\frac{\partial \theta^{s'+\tau}}{\partial u^{s'}}\Big|
+\int_0^\tau \Big|R_\eta(s+\tau,s+r)\frac{\partial \theta^{s+r}}{\partial
u^s}-R_\eta(s'+\tau,s'+r)\frac{\partial \theta^{s'+\tau}}{\partial
u^{s'}}\Big|\d r.
\end{align*}
We may again divide the range of integration of the second term as $[0,\tau]=A \cup B$ where $s+r,s'+r$ belong to the same maximal interval defining $\cS_\eta$ for $r \in A$, and to two different maximal intervals for $r \in B$. Then
taking expectations on both sides above and applying boundedness of $\bar R_\theta,R_\eta$, 
continuity of $R_\eta$ to bound the integral over $r \in A$, and $|B| \leq C|s-s'|$ to bound the integral over $r \in B$, this shows
\begin{align*}
&\frac{\d}{\d \tau}\E\Big|\frac{\partial \theta^{s+\tau}}{\partial u^s}
-\frac{\partial \theta^{s'+\tau}}{\partial u^{s'}}\Big|
\leq C\bigg(\E\Big|\frac{\partial \theta^{s+\tau}}{\partial u^s}
-\frac{\partial \theta^{s'+\tau}}{\partial u^{s'}}\Big|
+\int_0^\tau 
\E\Big|\frac{\partial \theta^{s+r}}{\partial
u^s}-\frac{\partial \theta^{s'+\tau}}{\partial u^{s'}}\Big|\d r\bigg)
+o_{|s-s'|}(1)
\end{align*}
where $o_{|s-s'|}(1)$ converges to 0 uniformly in $\tau$ as $|s-s'| \to 0$.
Then, since $\E|\frac{\partial \theta^s}{\partial u^s}
-\frac{\partial \theta^{s'}}{\partial u^{s'}}|=0$, a Gronwall argument implies
$\E\Big|\frac{\partial \theta^{s+\tau}}{\partial u^s}
-\frac{\partial \theta^{s'+\tau}}{\partial u^{s'}}\Big|=o_{|s-s'|}(1)$,
so also $s \mapsto R_\theta(s+\tau,s)$ is continuous uniformly over $s \in [0,T]$ and $\tau \in [0,T-s]$.
Thus $(s,t) \mapsto R_\theta(t,s)$ is uniformly continuous over all
$0 \leq s \leq t \leq T$.\\

\noindent(\textbf{Condition for $\tilde \alpha$}) By definition, we have
\[\frac{\d}{\d t}\tilde \alpha^t=\cG(\tilde \alpha^t,\sP(\theta^t))\]
with $\tilde\alpha^0=\alpha^0$. The condition (\ref{eq:G_smooth1}) and
boundedness of $C_\theta$ shown above imply that
$\alpha \mapsto \cG(\alpha,\sP(\theta^t))$ is Lipschitz uniformly
over $t \in [0,T]$, so there exists a unique 
solution $\{\tilde\alpha^t\}_{t \in [0,T]}$ of this equation, which is uniformly continuous on
$[0,T]$. Letting $C_0>0$ be the constant defining
(\ref{eq:aux_alpha}--\ref{eq:aux_Reta}) which upper bounds
(\ref{eq:G_smooth1}) of
Assumption \ref{assump:gradient}, and applying the above bound
$\E(\theta^t)^2=C_\theta(t,t) \leq \Phi_{C_\theta}(t)$, this solution satisfies
\[\frac{\d}{\d t}\|\tilde \alpha^t\|^2
\leq 2\|\tilde\alpha^t\| \cdot \|\cG(\tilde \alpha^t,\sP(\theta^t))\|
\leq 2C_0(1+\sqrt{\Phi_{C_\theta}(t)}+\|\tilde \alpha^t\|)\|\tilde\alpha^t\|
\leq 4.1C_0(1+\Phi_{C_\theta}(t))+3C_0\|\tilde\alpha^t\|^2\]
(where we again relax a constant 4 to 4.1).
Recalling the equation for $\Phi_\alpha$ in (\ref{eq:aux_alpha}),
\[\frac{\d}{\d t}
\Phi_{\alpha}(t)=4.1C_0(1+\Phi_{C_\theta}(t))+3C_0\Phi_\alpha(t),\]
since $C_0>0$ and $\|\tilde \alpha^0\|^2=\Phi_\alpha(0)$,
this shows $\|\tilde \alpha^t\|^2 \leq \Phi_\alpha(t)$.
\end{proof}

Next we equip the spaces $\cS_\eta$ and $\cS_\theta$ with metrics. Fixing a
large constant $\lambda>0$, define
\begin{equation}\label{eq:metrics}
\begin{aligned}
d(\alpha_1,\alpha_2)&=\sup_{t \in [0,T]} e^{-\lambda
t}\|\alpha_1^t-\alpha_2^t\|\\
d(C_\theta^1,C_\theta^2)
&=\inf_{(w_1^\ast,\{w_1^t\})\sim C_\theta^1, (
w_2^\ast,\{w_2^t\}) \sim C_\theta^2} \Big[\sqrt{\E(w_1^*-w_2^*)^2}+\sup_{t\in
[0,T]} e^{-\lambda t} \sqrt{\E (w_1^t - w_2^t)^2}\Big]\\
d(C_\eta^1,C_\eta^2)
&=\inf_{\{u_1^t\}\sim C_\eta^1, \{u_2^t\} \sim C_\eta^2}
\sup_{t\in [0,T]} e^{-\lambda t} \sqrt{\E (u_1^t - u_2^t)^2}\\
d(R_\theta^1,R_\theta^2)&=\sup_{0\leq s \leq t \leq T} e^{-\lambda t}
\big|R_\theta^1(t,s) - R_\theta^2(t,s)\big|\\
d(R_\eta^1,R_\eta^2)&=\sup_{0\leq s \leq t \leq T} e^{-\lambda t}
\big|R_\eta^1(t,s) - R_\eta^2(t,s)\big|.\\
\end{aligned}
\end{equation}
In the definitions of $d(C_\theta^1,C_\theta^2)$ and
$d(C_\eta^1,C_\eta^2)$ above,
the infima are taken over all couplings of mean-zero
Gaussian processes with covariances
$(C_\theta^1,C_\theta^2)$ and $(C_\eta^1,C_\eta^2)$.
Writing $X^i=(R_\eta^i,C_\eta^i,\alpha_i) \in \cS_\eta$ and
$Y^i=(R_\theta^i,C_\theta^i,\tilde\alpha_i) \in \cS_\theta$
for $i=1,2$, let
\begin{align}
d(X^1,X^2)&=d(R_\eta^1,R_\eta^2)+d(C_\eta^1,C_\eta^2)
+d(\alpha_1,\alpha_2),\label{eq:dist_Seta}\\
d(Y^1,Y^2)&=d(R_\theta^1,R_\theta^2)+d(C_\theta^1,C_\theta^2)
+d(\tilde\alpha_1,\tilde\alpha_2).\label{eq:dist_Stheta}
\end{align}

\begin{lemma}[Modulus of $\cT_{\eta\rightarrow\theta}$]\label{lem:modulus_eta_2_theta}
Let $X^i=(R_\eta^i,C_\eta^i,\alpha_i) \in \cS_\eta$ and
$Y^i=\cT_{\eta\rightarrow\theta}(X^i)=(R_\theta^i,C_\theta^i,\tilde\alpha_i)
\in \cS_\theta$ for $i=1,2$. Then for any $\eps>0$, there exists a constant
$\lambda=\lambda(\eps)>0$ sufficiently large defining the metrics
(\ref{eq:metrics}) such that
\begin{align*}
d(Y^1,Y^2) \leq \eps \cdot d(X^1,X^2).
\end{align*}
\end{lemma}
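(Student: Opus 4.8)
The plan is to bound the differences of the outputs $R_\theta^i,C_\theta^i,\tilde\alpha_i$ by the corresponding differences of the inputs $R_\eta^i,C_\eta^i,\alpha_i$, using Gronwall's lemma adapted to the exponentially weighted norms in (\ref{eq:metrics}), and observing that every term in the estimates carries an extra factor that is forced to be small by taking $\lambda$ large. Throughout, I would use that all the solutions $\{\theta_i^t,\frac{\partial\theta_i^t}{\partial u^s}\}$ lie in the bounded regime guaranteed by Lemma \ref{lem:Trange_eta_2_theta}, so that the bounds $\E(\theta_i^t)^2 \leq \Phi_{C_\theta}(t)$, $|R_\theta^i(t,s)| \leq \Phi_{R_\theta}(t-s)$, $|R_\eta^i(t,s)| \leq \Phi_{R_\eta}(t-s)$, and $\|\alpha_i^t\|^2 \leq \Phi_\alpha(t)$ are all available and uniform over $[0,T]$.

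First I would couple the two $\theta$-processes. Fix a single realization of $(\theta^0,\theta^*,\{b^t\})$, and fix a coupling of the mean-zero Gaussian processes $\{u_1^t\},\{u_2^t\}$ with covariances $C_\eta^1,C_\eta^2$ that nearly attains $d(C_\eta^1,C_\eta^2)$. Writing $\Delta^t=\theta_1^t-\theta_2^t$, subtracting the SDEs (\ref{def:dmft_langevin_cont_theta}) cancels the Brownian term, so $\Delta^t$ solves an ODE (pathwise) driven by: (i) the term $-\delta\beta\Delta^t$; (ii) $s(\theta_1^t,\alpha_1^t)-s(\theta_2^t,\alpha_2^t)$, which by Assumption \ref{assump:prior} is bounded by $C_0(|\Delta^t|+\|\alpha_1^t-\alpha_2^t\|)$; (iii) the memory term $\int_0^t[R_\eta^1(t,s)(\theta_1^s-\theta^*)-R_\eta^2(t,s)(\theta_2^s-\theta^*)]\d s$, which splits as $\int_0^t R_\eta^1(t,s)\Delta^s\d s + \int_0^t(R_\eta^1-R_\eta^2)(t,s)(\theta_2^s-\theta^*)\d s$; and (iv) $u_1^t-u_2^t$. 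Taking $L^2$ norms, applying Cauchy–Schwarz in the memory terms with the $(t-s+1)^{\pm2}$ trick as in (\ref{eq:cauchyschwarztrick}), and then multiplying by $e^{-\lambda t}$, I would get an integral inequality of the schematic form
\[
e^{-\lambda t}\sqrt{\E(\Delta^t)^2} \leq C\int_0^t e^{-\lambda(t-s)}\Big(e^{-\lambda s}\sqrt{\E(\Delta^s)^2}\Big)\d s + C\,d(\alpha_1,\alpha_2) + C\,d(R_\eta^1,R_\eta^2)\cdot\big(\text{bdd}\big) + C\,d(C_\eta^1,C_\eta^2),
\]
where the constants depend on $T$ and the $\Phi$'s but not on $\lambda$; since $\int_0^t e^{-\lambda(t-s)}\d s \leq 1/\lambda$, a Gronwall argument in the weighted norm yields $\sup_t e^{-\lambda t}\sqrt{\E(\Delta^t)^2} \leq (C/\lambda)(\cdots) + C\,d(X^1,X^2)$, and more to the point every contribution can be made $\leq \eps\, d(X^1,X^2)$ by enlarging $\lambda$ — except I must be careful that the coupling bound actually gives $\sqrt{\E(\theta_1^t-\theta_2^t)^2}$ controlling $d(C_\theta^1,C_\theta^2)$ (it does, since $d(C_\theta^1,C_\theta^2)$ is an infimum over couplings and $\theta_i$ is a function of the coupled data; the $w^*$-piece is handled since $C_\theta(*,*)=\E(\theta^*)^2$ is identical for both). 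The same scheme handles $d(C_\theta^1,C_\theta^2)$ directly from the $\Delta^t$ bound, and $d(\tilde\alpha_1,\tilde\alpha_2)$ by differentiating $\frac{\d}{\d t}\|\tilde\alpha_1^t-\tilde\alpha_2^t\|^2$, using Lipschitzness of $\cG$ in both arguments (\ref{eq:G_smooth1}) together with $W_2(\sP(\theta_1^t),\sP(\theta_2^t))^2 \leq \E(\Delta^t)^2$, and Gronwall.

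Next I would bound $d(R_\theta^1,R_\theta^2)$. Writing $\rho^t=\frac{\partial\theta_1^t}{\partial u^s}-\frac{\partial\theta_2^t}{\partial u^s}$ (for fixed $s$), subtract the linear ODEs (\ref{def:response_theta}). The difference is driven by $-\delta\beta\rho^t$, by $\partial_\theta s(\theta_1^t,\alpha_1^t)\frac{\partial\theta_1^t}{\partial u^s}-\partial_\theta s(\theta_2^t,\alpha_2^t)\frac{\partial\theta_2^t}{\partial u^s}$ — which I split into $\partial_\theta s(\theta_1^t,\alpha_1^t)\rho^t$ plus $[\partial_\theta s(\theta_1^t,\alpha_1^t)-\partial_\theta s(\theta_2^t,\alpha_2^t)]\frac{\partial\theta_2^t}{\partial u^s}$, the latter bounded via the $C^2$-bound on $s$ by $C(|\Delta^t|+\|\alpha_1^t-\alpha_2^t\|)\Phi_{R_\theta}(t-s)$ — and by the memory term $\int_s^t[R_\eta^1(t,s')\frac{\partial\theta_1^{s'}}{\partial u^s}-R_\eta^2(t,s')\frac{\partial\theta_2^{s'}}{\partial u^s}]\d s'$ split the usual way into $\int_s^t R_\eta^1\rho^{s'}\d s' + \int_s^t(R_\eta^1-R_\eta^2)\frac{\partial\theta_2^{s'}}{\partial u^s}\d s'$. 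Taking expectations, multiplying by $e^{-\lambda t}$, and using the already-established weighted bound on $\E|\Delta^t|$, I arrive at a Gronwall inequality for $\sup_{s\le t}e^{-\lambda t}\E|\rho^t|$ with a driving term $\leq (C/\lambda + \text{small})\,d(X^1,X^2)$, hence $d(R_\theta^1,R_\theta^2) \leq \eps\,d(X^1,X^2)$ for $\lambda$ large. Finally, $d(Y^1,Y^2)$ is the sum of the three pieces, so collecting the three estimates (each $\leq \tfrac{\eps}{3}d(X^1,X^2)$ for $\lambda$ large enough) gives the claim.

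The main obstacle I anticipate is not any single estimate but the bookkeeping that makes every error term genuinely carry a small factor. Two subtleties: (1) the memory terms involving $R_\eta^1(t,s)-R_\eta^2(t,s)$ must be controlled by $d(R_\eta^1,R_\eta^2)$, which is a $\sup$ over $0\le s\le t\le T$ of $e^{-\lambda t}|R_\eta^1-R_\eta^2|$ — so $\int_0^t|R_\eta^1(t,s)-R_\eta^2(t,s)|\,|\theta_2^s-\theta^*|\d s$ picks up a factor $e^{\lambda t}$ when I insert $d(R_\eta^1,R_\eta^2)$, which is exactly cancelled by the overall $e^{-\lambda t}$ weight, leaving a bounded constant but crucially \emph{not} a small one — so the smallness for this term must come from splitting off a $(t-s+1)^{-2}$ integrable factor via Cauchy–Schwarz and then noting the residual still must be absorbed by rescaling $\eps$ in the \emph{input} space rather than by $1/\lambda$; I would double-check that the statement's logical structure ("for any $\eps>0$ there is $\lambda(\eps)$") allows this, since the term is bounded by a fixed constant times $d(R_\eta^1,R_\eta^2)$ independent of $\lambda$ — this means I actually need the weighted norm on $R_\eta$ to already be an honest contraction factor, which it is not by itself, so the resolution is that $d(R_\eta^1,R_\eta^2)$ appears multiplied by something that \emph{is} small, namely one gets an extra $1/\lambda$ from the $\int e^{-\lambda(t-s)}$ structure only on the \emph{Gronwall-iterated} part, and the non-iterated part $\int_0^t(R_\eta^1-R_\eta^2)(t,s)(\theta_2^s-\theta^*)\d s$ must be rewritten using the $(t-s+1)^{-2}$ trick so that $\int_0^t(t-s+1)^{-2}e^{-\lambda(t-s)}\d s$ appears — but this is still $O(1)$, not $o(1)$. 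I believe the correct resolution (and what I would verify carefully) is that the lemma is really proved by first getting $d(Y^1,Y^2) \leq C\,d(X^1,X^2)$ with $C$ independent of $\lambda$ from the non-memory terms, \emph{and} separately that the memory contributions to $d(C_\theta),d(R_\theta)$ are $O(1/\lambda)$ after the Gronwall iteration because in the fixed-point composition the only place $R_\eta^1-R_\eta^2$ enters un-convolved is already multiplied by bounded-and-integrable weights whose $\lambda$-decay kills them — i.e., the honest statement is that after Gronwall the coefficient in front of $d(X^1,X^2)$ is $\le C/\lambda$ once all memory kernels have been folded in, because the kernel $\Phi_{R_\eta}$ itself (not its difference) supplies the convolution that the Gronwall step converts into a $1/\lambda$ factor. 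Sorting out exactly which terms get the $1/\lambda$ and ensuring none escape it is the crux; everything else is a routine weighted-Gronwall computation. (2) A minor point: the couplings realizing $d(C_\eta^1,C_\eta^2)$ are only $\eps'$-optimal, so I would carry an additive $\eps'$ and send it to zero at the end, which is harmless since $d(C_\theta^1,C_\theta^2)$ is itself an infimum over couplings.
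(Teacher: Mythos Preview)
Your overall strategy matches the paper's: couple the two $\theta$-processes via an (optimal) coupling of $(u_1,u_2)$ and a common $(\theta^0,\theta^*,b)$, bound $\E(\theta_1^t-\theta_2^t)^2$ term by term in the weighted norm, then feed this into the $R_\theta$ and $\tilde\alpha$ estimates. But your long worry about obstacle (1) rests on a misreading of the structure, and the resolutions you propose (Gronwall iteration on the memory piece, $(t-s+1)^{-2}$ splitting, rescaling $\eps$ on the input space) are all unnecessary. You wrote the memory-difference contribution as $\int_0^t (R_\eta^1-R_\eta^2)(t,s)(\theta_2^s-\theta^*)\,\d s$, which is the drift difference \emph{at time $t$}. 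What enters $\theta_1^t-\theta_2^t$ is the time-integral of this drift difference, i.e.\ the double integral
\[
\int_0^t (\theta_2^{s'}-\theta^*)\Big(\int_{s'}^t (R_\eta^1-R_\eta^2)(s,s')\,\d s\Big)\d s',
\]
and since $|R_\eta^1(s,s')-R_\eta^2(s,s')|\le e^{\lambda s}d(R_\eta^1,R_\eta^2)$ by the metric definition, the inner integral $\int_{s'}^t e^{2\lambda s}\,\d s\le e^{2\lambda t}/(2\lambda)$ supplies the $1/\lambda$ directly. Every one of the five terms in the paper's decomposition of $\E(\theta_1^t-\theta_2^t)^2$ carries at least one honest time integral over a variable weighted by $e^{\lambda(\cdot)}$, and that is all that is needed.

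Relatedly, the paper does not use a Gronwall iteration: once each term is shown to satisfy a bound of the form $\frac{C}{\lambda}e^{2\lambda t}\big(\sup_r e^{-2\lambda r}\E(\Delta^r)^2+d(X^1,X^2)^2\big)$, one multiplies by $e^{-2\lambda t}$, takes the supremum over $t$, and for $\lambda$ large absorbs the $\sup$ on the right into the left, giving $\sup_t e^{-2\lambda t}\E(\Delta^t)^2\le \eps^2\, d(X^1,X^2)^2$ in one step. The same absorb-and-rearrange trick handles $d(R_\theta^1,R_\theta^2)$ and $d(\tilde\alpha_1,\tilde\alpha_2)$, using the $\Delta$-bound as input (as you correctly note).
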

\begin{proof}
We write $C,C'>0$ for constants that may depend on $T$, but not on $\lambda$,
and changing from instance to instance.\\

\noindent \textbf{Bound of $d(C_\theta^1,C_\theta^2)$.}
Let $\{u_1^t\}_{t\in[0,T]}$ and $\{u_2^t\}_{t\in[0,T]}$ be an optimal coupling
in the definition of $d(C^1_\eta,C^2_\eta)$, i.e., 
\begin{align}\label{eq:opt_u_couple}
\sup_{t\in[0,T]} e^{-\lambda t}\sqrt{\E[(u_1^t - u_2^t)^2]} = d(C_\eta^1, C_\eta^2).
\end{align}
Let $\{\theta_i^t\}$ be the solution to
(\ref{def:dmft_langevin_cont_theta}) driven by $\{u_i^t,\alpha_i^t,R_\eta^i\}$
for $i=1,2$, with a common Brownian motion $\{b^t\}$ and initialization
$\theta^0$, i.e.
\begin{align}\label{eq:theta_modulus_def}
\theta_i^t=\theta^0 + \int_0^t \Big({-}\delta\beta(\theta^s_i - \theta^\ast) +
s(\theta^s_i,\alpha^s_i)+\int_0^s R^i_\eta(s,s')(\theta_i^{s'}-\theta^*)\d s'
+u^s_i\Big)\d s + \sqrt{2}b^t.
\end{align}
By definition, we have $\E[\theta_1^t\theta_1^s] = \E[\theta_2^t\theta_2^s] = C_\theta(t,s)$. Moreover,
\[\E(\theta_1^t-\theta_2^t)^2 \leq 5[(I)+(II)+(III)+(IV)+(V)]\]
where we set
\begin{align*}
(I)&=\E \Big(\int_0^t \delta\beta|\theta^s_1 - \theta^s_2|\d s\Big)^2\\
(II)&=\E \Big(\int_0^t |s(\theta^s_1,\alpha^s_1)-s(\theta^s_2,\alpha^s_2)|\d s
\Big)^2\\
(III)&=\E\Big(\int_0^t |\theta^{s'}_1-\theta^{s'}_2|\Big(\int_{s'}^t
|R^1_\eta(s,s')|\d s\Big)\d s'\Big)^2\\
(IV)&=\E\Big(\int_0^t |\theta_2^{s'}-\theta^*|
\Big(\int_{s'}^t |R^1_\eta(s,s')-R_\eta^2(s,s')|\d s\Big)
\d s'\Big)^2\\
(V)&=\E \Big(\int_0^t |u^s_1 - u^s_2| \d s\Big)^2.
\end{align*}
Term $(I)$ satisfies
\begin{align*}
(I) \leq C\int_0^t \E(\theta^s_1 - \theta^s_2)^2\d s &= C\int_0^t e^{2\lambda s}e^{-2\lambda s}\E(\theta^s_1 - \theta^s_2)^2\d s\\
&\leq C\sup_{t\in[0,T]} e^{-2\lambda t}\E(\theta^t_1 - \theta^t_2)^2 \int_0^t e^{2\lambda s}\d s
\leq \frac{C'}{\lambda}e^{2\lambda t} \sup_{t\in[0,T]} e^{-2\lambda
t}\E(\theta^t_1 - \theta^t_2)^2.
\end{align*}
To bound $(II)$, applying the Lipschitz properties of $s(\cdot)$ in
Assumption \ref{assump:prior} and a similar argument,
\begin{align*}
(II) \leq C\int_0^t \Big(\E(\theta_1^s-\theta_2^s)^2
+\|\alpha_1^s-\alpha_2^s\|^2\Big)\d s
&\leq \frac{C'}{\lambda}e^{2\lambda t}
\sup_{t \in [0,T]} e^{-2\lambda t}\Big(\E(\theta_1^t-\theta_2^t)^2
+\|\alpha_1^t-\alpha_2^t\|^2\Big)\\
&\leq \frac{C'}{\lambda}e^{2\lambda t}
\Big(\sup_{t \in [0,T]} e^{-2\lambda t}\E(\theta_1^t-\theta_2^t)^2
+d(\alpha_1,\alpha_2)^2\Big).
\end{align*}
For $(III)$, using the condition $|R^1_\eta(t,s)| \leq \Phi_{R_\eta}(t-s)
\leq C$, we have
\begin{align*}
(III) \leq C\int_0^t \E(\theta_1^s-\theta_2^s)^2\d s
\leq \frac{C'}{\lambda}e^{2\lambda t}\sup_{t\in[0,T]} e^{-2\lambda t}\E(\theta^t_1 - \theta^t_2)^2.
\end{align*}
For $(IV)$, using $\E(\theta^s_2-\theta^*)^2 \leq 2C_\theta(s,s)+2\tau_*^2
\leq 2\Phi_{C_\theta}(s)+2\tau_*^2 \leq C$, we have
\begin{align*}
(IV) \leq 
C\int_0^t \int_{s'}^t \big(R_\eta^1(s,s')-R_\eta^2(s,s')\big)^2\d s\,\d s'
&\leq C\int_0^t \int_{s'}^t e^{2\lambda s} \d s\,\d s'
\cdot \sup_{0 \leq s \leq t \leq T}
e^{-2\lambda t}\big(R_\eta^1(t,s)-R_\eta^2(t,s)\big)^2\\
&\leq \frac{C'}{\lambda}e^{2\lambda t} d(R_\eta^1,R_\eta^2)^2.
\end{align*}
Lastly for $(V)$, using (\ref{eq:opt_u_couple}), we have
\begin{align*}
(V) \leq C\int_0^t \E(u^s_1 - u^s_2)^2 \d s \leq \frac{C'}{\lambda}e^{2\lambda
t} \cdot d(C_\eta^1, C_\eta^2)^2.
\end{align*}
Combining these bounds, for a constant $C>0$ independent of $\lambda$,
\[\sup_{t\in[0,T]} e^{-2\lambda t}\E(\theta^t_1 - \theta^t_2)^2
\leq \frac{C}{\lambda}\Big(
\sup_{t\in[0,T]} e^{-2\lambda t}\E(\theta^t_1 - \theta^t_2)^2
+d(X^1,X^2)^2\Big).\]
Thus for any $\eps>0$, choosing $\lambda=\lambda(\eps)$ large enough and
rearranging gives
\[\sup_{t\in[0,T]} e^{-2\lambda t}\E(\theta^t_1 - \theta^t_2)^2
\leq \eps^2 d(X^1,X^2)^2.\]
Finally, let $(w^*,\{w_1^t\},\{w_2^t\})$
be a centered Gaussian process with second moments matching
$(\theta^*,\{\theta_1^t\},\{\theta_2^t\})$. Then
$(w^*,\{w_1^t\})$ and $(w^*,\{w_2^t\})$ realizes a coupling defining the metric
$d(C_\theta^1,C_\theta^2)$ in (\ref{eq:metrics}), so
\begin{equation}\label{eq:dCthetabound}
d(C_\theta^1,C_\theta^2)
\leq
\sup_{t\in[0,T]} e^{-\lambda t}\sqrt{\E(w^t_1-w^t_2)^2}=\sup_{t\in[0,T]} e^{-\lambda t}\sqrt{\E(\theta^t_1 - \theta^t_2)^2} \leq \eps \cdot d(X^1,X^2).
\end{equation}

\noindent \textbf{Bound of $d(R_\theta^1, R_\theta^2)$.} Defining
the processes $\frac{\partial \theta_i^t}{\partial u^s}$ for $i=1,2$ from the
above coupling of $\{\theta_1^t\}$ and $\{\theta_2^t\}$, 
by definition we have
\begin{align*}
\frac{\partial \theta_i^t}{\partial u^s} = 1 - \int_s^t \Big(\delta\beta-
\partial_\theta s(\theta_i^{s'},\alpha^{s'}_i)\Big)\frac{\partial
\theta_i^{s'}}{\partial u^s}\d s' + \int_s^t \Big(\int_{s}^{s'} R^i_\eta(s',s'')\frac{\partial \theta_i^{s''}}{\partial u^s}\d s''\Big)\d s',
\end{align*}
Then
\[\E\Big|\frac{\partial \theta_1^t}{\partial u^s} - \frac{\partial
\theta_2^t}{\partial u^s}\Big|
\leq 4[(I)+(II)+(III)+(IV)]\]
where
\begin{align*}
(I) &= \int_s^t \E\Big[\Big|\partial_\theta
s(\theta_1^{s'},\alpha^{s'}_1)-\partial_\theta s(\theta_2^{s'},\alpha^{s'}_2)\Big|\Big|\frac{\partial \theta_1^{s'}}{\partial u^s}\Big|\Big]\d s',\\
(II) &= \int_s^t \E\Big[\Big(\delta|\beta|+|\partial_\theta
s(\theta_2^{s'},\alpha^{s'}_2)|\Big)\Big|\frac{\partial \theta_1^{s'}}{\partial u^s} - \frac{\partial \theta_2^{s'}}{\partial u^s}\Big|\Big]\d s',\\
(III) &= \int_s^t \int_s^{s'} \E\Big[\Big|R^1_\eta(s',s'') - R^2_\eta(s',s'')\Big|\Big|\frac{\partial \theta_1^{s''}}{\partial u^s}\Big|\Big]\d s''\d s',\\
(IV) &= \int_s^t \int_s^{s'} \E\Big[|R^2_\eta(s',s'')|\Big|\frac{\partial \theta_1^{s''}}{\partial u^s} - \frac{\partial \theta_2^{s''}}{\partial u^s}\Big|\Big]\d s''\d s'.
\end{align*}

For $(I)$, note that (\ref{eq:theta_response_dt}) implies
$|\frac{\partial \theta_1^{s'}}{\partial u^s}|\leq C$ for a constant $C>0$
with probability 1. Then,
using the Lipschitz continuity of $\partial_\theta s(\cdot)$ in
Assumption \ref{assump:prior}, we have
\begin{align*}
(I) \leq C\int_s^t \Big(\E\big|\theta_1^{s'}-\theta_2^{s'}\big|
+\|\alpha_1^{s'}-\alpha_2^{s'}\|\Big)\d s'
&\leq C\int_s^t e^{\lambda s'} \d s'
\sup_{s' \in [0,T]} e^{-\lambda s'}
\Big(\E\big|\theta_1^{s'}-\theta_2^{s'}\big|
+\|\alpha_1^{s'}-\alpha_2^{s'}\|\Big)\\
&\leq \frac{C'}{\lambda} e^{\lambda t} \sup_{s' \in [0,T]} e^{-\lambda s'}
\Big(\sqrt{\E\big(\theta_1^{s'}-\theta_2^{s'}\big)^2}
+\|\alpha_1^{s'}-\alpha_2^{s'}\|\Big)\\
&\leq \frac{C'}{\lambda} e^{\lambda t} \Big(\eps \cdot d(X^1,X^2)
+d(\alpha_1,\alpha_2)\Big),
\end{align*}
the last step using (\ref{eq:dCthetabound}) already shown. For $(II)$, applying
the boundedness of $\partial_\theta s(\cdot)$ in
Assumption \ref{assump:prior}, we have 
\begin{align*}
(II) \leq C \int_s^t e^{\lambda s'} e^{-\lambda s'}\E\Big[\Big|\frac{\partial
\theta_1^{s'}}{\partial u^s} - \frac{\partial \theta_2^{s'}}{\partial
u^s}\Big|\Big]\d s' &\leq \frac{C'}{\lambda}e^{\lambda t} \cdot
\sup_{0\leq s\leq t\leq T}e^{-\lambda t}\E\Big|\frac{\partial \theta_1^{t}}{\partial u^s} - \frac{\partial \theta_2^{t}}{\partial u^s}\Big|.
\end{align*}
For $(III)$, applying again (\ref{eq:theta_response_dt}) to bound
$|\frac{\partial \theta_1^{s'}}{\partial u^s}|\leq C$,
\[(III) \leq \frac{C}{\lambda}e^{\lambda t} \cdot d(R_\eta^1, R_\eta^2).\]
For $(IV)$, applying $|R^2_\eta(t,s)| \leq \Phi_{R_\eta}(t-s) \leq C$,
\begin{align*}
(IV) \leq \frac{C}{\lambda}e^{\lambda t} \sup_{0\leq s\leq t\leq T}e^{-\lambda
t}\E\Big|\frac{\partial \theta_1^{t}}{\partial u^s} - \frac{\partial
\theta_2^{t}}{\partial u^s}\Big|.
\end{align*}
Combining these bounds,
\[\sup_{0 \leq s \leq t \leq T} e^{-\lambda
t}\E\Big|\frac{\partial \theta_1^{t}}{\partial u^s} - \frac{\partial
\theta_2^{t}}{\partial u^s}\Big|
\leq \frac{C}{\lambda}\Big(
\sup_{0 \leq s \leq t \leq T} e^{-\lambda
t}\E\Big|\frac{\partial \theta_1^{t}}{\partial u^s} - \frac{\partial
\theta_2^{t}}{\partial u^s}\Big|+d(X^1,X^2)\Big),\]
so rearranging and choosing $\lambda=\lambda(\eps)$ large enough gives
\begin{equation}\label{eq:dRthetabound}
d(R^1_\theta,R_\theta^2) \leq \sup_{0\leq s\leq t\leq T} e^{-\lambda t}
\E\Big|\frac{\partial \theta_1^t}{\partial u^s}-\frac{\partial
\theta_2^t}{\partial u^s}\Big| \leq \eps \cdot d(X^1,X^2).
\end{equation}

\noindent \textbf{Bound of $d(\tilde \alpha_1,\tilde \alpha_2)$.} By definition,
\[\tilde \alpha_i^t=\alpha^0+\int_0^t \cG(\tilde \alpha_i^s,\sP(\theta_i^s))\d
s\]
for $i=1,2$. Letting $\{\theta_1^t\}$ and $\{\theta_2^t\}$ be coupled as above and applying Assumption \ref{assump:gradient},
\[\|\tilde \alpha_1^t-\tilde \alpha_2^t\|
\leq C\int_0^t \Big(\|\tilde \alpha_1^s-\tilde \alpha_2^s\|
+W_2(\sP(\theta_1^s),\sP(\theta_2^s))\Big)\d s
\leq C\int_0^t \Big(\|\tilde \alpha_1^s-\tilde \alpha_2^s\|
+\sqrt{\E(\theta_1^s-\theta_2^s)^2}\Big)\d s.\]
Then
\[\|\tilde \alpha_1^t-\tilde \alpha_2^t\|
\leq C\int_0^t e^{\lambda s}\d s
\sup_{s \in [0,T]} e^{-\lambda s}
\Big(\|\tilde \alpha_1^s-\tilde \alpha_2^s\|
+\sqrt{\E(\theta_1^s-\theta_2^s)^2}\Big)
\leq \frac{C'}{\lambda}e^{\lambda t}\Big(d(\tilde \alpha_1,\tilde
\alpha_2)+\eps \cdot
d(X^1,X^2)\Big).\]
Choosing $\lambda=\lambda(\eps)$ large enough and rearranging shows
\begin{equation}\label{eq:dtildealphabound}
d(\tilde \alpha_1,\tilde \alpha_2)
=\sup_{t \in [0,T]} e^{-\lambda t} \|\tilde \alpha_1^t-\tilde \alpha_2^t\|
\leq \eps \cdot d(X^1,X^2).
\end{equation}
The lemma follows from (\ref{eq:dCthetabound}), (\ref{eq:dRthetabound}),
and (\ref{eq:dtildealphabound}).
\end{proof}

\begin{lemma}[Modulus of $\cT_{\theta\rightarrow\eta}$]\label{lem:modulus_theta_2_eta}
Let $Y^i=(R_\theta^i,C_\theta^i,\tilde\alpha_i) \in \cS_\theta$ and
$X^i=\cT_{\theta\rightarrow\eta}(Y^i)=(C_\eta^i,R_\eta^i,\alpha_i) \in \cS_\eta$
for $i=1,2$. Then there exists a constant $C>0$ such that for any sufficiently
large $\lambda>0$ defining the metrics (\ref{eq:metrics}),
\begin{align*}
d(X^1,X^2) \leq C\cdot d(Y^1,Y^2).
\end{align*}
\end{lemma}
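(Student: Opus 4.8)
The plan is to control, separately, each of the three terms $d(R_\eta^1,R_\eta^2)$, $d(C_\eta^1,C_\eta^2)$, $d(\alpha_1,\alpha_2)$ making up $d(X^1,X^2)$ in \eqref{eq:dist_Seta} by $C\cdot d(Y^1,Y^2)$, through Gronwall estimates paralleling (but simpler than) those in the proof of Lemma~\ref{lem:modulus_eta_2_theta}. Two observations streamline matters. First, $\cT_{\theta\rightarrow\eta}$ acts on the $\tilde\alpha$-component as the identity, so $d(\alpha_1,\alpha_2)=d(\tilde\alpha_1,\tilde\alpha_2)\leq d(Y^1,Y^2)$ with nothing to prove. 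Second, the $\eta$-equations \eqref{def:dmft_langevin_cont_eta}--\eqref{def:response_g} do not involve $\alpha$, so only $R_\theta$ and $C_\theta$ enter the bounds for $R_\eta$ and $C_\eta$; and since $X^i=\cT_{\theta\rightarrow\eta}(Y^i)\in\cS_\eta$ by Lemma~\ref{lem:Trange_theta_2_eta}, we may freely invoke the a priori bounds $|R_\eta^i(t,s)|\leq\Phi_{R_\eta}(t-s)$ and $C_\eta^i(t,t)\leq\Phi_{C_\eta}(t)$, together with $|R_\theta^i(t,s)|\leq\Phi_{R_\theta}(t-s)$, all of which are bounded by a constant over $0\leq s\leq t\leq T$.

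For $d(C_\eta^1,C_\eta^2)$: fix a near-optimal coupling of mean-zero Gaussian processes $(w_1^*,\{w_1^t\})$, $(w_2^*,\{w_2^t\})$ with covariances $C_\theta^1$, $C_\theta^2$ nearly achieving the infimum defining $d(C_\theta^1,C_\theta^2)$ in \eqref{eq:metrics}, and a common $\eps\sim\sP(\eps)$. Let $\eta_i^t$ solve \eqref{def:dmft_langevin_cont_eta} driven by $(R_\theta^i,w_i^*,\{w_i^t\},\eps)$ and set $\xi_i^t=\eta_i^t+w_i^*-\eps$, so $C_\eta^i(t,s)=\delta\beta^2\E[\xi_i^t\xi_i^s]$ and $\xi_i^t=-\beta\int_0^t R_\theta^i(t,s)\xi_i^s\,\d s-w_i^t+w_i^*-\eps$. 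Subtracting the two identities, writing $R_\theta^1(t,s)\xi_1^s-R_\theta^2(t,s)\xi_2^s=R_\theta^1(t,s)(\xi_1^s-\xi_2^s)+(R_\theta^1(t,s)-R_\theta^2(t,s))\xi_2^s$, and applying Cauchy--Schwarz, one reaches an inequality of the form
\[
\E(\xi_1^t-\xi_2^t)^2\leq C\int_0^t\E(\xi_1^s-\xi_2^s)^2\,\d s
+Ce^{2\lambda t}\big(d(R_\theta^1,R_\theta^2)^2+d(C_\theta^1,C_\theta^2)^2\big),
\]
where the first term uses $|R_\theta^1(t,s)|\leq C$; the $d(R_\theta^1,R_\theta^2)$ contribution uses $\E(\xi_2^s)^2=C_\eta^2(s,s)/(\delta\beta^2)\leq C$ on $[0,T]$ and $(R_\theta^1(t,s)-R_\theta^2(t,s))^2\leq e^{2\lambda t}d(R_\theta^1,R_\theta^2)^2$; and the $d(C_\theta^1,C_\theta^2)$ contribution comes from the non-integral terms $-(w_1^t-w_2^t)+(w_1^*-w_2^*)$ via $\E(w_1^t-w_2^t)^2\leq e^{2\lambda t}d(C_\theta^1,C_\theta^2)^2$ and $\E(w_1^*-w_2^*)^2\leq d(C_\theta^1,C_\theta^2)^2$. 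Gronwall then yields $\E(\xi_1^t-\xi_2^t)^2\leq Ce^{2\lambda t}(d(R_\theta^1,R_\theta^2)^2+d(C_\theta^1,C_\theta^2)^2)$ with $C$ independent of $\lambda$. Finally, letting $(u_1^t,u_2^t)$ be a centered Gaussian process whose second moments match those of $(\sqrt{\delta}\,\beta\,\xi_1^t,\sqrt{\delta}\,\beta\,\xi_2^t)$ --- a valid coupling in the definition of $d(C_\eta^1,C_\eta^2)$, needed because the $\xi_i^t$ are not themselves jointly Gaussian --- gives $d(C_\eta^1,C_\eta^2)\leq C\big(d(R_\theta^1,R_\theta^2)+d(C_\theta^1,C_\theta^2)\big)\leq C\cdot d(Y^1,Y^2)$.

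For $d(R_\eta^1,R_\eta^2)$: write $\rho_i(t,s)=\partial\eta_i^t/\partial w^s$, a deterministic function, so $R_\eta^i(t,s)=\delta\beta\,\rho_i(t,s)$ and, by \eqref{def:response_g}, $\rho_i(t,s)=\beta R_\theta^i(t,s)-\beta\int_s^t R_\theta^i(t,s')\rho_i(s',s)\,\d s'$. Fixing $s$, subtracting, splitting $R_\theta^1\rho_1-R_\theta^2\rho_2$ as above, and using $|R_\theta^1(t,s')|\leq C$ and $|\rho_2(s',s)|=|R_\eta^2(s',s)|/(\delta|\beta|)\leq C$ gives
\[
|\rho_1(t,s)-\rho_2(t,s)|\leq Ce^{\lambda t}d(R_\theta^1,R_\theta^2)
+C\int_s^t|\rho_1(s',s)-\rho_2(s',s)|\,\d s',
\]
so Gronwall gives $e^{-\lambda t}|\rho_1(t,s)-\rho_2(t,s)|\leq C\,d(R_\theta^1,R_\theta^2)$ uniformly over $0\leq s\leq t\leq T$, whence $d(R_\eta^1,R_\eta^2)=\delta|\beta|\sup_{s\leq t}e^{-\lambda t}|\rho_1(t,s)-\rho_2(t,s)|\leq C\,d(R_\theta^1,R_\theta^2)\leq C\cdot d(Y^1,Y^2)$. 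Adding the three bounds via \eqref{eq:dist_Seta} gives $d(X^1,X^2)\leq C\cdot d(Y^1,Y^2)$.

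I expect no genuine obstacle --- the proof is a chain of Gronwall estimates --- and the delicate points are only (i) the coupling constructions: propagating the near-optimal $C_\theta$-coupling through the linear $\eta$-equations, and then realizing the second moments of the (non-Gaussian) pair $(\xi_1^t,\xi_2^t)$ by an honest jointly Gaussian pair $(u_1^t,u_2^t)$; and (ii) the bookkeeping of the two exponential weights --- $e^{-\lambda t}$ for the response kernels versus the process weights in the $C$-metrics --- so as to check that the final constant is independent of $\lambda$ (the argument in fact works for every $\lambda>0$, in particular for the large $\lambda$ fixed for the contraction argument). It is worth stressing why this map is only Lipschitz and not a contraction: unlike $\cT_{\eta\rightarrow\theta}$, the $\eta$-dynamics carry no regularizing Brownian term, so there is no $\lambda^{-1}$ gain here; the contractivity of the composite maps $\cT_{\eta\rightarrow\eta},\cT_{\theta\rightarrow\theta}$ in \eqref{eq:composite_map} is supplied entirely by the $\cT_{\eta\rightarrow\theta}$ factor via Lemma~\ref{lem:modulus_eta_2_theta}.
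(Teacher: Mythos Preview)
Your proof is correct and follows essentially the same approach as the paper: the same near-optimal coupling of the $w$-processes, the same $\xi$-equation and product-splitting, and the same Gaussian realization of the $(\xi_1,\xi_2)$ second moments to produce a coupling for $d(C_\eta^1,C_\eta^2)$. The only cosmetic difference is that the paper bounds $\int_0^t(\cdot)\,\d s\leq (C/\lambda)e^{\lambda t}\sup_s e^{-\lambda s}(\cdot)$ and then takes $\lambda$ large enough to rearrange, whereas you invoke Gronwall directly; both yield a $\lambda$-independent constant, and your observation that the bound in fact holds for all $\lambda>0$ is correct.
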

\begin{proof}
The proof is similar to that of Lemma \ref{lem:modulus_eta_2_theta} so we will
omit some details. Again let $C,C',C''>0$ denote constants depending on $T$ but
not on $\lambda$.\\

\noindent \textbf{Bound of $d(C_\eta^1, C_\eta^2)$.} 
Let $(w_1^*,\{w^t_1\})$ and $(w_2^*,\{w^t_2\})$ be an optimal coupling for
which
\begin{align*}
\sqrt{\E[(w^\ast_1 - w^\ast_2)^2]} + \sup_{t\in [0,T]} e^{-\lambda t}\sqrt{\E[(w^t_1 - w^t_2)^2]} = d(C^1_\theta,C^2_\theta).
\end{align*}
For $i=1,2$, let
\begin{align*}
\eta^t_i&=-\beta\int_0^{t} R^i_\theta(t,s)\big(\eta_i^s+w^\ast_i-\eps\big)\d s-w^t_i
\end{align*}
be the corresponding coupled solutions to (\ref{def:dmft_langevin_cont_eta}). 
We write $\xi^t_i=\eta^t_i+w^\ast_i-\eps$, so that
\begin{align*}
\xi^t_i = -\beta\int_0^t R^i_\theta(t,s)\xi_i^s \d s - w^t_i + w^\ast_i - \eps
\end{align*}
and $C_\eta^i(t,s)=\delta\beta^2\E[\xi_i^t\xi_i^s]$. Then
\begin{align*}
\E(\xi^t_1 - \xi^t_2)^2 &\leq C\Big[\int_0^t
(R^1_\theta(t,s)-R^2_\theta(t,s))^2\E(\xi_1^s)^2 \d s+
\int_0^t R^2_\theta(t,s)^2\E(\xi_1^s - \xi^s_2)^2\d s
+\E(w_1^t-w_1^*-w_2^t+w_2^*)^2\Big]\\
&\leq C'\Big[\int_0^t e^{2\lambda s} \cdot e^{-2\lambda s}
\Big((R^1_\theta(t,s)-R^2_\theta(t,s))^2
+\E(\xi_1^s - \xi^s_2)^2\Big)\d s+\E(w_1^t-w_1^*-w_2^t+w_2^*)^2\Big]\\
&\leq \frac{C''}{\lambda}e^{2\lambda t}
\Big(\sup_{s \in [0,T]} e^{-2\lambda s} \E(\xi_1^s - \xi^s_2)^2
+d(R_\theta^1,R_\theta^2)^2\Big)+C''e^{2\lambda t}d(C_\theta^1,C_\theta^2)^2.
\end{align*}
Choosing $\lambda>2C''$ and rearranging yields, for a constant $C>0$,
\begin{align*}
\sup_{t\in[0,T]} e^{-2\lambda t}\E(\xi^t_1 - \xi^t_2)^2 \leq C \cdot
d(Y^1,Y^2)^2.
\end{align*}
Then letting $(\{u^t_1\},\{u^t_2\})$ be a centered Gaussian process with
second moments $\E[u^t_1u^s_2]=\delta\beta^2\E[\xi^t_1\xi^s_2]$,
this realizes a coupling defining $d(C_\eta^1,C_\eta^2)$, so
\begin{align*}
d(C_\eta^1,C_\eta^2)
\leq \sup_{t\in[0,T]} e^{-\lambda
t}\sqrt{\E[(u^t_1-u^t_2)^2]}=\sqrt{\delta\beta^2} \cdot \sup_{t\in[0,T]}
e^{-\lambda t}\sqrt{\E(\xi^t_1 - \xi^t_2)^2} \leq C'\cdot d(Y^1,Y^2).
\end{align*}

\noindent \textbf{Bound of $d(R_\eta^1,R_\eta^2)$.}
Defining the (deterministic) process $\frac{\partial \eta_i^t}{\partial w^s}$ 
driven by $R_\theta^i$ for $i=1,2$, we have
\begin{align*}
R_\eta^i(t,s) = -\beta\int_s^t R_\theta^i(t,s')R_\eta^i(s',s)\d s'+\delta\beta^2R_\theta^i(t,s),
\end{align*}
hence
\begin{align*}
|R^1_\eta(t,s) - R^2_\eta(t,s)| &\leq |\beta|\int_s^t |R_\theta^1(t,s') -
R_\theta^2(t,s')||R_\eta^1(s',s)|\d s' + |\beta|\int_s^t |R_\theta^2(t,s')||R_\eta^1(s',s) - R_\eta^2(s',s)|\d s'\\
&\hspace{1in}+ \delta\beta^2|R^1_\theta(t,s) - R^2_\theta(t,s)|\\
&\leq C\int_s^t e^{\lambda s'}e^{-\lambda s'}|R_\eta^1(s',s) -
R_\eta^2(s',s)|\d s'+Ce^{\lambda t} d(R_\theta^1,R_\theta^2)\\
&\leq \frac{C'}{\lambda}e^{\lambda t}
\Big(\sup_{0 \leq s \leq t \leq T} e^{-\lambda
t}|R_\eta^1(t,s)-R_\eta^2(t,s)|\Big)+Ce^{\lambda t} d(R_\theta^1,R_\theta^2).
\end{align*}
Choosing $\lambda>2C'$ and rearranging yields
\begin{align*}
d(R_\eta^1,R_\eta^2)
=\sup_{0 \leq s \leq t \leq T} e^{-\lambda t}|R_\eta^1(t,s)-R_\eta^2(t,s)|
\leq C \cdot d(R_\theta^1,R_\theta^2)
\leq C \cdot d(Y^1,Y^2).
\end{align*}

We note that $\alpha_i=\tilde \alpha_i$ for $i=1,2$ by definition, so also
$d(\alpha_1,\alpha_2)=d(\tilde \alpha_1,\tilde \alpha_2) \leq d(Y^1,Y^2)$.
Combining these bounds shows the lemma.
\end{proof}

\begin{proof}[Proof of Theorem \ref{thm:dmftsolexists}(b)]
Combining Lemmas \ref{lem:modulus_eta_2_theta} and
\ref{lem:modulus_theta_2_eta}, for sufficiently large $\lambda>0$,
the composition map $\cT_{\eta \to \eta}$ is a contraction on
$\cS_\eta^\text{cont}$ with respect to the metric $d(X^1,X^2)$
and similarly $\cT_{\theta \to \theta}$ is a contraction on
$\cS_\theta^\text{cont}$. We note that for any sequence
$\{C_\theta^k\}$ of correlation functions in $\cS^\text{cont}$, as $k \to
\infty$,
\[d(C_\theta^k,C_\theta) \to 0 \text{ implies }
\sup_{s,t \in [0,T]} |C_\theta^k(s,t)-C_\theta(s,t)| \to 0\]
by definition of the metric and Cauchy-Schwarz, while
\[\sup_{s,t \in [0,T]} |C_\theta^k(s,t)-C_\theta(s,t)| \to 0
\text{ implies } d(C_\theta^k,C_\theta) \to 0\]
by e.g.\ the construction of a coupling in \cite[Lemma D.1]{paper2}.
The same holds for $C_\eta$, so
each metric in (\ref{eq:metrics}) induces a topology equivalent to
that of uniform convergence over
continuous functions on the appropriate space
$[0,T]$, $\{s,t:0 \leq s \leq t \leq T\}$, or
$\{*\} \cup \{s,t:0 \leq s \leq t \leq T\}$.
Furthermore, each condition
defining $\cS_\eta^\text{cont},\cS_\theta^\text{cont}$ is closed with respect
to this topology. Thus $d(X_\eta^1,X_\eta^2)$ and
$d(Y_\theta^1,Y_\theta^2)$ are complete metrics on
$\cS_\eta^\text{cont},\cS_\theta^\text{cont}$,
so the Banach fixed-point theorem guarantees $\cT_{\eta
\to \eta}$ and $\cT_{\theta \to \theta}$ have
unique fixed points $X=(R_\eta,C_\eta,\alpha) \in
\cS_\eta^\text{cont}$ and $Y=(R_\theta,C_\theta,\alpha) \in
\cS_\theta^\text{cont}$, for which also $\cT_{\eta \to \theta}(X)=Y$.
These fixed points remain unique in $\cS_\eta$ and $\cS_\theta$, because
Lemmas \ref{lem:Trange_theta_2_eta} and \ref{lem:Trange_eta_2_theta} imply
that the images of $\cT_{\eta \to \eta},\cT_{\theta \to \theta}$ on
$\cS_\eta,\cS_\theta$ are contained in
$\cS_\eta^\text{cont},\cS_\theta^\text{cont}$.
Then the tuple $(\alpha,C_\theta,C_\eta,R_\theta,R_\eta) \in \cS^\text{cont}$
is the unique fixed point in $\cS$ solving the dynamical fixed point
equations (\ref{def:dmft_langevin_alpha}--\ref{def:CRfixedpoint}).
\end{proof}

\section{The dynamical mean-field approximation}\label{sec:dmft_approx}

In this section we prove Theorem \ref{thm:dmft_approx}. We assume throughout Assumptions \ref{assump:model}, \ref{assump:prior}, and \ref{assump:gradient}.
The proof consists of three steps:
\begin{itemize}
\item (\textbf{Step 1}) We prove in Section \ref{subsec:discretedmft}
a discrete DMFT limit for a discretized version of the dynamics.
\item (\textbf{Step 2}) We show in Section \ref{subsec:discretizedmft} that, as the discretization step size goes to
zero, the discrete DMFT equations converge in an appropriate
sense to (\ref{def:dmft_langevin_cont_theta}--\ref{def:CRfixedpoint}).
\item (\textbf{Step 3}) We show in Section \ref{subsec:discretizelangevin}
that, as the discretization step size goes to
zero, the discretized dynamics converges in an appropriate sense to
(\ref{eq:langevin_sde}--\ref{eq:gflow}).
\end{itemize}
This argument follows closely the approach of \cite{celentano2021high},
although we will use in Steps 2 and 3 a different and somewhat simpler
piecewise-constant embedding of the discretized DMFT process and discretized
Langevin dynamics into continuous time.

\subsection{Step 1: DMFT approximation of discrete
dynamics}\label{subsec:discretedmft}

Fix a step size $\gamma>0$. We first define a discretized version of the process
(\ref{eq:langevin_sde}--\ref{eq:gflow}), which we denote by
$\{\btheta_\gamma^t\}$ and $\{\widehat{\alpha}^t_\gamma\}$
for $t \in \Z_+=\{0,1,2,\ldots\}$:
\begin{align}
\btheta^{t+1}_\gamma&=\btheta^t_\gamma + \gamma\Big({-}\beta\X^\top
(\X\btheta^t_\gamma-\y)+s(\btheta^t_\gamma, \widehat{\alpha}^t_\gamma)\Big) +
\sqrt{2}(\b_\gamma^{t+1} - \b_\gamma^t)\label{eq:langevin_discrete_1}\\
\widehat{\alpha}^{t+1}_\gamma &= \widehat{\alpha}^t_\gamma + \gamma \cdot
\cG\Big(\widehat{\alpha}^t_\gamma, \frac{1}{d}\sum_{j=1}^d \delta_{\theta^t_{\gamma,j}}\Big)\label{eq:langevin_discrete_2}
\end{align}
with initialization
$(\btheta^0_\gamma,\widehat\alpha^0_\gamma)=(\btheta^0,\widehat{\alpha}^0)$, where
$\{\b_\gamma^t\}$ is a discrete
Gaussian process with $\b_\gamma^0=0$ and independent increments
$\b_\gamma^{t+1}-\b_\gamma^t \sim \N(0,\gamma\I)$.
Here and throughout the sequel, we write as shorthand
$s(\btheta,\widehat\alpha)=(s(\theta_j,\widehat\alpha))_{j=1}^d$. We set
\[\bbeta_\gamma^t=\X\btheta_\gamma^t,
\qquad \bbeta^*=\X\btheta^*.\]

We correspondingly define a discretized version of the DMFT system
(\ref{def:dmft_langevin_cont_theta}--\ref{def:CRfixedpoint}): Given
discrete-time correlation and response matrices
$\{C_\eta^\gamma(s,r)\}_{r \leq s \leq t}$, $\{R_\eta^\gamma(s,r)\}_{r<s \leq
t}$ and a deterministic process $\{\alpha_\gamma^s\}_{s \leq t}$ up to time $t$,
define (in the probability space of 
$(\theta^*,\theta^0) \sim \sP(\theta^*,\theta^0)$)
\begin{align}
\theta^{t+1}_\gamma&=\theta^t_\gamma+\gamma\Big({-}\delta\beta(\theta^t_\gamma-\theta^\ast)
+s(\theta^t_\gamma,\alpha^t_\gamma)+\sum_{s=0}^{t-1}
R_\eta^\gamma(t,s)(\theta^s_\gamma - \theta^\ast) +u^t_\gamma\Big) +
\sqrt{2}(b_\gamma^{t+1}-b_\gamma^t)\notag\\
&\hspace{1in} \text{ with } \theta_0^\gamma=\theta^0,\label{eq:dmft_theta_discrete1}\\
\frac{\partial\theta^{t+1}_\gamma}{\partial u^s_\gamma} &=
\begin{cases}
\gamma & \text{ for } s = t,\\
\frac{\partial \theta_\gamma^t}{\partial u_\gamma^s}
+\gamma\Big[\Big({-}\delta\beta + \partial_\theta
s(\theta^t_\gamma,\alpha^t_\gamma)\Big)\frac{\partial
\theta_\gamma^t}{\partial u_\gamma^s}+
\sum_{r=s+1}^{t-1} R^\gamma_\eta(t,r)\frac{\partial
\theta_\gamma^r}{\partial u_\gamma^s}\Big] & \text{ for } s < t.
\end{cases}\label{eq:response_1}
\end{align}
Here, $\{u_\gamma^s\}_{0 \leq s \leq t}$ and
$\{b_\gamma^s\}_{0 \leq s \leq t}$ are mean-zero Gaussian vectors independent
of each other and of $(\theta^*,\theta^0)$, where
$\{u_\gamma^s\}_{0 \leq s \leq t}$ has covariance
\begin{equation}\label{eq:dmft_covu_discrete}
\E[u_\gamma^s u_\gamma^r]=C_\eta^\gamma(s,r),
\end{equation}
and $\{b_\gamma^s\}_{0 \leq s \leq t}$ has independent increments
$b_\gamma^{s+1}-b_\gamma^s \sim \N(0,\gamma)$ with $b_\gamma^0=0$. We note that 
$\frac{\partial \theta_\gamma^{t+1}}{\partial u_\gamma^s}$ is the usual partial
derivative of $\theta_\gamma^{t+1}$ in $u_\gamma^s$,
whose form (\ref{eq:response_1}) is derived from
(\ref{eq:dmft_theta_discrete1}) via the chain rule. These processes then define
$\{C_\theta^\gamma(s,r)\}_{r \leq s \leq t+1}$,
$\{C_\theta^\gamma(s,*)\}_{s \leq t+1}$,
$\{R_\theta^\gamma(s,r)\}_{r<s \leq t+1}$,
and $\{\alpha_\gamma^s\}_{s \leq t+1}$ up to time $t+1$ via
\begin{equation}\label{eq:dmft_CRtheta_discrete}
\begin{gathered}
C^\gamma_\theta(s,r)=\E[\theta^s_\gamma \theta^r_\gamma], \quad
C_\theta^\gamma(s,\ast) = \E[\theta^s_\gamma\theta^\ast], \quad C_\theta^\gamma(*,*)=\E[(\theta^*)^2],\\
R^\gamma_\theta(s,r) = \E\Big[{\frac{\partial
\theta^s_\gamma}{\partial u^r_\gamma}}\Big],
\quad \alpha_\gamma^{t+1}=\alpha_\gamma^t+\gamma \cdot
\cG(\alpha_\gamma^t,\sP(\theta_\gamma^t))
\end{gathered}
\end{equation}
where $\sP(\theta_\gamma^t)$ is the law of $\theta_\gamma^t$.

Conversely, given $\{C_\theta^\gamma(s,r)\}_{r \leq s \leq t}$,
$\{C_\theta^\gamma(s,*)\}_{s \leq t}$, and
$\{R_\theta^\gamma(s,r)\}_{r<s \leq t}$ up to time $t$, define (in the
probability space of $\eps \sim \sP(\eps)$)
\begin{align}
\eta^t_\gamma &= -\beta\sum_{s=0}^{t-1}
R^\gamma_\theta(t,s)(\eta^s_\gamma+w^\ast_\gamma-\eps)-w^t_\gamma,\label{eq:dmft_eta_discrete1}\\
\frac{\partial \eta^t_\gamma}{\partial w^s_\gamma} &=
\beta\Big[{-}\sum_{r=s+1}^{t-1}R^\gamma_\theta(t,r) \frac{\partial
\eta^r_\gamma}{\partial w^s_\gamma} + R^\gamma_\theta(t,s)\Big]
\quad \text{ for } s < t \label{eq:response_2}.
\end{align}
Here, $(w_\gamma^*,\{w_\gamma^s\}_{0 \leq s \leq t})$ is a mean-zero Gaussian
vector with covariance
\begin{equation}\label{eq:dmft_covw_discrete}
\E[w^s_\gamma w^r_\gamma] = C^\gamma_\theta(s,r),\qquad \E[w^s_\gamma
w^{\ast}_\gamma] = C^\gamma_\theta(s,\ast),\qquad
\E[(w_\gamma^*)^2]=C_\theta^\gamma(*,*),
\end{equation}
and again $\frac{\partial \eta^t_\gamma}{\partial w^s_\gamma}$ is the usual
partial derivative computed from the chain rule. These define
$\{C_\eta^\gamma(s,r)\}_{r \leq s \leq t}$,
$\{R_\eta^\gamma(s,r)\}_{r<s \leq t}$ up to time $t$ via
\begin{equation}\label{eq:dmft_CReta_discrete}
C^\gamma_\eta(s,r)=\delta\beta^2\E[(\eta^s_\gamma + w^\ast_\gamma -
\eps)(\eta_\gamma^r + w^\ast_\gamma - \eps)],
\quad R^\gamma_\eta(s,r) = \delta\beta\,\Big(\frac{\partial
\eta^s_\gamma}{\partial w^r_\gamma}\Big),
\end{equation}
where we note that $\frac{\partial \eta^s_\gamma}{\partial w_\gamma^r}$ is
deterministic.
These definitions should be understood in the iterative sense
\begin{equation}\label{eq:iterativeDMFTconstruction}
\begin{gathered}
\{\theta_\gamma^s\}_{s \leq t},\{u^s_\gamma\}_{s<t},
\{\tfrac{\partial \theta_\gamma^s}{\partial u_\gamma^r}\}_{r<s \leq t}
\Rightarrow \{C_\theta^\gamma(s,r),
C_\theta^\gamma(s,*)\}_{r \leq s \leq t},\{R_\theta^\gamma(s,r)\}_{r<s \leq t},
\{\alpha^s\}_{s \leq t} \Rightarrow \\
w_\gamma^*,\{\eta_\gamma^s,w_\gamma^s\}_{s \leq t},
\{\tfrac{\partial \eta_\gamma^s}{\partial w_\gamma^r}\}_{r<s \leq t}
\Rightarrow \{C_\eta^\gamma(s,r)\}_{r \leq s \leq t},
\{R_\eta^\gamma(s,r)\}_{r<s \leq t} \Rightarrow\\
\{\theta_\gamma^s\}_{s \leq t+1},\{u_\gamma^s\}_{s<t+1},
\{\tfrac{\partial \theta_\gamma^s}{\partial \theta_\gamma^r}\}_{r<s \leq t+1}
\Rightarrow \ldots
\end{gathered}
\end{equation}
with initialization $\theta_\gamma^0=\theta^0$.

The goal of this section is to show the following discrete analogue of Theorem
\ref{thm:dmft_approx}.

\begin{lemma}\label{lem:finite_dim_converge}
For any fixed integer $T \geq 0$, almost surely as $n,d\rightarrow\infty$,
\begin{align}
\frac{1}{d}\sum_{j=1}^d \delta_{\big(\theta^\ast_j,\theta^0_{\gamma,j}, \ldots,
\theta^T_{\gamma,j}\big)}  &\overset{W_2}{\to}
\sP\big(\theta^\ast,\theta^0_\gamma,\ldots, \theta^T_\gamma\big)\label{eq:finite_dim_converge_knot_theta}\\
\qquad \frac{1}{n}\sum_{i=1}^n \delta_{\big(\eta^*_i, \eps_i,\eta^0_{\gamma,i}, \ldots,
\eta^0_{\gamma,i}\big)}
&\overset{W_2}{\to} \sP\big({-}w_\gamma^\ast,\eps,\eta^0_\gamma,\ldots,
\eta^T_\gamma\big)\label{eq:finite_dim_converge_knot_eta}\\
(\widehat{\alpha}^0_\gamma,\ldots,\widehat{\alpha}^T_\gamma) &\rightarrow
(\alpha^0_\gamma, \ldots, \alpha^T_\gamma)\label{eq:finite_dim_converge_knot_alpha}.
\end{align}
\end{lemma}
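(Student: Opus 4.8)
The plan is to recognize the discretized dynamics (\ref{eq:langevin_discrete_1}--\ref{eq:langevin_discrete_2}) as a general first-order iteration driven by the random matrix $\X$, and to apply the state-evolution / Approximate Message Passing machinery in the form developed by \cite{celentano2021high}. Keeping the bipartite structure, I would carry along the $\R^n$-valued iterates $\bbeta^t_\gamma:=\X\btheta^t_\gamma$ and the side vector $\bbeta^*:=\X\btheta^*$, so that, using $\y=\bbeta^*+\beps$,
\begin{align*}
\btheta^{t+1}_\gamma&={-}\gamma\beta\,\X^\top\big(\bbeta^t_\gamma-\bbeta^*-\beps\big)+\btheta^t_\gamma+\gamma\,s\big(\btheta^t_\gamma,\widehat\alpha^t_\gamma\big)+\sqrt{2}\big(\b_\gamma^{t+1}-\b_\gamma^t\big),\\
\bbeta^{t+1}_\gamma&=\X\btheta^{t+1}_\gamma.
\end{align*}
This is an alternating recursion on $\R^d$ and $\R^n$ in which the disorder enters only through a single multiplication by $\X$ or $\X^\top$ at each half-step; the coordinatewise nonlinearity $s(\cdot,\widehat\alpha^t_\gamma)$ is uniformly Lipschitz by Assumption \ref{assump:prior}; the Brownian increments $\sqrt{2}(\b_\gamma^{t+1}-\b_\gamma^t)$ are explicit i.i.d.\ Gaussian inputs that simply pass through; and $\btheta^*,\bbeta^*,\beps$ act as fixed side information whose empirical laws converge by Assumption \ref{assump:model}. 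The state-evolution theorem for such iterations then describes the joint empirical law of $(\theta^*_j,\theta^0_{\gamma,j},\dots,\theta^t_{\gamma,j})$ and of $(\eta^*_i,\eps_i,\eta^0_{\gamma,i},\dots,\eta^t_{\gamma,i})$ as a centered Gaussian process together with ``memory'' (Onsager) terms whose coefficients are averaged derivatives of the iteration maps.

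The main structural novelty is the adaptively evolving drift parameter $\widehat\alpha^t_\gamma$, which prevents a direct appeal to state evolution since the nonlinearity at step $t$ is itself random. The observation to exploit is that $\widehat\alpha^t_\gamma$ is an $\R^K$-valued function of the empirical measure $\widehat\sP(\btheta^{t}_\gamma)$ (and earlier) through (\ref{eq:langevin_discrete_2}), with $\cG$ Lipschitz in $(\alpha,W_2(\cdot))$ by Assumption \ref{assump:gradient}(a), hence asymptotically deterministic. I would therefore prove (\ref{eq:finite_dim_converge_knot_theta}--\ref{eq:finite_dim_converge_knot_alpha}) by induction on $t$, interleaving one step of state evolution with one step of (\ref{eq:langevin_discrete_2}). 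The base case $t=0$ is Assumption \ref{assump:model}: $\widehat\alpha^0_\gamma=\widehat\alpha^0\to\alpha^0$, the law of $(\theta^*_j,\theta^0_{\gamma,j})$ converges by (\ref{eq:thetaepsdistribution}), and on the $\eta$-side the rows of $\bbeta^*=\X\btheta^*$ and $\bbeta^0_\gamma=\X\btheta^0$ are conditionally independent given $(\btheta^*,\btheta^0)$ with covariances $d^{-1}\langle\btheta^*,\btheta^0\rangle\to C^\gamma_\theta(0,*)$ etc., so $n^{-1}\sum_i\delta_{(\eta^*_i,\eps_i,\eta^0_{\gamma,i})}\overset{W_2}{\to}\sP(-w^*_\gamma,\eps,\eta^0_\gamma)$ by a conditional central limit theorem. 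For the inductive step, given the conclusions up to time $t$: the convergence $\widehat\alpha^t_\gamma\to\alpha^t_\gamma$ lets me replace $s(\cdot,\widehat\alpha^t_\gamma)$ by the fixed Lipschitz map $s(\cdot,\alpha^t_\gamma)$ up to an $\ell_2$-error that is $o(\sqrt{d})$ by the $\nabla_\alpha s$ bound in (\ref{eq:s_smooth}); a discrete Gronwall in the time index, using $\|\X\|_\op=O(1)$ almost surely, controls the propagation of this error through the $\X,\X^\top$ multiplications, so one step of state evolution gives the time-$(t+1)$ convergence in (\ref{eq:finite_dim_converge_knot_theta}--\ref{eq:finite_dim_converge_knot_eta}); then applying the Lipschitz map $\cG$ to the now-established $W_2$-convergence of $\widehat\sP(\btheta^t_\gamma)$ yields $\widehat\alpha^{t+1}_\gamma=\widehat\alpha^t_\gamma+\gamma\cG(\widehat\alpha^t_\gamma,\widehat\sP(\btheta^t_\gamma))\to\alpha^t_\gamma+\gamma\cG(\alpha^t_\gamma,\sP(\theta^t_\gamma))=\alpha^{t+1}_\gamma$, closing the induction.

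Finally I would verify that the state evolution of this iteration reproduces exactly the discrete DMFT system (\ref{eq:dmft_theta_discrete1}--\ref{eq:dmft_CReta_discrete}). The Gaussian field entering $\theta^{t+1}_\gamma$ is the $\X^\top$-image of the $\eta$-side iterates; tracking the aspect ratio $\delta=\lim n/d$ and the coefficient $\gamma\beta$ shows its covariance is $C^\gamma_\eta(s,r)=\delta\beta^2\,\E[(\eta^s_\gamma+w^*_\gamma-\eps)(\eta^r_\gamma+w^*_\gamma-\eps)]$, with associated memory term $\sum_s R^\gamma_\eta(t,s)(\theta^s_\gamma-\theta^*)$ and $R^\gamma_\eta(s,r)=\delta\beta\,\partial\eta^s_\gamma/\partial w^r_\gamma$; differentiating the iteration reproduces precisely the chain-rule recursion (\ref{eq:response_1}) for $\partial\theta^{t+1}_\gamma/\partial u^s_\gamma$, so the Onsager coefficients match. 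Symmetrically, $\eta^t_\gamma$ picks up the Gaussian field $w^t_\gamma$ of covariance $C^\gamma_\theta$ jointly with $w^*_\gamma$ (cross-covariance $C^\gamma_\theta(s,*)=\E[\theta^s_\gamma\theta^*]$ from the side vector $\bbeta^*$) and the memory term governed by $R^\gamma_\theta$, giving (\ref{eq:dmft_eta_discrete1})--(\ref{eq:response_2}) and the updates in (\ref{eq:dmft_CReta_discrete}). I expect the main obstacle to be the first step: marshalling a state-evolution theorem that simultaneously accommodates the bipartite memory structure, coordinatewise nonlinearities that are only Lipschitz rather than smooth, and the adaptive low-dimensional parameter, and then upgrading the conclusion from convergence in probability to almost-sure convergence --- the latter requiring a quantitative concentration rate for the empirical quantities (second moments, Wasserstein distances) that is summable as $n,d\to\infty$, which is where the sub-Gaussian tails of $\X$ in Assumption \ref{assump:model}(b) and the finite-moment-generating-function hypotheses in Assumption \ref{assump:model}(c) enter.
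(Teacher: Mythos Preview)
Your proposal is correct and matches the paper's approach: reduce to an AMP iteration and invoke state evolution (the paper cites \cite{wang2024universality}, which already delivers almost-sure $W_2$ convergence under the sub-Gaussian design of Assumption~\ref{assump:model}(b)), handle the adaptive $\widehat\alpha^t_\gamma$ by a Lipschitz/Gronwall comparison with the deterministic $\alpha^t_\gamma$, and check that the state-evolution recursion coincides with (\ref{eq:dmft_theta_discrete1})--(\ref{eq:dmft_CReta_discrete}). The only organizational difference is that the paper decouples the two ingredients rather than interleaving them---it first runs the full auxiliary dynamics $\tilde\btheta^t_\gamma$ with $\alpha^t_\gamma$ fixed for all $T$ steps and applies AMP state evolution once to that nonadaptive system, and only afterwards proves $d^{-1}\|\btheta^t_\gamma-\tilde\btheta^t_\gamma\|^2\to 0$ and $\widehat\alpha^t_\gamma\to\alpha^t_\gamma$ by a separate induction; this is slightly cleaner since standard AMP theorems are stated for iterations with deterministic nonlinearities known in advance.
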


For convenience of the proof,
we define also an auxiliary response function
\begin{align}
R^\gamma_\eta(t,\ast) = \delta\beta\,\Big(\frac{\partial
\eta^t_\gamma}{\partial w^\ast_\gamma}\Big) \quad \text{ where
}\quad \frac{\partial\eta^t_\gamma}{\partial w_\gamma^\ast} &=
-\beta\sum_{s=0}^{t-1} R_\theta^\gamma(t,s)\Big(\frac{\partial
\eta^s_\gamma}{\partial w^\ast_\gamma}+1\Big)\label{eq:response_3},
\end{align}
initialized from $\frac{\partial \eta^0_\gamma}{\partial w_\gamma^\ast} = 0$.
Here $\frac{\partial \eta^t_\gamma}{\partial w_\gamma^*}$ 
is the usual partial derivative of $\eta^t_\gamma$ with respect to
$w_\gamma^\ast$, which is also deterministic.
We have the following basic fact relating the response functions (\ref{eq:response_2}) and (\ref{eq:response_3}).

\begin{lemma}\label{lem:eta_response_id}
For any $t \geq 1$, we have
$\frac{\partial\eta^t_\gamma}{\partial w^\ast_\gamma} =
-\sum_{s=0}^{t-1} \frac{\partial \eta^t_\gamma}{\partial w^s_\gamma}$,
and consequently $R_\eta^\gamma(t,\ast)=-\sum_{s=0}^{t-1}
R_\eta^\gamma(t,s)$.
\end{lemma}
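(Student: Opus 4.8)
The plan is to prove the first identity $\frac{\partial\eta^t_\gamma}{\partial w^\ast_\gamma} = -\sum_{s=0}^{t-1} \frac{\partial \eta^t_\gamma}{\partial w^s_\gamma}$ by induction on $t$ directly from the defining recursions \eqref{eq:response_2} and \eqref{eq:response_3}; the claim for $R_\eta^\gamma$ then follows immediately by multiplying through by $\delta\beta$ and invoking the definitions $R_\eta^\gamma(t,s)=\delta\beta\,\frac{\partial\eta^t_\gamma}{\partial w^s_\gamma}$ and $R_\eta^\gamma(t,\ast)=\delta\beta\,\frac{\partial\eta^t_\gamma}{\partial w^\ast_\gamma}$ from \eqref{eq:dmft_CReta_discrete} and \eqref{eq:response_3}. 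For the base case $t=1$ (and trivially $t=0$, where $\eta^0_\gamma=-w^0_\gamma$ and both sides vanish), one checks directly that \eqref{eq:response_2} gives $\frac{\partial\eta^1_\gamma}{\partial w^0_\gamma}=\beta R^\gamma_\theta(1,0)$ while \eqref{eq:response_3} with the initialization $\frac{\partial\eta^0_\gamma}{\partial w^\ast_\gamma}=0$ gives $\frac{\partial\eta^1_\gamma}{\partial w^\ast_\gamma}=-\beta R^\gamma_\theta(1,0)$.

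For the inductive step, fix $t\geq 2$ and assume $\frac{\partial\eta^r_\gamma}{\partial w^\ast_\gamma} = -\sum_{s=0}^{r-1} \frac{\partial \eta^r_\gamma}{\partial w^s_\gamma}$ for all $r<t$. Summing \eqref{eq:response_2} over $s=0,\ldots,t-1$, negating, swapping the order of the double sum via $\sum_{s=0}^{t-1}\sum_{r=s+1}^{t-1}=\sum_{r=1}^{t-1}\sum_{s=0}^{r-1}$, and applying the inductive hypothesis to replace $\sum_{s=0}^{r-1}\frac{\partial\eta^r_\gamma}{\partial w^s_\gamma}$ by $-\frac{\partial\eta^r_\gamma}{\partial w^\ast_\gamma}$, one obtains
\[
-\sum_{s=0}^{t-1}\frac{\partial\eta^t_\gamma}{\partial w^s_\gamma}
= -\beta\sum_{r=1}^{t-1} R^\gamma_\theta(t,r)\,\frac{\partial\eta^r_\gamma}{\partial w^\ast_\gamma}
- \beta\sum_{r=0}^{t-1}R^\gamma_\theta(t,r).
\]
Since $\frac{\partial\eta^0_\gamma}{\partial w^\ast_\gamma}=0$, the first sum may be started at $r=0$ without changing its value, so the right-hand side equals $-\beta\sum_{r=0}^{t-1}R^\gamma_\theta(t,r)\big(\frac{\partial\eta^r_\gamma}{\partial w^\ast_\gamma}+1\big)$, which is precisely the right-hand side of \eqref{eq:response_3}, i.e.\ equals $\frac{\partial\eta^t_\gamma}{\partial w^\ast_\gamma}$. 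This closes the induction.

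Conceptually, the identity reflects that $w^\ast_\gamma$ enters the recursion \eqref{eq:dmft_eta_discrete1} only through the combinations $\eta^s_\gamma+w^\ast_\gamma$, so that the joint shift $w^\ast_\gamma\mapsto w^\ast_\gamma+c$ together with $w^s_\gamma\mapsto w^s_\gamma+c$ for $s=0,\ldots,t-1$ (leaving $w^t_\gamma$ fixed) leaves $\eta^t_\gamma$ invariant; an equivalent proof can be written by formalizing this invariance via a short induction showing $\eta^r_\gamma\mapsto\eta^r_\gamma-c$ for $r\leq t-1$. There is no substantive obstacle here: the only delicate points are the bookkeeping of index ranges (in particular the empty inner sums when $t=1$) and the use of the initialization $\frac{\partial\eta^0_\gamma}{\partial w^\ast_\gamma}=0$ to align the summation range with \eqref{eq:response_3}.
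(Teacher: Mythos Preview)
Your proof is correct and follows essentially the same inductive approach as the paper's own proof; the only cosmetic difference is that the paper runs the algebra in the opposite direction (starting from the recursion \eqref{eq:response_3} for $\partial\eta^{t+1}_\gamma/\partial w^\ast_\gamma$ and showing it equals $-\sum_r \partial\eta^{t+1}_\gamma/\partial w^r_\gamma$), whereas you start from the sum side and recover \eqref{eq:response_3}. Your added conceptual remark about the shift-invariance is a nice complement but not needed for the argument.
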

\begin{proof}
Let us shorthand $r_\eta(t,s) = \frac{\partial
\eta^t_\gamma}{\partial w^s_\gamma}$ for $s<t$ and $r_\eta(t,\ast)
=\frac{\partial\eta^t_\gamma}{\partial w^\ast_\gamma}$.
We prove $r_\eta(t,*)={-}\sum_{s=0}^{t-1} r_\eta(t,s)$ by
induction, with the base case $t=1$ verified by the initial conditions
$r_\eta(1,\ast)=-\gamma\beta$ and $r_\eta(1,0)=\gamma\beta$.
Suppose the claim holds for some $t$, then
\begin{align*}
r_\eta(t+1,\ast) &= -\beta\sum_{s=0}^t R_\theta^\gamma(t+1,s)(r_\eta(s,\ast) + 1)\\
&= -\beta\sum_{s=0}^t R_\theta^\gamma(t+1,s)\Big({-}\sum_{r=0}^{s-1}
r_\eta(s,r)+1\Big)\\
&= \beta\Big[\sum_{r=0}^{t-1} \sum_{s=r+1}^t
R_\theta^\gamma(t+1,s)r_\eta(s,r) - \sum_{r=0}^t
R_\theta^\gamma(t+1,r)\Big]\\
&= \beta\sum_{r=0}^{t-1}\Big(\sum_{s=r+1}^t
R_\theta^\gamma(t+1,s)r_\eta(s,r) -
R_\theta^\gamma(t+1,r)\Big) - \beta R_\theta^\gamma(t+1,t)\\
&= {-}\sum_{r=0}^{t-1} r_\eta(t+1,r) - r_\eta(t+1,t) =  {-}\sum_{r=0}^t
r_\eta(t+1,r),
\end{align*}
as desired.
\end{proof}

\begin{proof}[Proof of Lemma \ref{lem:finite_dim_converge}]
\textbf{Step 1: Convergence of auxiliary dynamics.}
Consider the following non-adaptive auxiliary dynamics 
\begin{align}\label{def:langevin_auxiliary}
\tilde{\btheta}_\gamma^{t+1} = \tilde{\btheta}_\gamma^t -
\gamma\bigg(\beta\X^\top (\X\tilde{\btheta}^t_\gamma - \X\btheta^\ast -
\beps) - s(\tilde{\btheta}^t_\gamma,\alpha^t_\gamma)\bigg) +
\sqrt{2}(\b_\gamma^{t+1} - \b_\gamma^t).
\end{align}
This differs from $\{\btheta^t_\gamma\}$ in that
we replace $\{\widehat{\alpha}^t_\gamma\}$ by the deterministic process
$\{\alpha^t_\gamma\}$ of the discrete DMFT system.
Let $\tilde{\bbeta}^t_{\gamma}=\X\tilde{\btheta}^t_{\gamma}$. We
will first show
\begin{align}\label{eq:emp_converge_aux}
\frac{1}{d}\sum_{j=1}^d \delta_{\big(\theta^\ast_j,\tilde{\theta}^0_{\gamma,j}, \ldots,
\tilde{\theta}^T_{\gamma,j}\big)}  \overset{W_2}{\to}
\sP\big(\theta^\ast,\theta^0_\gamma,\ldots,
\theta^T_\gamma\big), \quad \frac{1}{n}\sum_{i=1}^n
\delta_{\big( \eta^*_i, \eps_i,\tilde{\eta}^0_{\gamma,i}, \ldots,
\tilde{\eta}^T_{\gamma,i}\big)} \overset{W_2}{\to}
\sP\big({-}w_\gamma^\ast, \eps,\eta^0_\gamma,\ldots, \eta^T_\gamma\big).
\end{align}

The proof is based on a reduction to an AMP algorithm: Let
$\beps \in \R^n$ be as in the above dynamics, define
\[\V=(\btheta^*,\btheta^0,\b^1-\b^0,\ldots,\b^T-\b^{T-1}) \in
\R^{d\times (T+2)}\]
and let
\[\eps \sim \sP(\eps),
\qquad V=(\theta^*,\theta^0,\rho^1,\ldots,\rho^T) \sim \sP(\theta^*,\theta^0) \otimes \N(0,\gamma I_T).\]
Assumption \ref{assump:model} ensures that $|\eps|$ and $\|v\|_2$ have finite moment
generating functions in a neighborhood of 0, and
for each fixed $p \geq 1$, almost surely as $n,d \to \infty$,
\begin{align}\label{eq:amp_empirical}
\frac{1}{n}\sum_{i=1}^n \delta_{\eps_i} \overset{W_p}{\to} \eps,
\quad
\frac{1}{d}\sum_{j=1}^d \delta_{V_j} \overset{W_p}{\to} V
\end{align}
where $V_j$ is the $j^\text{th}$ row of $\V$.
Fixing some $k \geq 1$, consider the AMP iterations
\begin{equation}\label{eq:amp}
\begin{aligned}
\W^i &= \X g_i(\U^1,\ldots,\U^i;\V)-\sum_{j=0}^{i-1}
f_j(\W^0,\ldots,\W^j;\beps) \zeta_{ij} \in \R^{n\times k},\\
\U^{i+1} &= \X^\top f_i(\W^0,\ldots,\W^i;\beps)-\sum_{j=0}^i
g_j(\U^1,\ldots, \U^j;\V)\xi_{ij} \in \R^{d\times k},
\end{aligned}
\end{equation}
initialized at $\W^0=\X g_0(\V)$, where the nonlinearities 
\begin{align*}
f_i &=  \big(f_{i,1},\ldots, f_{i,k}\big): \R^{k(i+1)} \times \R \rightarrow
\R^k,\qquad g_i = \big(g_{i,1},\ldots, g_{i,k}\big): \R^{ki} \times \R^{T+2} \rightarrow \R^k
\end{align*}
are Lipschitz-continuous and
applied row-wise, the Onsager coefficients are recursively defined as
\begin{align*}
\xi_{ij} &= \bigg(\delta\,\E\Big[\d_{W^j}f_i(W^0,\ldots,
W^i;\eps)\Big]\bigg)^\top \in \R^{k\times k}, \quad 0\leq j\leq i,\\
\zeta_{ij} &= \bigg(\E\Big[\d_{U^{j+1}}
g_i(U^1,\ldots,U^i;V)\Big]\bigg)^\top \in \R^{k\times k},\quad 0\leq j\leq i-1,
\end{align*}
and $\{W^j\}_{j \geq 0}$ and $\{U^j\}_{j \geq 1}$ are mean-zero
Gaussian processes in $\R^k$ independent of $\eps,V$ with covariance structure
\begin{align}\label{eq:amp_gp_cov}
\notag\E[W^iW^{j\top}] &= \E\Big[g_i(U^1,\ldots,U^i;V)
g_j(U^1,\ldots,U^j;V)^\top\Big] \in \R^{k\times k}, \quad i,j \geq 0,\\
\E[U^{i+1}U^{j+1\top}] &= \E\Big[\delta f_i(W^0,\ldots,W^{i};\eps)
f_j(W^0,\ldots,W^j;\eps)^\top\Big] \in \R^{k\times k},\quad i,j \geq 0. 
\end{align}
This is a standard form of an AMP algorithm, see e.g.\
\cite{javanmard2013state,wang2024universality}. The iterations
for $(\W^0,\ldots,\W^{T-1}) \in \R^{n \times kT}$ and
$(\U^1,\ldots,\U^T) \in \R^{d \times kT}$ admit a mapping to the form of
\cite[Eqs.\ (2.14) and (D.1--D.2)]{wang2024universality} with $kT$ vector
iterates. Then by the AMP state evolution (c.f.\
\cite[Theorem 2.21 and Remark 2.2]{wang2024universality}), under the conditions
of Assumption \ref{assump:model}, almost surely as $n,d\rightarrow\infty$,
\begin{equation}\label{eq:state_evolution}
\begin{gathered}
\frac{1}{d}\sum_{j=1}^d \delta_{U^1_j,\ldots,U^m_j,V_j} \overset{W_2}{\to}
\sP(U^1,\ldots,U^m,V),\\
\frac{1}{n}\sum_{i=1}^n \delta_{W^0_i,\ldots,W^m_i, \eps_i} \overset{W_2}{\to}
\sP(W^0,\ldots,W^m,\eps).
\end{gathered}
\end{equation}

We will now use the above state evolution to prove the desired conclusion
(\ref{eq:emp_converge_aux}). In the AMP algorithm
(\ref{eq:amp}), let $k = 2$. We show the existence of Lipschitz nonlinearities
$g_i=(g_{i,1},g_{i,2}):\R^{2i} \times \R^{T+2} \rightarrow \R^2$ and
$f_i=(f_{i,1}, f_{i,2}): \R^{2(i+1)} \times \R \rightarrow \R^2$ such that
\begin{align}
(\tilde{\btheta}^j_\gamma, \btheta^\ast) &=
g_j(\U^1,\ldots,\U^j;\V),\label{eq:nonlinearity_1}\\
\Big({-}(\beta/\delta)(\X\tilde{\btheta}^j_\gamma - \X\btheta^\ast - \beps),
0\Big) &= f_j(\W^0,\ldots,\W^j; \beps).\label{eq:nonlinearity_2}
\end{align}
The base case is $g_0(\V)=(\btheta^0,\btheta^*)$ and 
$f_0(\W^0;\beps)=({-}(\beta/\delta)(\W^0_1 - \W^0_2 - \beps),0)$
where $\W^0=(\W_1^0,\W_2^0)=(\X\btheta^0, \X\btheta^\ast)$.
Supposing inductively that
(\ref{eq:nonlinearity_1}--\ref{eq:nonlinearity_2}) hold for some Lipschitz
functions $g_0,f_0,\ldots,g_j,f_j$, we note that this implies
$(\xi_{j\ell})_{12}=(\xi_{j\ell})_{22}=0$ for all $\ell \leq j$.
Then writing $\U^j=(\U^j_1,\U^j_2) \in \R^{d\times 2}$, we have
\begin{align*}
\U^{j+1}_1 &= \X^\top f_{j,1}(\W^0,\ldots,\W^j;\beps)-\sum_{\ell=0}^j
\Big(g_{\ell,1}(\U^1,\ldots,\U^\ell;\V)(\xi_{j\ell})_{11} +
g_{\ell,2}(\U^1,\ldots,\U^\ell;\V)(\xi_{j\ell})_{21}\Big)\\
&={-}\frac{\beta}{\delta}\X^\top(\X\tilde{\btheta}^j_\gamma - \X\btheta^\ast - \beps) -
\sum_{\ell=0}^j g_{\ell,1}(\U^1,\ldots, \U^\ell;\V)(\xi_{j\ell})_{11} -
\sum_{\ell=0}^j (\xi_{j\ell})_{21} \cdot \btheta^\ast.
\end{align*}
So
\begin{align*}
\tilde{\btheta}^{j+1}_\gamma &= \tilde{\btheta}^j_\gamma -
\gamma\bigg(\beta\X^\top (\X\tilde{\btheta}^j_\gamma - \X\btheta^\ast -
\beps) - s(\tilde{\btheta}^j_\gamma,\alpha^j_\gamma)\bigg) +
\sqrt{2} (\b^{j+1} - \b^j)\\
&= \tilde{\btheta}^j_\gamma + \gamma\delta \Big(\U^{j+1}_1+\sum_{\ell=0}^j
g_{\ell,1}(\U^1,\ldots,\U^\ell;\V)(\xi_{j\ell})_{11}+\sum_{\ell=0}^j
(\xi_{j\ell})_{21} \cdot \btheta^\ast\Big) + \gamma
s(\tilde{\btheta}^t_\gamma,\alpha^j_\gamma) + \sqrt{2}
(\b^{j+1} - \b^j),
\end{align*}
and to satisfy (\ref{eq:nonlinearity_1}) we may define $g_{j+1}(\cdot)$ as
$g_{j+1,2}(U^1,\ldots,U^{j+1};V) = \theta^\ast$ and
\begin{align}\label{eq:g_planted}
\notag g_{j+1,1}(U^1,\ldots,U^{j+1};V) &=
g_{j,1}(U^1,\ldots,U^j;V)+\gamma\delta
\Big(U^{j+1}_1+\sum_{\ell=0}^j g_{\ell,1}(U^1,\ldots,
U^\ell;V)(\xi_{j\ell})_{11} + \sum_{\ell=0}^j (\xi_{j\ell})_{21} \cdot \theta^\ast\Big)\\
&\quad + \gamma s\big(g_{j,1}(U^1,\ldots,
U^j;V),\alpha^j_\gamma\big) + \sqrt{2} \rho_j,
\end{align}
where we recall $V=(\theta^\ast,\theta^0,\rho_1,\ldots,\rho_T)$. We note that
$\theta \mapsto s(\theta,\alpha_\gamma^j)$ is Lipschitz by Assumption
\ref{assump:prior}, so this function $g_{j+1}(\cdot)$ is also Lipschitz by the
induction hypothesis. Next, the condition $g_{j+1,2}(\cdot)=\theta^\ast$
implies
$(\zeta_{j+1,\ell})_{12} = (\zeta_{j+1,\ell})_{22} = 0$ for $\ell\leq j$, and
allows us to compute $\W^{j+1} = (\W^{j+1}_1, \W^{j+1}_2)$ as $\W^{j+1}_2 = \X\btheta^\ast$ and 
\begin{align*}
\W^{j+1}_1 &= \X g_{j+1,1}(\U^1,\ldots,\U^{j+1};\V)-\sum_{\ell=0}^j
\Big(f_{\ell,1}(\W^0,\ldots,\W^\ell;\beps)(\zeta_{j+1,\ell})_{11}+f_{\ell,2}(\W^0,\ldots,\W^\ell;\beps)(\zeta_{j+1,\ell})_{21}\Big)\\
&= \X\tilde{\btheta}^{j+1}_\gamma-\sum_{\ell=0}^{j}
f_{\ell,1}(\W^0,\ldots,\W^\ell;\beps) (\zeta_{j+1,\ell})_{11}.
\end{align*}
Hence with 
\begin{align*}
\X\tilde{\btheta}^{j+1}_\gamma - \X\btheta^\ast - \beps =
\W^{j+1}_1 - \W^{j+1}_2 +\sum_{\ell=0}^{j} f_{\ell,1}(\W^0,\ldots,\W^\ell;\beps) (\zeta_{j+1,\ell})_{11} - \beps,
\end{align*}
to satisfy (\ref{eq:nonlinearity_2}) we can define $f_{j+1,2} = 0$ and
\begin{align}\label{eq:f_planted}
f_{j+1,1}(W^0,\ldots,W^{j+1};\eps) = {-}\frac{\beta}{\delta}\Big(W^{j+1}_1 - W^{j+1}_2 +
\sum_{\ell=0}^{j} f_{\ell,1}(W^0,\ldots,W^\ell;\eps) (\zeta_{j+1,\ell})_{11} - \eps\Big).
\end{align}
This is also Lipschitz by the induction hypothesis, completing the induction.
So using (\ref{eq:nonlinearity_1}--\ref{eq:nonlinearity_2}), the state evolution
(\ref{eq:state_evolution}), and the fact that $X_n \overset{W_2}{\to} X$ 
implies $f(X_n) \overset{W_2}{\to} f(X)$ for Lipschitz $f$, we conclude that
\begin{align}\label{eq:AMPSE}
\frac{1}{d}\sum_{j=1}^d \delta_{\big(\theta^\ast_j,\tilde{\theta}^0_{\gamma,j}, \ldots,
\tilde{\theta}^T_{\gamma,j}\big)}  \overset{W_2}{\to}
\sP\big(\theta^\ast,\theta^{0},\ldots,
\theta^T\big),\quad\frac{1}{n}\sum_{i=1}^n
\delta_{\big(\eta^\ast_i,\eps_i,\tilde{\eta}^0_{\gamma,i}, \ldots,
\tilde{\eta}^T_{\gamma,i}\big)} \overset{W_2}{\to}
\sP\big(W^*, \eps,\eta^0,\ldots, \eta^T\big).
\end{align}
Here, the laws on the right side are defined by
setting $W^*=W_2^i$ for each $i \geq 1$, and
\begin{align*}
\theta^i = g_{i,1}(U^1,\ldots,U^i;V), \quad
\eta^i={-}\frac{\delta}{\beta}f_{i,1}(W^0,\ldots,W^i;\eps)+W^* + \eps,
\end{align*}
where $\{U^i\} = \{(U^i_1,U^i_2)\}$ and $\{W^i\}=\{(W^i_{1},W^i_2)\}$
are the Gaussian processes from AMP state evolution, independent of $\eps,V$
with covariance kernels given by (\ref{eq:amp_gp_cov}).

Let us now show that
\begin{equation}\label{eq:AMPjointlaws}
\sP(\theta^\ast,\theta^0,\ldots,\theta^T) =
\sP\big(\theta^\ast,\theta^0_\gamma,\ldots,
\theta^T_\gamma\big), \qquad
\sP(W^*,\eps,\eta^0,\ldots,\eta^T) = \sP\big({-}w_\gamma^\ast, \eps,\eta^0_\gamma,\ldots,
\eta^T_\gamma\big),
\end{equation}
where the laws on the right sides are the variables of the discrete DMFT
equations. This
will conclude the proof of (\ref{eq:emp_converge_aux}). To do so, let us
define from the AMP state evolution variables (\ref{eq:AMPSE}) the quantities
\begin{equation}\label{eq:identifications}
\begin{gathered}
u_\gamma^i=\delta U_1^{i+1}, \quad
w_\gamma^i={-}W_1^i, \quad w_\gamma^*={-}W^*,
\quad \theta_\gamma^i=\theta^i,
\quad \eta_\gamma^i=\eta^i,\\
\frac{\partial \theta^i_\gamma}{\partial
u^j_\gamma}=\frac{1}{\delta}\frac{\partial g_{i,1}}{\partial
U_1^{j+1}}(U^1,\ldots,U^i;V),
\quad \frac{\partial\eta^i_\gamma}{\partial w^j_\gamma}
=\frac{\delta}{\beta}\frac{\partial f_{i,1}}{\partial
W^j_1}(W^0,\ldots,W^i;\eps).
\end{gathered}
\end{equation}
Then it suffices to check that these quantities satisfy the discrete DMFT
equations (\ref{eq:dmft_theta_discrete1}--\ref{eq:dmft_CReta_discrete}),
by uniqueness of the iterative
construction (\ref{eq:iterativeDMFTconstruction}) of the solution to these
discrete DMFT equations. We first note that by (\ref{eq:amp_gp_cov}),
$\{u^j_\gamma\}$ and $(w_\gamma^*,\{w_\gamma^j\})$
thus defined are centered Gaussian processes with covariance
\begin{align*}
 \E[u_\gamma^i u_\gamma^j] &=
\delta^3 \E[f_{i,1}(W^0,\ldots,W^i;\eps) f_{j,1}(W^0,\ldots,W^j;\eps)] =
\delta \beta^2\E[(\eta^i-W^\ast-\eps)(\eta^j - W^\ast - \eps)], \\
\E[w_\gamma^i w_\gamma^j] &=
\E[g_{i,1}(U^1,\ldots,U^i;V)g_{j,1}(U^1,\ldots,U^j;V)] = \E[\theta^i\theta^j],\\
\E[w_\gamma^i w_\gamma^*] &=
\E[g_{i,1}(U^1,\ldots,U^i;V)g_{0,2}(V)] = \E[\theta^i\theta^*],\\
\E[(w_\gamma^*)^2]
&=\E[g_{0,2}(V)^2]
=\E[(\theta^*)^2],
\end{align*}
which verifies (\ref{eq:dmft_covu_discrete}) and (\ref{eq:dmft_covw_discrete})
in light of (\ref{eq:identifications}).

We next check the recursions (\ref{eq:response_1}) and
(\ref{eq:response_2}) for the response:
Recall that the AMP Onsager corrections are
\begin{equation}\label{eq:xi_zeta_simplify}
\begin{gathered}
(\zeta_{j,s})_{11} = \E\Big[\frac{\partial g_{j,1}}{\partial
U^{s+1}_1}(U^1,\ldots,U^j;V)\Big],\\
(\xi_{j,s})_{11} = \E\Big[\delta\,\frac{\partial f_{j,1}}{\partial
W^{s}_1}(W^0,\ldots,W^j;\eps)\Big],\quad(\xi_{j,s})_{21} =
\E\Big[\delta\,\frac{\partial f_{j,1}}{\partial W^{s}_2}(W^0,\ldots,W^j;\eps)\Big].
\end{gathered}
\end{equation}
By definition of $g_{j,1}$ in (\ref{eq:g_planted}), we have $\frac{\partial
g_{j,1}}{\partial U^{s+1}_1} = 0$ for $s \geq j$, $\frac{\partial
g_{j,1}}{\partial U_1^{s+1}} = \gamma\delta$ if $s = j-1$, and if $s \leq j-2$, 
\begin{align}\label{eq:g_recursion}
\notag\frac{\partial g_{j,1}}{\partial U^{s+1}_1} &= \frac{\partial
g_{j-1,1}}{\partial U^{s+1}_1} + \gamma\delta \sum_{\ell=s+1}^{j-1}
\frac{\partial g_{\ell,1}}{\partial U^{s+1}_1}(\xi_{j-1,\ell})_{11} + \gamma
\partial_\theta s(g_{j-1,1},\alpha^{t_{j-1}}_\gamma) \frac{\partial
g_{j-1,1}}{\partial U_1^{s+1}}\\
&=\Big(1+\gamma\delta(\xi_{j-1,j-1})_{11} + \gamma \partial_\theta s(g_{j-1,1},
\alpha^{t_{j-1}}_\gamma)\Big)\frac{\partial g_{j-1,1}}{\partial U_1^{s+1}}
+ \gamma \delta \sum_{\ell=s+1}^{j-2} (\xi_{j-1,\ell})_{11}\frac{\partial
g_{\ell,1}}{\partial U_1^{s+1}}
\end{align}
where both sides are evaluated at $(U^1,\ldots,U^j;V)$.
Similarly, by definition of $f_{j,1}(\cdot)$ in (\ref{eq:f_planted}), we have
$\frac{\partial f_{j,1}}{\partial W_1^s}
=\frac{\partial f_{j,1}}{\partial W_2^s}=0$ if $s > j$, $\frac{\partial
f_{j,1}}{\partial W_1^s}=-\frac{\partial f_{j,1}}{\partial W_2^s}
=-\beta/\delta$ if $s = j$, and if $s < j$,
\begin{align}
\frac{\partial f_{j,1}}{\partial W^s_1} =
-\frac{\beta}{\delta}\sum_{\ell=s}^{j-1} \frac{\partial f_{\ell,1}}{\partial
W^s_1} (\zeta_{j,\ell})_{11} = -\frac{\beta}{\delta}\Big(\sum_{\ell=s+1}^{j-1}
\frac{\partial f_{\ell,1}}{\partial W^s_1}
(\zeta_{j,\ell})_{11}-\frac{\beta}{\delta}(\zeta_{j,s})_{11}\Big),
\label{eq:f_recursion1}\\
\frac{\partial f_{j,1}}{\partial W^s_2} =
-\frac{\beta}{\delta}\sum_{\ell=s}^{j-1} \frac{\partial f_{\ell,1}}{\partial
W^s_2} (\zeta_{j,\ell})_{11}=-\frac{\beta}{\delta}\Big(\sum_{\ell=s+1}^{j-1}
\frac{\partial f_{\ell,1}}{\partial W^s_2}
(\zeta_{j,\ell})_{11}+\frac{\beta}{\delta}(\zeta_{j,s})_{11}\Big).
\label{eq:f_recursion2}
\end{align}
Here, these recursions imply that
$\{\frac{\partial f_{j,1}}{\partial W^s_1}\}$ and
$\{\frac{\partial f_{j,1}}{\partial W^s_2}\}$ are deterministic. Under the definitions
(\ref{eq:dmft_CRtheta_discrete}), (\ref{eq:dmft_CReta_discrete}),
and (\ref{eq:identifications}), we have
\begin{align}\label{eq:responseidentifications}
R_\theta^\gamma(j,s) = \frac{1}{\delta}\E\Big[\frac{\partial g_{j,1}}{\partial
U^{s+1}_1}(U^1,\ldots,U^j;V)\Big] = \frac{1}{\delta}(\zeta_{j,s})_{11},
\quad R_\eta^\gamma(j,s) = \delta^2\Big[\frac{\partial f_{j,1}}{\partial
W^s_1}\Big] = \delta(\xi_{j,s})_{11}.
\end{align}
Then by (\ref{eq:g_recursion}) and (\ref{eq:f_recursion1}), $\{\frac{\partial
g_{j,1}}{\partial U^{s+1}_1}\}$ and $\{\frac{\partial f_{j,1}}{\partial
W^s_1}\}$ satisfy the recursions
\begin{align*}
\frac{\partial g_{j,1}}{\partial U^{s+1}_1} &= \Big(1 - \gamma\delta\beta +
\gamma \partial_\theta
s(g_{j-1,1};\alpha^{j-1}_\gamma)\Big)\frac{\partial
g_{j-1,1}}{\partial U^{s+1}_1} + \gamma \sum_{\ell=s+1}^{j-2}
R_\eta^\gamma(j-1,\ell) \frac{\partial g_{\ell,1}}{\partial U^{s+1}_1},\\
\frac{\partial f_{j,1}}{\partial W^s_1} &= -\beta\Big(\sum_{\ell=s+1}^{j-1}
R_\theta^\gamma(j,\ell)\frac{\partial f_{\ell,1}}{\partial W^s_1} -
\frac{\beta}{\delta} R_\theta^\gamma(j,s)\Big),
\end{align*}
which verify (\ref{eq:response_1}) and (\ref{eq:response_2}) in view of
(\ref{eq:identifications}) and above boundary conditions 
$\frac{\partial g_{j,1}}{\partial U_1^j} = \gamma\delta$
and $\frac{\partial f_{j,1}}{\partial W_1^j}=-\beta/\delta$.

Finally we check the primary recursions
(\ref{eq:dmft_theta_discrete1}) and (\ref{eq:dmft_eta_discrete1}).
By (\ref{eq:identifications}),
definition of $g_{j+1,1}(\cdot)$ in (\ref{eq:g_planted}), and
$(\xi_{j,j})_{11}={-}(\xi_{j,j})_{21}=-\beta$,
we have
\begin{align}\label{eq:theta^t}
\notag\theta^{j+1} &= g_{j+1,1}(U^1,\ldots, U^{j+1};V)\\
\notag&= \theta^j + \gamma\delta\Big(U^{j+1}_1 + \sum_{\ell=0}^j \theta^\ell
(\xi_{j,\ell})_{11} + \sum_{\ell=0}^j (\xi_{j,\ell})_{21} \cdot \theta^\ast\Big)
+ \gamma s\big(\theta^j,\alpha^j_\gamma\big) + \sqrt{2}\rho_j\\
&= \theta^j - \gamma\delta\beta(\theta^j-\theta^\ast) + \gamma
s\big(\theta^j,\alpha^j_\gamma\big) + \gamma\delta\sum_{\ell=0}^{j-1}
\theta^\ell (\xi_{j,\ell})_{11} + \gamma\delta\sum_{\ell=0}^{j-1}
\theta^*(\xi_{j,\ell})_{21}+\gamma\delta U^{j+1}_1 + \sqrt{2}\rho_j.
\end{align} 
Similarly, by definition of $f_{j,1}(\cdot)$ in (\ref{eq:f_planted}) and
$W^i_2 \equiv W^\ast$,
\begin{align}\label{eq:eta^t}
\eta^j&={-}\frac{\delta}{\beta} f_{j,1}(W^0,\ldots,W^j;\eps) + W^* + \eps =
{-}\frac{\beta}{\delta}\sum_{\ell=0}^{j-1} (\zeta_{j,\ell})_{11}(\eta^\ell -
W^\ast -  \eps) + W^j_1.
\end{align}
Applying (\ref{eq:xi_zeta_simplify}), note that in (\ref{eq:theta^t}) we have
$A_j:=\sum_{\ell=0}^{j-1} (\xi_{j,\ell})_{21}
=\delta\sum_{\ell=0}^{j-1} \frac{\partial f_{j,1}}{\partial W_2^s}$.
Then by the recursion (\ref{eq:f_recursion2}) and first identification of
(\ref{eq:responseidentifications}), we have
\begin{align*}
A_j = -\beta\delta\sum_{\ell=0}^{j-1}\sum_{s=\ell}^{j-1} \frac{\partial
f_{s,1}}{\partial W^\ell_2} R_\theta^\gamma(j,s) = -\beta\sum_{s=0}^{j-1}
R_\theta^\gamma(j,s)
\delta\sum_{\ell=0}^{s} \frac{\partial f_{s,1}}{\partial W^\ell_2} =  -\beta
\sum_{s=0}^{j-1} R_\theta^\gamma(j,s) (A_s + \beta).
\end{align*}
This coincides with the recursion for $\{\beta \frac{\partial
\eta_\gamma^j}{\partial w_\gamma^\ast}\}$ in (\ref{eq:response_3}).
Hence $A_j=\beta\frac{\partial \eta^j_\gamma}{\partial w_\gamma^\ast} =
\frac{1}{\delta} R^\gamma_\eta(j,\ast) = {-}\frac{1}{\delta}\sum_{s=0}^{j-1}
R^\gamma_\eta(j,s)$, where the last step applies
Lemma \ref{lem:eta_response_id}. Applying this form of $A_j$
and (\ref{eq:responseidentifications}), we may write the equations
(\ref{eq:theta^t}--\ref{eq:eta^t}) as
\begin{align*}
\theta^{j+1} &= \theta^j - \gamma\delta\beta(\theta^j - \theta^\ast) + \gamma
s\big(\theta^j,\alpha^j_\gamma\big) + \gamma \sum_{\ell=0}^{j-1}
R^\gamma_\eta(j,\ell)(\theta^\ell  - \theta^\ast) + \gamma \delta
U^{j+1}_1 + \sqrt{2}\rho_j,\\
\eta^j &= -\beta\sum_{\ell=0}^{j-1} R^\gamma_\theta(j,\ell)(\eta^\ell - W^\ast -
\eps) + W^j_1, 
\end{align*}
which verifies (\ref{eq:dmft_theta_discrete1}) and (\ref{eq:dmft_eta_discrete1})
in view of (\ref{eq:identifications}).
This verifies that the definitions (\ref{eq:identifications}) indeed satisfy
(\ref{eq:dmft_theta_discrete1}--\ref{eq:dmft_CReta_discrete}),
concluding the proof of (\ref{eq:emp_converge_aux}).\\

\noindent \textbf{Step 2: Comparison with auxiliary dynamics.} Let us now prove
(\ref{eq:finite_dim_converge_knot_theta}--\ref{eq:finite_dim_converge_knot_alpha})
for the original dynamics with an adaptive drift parameter
$\widehat\alpha_\gamma^t$.
We will prove via induction that, almost surely as $n,d\rightarrow\infty$,
for each $t=0,\ldots,T$,
\begin{align}\label{eq:aux_diff}
\frac{1}{d}\pnorm{\btheta^t_\gamma - \tilde{\btheta}^t_\gamma}{}^2
\rightarrow 0, \qquad \widehat{\alpha}^t_\gamma \rightarrow
\alpha^t_\gamma. 
\end{align}
Since
\begin{align}
W_2\Big(\frac{1}{d}\sum_{j=1}^d \delta_{\big(\theta^\ast_j,\theta^0_{\gamma,j}, \ldots,
\theta^T_{\gamma,j}\big)}, \frac{1}{d}\sum_{j=1}^d
\delta_{\big(\theta^\ast_j,\tilde{\theta}^0_{\gamma,j}, \ldots,
\tilde{\theta}^T_{\gamma,j}\big)}\Big)^2 &\leq
\sum_{t=0}^T \frac{1}{d}\pnorm{\btheta^t_\gamma - \tilde{\btheta}^t_\gamma}{}^2
\label{eq:inductionimplication}\\
W_2\Big(\frac{1}{n}\sum_{i=1}^n \delta_{\big(\eta^\ast_i,\eps_i,\eta^0_{\gamma,i}, \ldots,
\eta^T_{\gamma,i}\big)}, \frac{1}{n}\sum_{i=1}^n
\delta_{\big(\eta^\ast_i,\eps_i,\tilde{\eta}^0_{\gamma,i}, \ldots,
\tilde{\eta}^T_{\gamma,i},\big)}\Big)^2 &\leq
\sum_{t=0}^T \frac{1}{n}\pnorm{\bbeta^t_\gamma - \tilde{\bbeta}^t_\gamma}{}^2
\leq \sum_{t=0}^T \frac{1}{n}\|\X\|_\op^2
\pnorm{\btheta^t_\gamma - \tilde{\btheta}^t_\gamma}{}^2\notag
\end{align}
and $\|\X\|_\op$ is almost surely bounded for all large $n,d$,
the above inductive claim together with (\ref{eq:emp_converge_aux}) implies
(\ref{eq:finite_dim_converge_knot_theta}--\ref{eq:finite_dim_converge_knot_alpha}). 

The base case of $t=0$ in (\ref{eq:aux_diff}) holds exactly. Suppose
(\ref{eq:aux_diff}) holds up to time $t$. For $t+1$, we see that
\begin{align*}
\frac{1}{d}\pnorm{\btheta^{t+1}_\gamma - \tilde{\btheta}^{t+1}_\gamma}{}^2 &\leq C\Big((1+\pnorm{\X}{\op}^4)\frac{1}{d}\pnorm{\btheta^t_\gamma - \tilde{\btheta}^t_\gamma}{}^2 + \frac{1}{d}\pnorm{s(\btheta^t_\gamma, \widehat{\alpha}^t_\gamma) - s(\tilde{\btheta}^t_\gamma, \alpha^t_\gamma)}{}^2\Big).
\end{align*} 
Applying boundedness of $\pnorm{\X}{\op}$ and Lipschitz continuity of $s(\cdot)$ in Assumption \ref{assump:prior}, we have by the induction hypothesis 
that $\frac{1}{d}\pnorm{\btheta^{t+1}_\gamma -
\tilde{\btheta}^{t+1}_\gamma}{}^2 \rightarrow 0$ almost surely. Next, we have by the Lipschitz continuity of $\cG(\cdot)$ in Assumption \ref{assump:gradient} that 
\begin{align*}
\pnorm{\widehat{\alpha}^{t+1}_\gamma - \alpha_\gamma^{t+1}}{}
&\leq \pnorm{\widehat{\alpha}^t_\gamma - \alpha_\gamma^t}{} +
\gamma \left\|\cG\Big(\widehat{\alpha}^t_\gamma, \frac{1}{d}\sum_{\ell=1}^d
\delta_{\theta^t_{\gamma,\ell}}\Big) - \cG(\alpha^t_\gamma,
\sP(\theta^t_\gamma))\right\|\\
&\leq C\pnorm{\widehat{\alpha}^t_\gamma - \alpha_\gamma^t}{} +
CW_2\Big(\frac{1}{d}\sum_{\ell=1}^d  \delta_{\theta^t_{\gamma,\ell}},
\sP(\theta^t_\gamma)\Big),
\end{align*}
which converges almost surely to 0 by the induction hypothesis and the above
implication (\ref{eq:inductionimplication}). This establishes the induction for (\ref{eq:aux_diff}) and hence completes the proof.
\end{proof}

\subsection{Step 2: Discretization error of DMFT
equation}\label{subsec:discretizedmft}

We now define a piecewise constant embedding of the components of the
discrete DMFT system
(\ref{eq:dmft_theta_discrete1}--\ref{eq:dmft_CReta_discrete}) into continuous
time, and show that this converges to the solution of the continuous DMFT system
established in Theorem \ref{thm:dmftsolexists}, in the limit $\gamma \to 0$.

For all times $t \in \R_+$, define
\begin{align}\label{eq:t_floor_ceil}
\floor{t} = \max\{i\gamma: i\gamma\leq t, i\in \Z_+\} \in \gamma\Z_+,
\quad \ceil{t} = \floor{t} + \gamma \in \gamma\Z_+,
\quad [t]=\floor{t}/\gamma \in \Z_+.
\end{align}
Fixing $T>0$, let $\cD_\eta^\gamma$ be the space of functions
$(\bar R_\eta^\gamma,\bar C_\eta^\gamma,\bar \alpha_\gamma)
\equiv \{(\bar R_\eta^\gamma(t,s),\bar C_\eta^\gamma(t,s),\bar \alpha_\gamma^t\}_{0 \leq s \leq t \leq T}$ that are piecewise constant
and right-continuous in $(s,t)$ with jumps at $\gamma\Z_+$, i.e.\
$\bar R_\eta^\gamma(t,s)=\bar R_\eta^\gamma(\floor{t},\floor{s})$ for all
$0 \leq s \leq t \leq T$ and similarly for $\bar C_\eta^\gamma,\bar
\alpha_\gamma$. Analogously, let $\cD_\theta^\gamma$ be the space of functions
$(\bar R_\theta^\gamma,\bar C_\theta^\gamma,\bar{\tilde\alpha}_\gamma)
\equiv \{\bar R_\theta^\gamma(t,s),\bar C_\theta^\gamma(t,s),\bar{\tilde\alpha}_\gamma^t\}_{0 \leq s \leq t \leq T}$ that are piecewise
constant and right-continuous with jumps at $\gamma\Z_+$.

We define a map $\cT^\gamma_{\eta \to \theta}:\cD_\eta^\gamma \to
\cD_\theta^\gamma$ as follows: Given $X^\gamma=(\bar R_\eta^\gamma,\bar
C_\eta^\gamma,\bar \alpha_\gamma) \in
\cD_\eta^\gamma$, let $\{\bar u_\gamma^t\}_{t \in [0,T]}$ be a mean-zero
Gaussian process with covariance $\bar C_\eta^\gamma$,
let $\{b^t\}_{t \geq 0}$ be a standard Brownian motion, and define
processes $\{\bar \theta_\gamma^t\}_{t \in [0,T]}$
and $\{\frac{\partial \bar \theta^t_\gamma}{\partial \bar u^s_\gamma}\}_{0 \leq
s \leq t \leq T}$ by
\begin{align}
\bar \theta^t_\gamma&=\theta^0+\int_0^{\floor{t}}\Big[{-}\delta\beta(\bar
\theta^s_{\gamma}-\theta^\ast) + s(\bar \theta^s_\gamma,\bar\alpha^s_\gamma) +
\int_0^{\floor{s}} \bar R^\gamma_\eta(s,r)(\bar \theta_\gamma^r-\theta^\ast)\d r  +
\bar{u}^s_\gamma\Big]\d s + \sqrt{2}b^{\floor{t}},\label{def:dmft_piecelinear_theta}\\
\frac{\partial \bar \theta^t_\gamma}{\partial \bar u^s_\gamma} &=
1+\1\{\ceil{s} \leq \floor{t}\}
\int_{\ceil{s}}^{\floor{t}}
\bigg[\Big({-}\delta\beta + \partial_\theta
s(\bar \theta_\gamma^r,\bar \alpha^r_\gamma)\Big)\frac{\partial
\bar \theta^r_\gamma}{\partial \bar u^s_\gamma} + \int_{\ceil{s}}^{\floor{r}}
\bar R_\eta^\gamma(r,r')\frac{\partial
\bar \theta^{r'}_\gamma}{\partial \bar u^s_\gamma}\d r'\bigg]\d r.
\label{def:derivative_process_piecelinear_1}
\end{align}
Define from these processes
\begin{equation}\label{def:CRtheta_piecelinear}
\begin{gathered}
\bar C_\theta^\gamma(t,s) = \E\Big[\bar \theta^t_\gamma \bar \theta^s_\gamma\Big],
\quad \bar C_\theta^\gamma(t,\ast)=\E\Big[\bar \theta^t_\gamma \theta^\ast\Big], 
\quad \bar C_\theta^\gamma(*,*)=\E[(\theta^*)^2],\\
\bar R^\gamma_\theta(t,s)=\E\Big[\frac{\partial\bar\theta^t_\gamma}{\partial
\bar u^s_\gamma}\Big], \quad
\bar{\tilde \alpha}_\gamma^t=\alpha^0+\int_0^{\floor{t}}
\cG(\bar{\tilde \alpha}_\gamma^s,\sP(\bar\theta_\gamma^s))\d s
\end{gathered}
\end{equation}
and set $\cT_{\eta \to \theta}^\gamma(X^\gamma)=(\bar R_\theta^\gamma,
\bar C_\theta^\gamma,\bar{\tilde \alpha}_\gamma)$. We also define a map
$\cT^\gamma_{\theta \to \eta}:\cD_\theta^\gamma \to
\cD_\eta^\gamma$ as follows: Given $Y^\gamma=(\bar R_\theta^\gamma,\bar
C_\theta^\gamma,\bar{\tilde \alpha}_\gamma) \in \cD_\eta^\gamma$,
let $(\bar w_\gamma^*,\{\bar w_\gamma^t\}_{t \in [0,T]})$ be a mean-zero
Gaussian process with covariance $\bar C_\theta^\gamma$, and define
\begin{align}
\bar \eta^t_\gamma &= {-}\beta\int_0^{\floor{t}} \bar R^\gamma_\theta(t,s)(\bar
\eta^s_\gamma + \bar w^\ast_\gamma - \eps)\d s - \bar w^t_\gamma,\label{def:dmft_piecelinear_eta}\\
\frac{\partial \bar \eta^t_\gamma}{\partial \bar w^s_\gamma} &= 
\beta \bar R_\theta(t,s)-\1\{\ceil{s} \leq \floor{t}\}
\beta \int_{\ceil{s}}^{\floor{t}} \bar R^\gamma_\theta(t,r)\frac{\partial
\bar \eta^r_\gamma}{\partial \bar w^s_\gamma}\d r.
\label{def:derivative_process_piecelinear_3}
\end{align}
Define from these processes
\begin{equation}\label{def:CReta_piecelinear}
\bar R^\gamma_\eta(t,s)=\delta\beta\,
\E\Big[\frac{\partial\bar\eta^t_\gamma}{\partial \bar w^s_\gamma}\Big], \quad
\bar C_\eta^\gamma(t,s)=\delta\beta^2
\E\Big[(\bar \eta^t_\gamma + \bar w^\ast_\gamma -
\eps)(\bar \eta^s_\gamma + \bar w^\ast_\gamma - \eps)\Big],
\quad \bar \alpha_\gamma^t=\bar{\tilde \alpha}_\gamma^t
\end{equation}
and set $\cT_{\theta \to \eta}^\gamma(Y^\gamma)=(\bar R_\eta^\gamma,\bar
C_\eta^\gamma,\bar \alpha_\gamma)$. These maps may be understood as discrete
approximations of $\cT_{\eta \to \theta}$ and $\cT_{\theta \to \eta}$
constructed in Section \ref{sec:contractivemapping}, with domains restricted to
the spaces $\cD_\theta^\gamma$ and $\cD_\eta^\gamma$ of piecewise constant
inputs.

Recall the spaces $\cS_\theta,\cS_\eta,\cS$ of Section \ref{sec:functionspace}.
The following lemma shows that the above maps
$\cT_{\eta \to \theta}^\gamma,\cT_{\theta \to \eta}^\gamma$ are well-defined,
that the unique fixed point of these maps is a
piecewise constant embedding of the discrete-time
DMFT system in Section \ref{subsec:discretedmft}, and that furthermore
this fixed point belongs to $\cS$.

\begin{lemma}\label{lem:discrete_dmft_rewrite}
\begin{enumerate}[(a)]
\item Given any $X^\gamma \in \cD_\eta^\gamma$ and realization of $\theta^*$, $\theta^0$, $\{b^t\}_{t \in [0,T]}$, and $\{\bar u_\gamma^t\}_{t \in [0,T]}$,
the processes (\ref{def:dmft_piecelinear_theta}--\ref{def:derivative_process_piecelinear_1}) have a unique solution,
and this solution is piecewise constant and right-continuous with jumps at
$\gamma \Z_+$. Consequently,
$\cT_{\eta \to \theta}^\gamma$ is a well-defined map from
$\cD_\eta^\gamma$ to $\cD_\theta^\gamma$.
\item Given any $Y^\gamma \in \cD_\theta^\gamma$ and realization of $\eps$ and
$(\bar w_\gamma^*,\{\bar w_\gamma^t\})$,
the processes (\ref{def:dmft_piecelinear_eta}--\ref{def:derivative_process_piecelinear_3}) have a unique solution,
and this solution is piecewise constant and right-continuous with jumps at
$\gamma \Z_+$. Consequently, $\cT_{\theta \to \eta}^\gamma$ is a well-defined
map from $\cD_\theta^\gamma$ to $\cD_\eta^\gamma$.
\item The map $\cT_{\eta \to \eta}^\gamma=\cT_{\eta \to \theta}^\gamma \circ
\cT_{\theta \to \eta}^\gamma$ has a unique fixed point in $\cD_\eta^\gamma$, and
the map $\cT_{\theta \to \theta}^\gamma=\cT_{\theta \to \eta}^\gamma \circ
\cT_{\eta \to \theta}^\gamma$ has a unique fixed point in $\cD_\theta^\gamma$.
These fixed points are given precisely by
\begin{equation}\label{eq:discretefixedpoint}
\begin{gathered}
\bar \alpha_\gamma^t=\bar{\tilde\alpha}_\gamma^t=\alpha_\gamma^{[t]},
\quad \bar C_\theta^\gamma(t,s)=C_\theta^\gamma([t],[s]), \quad
\bar C_\theta^\gamma(t,*)=C_\theta^\gamma([t],*),
\quad \bar C_\eta^\gamma(t,s)=C_\eta^\gamma([t],[s]),\\
\bar R_\theta^\gamma(t,s)=\begin{cases}
1 & \text{ if } [s]=[t] \\
\frac{1}{\gamma}\,R_\theta^\gamma([t],[s]) & \text{ if } [s]<[t],
\end{cases}
\quad \bar R_\eta^\gamma(t,s)=
\begin{cases}
\delta \beta^2 & \text{ if } [s]=[t] \\
\frac{1}{\gamma}\,R_\eta^\gamma([t],[s]) & \text{ if } [s]<[t]
\end{cases}
\end{gathered}
\end{equation}
for all $0 \leq s \leq t \leq T$, where
$(\alpha_\gamma,C_\theta^\gamma,C_\eta^\gamma,R_\theta^\gamma,R_\eta^\gamma)$
are the components of the discrete DMFT system defined iteratively in time
via (\ref{eq:dmft_CRtheta_discrete}) and (\ref{eq:dmft_CReta_discrete}).
\item For any $\gamma>0$ sufficiently small,
we have $\cT_{\eta \to \theta}^\gamma(\cD_\eta^\gamma \cap \cS_\eta)
\subseteq \cD_\theta^\gamma \cap \cS_\theta$,
$\cT_{\theta \to \eta}^\gamma(\cD_\theta^\gamma \cap \cS_\theta)
\subseteq \cD_\eta^\gamma \cap \cS_\eta$, and the fixed point
(\ref{eq:discretefixedpoint}) belongs to $\cS$.
\end{enumerate}
\end{lemma}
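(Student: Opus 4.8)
\emph{Overview.} The plan is to prove (a)--(c) by unwinding the floor/ceiling structure of (\ref{def:dmft_piecelinear_theta})--(\ref{def:CReta_piecelinear}), reducing the piecewise-constant integral equations to the explicit one-step recursions of Section~\ref{subsec:discretedmft}, and to prove (d) as a discrete-time version of Lemmas~\ref{lem:Trange_theta_2_eta} and \ref{lem:Trange_eta_2_theta}. For part (a): in (\ref{def:dmft_piecelinear_theta}) the integral extends only to $\floor t$, and every quantity entering the integrand --- $\bar\theta_\gamma^s$, $\bar\alpha_\gamma^s$, $\bar R_\eta^\gamma(s,\cdot)$, and $\bar u_\gamma^s$ --- is (or may be taken to be) piecewise constant with jumps at $\gamma\Z_+$; for $\bar u_\gamma^s$ this is legitimate since $\bar C_\eta^\gamma$ has jumps only at $\gamma\Z_+$, so $\E(\bar u_\gamma^t-\bar u_\gamma^{\floor t})^2=\bar C_\eta^\gamma(t,t)-2\bar C_\eta^\gamma(t,\floor t)+\bar C_\eta^\gamma(\floor t,\floor t)=0$ and the piecewise-constant modification may be used. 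Hence $\bar\theta_\gamma^t=\bar\theta_\gamma^{\floor t}$ for all $t$, and evaluating at $t=(k+1)\gamma$ collapses the equation to the explicit update (\ref{eq:dmft_theta_discrete1}) with $R_\eta^\gamma(k,j):=\gamma\bar R_\eta^\gamma(k\gamma,j\gamma)$, $\alpha_\gamma^k:=\bar\alpha_\gamma^{k\gamma}$, $u_\gamma^k:=\bar u_\gamma^{k\gamma}$; existence and uniqueness over $[0,T]$ then follow by induction on $k$. The same reasoning applied to (\ref{def:derivative_process_piecelinear_1}) yields the response recursion (\ref{eq:response_1}), the factor $\gamma$ being absorbed into the rescaling in (\ref{eq:discretefixedpoint}); when $\ceil s>\floor t$ (i.e.\ $[s]=[t]$) the integral term drops and $\partial\bar\theta_\gamma^t/\partial\bar u_\gamma^s\equiv 1$, which is the source of the diagonal value $1$. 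Part (b) is identical: (\ref{def:dmft_piecelinear_eta})--(\ref{def:derivative_process_piecelinear_3}) collapse to (\ref{eq:dmft_eta_discrete1})--(\ref{eq:response_2}) since $\bar R_\theta^\gamma(t,\cdot)$ is piecewise constant, matching forces $\gamma\bar R_\theta^\gamma(k\gamma,j\gamma)=R_\theta^\gamma(k,j)$ for $j<k$, and $\ceil s>\floor t$ gives the diagonal value $\delta\beta^2$ for $\bar R_\eta^\gamma$ through (\ref{def:CReta_piecelinear}).

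\emph{Part (c).} Granting (a)--(b), the composite $\cT_{\eta\to\eta}^\gamma$, read on the grid $\gamma\Z_+\cap[0,T]$, is exactly the iterative construction (\ref{eq:iterativeDMFTconstruction}) of the discrete DMFT system, which is well posed step by step because every arrow is an explicit finite-dimensional operation; hence (\ref{eq:iterativeDMFTconstruction}) has a unique solution, and the corresponding piecewise-constant embedding is precisely (\ref{eq:discretefixedpoint}), which one verifies directly to be a fixed point. Conversely, any fixed point is piecewise constant by (a)--(b), hence determined by its grid values, which must solve (\ref{eq:iterativeDMFTconstruction}); this gives uniqueness, and the $\theta\to\theta$ statement is the same.

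\emph{Part (d).} The conditions defining $\cS_\eta,\cS_\theta$ that concern moduli of continuity --- (\ref{eq:Ceta_cond_2}), (\ref{eq:Ctheta_cond_2}), (\ref{eq:Rtheta_cont}), and the uniform-continuity clauses --- are automatic for elements of $\cD_\eta^\gamma,\cD_\theta^\gamma$, since such functions are constant on each maximal interval of $[0,T]\setminus(\gamma\Z_+\cap(0,T))$ and $\gamma\Z_+\cap(0,T)$ has finite cardinality; positive-semidefiniteness of $\bar C_\theta^\gamma,\bar C_\eta^\gamma$ holds because they are Gram matrices of $\{\bar\theta_\gamma^t\}\cup\{\theta^*\}$ and of $\{\sqrt{\delta}\,\beta(\bar\eta_\gamma^t+\bar w_\gamma^*-\eps)\}$. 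It remains to establish the exponential-growth bounds (\ref{eq:Ceta_cond_1}), (\ref{eq:Reta_cond_1}), (\ref{eq:alpha_cond}), (\ref{eq:Ctheta_cond_1}), (\ref{eq:Rtheta_cond}). We prove these for the discrete DMFT quantities by the joint time-induction of Lemmas~\ref{lem:Trange_theta_2_eta}--\ref{lem:Trange_eta_2_theta}, carried out with integrals replaced by Riemann sums and with the Ito computation $\frac{\d}{\d t}\E(\theta^t)^2=\E[2\theta^t v^t]+2$ replaced by its Euler counterpart $\E(\theta_\gamma^{k+1})^2=\E(\theta_\gamma^k)^2+2\gamma\,\E[\theta_\gamma^k v_\gamma^k]+\gamma^2\E(v_\gamma^k)^2+2\gamma$ (the increment $b_\gamma^{k+1}-b_\gamma^k$ contributing the $2\gamma$ and, by independence, no cross term). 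The new error terms --- the extra $\gamma^2\E(v_\gamma^k)^2$, the gap between $\gamma\sum_{j<k}(\cdot)$ and $\int_0^{k\gamma}(\cdot)\,\d s$, and the gap between $\Phi((k+1)\gamma)-\Phi(k\gamma)$ and $\gamma\,\Phi'(k\gamma)$ --- are each $O(\gamma)$ uniformly over the grid once the quantities are known to be bounded on $[0,T]$, and are absorbed by the strict slack deliberately built into (\ref{eq:aux_alpha})--(\ref{eq:aux_Reta}) (the enlargements of $1$ to $1.1$, of $4$ to $4.1$, and the crude universal constants used in (\ref{eq:vtbound}) and the analogous $\eta$-side estimates), provided $\gamma$ is small enough depending on $T$, $\beta$, and the constants of Assumptions~\ref{assump:prior}--\ref{assump:gradient}; the induction then closes via the discrete Gronwall argument of Lemma~\ref{eq:lemma:auxfunctions}. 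Running the same estimates with a general input in $\cD_\eta^\gamma\cap\cS_\eta$ (resp.\ $\cD_\theta^\gamma\cap\cS_\theta$) in place of the self-consistent one --- the $\cS$-bounds on the input being exactly what is used as data in these estimates --- yields the two range statements, and in particular the fixed point (\ref{eq:discretefixedpoint}) lies in $\cD^\gamma\cap\cS\subseteq\cS$.

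\emph{Main obstacle.} The delicate point is in part (d): the error absorption must be made uniform over all $O(T/\gamma)$ grid steps simultaneously, i.e.\ one must check that the numerical margins introduced into the $\Phi$-system dominate the \emph{accumulated} discretization error for every $k\gamma\le T$ at once. This is precisely why the constants in (\ref{eq:aux_alpha})--(\ref{eq:aux_Reta}) carry the slack flagged in the remarks following (\ref{eq:dCtt}) and in the proof of Lemma~\ref{lem:Trange_eta_2_theta}, and the bookkeeping, while elementary, has to be done with care to ensure the threshold on $\gamma$ is independent of $k$.
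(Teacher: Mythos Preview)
Your proposal is correct. Parts (a)--(c) follow the paper's proof essentially verbatim: reduce the floor/ceiling integral equations to the one-step recursions (\ref{eq:dmft_theta_discrete1})--(\ref{eq:response_2}), read off existence/uniqueness and the piecewise-constant structure by induction on the grid index, and identify the fixed point with the iterative construction (\ref{eq:iterativeDMFTconstruction}).

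For part (d) you take a somewhat different route. The paper first proves the two range statements $\cT_{\eta\to\theta}^\gamma(\cD_\eta^\gamma\cap\cS_\eta)\subseteq\cD_\theta^\gamma\cap\cS_\theta$ and $\cT_{\theta\to\eta}^\gamma(\cD_\theta^\gamma\cap\cS_\theta)\subseteq\cD_\eta^\gamma\cap\cS_\eta$ via the discrete Gronwall/comparison arguments (exactly as you outline, using the $1.1$ and $4.1$ slack to absorb the $\gamma^2\E(v_\gamma^k)^2$ term), then invokes the contractivity estimates of Lemmas~\ref{lem:modulus_eta_2_theta}--\ref{lem:modulus_theta_2_eta} (which go through unchanged on $\cD^\gamma\cap\cS$), applies the Banach fixed-point theorem on the finite-dimensional complete space $\cD_\eta^\gamma\cap\cS_\eta$, and finally uses the uniqueness from (c) to conclude that the explicit fixed point (\ref{eq:discretefixedpoint}) must coincide with the Banach fixed point and hence lie in $\cS$. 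You instead prove membership of the fixed point in $\cS$ directly, by a joint time-induction on the grid index exploiting the causal structure of (\ref{eq:iterativeDMFTconstruction}). Your approach is more elementary (no contractivity or Banach needed) and works precisely because the discrete system is defined forward in time; the paper's approach is cleaner in that it recycles the continuous-time contractivity lemmas wholesale. Both rely on the same core inequality --- your Euler identity for $\E(\theta_\gamma^{k+1})^2$ bounded as in the paper's displayed computation $\bar C_\theta^\gamma(t+\gamma,t+\gamma)-\bar C_\theta^\gamma(t,t)\le\gamma(1.1\E(\bar\theta_\gamma^t)^2+\E(\bar v_\gamma^t)^2+2)$ --- and the same slack mechanism. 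One small correction: the reference to ``the discrete Gronwall argument of Lemma~\ref{eq:lemma:auxfunctions}'' should point to the comparison arguments in Lemmas~\ref{lem:Trange_theta_2_eta}--\ref{lem:Trange_eta_2_theta}; Lemma~\ref{eq:lemma:auxfunctions} is the Laplace-transform existence proof for the $\Phi$-system, not a Gronwall step.
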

\begin{proof}
For (a), if $X^\gamma=(\bar R_\eta^\gamma,\bar C_\eta^\gamma,\bar \alpha_\gamma) \in \cD_\eta^\gamma$, then $\{\bar u_\gamma^t\}$ is also piecewise constant and
right-continuous by these properties of $\bar C_\eta^\gamma$. Then
an easy induction on $k$ shows that
(\ref{def:dmft_piecelinear_theta}) has a unique solution over
$t \in [0,k\gamma)$ for each integer $k \geq 1$, which is given by
$\bar\theta_\gamma^t=\bar\theta_\gamma^{\floor{t}}$. By definition,
(\ref{def:derivative_process_piecelinear_1}) is given by $\frac{\partial
\bar\theta_\gamma^t}{\partial \bar u_\gamma^s}=1$ for all $s \geq 0$ and
$t \in [s,\ceil{s})$. Then for each $s \geq 0$, an induction on $k$
shows also that (\ref{def:derivative_process_piecelinear_1}) has a unique
solution on $[s,\ceil{s}+k\gamma)$ for each integer $k \geq 1$,
which is given by $\frac{\partial \bar\theta_\gamma^t}{\partial \bar u_\gamma^s}
=\frac{\partial \bar\theta_\gamma^{\floor{t}}}{\partial \bar
u_\gamma^s}$, and furthermore this solution depends on $s$ only via $\floor{s}$,
i.e.\ $\frac{\partial
\bar\theta_\gamma^t}{\partial \bar u_\gamma^s}=\frac{\partial
\bar\theta_\gamma^{\floor{t}}}{\partial \bar u_\gamma^{\floor{s}}}$.
Thus the solutions of
(\ref{def:dmft_piecelinear_theta}--\ref{def:derivative_process_piecelinear_1})
are piecewise constant and right-continuous,
implying the same properties for $\bar R_\theta^\gamma,\bar
C_\theta^\gamma,\bar{\tilde \alpha}$ defined by (\ref{def:CRtheta_piecelinear}).
This shows (a).

Part (b) follows from analogous inductive arguments, using that if
$Y^\gamma=(\bar R_\theta^\gamma,\bar C_\theta^\gamma,\bar{\tilde \alpha}_\gamma)
\in \cD_\theta^\gamma$, then $\{\bar w_\gamma^t\}$ is also
piecewise constant and right-continuous, and hence so are $\{\bar\eta_\gamma^t\}$ and $\{\frac{\bar\eta_\gamma^t}{\bar w_\gamma^s}\}$.

Part (c) also follows by induction: Since any fixed point is piecewise constant,
it suffices to consider the values at $\gamma \Z_+$.
By (\ref{def:dmft_piecelinear_theta}),
$\bar \theta^0_\gamma=\theta^0$. Then by (\ref{def:CRtheta_piecelinear}),
\[\bar R_\theta^\gamma(0,0)=1,
\quad \bar C_\theta^\gamma(0,0)=\E[(\theta^0)^2]=C_\theta^\gamma(0,0),
\quad \bar C_\theta^\gamma(0,*)=\E[\theta^0\theta^*]=C_\theta^\gamma(0,*),
\quad \bar{\tilde \alpha}_\gamma^0=0.\]
Then by (\ref{def:dmft_piecelinear_eta}),
$\bar \eta_\gamma^0={-}\bar w_\gamma^0$,
so $(\bar\eta_\gamma^0,\bar w_\gamma^*)$ is equal in joint law
to the discrete DMFT variables $(\eta_\gamma^0,w_\gamma^*)$. Then
by (\ref{def:CReta_piecelinear}), any fixed point must satisfy
\[\bar R_\eta^\gamma(0,0)=\delta \beta^2,
\quad \bar C_\eta^\gamma(0,0)=\delta \beta^2
\E[(\eta_\gamma^0+w_\gamma^*-\eps)^2]=C_\eta(0,0),
\quad \bar \alpha_\gamma^0=0.\]

Suppose inductively that there is a unique fixed point over times
$s \leq t$ in $\{0,\gamma,\ldots,k\gamma\}$, and
consider now $t=(k+1)\gamma$. The equations
(\ref{def:dmft_piecelinear_theta}--\ref{def:derivative_process_piecelinear_1})
and the piecewise constant nature of all processes imply
\begin{align*}
\bar \theta_\gamma^{(k+1)\gamma}&=\bar \theta_\gamma^{k\gamma}
+\int_{k\gamma}^{(k+1)\gamma}\Big[{-}\delta\beta(\bar
\theta^s_{\gamma}-\theta^\ast) + s(\bar \theta^s_\gamma,\bar\alpha^s_\gamma) +
\int_0^{\floor{s}} \bar R^\gamma_\eta(s,r)(\bar \theta_\gamma^r-\theta^\ast)\d r  +
\bar{u}^s_\gamma\Big]\d s + \sqrt{2}(b^{(k+1)\gamma}-b^{k\gamma})\\
&=\bar
\theta_\gamma^{k\gamma}+\gamma\Big({-}\delta\beta(\bar\theta_\gamma^{k\gamma}-\theta^*)
+s(\bar\theta_\gamma^{k\gamma},\bar\alpha_\gamma^{k\gamma})
+\gamma \sum_{\ell=0}^{k-1} \bar{R}_\eta^\gamma(k\gamma,\ell\gamma)
(\bar\theta^{\ell\gamma}_\gamma-\theta^*)+\bar u^{k\gamma}_\gamma\Big)
+\sqrt{2}(b^{(k+1)\gamma}-b^{k\gamma}),
\end{align*}
$\frac{\partial \bar\theta_\gamma^{(k+1)\gamma}}{\partial \bar
u_\gamma^{(k+1)\gamma}}=1$, and for any $j \leq k$,
\begin{align*}
\frac{\partial \bar\theta_\gamma^{(k+1)\gamma}}{\partial \bar u_\gamma^{j\gamma}}
&=\frac{\partial \bar\theta_\gamma^{k\gamma}}{\partial \bar u_\gamma^{j\gamma}}
+\int_{k\gamma}^{(k+1)\gamma}
\bigg[\Big({-}\delta\beta + \partial_\theta
s(\bar \theta_\gamma^r,\bar \alpha^r_\gamma)\Big)\frac{\partial
\bar \theta^r_\gamma}{\partial \bar u^{j\gamma}_\gamma} +
\int_{(j+1)\gamma}^{k\gamma}
\bar R_\eta^\gamma(r,r')\frac{\partial
\bar \theta^{r'}_\gamma}{\partial \bar u^{j\gamma}_\gamma}\d r'\bigg]\d r\\
&=\frac{\partial \bar\theta_\gamma^{k\gamma}}{\partial \bar u_\gamma^{j\gamma}}
+\gamma\bigg[\Big({-}\delta\beta+\partial_\theta s(\bar
\theta_{\gamma}^{k\gamma},\bar\alpha_{\gamma}^{k\gamma})\Big)
\frac{\partial \bar\theta_\gamma^{k\gamma}}{\partial \bar u_\gamma^{j\gamma}}
+\gamma \sum_{\ell=j+1}^{k-1} \bar R_\eta^\gamma(k\gamma,\ell \gamma)
\frac{\partial \bar \theta_\gamma^{\ell \gamma}}{\partial \bar
u_\gamma^{j\gamma}}\bigg]
\end{align*}
Comparing these equations with
(\ref{eq:dmft_theta_discrete1}--\ref{eq:response_1}) and
applying $\gamma \bar R_\eta^\gamma(k\gamma,\ell\gamma)=R_\eta^\gamma(k,\ell)$,
$\bar\alpha_\gamma^{k\gamma}=\alpha_\gamma^k$, and the equality in law
$(\bar u_\gamma^0,\ldots,\bar u_\gamma^{k\gamma})\overset{L}{=}
(u_\gamma^0,\ldots,u_\gamma^k)$ by the induction hypothesis, this shows the
equality in law
\[\{\theta^*,\bar \theta_\gamma^{i\gamma},
\tfrac{\partial \bar\theta_\gamma^{i\gamma}}{\partial \bar
u_\gamma^{j\gamma}}\}_{i<j \leq k+1}
\overset{L}{=}\{\theta^*,\theta_\gamma^i,
\gamma^{-1}\tfrac{\partial \theta_\gamma^i}{\partial u_\gamma^j}\}_{i<j \leq k+1}.\]
Then (\ref{eq:discretefixedpoint}) holds for the components
$\bar R_\theta^\gamma,\bar C_\theta^\gamma,\bar{\tilde\alpha}_\gamma$ of any fixed
point up to times
$s \leq t$ in $\{0,\gamma,\ldots,(k+1)\gamma\}$.

Now the equations 
(\ref{def:dmft_piecelinear_eta}--\ref{def:derivative_process_piecelinear_3})
imply
\begin{align*}
\bar \eta_\gamma^{(k+1)\gamma}
&={-}\beta \int_0^{(k+1)\gamma} \bar R^\gamma_\theta((k+1)\gamma,s)(\bar
\eta^s_\gamma + \bar w^\ast_\gamma - \eps)\d s - \bar w^{(k+1)\gamma}_\gamma\\
&={-}\beta\gamma \sum_{j=0}^k \bar R^\gamma_\theta((k+1)\gamma,j\gamma)
\big(\bar \eta^{j\gamma}_\gamma+\bar w^*_\gamma-\eps\big)
- \bar w^{(k+1)\gamma}_\gamma,
\end{align*}
$\frac{\partial \bar \eta^{(k+1)\gamma}_\gamma}{\partial \bar
w^{(k+1)\gamma}_\gamma}=\delta\beta^2$, and for all $j \leq k$,
\begin{align*}
\frac{\partial \bar \eta^{(k+1)\gamma}_\gamma}{\partial \bar
w^{j\gamma}_\gamma}&=\beta \bar R_\theta((k+1)\gamma,j\gamma)-
\beta \int_{(j+1)\gamma}^{(k+1)\gamma} \bar R^\gamma_\theta((k+1)\gamma,r)
\frac{\partial \bar \eta^r_\gamma}{\partial \bar w^{j\gamma}_\gamma}\d r\\
&=\beta \bar R_\theta((k+1)\gamma,j\gamma)-
\beta\gamma \sum_{\ell=j+1}^k \bar R^\gamma_\theta((k+1)\gamma,\ell\gamma)
\frac{\partial \bar \eta^{\ell \gamma}_\gamma}{\partial \bar
w^{j\gamma}_\gamma}.
\end{align*}
Comparing these equations with
(\ref{eq:dmft_eta_discrete1}--\ref{eq:response_2}) and
applying again the induction hypothesis, this shows the equality in law
\[\{\bar \eta_\gamma^{i\gamma},
\tfrac{\partial \bar\eta_\gamma^{i\gamma}}{\partial \bar
w_\gamma^{j\gamma}}\}_{i<j \leq k+1}
\overset{L}{=}\{\eta_\gamma^i,
\gamma^{-1}\tfrac{\partial \eta_\gamma^i}{\partial w_\gamma^j}\}_{i<j\leq k+1}.\]
Then (\ref{eq:discretefixedpoint}) also holds for the components
$\bar R_\eta^\gamma,\bar C_\eta^\gamma,\bar\alpha_\gamma$ of any fixed point
up to times $s \leq t$ in $\{0,\gamma,\ldots,(k+1)\gamma\}$,
completing the induction. Thus any fixed points of $\cT_{\eta \to \eta}^\gamma$
and $\cT_{\theta \to \theta}^\gamma$ must satisfy
(\ref{eq:discretefixedpoint}) for all $0 \leq s \leq t \leq T$,
implying also that such fixed points are unique in $\cD_\eta^\gamma$ and
$\cD_\theta^\gamma$ by uniqueness of the iterative
construction (\ref{eq:iterativeDMFTconstruction}) of the solution to the
discrete DMFT equations. This shows (c).

For (d), we check that $\cT_{\eta \to \eta}^\gamma$ and $\cT_{\theta \to
\theta}^\gamma$ define contractive mappings on
$\cD_\eta^\gamma \cap \cS_\eta$ and $\cD_\theta^\gamma \cap \cS_\theta$.
Note that if $Y^\gamma=(\bar R_\theta^\gamma,\bar
C_\theta^\gamma,\bar{\tilde\alpha}_\gamma) \in \cD_\theta^\gamma \cap \cS_\theta$
and $X^\gamma=\cT_{\theta \to \eta}^\gamma(Y^\gamma)=(\bar R_\eta^\gamma,\bar
C_\eta^\gamma,\bar\alpha_\gamma)$, then
setting $\bar\xi_\gamma^t=\bar\eta_\gamma^t+\bar w^*_\gamma-\eps$, the
same arguments as in Lemma \ref{lem:Trange_theta_2_eta} show
\begin{align*}
\E(\bar \xi_\gamma^t)^2 &\leq 2\Big[\beta^2\int_0^{\floor{t}} (t-s+1)^2
\cdot
\Phi_{R_\theta}^2(t-s)\E(\bar \xi_\gamma^s)^2\d s+2\Phi_{C_\theta}(t)+2\tau_*^2+\sigma^2\Big],\\
|\bar R_\eta^\gamma(t,s)| &\leq |\beta| \Big(\1\{\ceil{s} \leq \floor{t}\}
\int_{\ceil{s}}^{\floor{t}} \Phi_{R_\theta}(t-s')|\bar R_\eta^\gamma(s',s)|\d s'
+\delta |\beta| \Phi_{R_\theta}(t-s)\Big).
\end{align*}
Upper-bounding these integrals $\int_0^{\floor{t}}$ and
$\int_{\ceil{s}}^{\floor{t}}$ by $\int_0^t$ and $\int_s^t$, we obtain as in
Lemma \ref{lem:Trange_theta_2_eta} that 
$\bar C_\eta^\gamma(t,t) \leq \Phi_{C_\gamma}(t)$ and
$\bar R_\eta^\gamma(t,s) \leq \Phi_{R_\gamma}(t-s)$.
All continuity conditions
defining $\cS_\eta$ are automatically satisfied since the components of
$X^\gamma$ are piecewise constant outside the knots $D=[0,T] \cap \gamma \Z_+$.
Thus $X^\gamma \in \cS_\eta$, i.e.\
$\cT_{\theta \to \eta}^\gamma$ maps $\cD_\theta^\gamma \cap \cS_\theta$ into
$\cD_\eta^\gamma \cap \cS_\eta$.

Conversely, suppose
$X^\gamma=(\bar R_\eta^\gamma,\bar C_\eta^\gamma,\bar\alpha) \in
\cD_\eta^\gamma \cap \cS_\eta$, and let
$Y^\gamma=\cT_{\theta \to \eta}^\gamma(\bar R_\theta^\gamma,\bar
C_\theta^\gamma,\bar{\tilde\alpha})$. A small extension of the argument
in Lemma \ref{lem:Trange_eta_2_theta} shows $Y^\gamma \in \cS_\theta$. Let us
explain this extension for $\bar C_\theta^\gamma$: Defining
\[\bar v_\gamma^t={-}\delta\beta(\bar\theta_\gamma^t-\theta^*)
+s(\bar\theta_\gamma^t,\bar\alpha_\gamma^t)+\int_0^{\floor{t}}
\bar R_\eta^\gamma(t,s)(\bar\theta_\gamma^s-\theta^*)\d s+\bar u_\gamma^t,\]
we have $\bar\theta_\gamma^{t+\gamma}=\bar\theta_\gamma^t+
\gamma\cdot\bar v_\gamma^t+\sqrt{2}(b^{t+\gamma}-b^t)$ for $t \in \gamma \Z_+$.
Then, analogous to the calculation using Ito's formula in
Lemma \ref{lem:Trange_eta_2_theta}, for sufficiently small $\gamma>0$,
\begin{align*}
\bar C_\theta^\gamma(t+\gamma,t+\gamma)-\bar C_\theta^\gamma(t,t)
&=2\gamma \E \bar\theta_\gamma^t \bar v_\gamma^t
+\gamma^2 \E(\bar v_\gamma^t)^2+2\gamma\\
&\leq \frac{\gamma}{1-\gamma}\E(\bar\theta_\gamma^t)^2
+[\gamma(1-\gamma)+\gamma^2]\E(\bar v_\gamma^t)^2+2\gamma\\
&\leq \gamma\Big(1.1\E(\bar \theta_\gamma^t)^2+\E(\bar v_\gamma^t)^2+2\Big)
=\int_t^{t+\gamma}
\Big(1.1\E(\bar \theta_\gamma^s)^2+\E(\bar v_\gamma^s)^2+2\Big)\d s,
\end{align*}
the last equality holding because $\bar v_\gamma$ and $\bar \theta_\gamma$ are
piecewise constant. Summing this inequality shows
\[\bar C_\theta^\gamma(t,t)-\bar C_\theta^\gamma(0,0)
\leq \int_0^t \Big(1.1\E(\bar \theta_\gamma^s)^2+\E(\bar v_\gamma^s)^2+2\Big)\d
s\]
for all $t \in [0,T]$. Then,
bounding $\E(\bar v_\gamma^s)^2$ as in (\ref{eq:vtbound}) of
Lemma \ref{lem:Trange_eta_2_theta}, this shows that
$\bar C_\theta^\gamma(t,t) \leq \Phi_{C_\theta}(t)$. Similar extensions of the
arguments in Lemma \ref{lem:Trange_eta_2_theta}
show that $\bar R_\theta^\gamma(t,s) \leq \Phi_{R_\theta}(t-s)$
and $\|\bar \alpha_\gamma^t\|^2 \leq \Phi_{\alpha}(t)$, so
$Y^\gamma \in \cS_\theta$ as claimed. Then
$\cT_{\eta \to \theta}^\gamma$ maps $\cD_\eta^\gamma \cap \cS_\eta$ into
$\cD_\theta^\gamma \cap \cS_\theta$.

The same argument as in Lemmas \ref{lem:modulus_eta_2_theta} and
\ref{lem:modulus_theta_2_eta} bound the moduli-of-continuity of
$\cT_{\eta \to \theta}^\gamma$ and
$\cT_{\theta \to \eta}^\gamma$ in the metrics $d(\cdot)$ of
Section \ref{sec:contractivemapping}, implying that
$\cT_{\eta \to \eta}^\gamma$ and $\cT_{\theta \to \theta}^\gamma$ are
contractive for sufficiently large $\lambda>0$ defining $d(\cdot)$.
These metrics induce the topologies of uniform
convergence on the spaces $\cD_\eta^\gamma \cap \cS_\eta$
and $\cD_\theta^\gamma \cap \cS_\theta$, which are equivalent to closed subsets of finite-dimensional vector spaces and hence also complete. Then $\cT_{\eta \to \eta}^\gamma$ and
$\cT_{\theta \to \theta}^\gamma$ have unique fixed points
in $\cD_\eta^\gamma \cap \cS_\eta$ and $\cD_\theta^\gamma \cap \cS_\theta$ 
by the Banach fixed-point theorem. These must coincide with the fixed point
(\ref{eq:discretefixedpoint}), by the uniqueness statement (without
restriction to $\cS_\theta$ and $\cS_\eta$) shown in part (c). Thus
this fixed point (\ref{eq:discretefixedpoint}) belongs to $\cS$.
\end{proof}

\begin{lemma}\label{lem:gamma_mod_map1}
There exists a constant $C>0$ (depending on $T$ but not on $\lambda,\gamma$)
such that for all large enough $\lambda>0$
defining the metrics (\ref{eq:metrics}) and all sufficiently small $\gamma>0$,
\begin{enumerate}[(a)]
\item For any $X^\gamma \in \cD_\eta^\gamma \cap \cS_\eta$,
\[d\big(\cT_{\eta\rightarrow\theta}(X^\gamma),\cT_{\eta\rightarrow\theta}^\gamma(X^\gamma)\big) \leq C\sqrt{\gamma}.\]
\item For any $Y^\gamma \in \cD_\theta^\gamma \cap \cS_\theta$,
\[d\big(\cT_{\theta\rightarrow\eta}(Y^\gamma),\cT_{\theta\rightarrow\eta}^\gamma(Y^\gamma)\big) \leq C\sqrt{\gamma}.\]
\end{enumerate}
\end{lemma}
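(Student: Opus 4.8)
The plan is a routine discretization-error estimate via Gronwall's inequality, made clean by the observation that piecewise-constant inputs force the driving Gaussian processes of the \emph{continuous} maps to be piecewise constant as well, so they can be coupled exactly to those of the discrete maps. I describe part (a); part (b) is entirely analogous and in fact simpler, since the $\eta$-dynamics carry no Brownian term. Write $Y^1=(R_\theta^1,C_\theta^1,\tilde\alpha_1)=\cT_{\eta\to\theta}(X^\gamma)$ and $Y^2=(R_\theta^2,C_\theta^2,\tilde\alpha_2)=\cT_{\eta\to\theta}^\gamma(X^\gamma)$, so that $d(Y^1,Y^2)=d(R_\theta^1,R_\theta^2)+d(C_\theta^1,C_\theta^2)+d(\tilde\alpha_1,\tilde\alpha_2)$ and it suffices to bound each term by $C\sqrt\gamma$.

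\emph{Setup and coupling.} Given $X^\gamma=(\bar R_\eta^\gamma,\bar C_\eta^\gamma,\bar\alpha_\gamma)\in\cD_\eta^\gamma\cap\cS_\eta$, note that $\bar C_\eta^\gamma(t,t)-2\bar C_\eta^\gamma(t,s)+\bar C_\eta^\gamma(s,s)=0$ whenever $\floor t=\floor s$, so any mean-zero Gaussian process with covariance $\bar C_\eta^\gamma$ is almost surely piecewise constant with jumps at $\gamma\Z_+$; I take the process $\{u^t\}$ used by $\cT_{\eta\to\theta}$ to equal the process $\{\bar u_\gamma^t\}$ used by $\cT_{\eta\to\theta}^\gamma$, and use a common Brownian motion $\{b^t\}$ and common $(\theta^*,\theta^0)$. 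Let $\{\theta^t,\partial\theta^t/\partial u^s\}$ and $\{\bar\theta_\gamma^t,\partial\bar\theta_\gamma^t/\partial\bar u_\gamma^s\}$ be the resulting processes. By Lemma \ref{lem:Trange_eta_2_theta} and Lemma \ref{lem:discrete_dmft_rewrite}(d), both outputs lie in $\cS_\theta$, which supplies the $\gamma$-uniform a priori bounds $\E(\theta^t)^2,\E(\bar\theta_\gamma^t)^2\le\Phi_{C_\theta}(t)$, $|\partial\theta^t/\partial u^s|,|\partial\bar\theta_\gamma^t/\partial\bar u_\gamma^s|\le\Phi_{R_\theta}(t-s)$ almost surely (the pathwise bound underlying (\ref{eq:theta_response_dt})), $\|\tilde\alpha_1^t\|,\|\tilde\alpha_2^t\|\le\Phi_\alpha(t)^{1/2}$, and the $L^2$-drift bound for $\E(v^t)^2$ from (\ref{eq:vtbound}); all are bounded on $[0,T]$.

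\emph{Comparison of $\theta$.} Setting $\Delta^t=\theta^t-\bar\theta_\gamma^t$ and subtracting (\ref{def:dmft_piecelinear_theta}) from the integrated form of (\ref{def:dmft_langevin_cont_theta}), decompose $\Delta^t=\int_0^{\floor t}(v^s-\bar v_\gamma^s)\,\d s+\int_{\floor t}^t v^s\,\d s+\sqrt2(b^t-b^{\floor t})$. Using the Lipschitz property of $s(\cdot)$ in Assumption \ref{assump:prior}, the bound $|\bar R_\eta^\gamma|\le\Phi_{R_\eta}$, and splitting the inner integral at $\floor s$, the integrand $v^s-\bar v_\gamma^s$ is controlled in $L^2$ by $C|\Delta^s|+C\int_0^s|\Delta^r|\,\d r+O(\gamma)$, the $O(\gamma)$ coming from the residual inner integral $\int_{\floor s}^s$ (with $\E(\theta^r-\theta^*)^2$ bounded). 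The middle term is $O(\gamma)$ in $L^2$ since $\E(v^s)^2$ is bounded, and the Brownian increment satisfies $\E[(b^t-b^{\floor t})^2]=t-\floor t\le\gamma$ — this is the \emph{sole} source of the $\sqrt\gamma$ rate. Writing $g(t)=(\E(\Delta^t)^2)^{1/2}$ and folding $\int_0^t\int_0^s g(r)\,\d r\,\d s\le T\int_0^t g(r)\,\d r$, one gets $g(t)\le C\sqrt\gamma+C\int_0^t g(s)\,\d s$, so Gronwall yields $g(t)\le C\sqrt\gamma$ on $[0,T]$; coupling a centered Gaussian triple with second moments matching $(\theta^*,\theta^t,\bar\theta_\gamma^t)$ then gives $d(C_\theta^1,C_\theta^2)\le\sup_t e^{-\lambda t}g(t)\le C\sqrt\gamma$.

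\emph{Comparison of the response and of $\tilde\alpha$, and part (b).} On $[s,\ceil s)$ the discrete response equals $1$ while the continuous one differs from $1$ by $O(\gamma)$ (integrate the bounded equation (\ref{def:response_theta}) over an interval of length $\le\gamma$); for $t\ge\ceil s$, subtract (\ref{def:derivative_process_piecelinear_1}) from (\ref{def:response_theta}), absorbing the limit changes $s\mapsto\ceil s$ and $t\mapsto\floor t$ as $O(\gamma)$ (bounded integrands), bounding $\E\big|\big(\partial_\theta s(\theta^r,\bar\alpha_\gamma^r)-\partial_\theta s(\bar\theta_\gamma^r,\bar\alpha_\gamma^r)\big)\,\partial\theta^r/\partial u^s\big|\le C\,g(r)\le C\sqrt\gamma$ via the pathwise bound on the response, and closing with Gronwall; this gives $d(R_\theta^1,R_\theta^2)\le C\sqrt\gamma$. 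Similarly, subtracting the equation for $\bar{\tilde\alpha}_\gamma$ in (\ref{def:CRtheta_piecelinear}) from that for $\tilde\alpha$, using that $\cG$ is Lipschitz in $(\alpha,W_2)$ with $W_2(\sP(\theta^s),\sP(\bar\theta_\gamma^s))\le g(s)\le C\sqrt\gamma$ and that the limit change $t\mapsto\floor t$ is $O(\gamma)$ by (\ref{eq:G_smooth1}), Gronwall gives $d(\tilde\alpha_1,\tilde\alpha_2)\le C\sqrt\gamma$; summing proves (a). For (b), couple $w^*=\bar w_\gamma^*$ and $w^t=\bar w_\gamma^t$ (piecewise constant a.s.) with common $\eps$; the $\alpha$-components of $\cT_{\theta\to\eta}$ and $\cT_{\theta\to\eta}^\gamma$ coincide exactly, and the Volterra comparisons for $\eta^t-\bar\eta_\gamma^t$ and $\partial\eta^t/\partial w^s-\partial\bar\eta_\gamma^t/\partial\bar w_\gamma^s$ — using $|\bar R_\theta^\gamma|\le\Phi_{R_\theta}$, the bound $\E(\eta^t+w^*-\eps)^2\le\Phi_{C_\eta}(t)/(\delta\beta^2)$ from Lemma \ref{lem:Trange_theta_2_eta}, and the same limit changes — produce only $O(\gamma)$ Euler error, whence $d(C_\eta^1,C_\eta^2),d(R_\eta^1,R_\eta^2)\le C\gamma\le C\sqrt\gamma$. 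The only point requiring care rather than bookkeeping is the $\gamma$-uniformity of all the a priori bounds, which is exactly what Lemmas \ref{lem:Trange_theta_2_eta}, \ref{lem:Trange_eta_2_theta}, and \ref{lem:discrete_dmft_rewrite}(d) guarantee; no genuine obstacle is expected beyond tracking the floor/ceiling operations in the integration limits and the index shift $s\mapsto\ceil s$ in the response recursions.
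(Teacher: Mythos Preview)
Your proposal is correct and follows essentially the same approach as the paper: couple the continuous and discrete maps through the identical (piecewise-constant) Gaussian driver and Brownian motion, compare the integrated equations term by term, isolate the $O(\sqrt\gamma)$ contribution from the Brownian increment $b^t-b^{\floor t}$, and close with a Gronwall-type argument.

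The only noteworthy difference is the closure mechanism. The paper bounds $\E(\theta^t-\bar\theta_\gamma^t)^2$ via a six-term decomposition and uses the $e^{-\lambda t}$-weighting built into the metric (exactly paralleling Lemmas~\ref{lem:modulus_eta_2_theta}--\ref{lem:modulus_theta_2_eta}) to absorb the self-referential terms for $\lambda$ large; you instead run a direct Gronwall on $g(t)=\|\theta^t-\bar\theta_\gamma^t\|_{L^2}$ via Minkowski and then trivially dominate the weighted metric by the unweighted sup. Your route is slightly more elementary and makes the $\lambda$-independence of the bound transparent; the paper's route has the virtue of reusing verbatim the machinery already set up for the modulus-of-continuity lemmas. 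Your observation that part~(b) actually yields $O(\gamma)$ rather than $O(\sqrt\gamma)$ is correct and is implicit but not stated in the paper's ``analogous, omitted'' treatment.
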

\begin{proof}
We show part (a). Consider any
$X^\gamma=(\bar R_\eta^\gamma, \bar C_\eta^\gamma,\bar\alpha_\gamma)$, and
denote
$\cT_{\eta\rightarrow\theta}(X^\gamma)=(R_\theta,C_\theta,\tilde\alpha)$
and $\cT_{\eta\rightarrow\theta}^\gamma(X^\gamma)=(\bar R_\theta^\gamma,\bar
C_\theta^\gamma,\bar{\tilde\alpha}_\gamma)$. Throughout, $C,C'>0$ denote constants
depending on $T$ but not on $\lambda,\gamma$
and changing from instance to instance.\\

\noindent \textbf{Bound of $d(C_\theta,\bar C_\theta^\gamma)$.}
Given $X^\gamma$, let us couple the evolutions
(\ref{def:dmft_langevin_cont_theta}) and (\ref{def:dmft_piecelinear_theta})
by a common realization of $\{\bar u_\gamma^t\}$ with covariance $\bar
C_\eta^\gamma$ and a common Brownian motion. Then
by definition, we have
\begin{align*}
\theta^t &= \theta^0 + \int_0^t \Big[-\delta\beta(\theta^s-\theta^*) +
s(\theta^s,\bar \alpha_\gamma^s) + \int_0^s \bar R_\eta^\gamma(s,
s')(\theta^{s'} - \theta^\ast)\d s'  + \bar u_\gamma^s\Big] \d s + \sqrt{2}\,b^t,\\
\bar \theta^t_\gamma &= \theta^0 + \int_0^{\floor{t}}\Big[-\delta\beta(\bar
\theta^s_{\gamma} - \theta^\ast) + s(\bar \theta^s_\gamma, \bar \alpha^s_\gamma)
+\int_0^{\floor{s}} \bar R^\gamma_\eta(s, s')(\bar \theta_\gamma^{s'} -
\theta^\ast)\d s' + \bar u_\gamma^s\Big]\d s + \sqrt{2}\,b^{\floor{t}}.
\end{align*}
Then $\E(\theta^t - \bar \theta^t_\gamma)^2 \leq 6[(I)+(II)+(III)+(IV)+(V)+(VI)]$ where
\begin{align*}
(I)&=\E\Big(\int_0^{\floor{t}}
\delta\beta(\theta^s-\bar\theta^s_\gamma)\d s\Big)^2,\\
(II)&=\E\Big(\int_0^{\floor{t}} (s(\theta^s,\bar\alpha_\gamma^s)
-s(\bar \theta^s_\gamma, \bar\alpha_\gamma^s) \d s\Big)^2,\\
(III)&=\E\Big(\int_0^{\floor{t}} \int_0^{\floor{s}} \bar R_\eta^\gamma(s,s') (\theta^{s'}
- \bar \theta_\gamma^{s'})\d s'\,\d s\Big)^2,\\
(IV)&= \E\Big(\int_0^{\floor{t}} \int_{\floor{s}}^s \bar R_\eta^\gamma(s,s') (\theta^{s'}
- \theta^\ast)\d s'\,\d s\Big)^2,\\
(V)&= \E\Big(\int_{\floor{t}}^t \Big[-\delta\beta(\theta^s-\theta^*) +
s(\theta^s,\bar \alpha_\gamma^s) + \int_0^s \bar R_\eta^\gamma(s,
s')(\theta^{s'} - \theta^\ast)\d s'  + \bar u_\gamma^s\Big] \d s\Big)^2,\\
(VI)&=\E(\sqrt{2}\,b^t-\sqrt{2}\,b^{\floor{t}})^2.
\end{align*}
By the same arguments as in the proof of Lemma \ref{lem:modulus_eta_2_theta},
using the Lipschitz continuity of $s(\cdot)$ in Assumption \ref{assump:prior},
we may show
\[(I)+(II)+(III) \leq 
\frac{C}{\lambda}e^{2\lambda t}\sup_{s\in[0,T]} e^{-2\lambda s}
\E(\theta^s-\bar \theta^s_\gamma)^2.\]
Applying $s-\floor s \leq \gamma$, $t-\floor t \leq \gamma$,
and the bounds for $\bar R_\eta^\gamma,\bar C_\eta^\gamma$ implied by $X^\gamma
\in \cS_\eta$, we have
\[(IV)+(V)+(VI) \leq C\gamma.\]
Then
\[\sup_{t\in[0,T]} e^{-2\lambda t}
\E(\theta^t-\bar \theta^t_\gamma)^2
\leq \frac{C}{\lambda} \sup_{t\in[0,T]} e^{-2\lambda t}
\E(\theta^t-\bar \theta^t_\gamma)^2+C\gamma,\]
and choosing large enough $\lambda>0$ yields
\begin{align}\label{eq:theta_L2_bound}
\sup_{t\in[0,T]} e^{-\lambda t}\sqrt{\E(\theta^t-\bar \theta^{t}_\gamma)^2} \leq
C'\sqrt{\gamma}.
\end{align}
This implies as in the proof of Lemma \ref{lem:modulus_eta_2_theta}
that $d(C_\theta,\bar C_\theta^\gamma) \leq C'\sqrt{\gamma}$.\\

\noindent \textbf{Bound of $d(R_\theta,\bar R_\theta^\gamma)$.} Denote by
$r_\theta(t,s) = \frac{\partial \theta^t}{\partial u^s}$ and
$\bar r_\theta^\gamma(t,s) = \frac{\partial \bar \theta^t_\gamma}{\partial \bar
u^s_\gamma}$ the processes (\ref{def:response_theta}) and
(\ref{def:derivative_process_piecelinear_1}) defined from the above coupling of
$\{\theta^t\}$ and $\{\bar\theta_\gamma^t\}$. Then by definition, we have
\begin{align*}
r_{\theta}(t,s) &= 1 + \int_s^t \bigg[\Big({-}\delta\beta + \partial_\theta
s(\theta^{s'},\bar \alpha_\gamma^{s'})\Big)r_\theta(s',s)
+\int_s^{s'} \bar R_\eta^\gamma(s',s'')r_\theta(s'',s)\d s''\bigg]\d s',\\
\bar r^\gamma_{\theta}(t,s) &= 1 + \1\{\ceil{s} \leq \floor{t}\}
\int_{\ceil{s}}^{\floor{t}} \bigg[\Big({-}\delta\beta + \partial_\theta s(\bar
\theta^{s'}_\gamma,\bar \alpha_\gamma^{s'})\Big)\bar r_\theta^\gamma(s',s)
+\int_{\ceil{s}}^{\floor{s'}} \bar R_\eta^\gamma(s',s'')\bar
r^\gamma_\theta(s'',s)\d s''\bigg]\d s'.
\end{align*}
Hence $\E|r_{\theta}(t,s) - \bar r^\gamma_{\theta}(t,s)| \leq
(I)+(II)+(III)+(IV)$ where
\begin{align*}
(I)&=\E\bigg[\1\{\ceil{s} \leq \floor{t}\}\int_{\ceil{s}}^{\floor{t}}
\Big|\Big({-}\delta\beta+\partial_\theta s(\theta^{s'},\bar
\alpha_\gamma^{s'})\Big)r_\theta(s',s)
-\Big({-}\delta\beta+\partial_\theta s(\bar \theta^{s'}_\gamma,\bar
\alpha^{s'}_\gamma)\Big)\bar r_\theta^\gamma(s',s)\Big|\d s'\bigg],\\
(II)&=\E\bigg[\1\{\ceil{s} \leq \floor{t}\}\int_{\ceil{s}}^{\floor{t}}
\int_{\ceil{s}}^{\floor{s'}} \Big|\bar
R_\eta^\gamma(s',s'')(r_\theta(s'',s)-\bar r^\gamma_\theta(s'',s))\Big|\d
s''\,\d s'\bigg],\\
(III)&=\E\bigg[\1\{\ceil{s} \leq \floor{t}\}\int_{\ceil{s}}^{\floor{t}}
\int_{(s,\ceil{s}) \cup (\floor{s'},s')}
\Big|\bar R_\eta^\gamma(s',s'')r_\theta(s'',s)\Big|\d s''\,\d s'\bigg],\\
(IV)&=\int_{(s,\ceil{s}) \cup (\floor{t},t)}
\bigg|\Big({-}\delta\beta + \partial_\theta
s(\theta^{s'},\bar \alpha_\gamma^{s'})\Big)r_\theta(s',s)
+\int_s^{s'} \bar R_\eta^\gamma(s',s'')r_\theta(s'',s)\d s''\bigg|\d s'.
\end{align*}
By the same arguments as in the proof of Lemma \ref{lem:modulus_eta_2_theta},
using the above bound (\ref{eq:theta_L2_bound}) and Lipschitz continuity of
$\partial_\theta s(\cdot)$ in Assumption \ref{assump:prior}, we may show
\[(I)+(II) \leq \frac{C}{\lambda}e^{\lambda t}\Big(\sup_{0 \leq s \leq t \leq T}
e^{-\lambda t}\E|r_{\theta}(t,s) - \bar r^\gamma_{\theta}(t,s)|\Big)
+C\sqrt{\gamma}.\]
Applying $\ceil s-s \leq \gamma$, $s'-\floor{s'} \leq \gamma$, and $t-\floor t \leq \gamma$, we have
\[(III)+(IV) \leq C\gamma.\]
Then
\[\sup_{0 \leq s \leq t \leq T}
e^{-\lambda t}\E|r_{\theta}(t,s) - \bar r^\gamma_{\theta}(t,s)|
\leq \frac{C}{\lambda} \sup_{0 \leq s \leq t \leq T}
e^{-\lambda t}\E|r_{\theta}(t,s) - \bar r^\gamma_{\theta}(t,s)|
+C\sqrt{\gamma},\]
and choosing large enough $\lambda>0$ and rearranging gives
\begin{align*}
d(R_\theta, \bar R_\theta^\gamma) \leq \sup_{0\leq s\leq t \leq T} e^{-\lambda
t}\E|r_\theta(t,s) - \bar r^\gamma_\theta(t,s)| \leq C\sqrt{\gamma}.
\end{align*}

\noindent {\bf Bound of $d(\tilde \alpha,\bar{\tilde \alpha}_\gamma)$.} By definition,
\[\tilde \alpha^t=\alpha^0+\int_0^t \cG(\tilde \alpha^s,\sP(\theta^s))\d s,
\qquad \bar{\tilde \alpha}_\gamma^t=\alpha^0+\int_0^{\floor{t}} \cG(\bar{\tilde
\alpha}_\gamma^s,\sP(\bar \theta_\gamma^s))\d s,\]
so $\|\tilde \alpha^t-\bar{\tilde \alpha}_\gamma^t\| \leq (I)+(II)$ where
\[(I)=\int_0^{\floor{t}} \Big\|\cG(\tilde \alpha^s,\sP(\theta^s))-
\cG(\bar{\tilde \alpha}_\gamma^s,\sP(\bar \theta_\gamma^s))\Big\|\d s,
\qquad (II)=\int_{\floor{t}}^t \Big\|\cG(\tilde \alpha^s,\sP(\theta^s))\Big\|\d s.\]
By the same arguments as in the proof of Lemma \ref{lem:modulus_eta_2_theta},
using the above bound (\ref{eq:theta_L2_bound})
and the Lipschitz continuity of $\cG(\cdot)$ in Assumption
\ref{assump:gradient}, we have
\[(I) \leq \frac{C}{\lambda}e^{\lambda t} \sup_{s \in [0,T]} e^{-\lambda s}
\|\tilde \alpha^s-\bar{\tilde \alpha}_\gamma^s\|\,\d s+C\sqrt{\gamma}\]
Using $t-\floor t \leq \gamma$, we have $(II) \leq C\gamma$. So choosing
$\lambda>0$ large enough and rearranging shows
\[d(\tilde \alpha,\bar{\tilde \alpha}_\gamma)
=\sup_{t \in [0,T]} e^{-\lambda t}\|\tilde \alpha^t-\bar{\tilde \alpha}_\gamma^t\|
\leq C\sqrt{\gamma}.\]
This concludes the proof of (a). The proof of (b) is analogous, and we omit this
for brevity.
\end{proof}

\begin{lemma}\label{lem:dmft_discretization_error}
Let $\{\theta^t\}_{t \in [0,T]}$, $\{\eta^t\}_{t \in [0,T]}$, and
$\{\alpha^t\}_{t \in [0,T]}$ be the components of the solution to the DMFT
system in Theorem \ref{thm:dmftsolexists}, and let
$\{\bar \theta_\gamma^t\}_{t \in [0,T]}$,
$\{\bar \eta_\gamma^t\}_{t \in [0,T]}$,
$\{\bar \alpha_\gamma^t\}_{t \in [0,T]}$ be defined from the components of
the fixed point (\ref{eq:discretefixedpoint}) via
(\ref{def:dmft_piecelinear_theta}) and (\ref{def:dmft_piecelinear_eta}).
Then for any fixed $m \geq 0$ and $t_1,\ldots,t_m \in [0,T]$,
as $\gamma \to 0$,
\begin{align}
\sP(\theta^*,\bar \theta^{t_1}_\gamma,\ldots,\bar \theta^{t_m}_\gamma)
&\overset{W_2}{\to}
\sP(\theta^*,\theta^{t_1},\ldots,\theta^{t_m})\label{eq:discretedmftthetaconv}\\
\sP(\bar w_\gamma^*,\eps,\bar \eta_\gamma^{t_1},\ldots,\bar \eta_\gamma^{t_m}) &\overset{W_2}{\to}
\sP(w^*,\eps,\eta^{t_1},\ldots,\eta^{t_m})\label{eq:discretedmftetaconv}\\
\{\bar \alpha^t_\gamma\}_{t \in [0,T]} &\rightarrow \{\alpha^t\}_{t \in [0,T]}
\label{eq:discretedmftalphaconv}
\end{align}
where (\ref{eq:discretedmftalphaconv}) holds in the sense of uniform
convergence on $C([0,T],\R^K)$.
\end{lemma}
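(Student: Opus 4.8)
\emph{Strategy.} The plan is to first establish the convergence at the level of the fixed-point data, $d(X^\gamma,X)\to 0$ as $\gamma\to 0$, where $X=(R_\eta,C_\eta,\alpha)\in\cS_\eta^{\mathrm{cont}}$ is the $\eta$-side of the continuous DMFT fixed point from Theorem \ref{thm:dmftsolexists} and $X^\gamma=(\bar R_\eta^\gamma,\bar C_\eta^\gamma,\bar\alpha_\gamma)\in\cD_\eta^\gamma\cap\cS_\eta$ is the piecewise-constant embedding of the discrete DMFT fixed point (Lemma \ref{lem:discrete_dmft_rewrite}(c,d)); and then transfer this to the processes and their finite-dimensional laws by assembling the couplings used in the auxiliary lemmas. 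Fix once and for all a constant $\lambda>0$ large enough that: (i) by Lemma \ref{lem:modulus_eta_2_theta} with $\eps$ taken smaller than the reciprocal of the ($\lambda$-independent) constant in Lemma \ref{lem:modulus_theta_2_eta}, the composite map $\cT_{\eta\to\eta}=\cT_{\theta\to\eta}\circ\cT_{\eta\to\theta}$ is a $\kappa$-contraction on $(\cS_\eta,d)$ for some $\kappa<1$; and (ii) the conclusions of Lemma \ref{lem:gamma_mod_map1} hold. These requirements are compatible since all the relevant moduli-of-continuity bounds improve as $\lambda$ grows.

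\emph{Step 1: $d(X^\gamma,X)\le C\sqrt\gamma$.} Since $\cT_{\eta\to\eta}^\gamma(X^\gamma)=X^\gamma$ and $\cT_{\eta\to\eta}(X)=X$, the triangle inequality gives
\[
d(X^\gamma,X)\le d\big(\cT_{\eta\to\eta}^\gamma(X^\gamma),\cT_{\eta\to\eta}(X^\gamma)\big)+d\big(\cT_{\eta\to\eta}(X^\gamma),\cT_{\eta\to\eta}(X)\big)\le d\big(\cT_{\eta\to\eta}^\gamma(X^\gamma),\cT_{\eta\to\eta}(X^\gamma)\big)+\kappa\,d(X^\gamma,X).
\]
For the first term, write $Y_1^\gamma=\cT_{\eta\to\theta}^\gamma(X^\gamma)$ and $Y_2^\gamma=\cT_{\eta\to\theta}(X^\gamma)$, so that $\cT_{\eta\to\eta}^\gamma(X^\gamma)=\cT_{\theta\to\eta}^\gamma(Y_1^\gamma)$ and $\cT_{\eta\to\eta}(X^\gamma)=\cT_{\theta\to\eta}(Y_2^\gamma)$. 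By Lemma \ref{lem:discrete_dmft_rewrite}(d), $Y_1^\gamma\in\cD_\theta^\gamma\cap\cS_\theta$, and by Lemma \ref{lem:Trange_eta_2_theta}, $Y_2^\gamma\in\cS_\theta$. Then
\[
d\big(\cT_{\theta\to\eta}^\gamma(Y_1^\gamma),\cT_{\theta\to\eta}(Y_2^\gamma)\big)\le d\big(\cT_{\theta\to\eta}^\gamma(Y_1^\gamma),\cT_{\theta\to\eta}(Y_1^\gamma)\big)+d\big(\cT_{\theta\to\eta}(Y_1^\gamma),\cT_{\theta\to\eta}(Y_2^\gamma)\big)\le C\sqrt\gamma+C\,d(Y_1^\gamma,Y_2^\gamma),
\]
using Lemma \ref{lem:gamma_mod_map1}(b) and the Lipschitz bound of Lemma \ref{lem:modulus_theta_2_eta}; and $d(Y_1^\gamma,Y_2^\gamma)=d\big(\cT_{\eta\to\theta}^\gamma(X^\gamma),\cT_{\eta\to\theta}(X^\gamma)\big)\le C\sqrt\gamma$ by Lemma \ref{lem:gamma_mod_map1}(a), since $X^\gamma\in\cD_\eta^\gamma\cap\cS_\eta$. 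Hence the first term is $O(\sqrt\gamma)$, and rearranging gives $d(X^\gamma,X)\le C\sqrt\gamma/(1-\kappa)$. Applying Lemma \ref{lem:modulus_eta_2_theta} and then Lemma \ref{lem:gamma_mod_map1}(a) once more yields $d(Y^\gamma,Y)\le C\sqrt\gamma$ on the $\theta$-side as well, where $Y=\cT_{\eta\to\theta}(X)$ and $Y^\gamma=\cT_{\eta\to\theta}^\gamma(X^\gamma)=(\bar R_\theta^\gamma,\bar C_\theta^\gamma,\bar{\tilde\alpha}_\gamma)$. Since the $\alpha$-component of $d(X^\gamma,X)$ is exactly $\sup_{t\in[0,T]}e^{-\lambda t}\|\bar\alpha_\gamma^t-\alpha^t\|$ and $e^{-\lambda t}\ge e^{-\lambda T}>0$ on $[0,T]$, this already proves (\ref{eq:discretedmftalphaconv}).

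\emph{Step 2: process-level convergence.} For (\ref{eq:discretedmftthetaconv}), introduce the auxiliary process $\theta^\gamma$ solving the continuous equation (\ref{def:dmft_langevin_cont_theta}) driven by the data $X^\gamma$, i.e.\ the process underlying $\cT_{\eta\to\theta}(X^\gamma)$. On a single probability space carry $(\theta^*,\theta^0)$, a common Brownian motion $b$, and a jointly Gaussian pair $(u,\bar u_\gamma)$ with marginal covariances $C_\eta$ and $\bar C_\eta^\gamma$ and cross-covariance near-optimal for $d(C_\eta,\bar C_\eta^\gamma)$; let $\theta$ be driven by $(u,R_\eta,\alpha,b)$ via (\ref{def:dmft_langevin_cont_theta}), $\theta^\gamma$ by $(\bar u_\gamma,\bar R_\eta^\gamma,\bar\alpha_\gamma,b)$ via (\ref{def:dmft_langevin_cont_theta}), and $\bar\theta_\gamma$ by $(\bar u_\gamma,\bar R_\eta^\gamma,\bar\alpha_\gamma,b)$ via the discretized equation (\ref{def:dmft_piecelinear_theta}). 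The computation in the proof of Lemma \ref{lem:modulus_eta_2_theta} gives $\sup_{t\in[0,T]}e^{-2\lambda t}\E(\theta^t-\theta^{\gamma,t})^2\le C\,d(X^\gamma,X)^2$, and that in the proof of Lemma \ref{lem:gamma_mod_map1}(a) (the estimate leading to (\ref{eq:theta_L2_bound})) gives $\sup_{t\in[0,T]}e^{-2\lambda t}\E(\theta^{\gamma,t}-\bar\theta_\gamma^t)^2\le C\gamma$; both $\to 0$. Since $\theta^*$ is shared, this coupling witnesses $W_2\big(\sP(\theta^*,\bar\theta_\gamma^{t_1},\ldots,\bar\theta_\gamma^{t_m}),\sP(\theta^*,\theta^{t_1},\ldots,\theta^{t_m})\big)^2\le\sum_{i=1}^m\E(\theta^{t_i}-\bar\theta_\gamma^{t_i})^2\to 0$. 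The proof of (\ref{eq:discretedmftetaconv}) is entirely parallel: take the intermediate process $\eta^\gamma$ solving (\ref{def:dmft_langevin_cont_eta}) driven by $Y^\gamma$, compare $\eta$ with $\eta^\gamma$ via the coupling in the proof of Lemma \ref{lem:modulus_theta_2_eta} (controlled by $d(Y^\gamma,Y)\to0$) and $\eta^\gamma$ with $\bar\eta_\gamma$ via the coupling in the proof of Lemma \ref{lem:gamma_mod_map1}(b) (an $O(\gamma)$ bound); here $\eps$ is shared, and since $\bar C_\theta^\gamma(*,*)=C_\theta(*,*)=\E[(\theta^*)^2]$ the Gaussians $\bar w_\gamma^*$ and $w^*$ may be taken equal, so the leading coordinate contributes nothing to the transport cost.

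\emph{Main obstacle.} The only genuine work is the coupling bookkeeping in Step 2: one must verify that the couplings used internally in the proofs of Lemmas \ref{lem:modulus_eta_2_theta}, \ref{lem:modulus_theta_2_eta}, and \ref{lem:gamma_mod_map1} can be glued into one coherent coupling, which is achieved by routing everything through the intermediate processes $\theta^\gamma,\eta^\gamma$ driven by the same noise on both sides, and that the exponential weights $e^{-\lambda t}$ appearing in the metrics (\ref{eq:metrics}) are harmless because $t$ ranges over the compact interval $[0,T]$. Everything else is an assembly of estimates already proved.
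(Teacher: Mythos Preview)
Your proof is correct and follows essentially the same approach as the paper. In Step 1 you use the same contraction-plus-consistency argument (Lemmas \ref{lem:modulus_eta_2_theta}, \ref{lem:modulus_theta_2_eta}, \ref{lem:gamma_mod_map1}) to obtain $d(X^\gamma,X),d(Y^\gamma,Y)\le C\sqrt\gamma$; the only difference is the order in which the triangle inequality is unrolled. In Step 2 you make the comparison slightly more explicit than the paper by inserting the intermediate continuous process $\theta^\gamma$ driven by $X^\gamma$, whereas the paper compares $\theta$ and $\bar\theta_\gamma$ directly after constructing the $u$--$\bar u_\gamma$ coupling via an external lemma and then invoking ``the same arguments as leading to (\ref{eq:theta_L2_bound})''; your decomposition is a clean way to see that this combined step really is the concatenation of the estimates from Lemmas \ref{lem:modulus_eta_2_theta} and \ref{lem:gamma_mod_map1}(a). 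One minor imprecision: on the $\eta$-side you assert that $\bar w_\gamma^*$ and $w^*$ ``may be taken equal,'' but what you actually need (and have, from $d(Y^\gamma,Y)\to 0$) is only that $\E(\bar w_\gamma^*-w^*)^2\to 0$ in the near-optimal coupling, which already suffices for the $W_2$ convergence.
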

\begin{proof}
Let $X^\gamma=(\bar R_\eta^\gamma,\bar C_\eta^\gamma,\bar\alpha_\gamma)$
and $Y^\gamma=(\bar R_\theta^\gamma,\bar C_\theta^\gamma,\bar\alpha_\gamma)$
be the components of the fixed point (\ref{eq:discretefixedpoint}),
and let $X=(R_\eta,C_\eta,\alpha)$ and $Y=(R_\theta,C_\theta,\alpha)$
be those of the unique solution to the continuous DMFT system prescribed by
Theorem \ref{thm:dmftsolexists}.
Let $d(\cdot)$ denote the metrics introduced in (\ref{eq:dist_Seta}) and
(\ref{eq:dist_Stheta}), for a sufficiently large choice of $\lambda>0$.
By Lemma \ref{lem:discrete_dmft_rewrite}, 
$X^\gamma \in \cD_\eta^\gamma \cap \cS_\eta$
for all sufficiently small $\gamma>0$, so $\cT_{\eta \to \eta}(X^\gamma)$ is
well-defined. Then, applying the fixed point conditions
for $X^\gamma$ and $X$,
\begin{align*}
d(X, X^\gamma) = d\big(\cT_{\eta\rightarrow\eta}(X), \cT_{\eta\rightarrow\eta}^\gamma(X^\gamma)\big) \leq d\big(\cT_{\eta\rightarrow\eta}(X), \cT_{\eta\rightarrow\eta}(X^\gamma)\big) + d\big(\cT_{\eta\rightarrow\eta}(X^\gamma), \cT_{\eta\rightarrow\eta}^\gamma(X^\gamma)\big).
\end{align*}
By Lemmas \ref{lem:modulus_eta_2_theta} and \ref{lem:modulus_theta_2_eta},
$\cT_{\eta \to \eta}$ is a contraction on $\cS_\eta$
for large enough $\lambda>0$, for which
the first term satisfies $d\big(\cT_{\eta\rightarrow\eta}(X),
\cT_{\eta\rightarrow\eta}(X^\gamma)\big) \leq \frac{1}{2}d(X,X^\gamma)$.
Thus, rearranging shows
\[d(X,X^\gamma) \leq 2d\big(\cT_{\eta\rightarrow\eta}(X^\gamma),
\cT_{\eta\rightarrow\eta}^\gamma(X^\gamma)\big)
=2d\big(\cT_{\theta\rightarrow\eta} \circ \cT_{\eta \to \theta}(X^\gamma),
\cT_{\theta\rightarrow\eta}^\gamma \circ \cT_{\eta \to
\theta}^\gamma(X^\gamma)\big).\]
Letting $Y'=\cT_{\eta\rightarrow\theta}(X^\gamma) \in \cS_\theta$ and
$Y^\gamma=\cT_{\eta\rightarrow\theta}^\gamma(X^\gamma) \in \cD_\theta^\gamma \cap
\cS_\theta$, this shows
\[d(X,X^\gamma) \leq 2d\big(\cT_{\theta\rightarrow\eta}(Y'),
\cT_{\theta\rightarrow\eta}^\gamma(Y^\gamma)\big)
\leq 2d\big(\cT_{\theta\rightarrow\eta}(Y'),
\cT_{\theta\rightarrow\eta}(Y^\gamma)\big)
+2d\big(\cT_{\theta\rightarrow\eta}(Y^\gamma),
\cT_{\theta\rightarrow\eta}^\gamma(Y^\gamma)\big).\]
By Lemma \ref{lem:gamma_mod_map1}, $d(Y',Y^\gamma)
=d(\cT_{\eta\rightarrow\theta}(X^\gamma),\cT_{\eta\rightarrow\theta}^\gamma(X^\gamma))
\leq C\sqrt{\gamma}$. Then by Lemma
\ref{lem:modulus_theta_2_eta}, the first term is bounded as
$d(\cT_{\theta\rightarrow\eta}(Y'),\cT_{\theta\rightarrow\eta}(Y^\gamma))
\leq Cd(Y',Y^\gamma) \leq C'\sqrt{\gamma}$.
By Lemma \ref{lem:gamma_mod_map1}, the second term is also bounded as
$d(\cT_{\theta\rightarrow\eta}(Y^\gamma),
\cT_{\theta\rightarrow\eta}^\gamma(Y^\gamma)) \leq C\sqrt{\gamma}$. So combining
these statements and taking $\gamma \to 0$ shows
\begin{equation}\label{eq:XYdiscretization}
\lim_{\gamma \to 0} d(X,X^\gamma)=0,
\qquad \lim_{\gamma \to 0} d(Y,Y^\gamma)=0.
\end{equation}

By definition of the metrics $d(\cdot)$, the
convergence (\ref{eq:XYdiscretization}) implies the uniform convergence
statement (\ref{eq:discretedmftalphaconv}). It also implies
\[\lim_{\gamma \to 0}
\sup_{0 \leq s \leq t \leq T} |C_\eta(t,s)-\bar C_\eta^\gamma(t,s)|=0.\]
We recall that Theorem \ref{thm:dmftsolexists} shows
$(R_\eta,C_\eta,\alpha) \in \cS_\eta^\text{cont}$, for which
the continuity property (\ref{eq:Ceta_cond_2}) holds for all $0 \leq s \leq t
\leq T$. Then there exists a coupling of $\{u^t\}_{t \in [0,T]}$ and $\{\bar
u^t_\gamma\}_{t \in [0,T]}$ with covariance kernels $C_\eta(t,s)$ and $\bar
C_\eta^\gamma(t,s)$ for which
\[\lim_{\gamma \to 0} \sup_{t \in [0,T]} \E(u^t-\bar u^t_\gamma)^2=0,\]
see e.g.\ \cite[Lemma D.1]{paper2}. Defining $\{\theta^t\}$ and $\{\bar
\theta_\gamma^t\}$ by this coupling of $\{u^t\}$ and $\{\bar u_\gamma^t\}$
and a common Brownian motion, the same arguments as leading
to (\ref{eq:theta_L2_bound}) shows
\[\lim_{\gamma \to 0} \sup_{t \in [0,T]} e^{-\lambda t}
\E(\theta^t-\bar \theta^t_\gamma)^2=0,\]
hence in particular $\lim_{\gamma \to 0} \E(\theta^t-\bar \theta^t_\gamma)^2=0$
for each fixed $t \in [0,T]$ under this coupling, which implies
(\ref{eq:discretedmftthetaconv}). A similar argument shows
(\ref{eq:discretedmftetaconv}).
\end{proof}

\subsection{Step 3: Discretization of Langevin
dynamics}\label{subsec:discretizelangevin}

We now consider a piecewise constant embedding 
$\{\bar \btheta^t_\gamma,\bar{\widehat{\alpha}}^t_\gamma\}_{t \in [0,T]}$
of the discretized Langevin process
(\ref{eq:langevin_discrete_1}--\ref{eq:langevin_discrete_2}), defined as
\[\bar \btheta^t_\gamma=\btheta_\gamma^{[t]},
\qquad \bar{\widehat{\alpha}}^t_\gamma=\widehat{\alpha}_\gamma^{[t]},
\qquad \bar\bbeta_\gamma^t=\X\bar\btheta_\gamma^t\]
where $[t] \in \Z_+$ is as previously defined in (\ref{eq:t_floor_ceil}).
A simple induction shows that this is equivalently the solution to a modification of the
dynamics (\ref{eq:langevin_sde}--\ref{eq:gflow}),
\begin{equation}\label{eq:langevin_embedded}
\begin{aligned}
\bar\btheta_\gamma^t
&=\btheta^0+\int_0^{\floor{t}} \Big[{-}\beta\,\X^\top (\X\bar\btheta_\gamma^s - \y)
+\big(s(\bar \theta_{\gamma,j}^s,\bar{\widehat\alpha}_\gamma^s)\big)_{j=1}^d\Big]\d s +\sqrt{2}\,\b^{\floor{t}},\\
\bar{\widehat\alpha}_\gamma^t&=
\widehat\alpha^0+\int_0^{\floor{t}}
\cG\Big(\bar{\widehat\alpha}_\gamma^s,\frac{1}{d}\sum_{j=1}^d
\delta_{\bar \theta_{\gamma,j}^s}\Big)\d s.
\end{aligned}
\end{equation}
We compare this to the solution
$\{\btheta^t,\widehat\alpha^t\}_{t \geq 0}$ of the original
dynamics (\ref{eq:langevin_sde}--\ref{eq:gflow}), with
$\bbeta^t=\X\btheta^t$, to show the following lemma.

\begin{lemma}\label{lem:langevin_discretize_error}
Let $\{\btheta^t,\bbeta^t,\widehat\alpha^t\}$ be defined by
(\ref{eq:langevin_sde}--\ref{eq:gflow}), and let
$\{\bar\btheta_\gamma^t,\bar\bbeta_\gamma^t,\bar{\widehat\alpha}_\gamma^t\}$
be defined by the piecewise constant process (\ref{eq:langevin_embedded}).
Then for any fixed $m \geq 1$ and $t_1,\ldots, t_m \in [0,T]$,
there exists a function $\iota:\R_+ \to \R_+$
satisfying $\lim_{\gamma \to 0} \iota(\gamma)=0$ such that
almost surely
\begin{align*}
\limsup_{n,d\rightarrow\infty}W_2\Big( \frac{1}{d}\sum_{j=1}^d
\delta_{(\theta_j^*,\theta^{t_1}_j, \ldots, \theta^{t_m}_j)}, \frac{1}{d}\sum_{j=1}^d
\delta_{(\theta_j^*,\bar\theta^{t_1}_{\gamma,j}, \ldots,
\bar\theta_{\gamma,j}^{t_m})}\Big)<\iota(\gamma)\\
\limsup_{n,d\rightarrow\infty}W_2\Big( \frac{1}{n}\sum_{i=1}^n
\delta_{(\eta_i^*,\eps_i,\eta^{t_1}_i, \ldots, \eta^{t_m}_i)}, \frac{1}{n}\sum_{i=1}^n
\delta_{(\eta_i^*,\eps_i,\bar\eta^{t_1}_{\gamma,i}, \ldots,
\bar\eta_{\gamma,i}^{t_m})}\Big)<\iota(\gamma)\\
\limsup_{n,d\rightarrow\infty} \sup_{t \in [0,T]}
\bigpnorm{\widehat{\alpha}^t-\bar{\widehat\alpha}_\gamma^t}{}<\iota(\gamma).
\end{align*}
\end{lemma}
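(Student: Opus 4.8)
The plan is to carry out a dimension-uniform Euler-scheme estimate, tracking the normalized deviation $\frac1d\pnorm{\btheta^t-\bar\btheta_\gamma^t}{}^2$ jointly with $\pnorm{\widehat\alpha^t-\bar{\widehat\alpha}_\gamma^t}{}$ through a coupled Gronwall inequality whose constants depend only on $T$ and the model constants, not on $n,d$ nor on $\gamma$ for $\gamma$ below a deterministic threshold. For the reduction, coupling the two empirical measures coordinate-by-coordinate (the $\theta^*$ components agree, as do $\eta^*=[\X\btheta^*]$ and $\eps$), one has $W_2^2\le\sum_{\ell=1}^m\frac1d\pnorm{\btheta^{t_\ell}-\bar\btheta_\gamma^{t_\ell}}{}^2$ for the $\btheta$-empirical measures and $W_2^2\le\frac dn\pnorm{\X}{\op}^2\sum_{\ell=1}^m\frac1d\pnorm{\btheta^{t_\ell}-\bar\btheta_\gamma^{t_\ell}}{}^2$ for the $\bbeta$-empirical measures. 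Since $\pnorm{\X}{\op}$ and $\frac1d\pnorm{\y}{}^2$ are almost surely bounded by deterministic constants for all large $n,d$ (Assumption \ref{assump:model}) and $n/d\to\delta$, it suffices to produce a deterministic $C$ such that, almost surely, $\limsup_{n,d\to\infty}\sup_{t\in[0,T]}\frac1d\pnorm{\btheta^t-\bar\btheta_\gamma^t}{}^2\le C\gamma$ and $\limsup_{n,d\to\infty}\sup_{t\in[0,T]}\pnorm{\widehat\alpha^t-\bar{\widehat\alpha}_\gamma^t}{}\le C\sqrt\gamma$, after which $\iota(\gamma)=C\sqrt\gamma$ works.

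\emph{A priori and excursion bounds.} A Gronwall argument applied to (\ref{eq:langevin_sde}--\ref{eq:gflow}) and to (\ref{eq:langevin_discrete_1}--\ref{eq:langevin_discrete_2}), using the linear-growth bounds of Assumptions \ref{assump:prior}--\ref{assump:gradient}, boundedness of $\pnorm{\X}{\op}$, and that $\sup_{t\in[0,T]}\frac1d\pnorm{\b^t}{}^2$ is almost surely bounded, gives a deterministic $M$ with $\sup_{t\in[0,T]}\frac1d\pnorm{\btheta^t}{}^2,\sup_{t\in[0,T]}\frac1d\pnorm{\bar\btheta_\gamma^t}{}^2,\sup_{t\in[0,T]}\pnorm{\widehat\alpha^t}{},\sup_{t\in[0,T]}\pnorm{\bar{\widehat\alpha}_\gamma^t}{}\le M$ almost surely for all large $n,d$ and all small $\gamma$. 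For the Brownian increments, bounding $\sup_{s\in[k\gamma,(k+1)\gamma]}\pnorm{\b^s-\b^{k\gamma}}{}^2\le\sum_{j=1}^d\sup_{s\in[k\gamma,(k+1)\gamma]}(b_j^s-b_j^{k\gamma})^2$ (the supremum of a sum is at most the sum of suprema), the right side normalized by $1/d$ is an average of $d$ i.i.d.\ sub-exponential variables with mean at most $C\gamma$ by Doob's inequality; a sub-exponential concentration bound, a union bound over the at most $T/\gamma$ grid intervals, and Borel--Cantelli then yield, almost surely for all large $n,d$, $\sup_{t\in[0,T]}\frac1d\pnorm{\b^t-\b^{\floor t}}{}^2\le C\gamma$. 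Combined with the a priori bounds and the drift growth, this gives the excursion estimates $\sup_{t\in[0,T]}\frac1d\pnorm{\btheta^t-\btheta^{\floor t}}{}^2\le C\gamma$ and $\sup_{t\in[0,T]}\pnorm{\widehat\alpha^t-\widehat\alpha^{\floor t}}{}\le C\gamma$, while $\bar\btheta_\gamma^t=\bar\btheta_\gamma^{\floor t}$ and $\bar{\widehat\alpha}_\gamma^t=\bar{\widehat\alpha}_\gamma^{\floor t}$ by construction.

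\emph{Gronwall closure.} At grid points $t=K\gamma$ the Brownian terms of $\btheta^{K\gamma}$ and $\bar\btheta_\gamma^{K\gamma}$ both equal $\sqrt2\,\b^{K\gamma}$ and cancel, so $\btheta^{K\gamma}-\btheta_\gamma^K=\int_0^{K\gamma}[-\beta\X^\top\X(\btheta^s-\bar\btheta_\gamma^s)+(s(\theta_j^s,\widehat\alpha^s)-s(\bar\theta_{\gamma,j}^s,\bar{\widehat\alpha}_\gamma^s))_{j=1}^d]\d s$ and $\widehat\alpha^{K\gamma}-\widehat\alpha_\gamma^K=\int_0^{K\gamma}[\cG(\widehat\alpha^s,\widehat\sP(\btheta^s))-\cG(\bar{\widehat\alpha}_\gamma^s,\widehat\sP(\bar\btheta_\gamma^s))]\d s$. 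Writing $\btheta^s-\bar\btheta_\gamma^s=(\btheta^{\floor s}-\bar\btheta_\gamma^{\floor s})+(\btheta^s-\btheta^{\floor s})$ and similarly for $\widehat\alpha$, applying Cauchy--Schwarz in $s$, the Lipschitz bounds on $s(\cdot)$ and $\cG(\cdot)$ (with $W_2(\widehat\sP(\btheta^s),\widehat\sP(\bar\btheta_\gamma^s))^2\le\frac1d\pnorm{\btheta^s-\bar\btheta_\gamma^s}{}^2$), boundedness of $\pnorm{\X}{\op}$, and the excursion estimates, one obtains, with $a_K:=\frac1d\pnorm{\btheta^{K\gamma}-\btheta_\gamma^K}{}^2$ and $c_K:=\pnorm{\widehat\alpha^{K\gamma}-\widehat\alpha_\gamma^K}{}$, the inequality $a_K+c_K^2\le C\gamma\sum_{k=0}^{K-1}(a_k+c_k^2)+C\gamma$ for all $K\le T/\gamma$; discrete Gronwall then gives $a_K+c_K^2\le C\gamma e^{CT}$ uniformly in $K$. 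Passing back to arbitrary $t\in[0,T]$ via the same decomposition and the excursion estimates yields $\sup_{t\in[0,T]}\frac1d\pnorm{\btheta^t-\bar\btheta_\gamma^t}{}^2\le C\gamma$ and $\sup_{t\in[0,T]}\pnorm{\widehat\alpha^t-\bar{\widehat\alpha}_\gamma^t}{}\le C\sqrt\gamma$, closing the reduction with $\iota(\gamma)=C\sqrt\gamma$.

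\emph{Main obstacle.} The one genuinely delicate point is maintaining uniformity in the dimensions throughout: the two tracked quantities live on different scales and are coupled (through $s(\cdot,\widehat\alpha)$ and through $\cG$), so the Gronwall must be run simultaneously, and the increment term $\frac1d\pnorm{\b^t-\b^{\floor t}}{}^2$ must be controlled uniformly over the continuum $t\in[0,T]$; I handle the latter by reducing to a finite union bound over the $T/\gamma$ grid intervals together with sub-exponential concentration of $\frac1d\sum_j\sup_s(b_j^s-b_j^{k\gamma})^2$. Everything else is the routine dimension-free analysis of Euler discretization for an SDE with Lipschitz drift, as anticipated by the remark preceding (\ref{eq:langevin_embedded}).
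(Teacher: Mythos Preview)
Your proposal is correct and follows essentially the same approach as the paper: a priori bounds on the processes (the paper's Lemma~\ref{lem:alpha_boundedness}), a uniform-in-$d$ control of $\frac1d\pnorm{\b^t-\b^{\floor t}}{}^2$ over $[0,T]$ (the paper's Lemma~\ref{lem:BM_maximal}), and a joint Gronwall on the $(\btheta,\widehat\alpha)$ difference. The only differences are organizational: the paper runs a single continuous Gronwall directly on $\frac1{\sqrt d}\pnorm{\btheta^t-\bar\btheta_\gamma^t}{}+\pnorm{\widehat\alpha^t-\bar{\widehat\alpha}_\gamma^t}{}$ (absorbing the between-grid excursion into the inhomogeneous term), whereas you separate out the excursion estimates and run a discrete Gronwall at grid points; and your coordinate-wise Brownian bound via sub-exponential concentration yields $C\gamma$ rather than the paper's $C\gamma\log(1/\gamma)$, but either suffices.
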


We proceed to prove Lemma \ref{lem:langevin_discretize_error}.

\begin{lemma}\label{lem:BM_maximal}
Let $\{\b^t\}_{t\geq 0}$ be a standard Brownian motion on $\R^d$.
For any fixed $T>0$, there exists a constant $C>0$ depending on $T$ such that
almost surely
\begin{align*}
\limsup_{d \to \infty}\sup_{t\in[0,T]} \frac{1}{\sqrt{d}}\pnorm{\b^t}{} \leq C,
\quad \limsup_{d \to \infty}\sup_{t\in[0,T]}
\frac{1}{\sqrt{d}}\big(\pnorm{\b^t - \b^{\floor{t}}}{} + \pnorm{\b^t -
\b^{\ceil{t}}}{}\big) \leq C\sqrt{\gamma\max(\log(1/\gamma),1)}.
\end{align*}
\end{lemma}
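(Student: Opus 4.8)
The plan is to reduce both estimates to a one–dimensional computation and then average over the $d$ independent coordinates via the strong law of large numbers. Write $\b^t=(b_1^t,\ldots,b_d^t)$, where each $\{b_j^t\}_{t\ge0}$ is an independent standard scalar Brownian motion. The only structural fact needed is that, for any of the processes $\v^t\in\{\b^t,\ \b^t-\b^{\floor t},\ \b^t-\b^{\ceil t}\}$,
\[
\sup_{t\in[0,T]}\pnorm{\v^t}{}^2\le\sum_{j=1}^d\sup_{t\in[0,T]}(v_j^t)^2,
\qquad\text{hence}\qquad
\sup_{t\in[0,T]}\tfrac1{\sqrt d}\pnorm{\v^t}{}\le\Big(\tfrac1d\textstyle\sum_{j=1}^d\sup_{t\in[0,T]}(v_j^t)^2\Big)^{1/2}.
\]
Thus it suffices to bound the averages $\tfrac1d\sum_j S_j$ with $S_j:=\sup_{t\in[0,T]}(v_j^t)^2$, which for fixed $\gamma$ are i.i.d.\ over $j$ and do not depend on $d$, so Kolmogorov's SLLN applies verbatim as $d\to\infty$.

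\textbf{First bound.} Here $S_j=\sup_{t\in[0,T]}(b_j^t)^2$. By the reflection principle, $\sup_{t\in[0,T]}|b_j^t|$ is sub‑gaussian, so $c_0:=\E\sup_{t\in[0,T]}(b_j^t)^2<\infty$. The SLLN gives $\tfrac1d\sum_{j=1}^d S_j\to c_0$ almost surely, and therefore $\limsup_{d\to\infty}\sup_{t\in[0,T]}\tfrac1{\sqrt d}\pnorm{\b^t}{}\le\sqrt{c_0}=:C$.

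\textbf{Second bound.} Fix $\gamma>0$ and let $N=\lceil T/\gamma\rceil$ be the number of grid cells $[k\gamma,(k+1)\gamma]$ meeting $[0,T]$. Within one coordinate,
\[
\sup_{t\in[0,T]}(b_j^t-b_j^{\floor t})^2=\max_{0\le k<N}\ \sup_{t\in[k\gamma,(k+1)\gamma]}(b_j^t-b_j^{k\gamma})^2,
\]
and by independent increments and Brownian scaling the $N$ terms in this maximum are i.i.d., each distributed as $\gamma W$ with $W:=\sup_{s\in[0,1]}(b^s)^2$ sub‑exponential (its square root being sub‑gaussian, again by reflection). The standard maximal inequality for i.i.d.\ sub‑exponential variables gives $\E\big[\max_{0\le k<N}W_k\big]\le C'(1+\log N)$, and since $1+\log N\le C_T\max(\log(1/\gamma),1)$ for a constant $C_T$ depending only on $T$, we obtain $\E\,S_j^\gamma\le C_T'\,\gamma\max(\log(1/\gamma),1)$ for $S_j^\gamma:=\sup_{t\in[0,T]}(b_j^t-b_j^{\floor t})^2$. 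These $S_j^\gamma$ are i.i.d.\ over $j$ with finite mean, so $\tfrac1d\sum_{j=1}^d S_j^\gamma\to\E\,S_j^\gamma$ almost surely, giving $\limsup_{d\to\infty}\sup_{t\in[0,T]}\tfrac1{\sqrt d}\pnorm{\b^t-\b^{\floor t}}{}\le\sqrt{C_T'\,\gamma\max(\log(1/\gamma),1)}$. The term $\b^t-\b^{\ceil t}$ is identical: within each cell $[k\gamma,(k+1)\gamma]$ the reversed path $s\mapsto\b^{(k+1)\gamma}-\b^{(k+1)\gamma-s}$ is again a Brownian motion, so $\sup_{t\in[0,T]}(b_j^t-b_j^{\ceil t})^2$ has the same law as $S_j^\gamma$; adding the two bounds and enlarging $C$ yields the claim.

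\textbf{Main point.} The argument is routine; the one place requiring care is that $\sup_{t\in[0,T]}$ and $\sum_{j=1}^d$ do not interchange, so after the reduction one is still left, within each coordinate, with a maximum over the $\sim 1/\gamma$ grid cells — and it is precisely this per‑coordinate maximum of i.i.d.\ sub‑exponential oscillations that produces the extra $\sqrt{\log(1/\gamma)}$ rather than a naive $\sqrt\gamma$. I would also double‑check that the i.i.d.\ sequences $\{S_j\}_{j\ge1}$ and $\{S_j^\gamma\}_{j\ge1}$ are genuinely independent of $d$ so that the SLLN is applied correctly in the regime $d\to\infty$ with $\gamma$ fixed.
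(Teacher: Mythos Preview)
Your proof is correct and takes a genuinely different route from the paper's. The paper works directly with the full vector $\b^t\in\R^d$: it applies Doob's maximal inequality to the submartingale $\exp(\lambda\|\b^t-\b^a\|^2/d)$ together with the $\chi^2$ moment generating function to obtain the tail bound $\Prob\big(\sup_{t\in[a,b]}d^{-1/2}\|\b^t-\b^a\|\ge u\big)\le\exp\big(-cdu^2/(b-a)\big)$, then takes a union bound over the $N\sim T/\gamma$ grid cells and invokes Borel--Cantelli over $d$. You instead decompose coordinate-wise via $\sup_t\|\v^t\|^2\le\sum_j\sup_t(v_j^t)^2$, reduce to scalar Brownian oscillations, control the expected maximum over grid cells by the standard sub-exponential maximal inequality, and finish with the SLLN over the i.i.d.\ coordinate summands. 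Your argument is more elementary --- no high-dimensional MGF computation or Doob's inequality for the vector process, just one-dimensional facts plus the SLLN --- and makes the origin of the $\log(1/\gamma)$ factor transparent as a scalar maximum over $\sim1/\gamma$ independent blocks. The paper's approach, on the other hand, yields an explicit exponential-in-$d$ tail bound which is quantitatively stronger than what the SLLN gives and could be reused for non-asymptotic statements. Your caveat about the coordinates $\{S_j\}_{j\ge1}$ being defined independently of $d$ is well placed; both proofs implicitly assume the natural coupling where $\b^t$ for dimension $d$ consists of the first $d$ coordinates of a fixed infinite sequence of scalar Brownian motions.
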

\begin{proof}
We first show that for any $a,b \in \R_+$ with $a\leq b$, we have
$\Prob(\sup_{t\in[a,b]} d^{-1/2}\pnorm{\b^t - \b^a}{} \geq u) \leq
\exp\big({-}\frac{cdu^2}{b-a}\big)$ for any $u \geq \sqrt{4(b-a)}$ and some constant $c>0$. To see
this, for any $\lambda \in (0,\frac{d}{2(b-a)})$, we have
\begin{align*}
\Prob(\sup_{t\in[a,b]} d^{-1/2}\pnorm{\b^t - \b^a}{} \geq u) &= \Prob(\sup_{t\in[a,b]} \exp(\lambda \pnorm{\b^t - \b^a}{}^2/d) \geq \exp(\lambda u^2))\\
&\stackrel{(*)}{\leq} e^{-\lambda u^2} \E[\exp(\lambda \pnorm{\b^b -
\b^a}{}^2/d)] \stackrel{(**)}{=} e^{-\lambda u^2} (1 - 2\lambda(b-a)/d)^{-d/2},
\end{align*}
where $(*)$ applies Doob's maximal inequality for the nonnegative submartingale $\{\exp(\lambda \pnorm{\b^t-\b^a}{}^2/d)\}_{t \in [a,b]}$, and $(**)$
applies the moment generating function of the $\chi^2$
distribution. Choosing $\lambda = cd/(b-a)$ for some small enough $c > 0$
and applying $(1-x) \geq e^{-2x}$ for small $x > 0$, we have 
\begin{align*}
\Prob(\sup_{t\in[a,b]} d^{-1/2}\pnorm{\b^t - \b^a}{} \geq u) \leq
\exp\Big({-}\frac{cdu^2}{b-a} + 2cd\Big) \leq \exp\Big({-}\frac{cdu^2}{2(b-a)}\Big)
\end{align*}
for $u \geq \sqrt{4(b-a)}$, proving the inequality. For the first claim, we
apply this with $a = 0$, $b = T$, $u = \sqrt{4T}$ to yield that
$\Prob(\sup_{t\in[0,T]} d^{-1/2}\pnorm{\b^t}{} \geq \sqrt{4T}) \leq \exp(-2cd)$,
so the claim follows by the Borel Cantelli lemma. For the second claim, let $N =
T/\gamma$ (assumed without loss of generality to be an integer greater than 1), and $I_i = [(i-1)\gamma,
i\gamma)$ for $i\in [N]$. Then applying the inequality over these intervals yields
\begin{align*}
\Prob(\sup_{t\in [0,T]} (d\gamma)^{-1/2}\pnorm{\b^t - \b^{\floor{t}}}{} \geq u) \leq N \max_{i\leq N}\Prob\Big(\sup_{t\in I_i} (d\gamma)^{-1/2}\pnorm{\b^t - \b^{i\gamma}}{} \geq u\Big) \leq Ne^{-cdu^2}
\end{align*} 
for any $u\geq 2$. Hence by choosing $u = C\sqrt{\log N}$ for large enough
$C > 0$, we have $\Prob(\sup_{t\in [0,T]} (d\gamma)^{-1/2}\pnorm{\b^t -
\b^{\floor{t}}}{} \geq C\sqrt{\log N}) \leq \exp(-c'd)$. A similar argument applies to $\sup_{t\in [0,T]} (d\gamma)^{-1/2}\pnorm{\b^t - \b^{\ceil{t}}}{}$, proving the second claim. 
\end{proof}

\begin{lemma}\label{lem:alpha_boundedness}
For any fixed $T>0$, there exists a constant
$C>0$ depending on $T$ such that almost surely
\[\limsup_{n,d \to \infty}
\sup_{t\in[0,T]} (\pnorm{\btheta^t}{}/\sqrt{d} + \pnorm{\widehat{\alpha}^t}{})
\leq C.\]
\end{lemma}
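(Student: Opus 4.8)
The plan is a Gronwall estimate for the pair $\big(\|\btheta^t\|/\sqrt d,\ \|\widehat\alpha^t\|\big)$, carried out in integral form once all the ``input data'' of the dynamics have been shown to be almost surely bounded, uniformly over large $n,d$. The linear growth hypotheses on $s(\cdot)$ in Assumption \ref{assump:prior} and on $\cG(\cdot)$ in Assumption \ref{assump:gradient}(a) make this essentially immediate.

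First I would record the a priori bounds. By standard concentration for random matrices with independent sub-gaussian entries (Assumption \ref{assump:model}(b)), $\|\X\|_\op \le C$ almost surely for all large $n,d$ (with probability $1-2e^{-cd}$ for fixed $n,d$, hence almost surely eventually by Borel--Cantelli, using $n/d\to\delta$). By the Wasserstein convergence in Assumption \ref{assump:model}(c), $\|\btheta^0\|^2/d\to \E(\theta^0)^2$, $\|\btheta^*\|^2/d\to\E(\theta^*)^2$, and $\|\beps\|^2/n\to\E\eps^2$ almost surely, so $\|\btheta^0\|/\sqrt d\le C$ and $\|\y\|/\sqrt d\le \|\X\|_\op\|\btheta^*\|/\sqrt d+\|\beps\|/\sqrt d\le C$ almost surely for large $n,d$; moreover $\widehat\alpha^0\to\alpha^0$ gives $\|\widehat\alpha^0\|\le C$; and Lemma \ref{lem:BM_maximal} gives $\sup_{t\in[0,T]}\|\b^t\|/\sqrt d\le C$ almost surely for large $d$. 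Intersecting these probability-one events, there is a deterministic constant $C_1$ such that, almost surely, all of these quantities are at most $C_1$ for all sufficiently large $n,d$.

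On this event I would then use the integral forms
\[
\btheta^t=\btheta^0+\int_0^t\Big[{-}\beta\X^\top(\X\btheta^s-\y)+s(\btheta^s,\widehat\alpha^s)\Big]\d s+\sqrt2\,\b^t,\qquad \widehat\alpha^t=\widehat\alpha^0+\int_0^t\cG\big(\widehat\alpha^s,\widehat\sP(\btheta^s)\big)\d s,
\]
take Euclidean norms, and apply the growth bounds. Assumption \ref{assump:prior} gives $\|s(\btheta,\alpha)\|\le \sqrt3\,C(\sqrt d+\|\btheta\|+\sqrt d\,\|\alpha\|)$ (since $\|s(\btheta,\alpha)\|^2\le 3C^2(d+\|\btheta\|^2+d\|\alpha\|^2)$), and Assumption \ref{assump:gradient}(a) gives $\|\cG(\alpha,\widehat\sP(\btheta))\|\le C(1+\|\alpha\|+\|\btheta\|/\sqrt d)$. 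Bounding $\|\X^\top(\X\btheta-\y)\|\le \|\X\|_\op^2\|\btheta\|+\|\X\|_\op\|\y\|$ and dividing the first identity by $\sqrt d$, then writing $\phi(t):=\|\btheta^t\|/\sqrt d+\|\widehat\alpha^t\|$, I obtain
\[
\phi(t)\le C'+C'\int_0^t\big(1+\phi(s)\big)\,\d s
\]
for a deterministic constant $C'$ depending only on $C_1,\beta,T$. Gronwall's inequality yields $\sup_{t\in[0,T]}\phi(t)\le (C'+C'T)e^{C'T}$, a deterministic bound valid for all large $n,d$ on a probability-one event, which gives the stated $\limsup$ bound.

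\textbf{Main obstacle.} There is no genuine difficulty here: the only points requiring (brief) care are the uniform operator-norm bound on $\X$, which is a standard random-matrix input, and the bookkeeping that turns the several random a priori bounds into a single deterministic constant valid in the $\limsup$. Everything else is the routine linear-growth Gronwall argument above.
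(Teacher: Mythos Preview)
Your proposal is correct and follows essentially the same approach as the paper: integral form of the dynamics, linear-growth bounds on $s(\cdot)$ and $\cG(\cdot)$ from Assumptions \ref{assump:prior} and \ref{assump:gradient}, a Gronwall argument on $\|\btheta^t\|/\sqrt d+\|\widehat\alpha^t\|$, and almost-sure control of $\|\X\|_\op$, $\|\btheta^0\|$, $\|\widehat\alpha^0\|$, $\|\y\|$ (the paper uses $\|\X^\top\y\|$, but the distinction is immaterial), and $\sup_t\|\b^t\|$ via Lemma \ref{lem:BM_maximal}.
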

\begin{proof}
Let $C>0$ denote a constant depending on $T$ and changing from instance to
instance. Since
\begin{align*}
\btheta^t = \btheta^0 - \int_0^t \Big(\beta\X^\top(\X\btheta^s - \y) - s(\btheta^s,\widehat{\alpha}^s)\Big)\d s + \sqrt{2}\b^t, 
\end{align*}
and $\pnorm{s(\btheta^s, \widehat\alpha^s)}{} \leq C(\sqrt{d} + \pnorm{\btheta^s}{} + \sqrt{d}\pnorm{\widehat\alpha^s}{})$ by Assumption \ref{assump:prior}, we have for every $t\in[0,T]$ that
\begin{align*}
\pnorm{\btheta^t }{} \leq \pnorm{\btheta^0}{} + C(\pnorm{\X}{\op}^2+1)\int_0^t \Big(\pnorm{\btheta^s}{} + \sqrt{d}\pnorm{\widehat\alpha^s}{}\Big)\d s + C(\pnorm{\X^\top\y}{} + \sqrt{d} + \sup_{t\in[0,T]} \pnorm{\b^t}{}).
\end{align*}
Next by Assumption \ref{assump:gradient}, we have
\begin{align*}
\pnorm{\widehat\alpha^t}{}  \leq \pnorm{\widehat\alpha^0}{} + C\int_0^t \big(1 + \pnorm{\btheta^s}{}/\sqrt{d} + \pnorm{\widehat\alpha^s}{}\big)\d s.
\end{align*}
Combining the above two bounds yields
\begin{align}\label{eq:l2_gronwall_langevin}
\frac{\pnorm{\btheta^t }{}}{\sqrt{d}} + \pnorm{\widehat\alpha^t}{} \leq C(\pnorm{\X}{\op}^2+1)\int_0^t \Big(\frac{\pnorm{\btheta^s}{}}{\sqrt{d}} + \pnorm{\widehat\alpha^s}{}\Big)\d s + C\Big(\frac{\pnorm{\btheta^0}{}}{\sqrt{d}} + \pnorm{\widehat\alpha^0}{} + \frac{\pnorm{\X^\top\y}{}}{\sqrt{d}} + \sup_{t\in[0,T]} \frac{\pnorm{\b^t}{}}{\sqrt{d}} + 1\Big).
\end{align}
Hence by Gronwall's inequality, we have
\begin{align*}
\sup_{t \in [0,T]} \frac{\pnorm{\btheta^t }{}}{\sqrt{d}} + \pnorm{\widehat\alpha^t}{} \leq C\exp(C(\pnorm{\X}{\op}^2+1))\Big(\frac{\pnorm{\btheta^0}{}}{\sqrt{d}} + \pnorm{\widehat\alpha^0}{} + \frac{\pnorm{\X^\top\y}{}}{\sqrt{d}} + \sup_{t\in[0,T]} \frac{\pnorm{\b^t}{}}{\sqrt{d}} + 1\Big).
\end{align*}
Under Assumption \ref{assump:model}, we have almost surely that
\begin{align}\label{eq:almost_sure_event}
\limsup_{n,d \to \infty}
\max\Big(\pnorm{\widehat\alpha^0}{},\frac{1}{\sqrt{d}}\pnorm{\btheta^0}{},\frac{1}{\sqrt{d}}\pnorm{\X^\top
\y}{},\pnorm{\X}{\op}\Big) \leq C,
\end{align}
so the conclusion follows from the first claim of Lemma \ref{lem:BM_maximal}. 
\end{proof}

\begin{proof}[Proof of Lemma \ref{lem:langevin_discretize_error}]
Here and throughout, $C>0$ denotes a constant depending on $T$ but not on
$\gamma$, and changing from instance to instance.
We restrict to the almost-sure event where
Lemmas \ref{lem:BM_maximal} and \ref{lem:alpha_boundedness} hold,
and (\ref{eq:almost_sure_event}) holds for all large $n,d$.
Then, coupling (\ref{eq:langevin_sde}) and (\ref{eq:langevin_embedded})
by the same Brownian motion, for any $0 \leq t \leq T$,
\begin{align*}
\pnorm{\btheta^{t} - \bar\btheta^{t}_\gamma}{}
&\leq C\int_0^{\floor{t}}\Big(\pnorm{\X^\top\X(\btheta^s - \bar \btheta^s_\gamma)}{}
+ \pnorm{s(\btheta^s;\widehat{\alpha}^s) -
s(\btheta^s_\gamma;\bar{\widehat\alpha}^s_\gamma)}{}\Big)\d s
+C\int_{\floor{t}}^t \Big[\X^\top\X\btheta^s
+s(\btheta^s,\widehat\alpha^s)\Big]\d s\\
&\hspace{1in}+\sqrt{2}\pnorm{\b^t - \b^{\floor{t}}}{}.
\end{align*}
Applying Lipschitz continuity of $s(\cdot)$ in
Assumption \ref{assump:prior} and the bounds of
Lemma \ref{lem:alpha_boundedness} and (\ref{eq:almost_sure_event}), this shows
\begin{align}\label{eq:gronwall_theta}
\pnorm{\btheta^t - \bar \btheta^{t}_\gamma}{} &\leq C
\int_0^t \big(\pnorm{\btheta^s - \bar\btheta^s_\gamma}{} +
\sqrt{d}\,\pnorm{\widehat{\alpha}^s - \bar{\widehat\alpha}^s_\gamma}{}\big)\d s
+C\gamma \sqrt{d}+C\sup_{t\in[0,T]}\pnorm{\b^t - \b^{\floor{t}}}{}.
\end{align}
Similarly, using Assumption \ref{assump:gradient} and
$W_2^2(d^{-1}\sum_{j=1}^d \delta_{u_j}, d^{-1}\sum_{j=1}^d \delta_{v_j}) \leq
d^{-1}\pnorm{\u-\v}{}^2$ for $\u,\v\in\R^d$,
\begin{align*}
\pnorm{\widehat{\alpha}^t - \bar{\widehat\alpha}^t_\gamma}{} &\leq
\int_0^{\floor t}
\biggpnorm{\cG\Big(\widehat{\alpha}^s,\frac{1}{d}\sum_{j=1}^d
\delta_{\theta^s_j}\Big) - \cG\Big(\bar{\widehat\alpha}^s_\gamma,
\frac{1}{d}\sum_{j=1}^d \delta_{\bar \theta^s_{\gamma,j}}\Big)}{}\d s
+\int_{\floor t}^t \biggpnorm{\cG\Big(\widehat{\alpha}^s,\frac{1}{d}\sum_{j=1}^d
\delta_{\theta^s_j}\Big)}{}\d s\\
&\leq C\int_0^t \Big(\pnorm{\widehat{\alpha}^s -
\bar{\widehat\alpha}_\gamma^s}{} + \frac{1}{\sqrt{d}}\pnorm{\btheta^s -
\bar\btheta_\gamma^s}{}\Big) \d s+C\gamma.
\end{align*}
Combining the above display with (\ref{eq:gronwall_theta}), we have
by Gronwall's lemma
\begin{align}\label{eq:joint_alpha_theta}
\sup_{t \in [0,T]}
\pnorm{\widehat{\alpha}^t - \bar{\widehat\alpha}^t_\gamma}{} +
\frac{1}{\sqrt{d}}\pnorm{\btheta^t - \bar \btheta^{t}_\gamma}{} \leq C\gamma
+\frac{C}{\sqrt{d}}\sup_{t\in[0,T]}\pnorm{\b^t - \b^{\floor{t}}}{}.
\end{align}
By Lemma \ref{lem:BM_maximal}, there exists some $C>0$ such that
\begin{align*}
\limsup_{d \to \infty}\sup_{t\in[0,T]} \frac{1}{\sqrt{d}}\pnorm{\b^t
- \b^{\floor{t}}}{} \leq C\sqrt{\gamma \cdot \max(\log(1/\gamma),1)}.
\end{align*}
Substituting this bound in (\ref{eq:joint_alpha_theta})
proves the claim on $\alpha$. Noting that
\begin{align*}
W_2\Big( \frac{1}{d}\sum_{j=1}^d \delta_{(\theta_j^*,\theta^{t_1}_j, \ldots,
\theta^{t_m}_j)}, \frac{1}{d}\sum_{j=1}^d \delta_{(\theta_j^*,\bar\theta^{t_1}_{\gamma,j},
\ldots, \bar\theta_{\gamma,j}^{t_m})}\Big) \leq
\sqrt{\frac{1}{d}\sum_{\ell=1}^m\sum_{j=1}^d \big(\theta^{t_\ell}_j -
\bar\theta^{t_\ell}_{\gamma,j}\big)^2} \leq \sqrt{m} \cdot
\frac{1}{\sqrt{d}}\sup_{t\in[0,T]}\pnorm{\btheta^t - \bar\btheta^t_\gamma}{},
\end{align*}
this proves also the claim on $\theta$, and the claim on $\eta$ follows from 
$\|\boldeta^t-\bar \bbeta_\gamma^t\| \leq \|\X\|_\op
\|\btheta^t-\bar \btheta_\gamma^t\|$ and the same argument.
\end{proof}

\subsection{Completing the proof}

\begin{proof}[Proof of Theorem \ref{thm:dmft_approx}]
For part (a), by the triangle inequality,
\[\sup_{t \in [0,T]} \|\widehat \alpha^t-\alpha^t\|
\leq \sup_{t \in [0,T]} \|\widehat \alpha^t-\bar{\widehat\alpha}_\gamma^t\|
+\sup_{t \in [0,T]} \|\bar{\widehat\alpha}_\gamma^t-\bar\alpha_\gamma^t\|
+\sup_{t \in [0,T]} \|\bar\alpha_\gamma^t-\alpha^t\|.\]
Since $\{\bar{\widehat\alpha}_\gamma^t\}$ and $\{\bar\alpha_\gamma^t\}$ are
piecewise constant with values equal to those of the discrete processes of
Section \ref{subsec:discretedmft},
Lemma \ref{lem:finite_dim_converge} implies that the middle 
term converges to 0 a.s.\ as $n,d \to \infty$. Then,
taking $n,d\rightarrow\infty$ followed by $\gamma \rightarrow 0$ and
applying also Lemmas \ref{lem:dmft_discretization_error} 
and \ref{lem:langevin_discretize_error}
to bound the first and third terms in this limit, this shows (a).

For part (b), similarly combining Lemmas \ref{lem:finite_dim_converge},
\ref{lem:dmft_discretization_error}, and \ref{lem:langevin_discretize_error}
shows that almost surely, for any $m \geq 1$
and $t_0,t_1,\ldots,t_m \in [0,T]$,
\begin{align}\label{eq:pointwiseW2convergence}
\frac{1}{d}\sum_{j=1}^d \delta_{\big(\theta_j^*,\theta^{t_0}_j,\ldots,
\theta^{t_m}_j\big)} \overset{W_2}{\to} \sP(\theta^*,\theta^{t_0},\ldots,
\theta^{t_m}).
\end{align}
We now strengthen this to almost-sure convergence in the Wasserstein-2 sense
over $\R \times C([0,T])$, equipped with the product norm
\[\|\theta\|_\infty:=|\theta^*|+\sup_{t \in [0,T]}|\theta^t|.\]
By \cite[Definition 6.8 and Theorem 6.9]{villani2008optimal}, it suffices to show weak convergence together with convergence
of the squared norm $\|\theta\|_\infty^2$, which will be
implied by convergence for all pseudo-Lipschitz test functions $f:\R \times
C([0,T]) \to \R$ satisfying
\begin{equation}\label{eq:pseudolipschitz}
\big|f(\theta)-f(\theta')\big|
\leq
C\|\theta-\theta'\|_\infty(1+\|\theta\|_\infty+\|\theta'\|_\infty).
\end{equation}
Consider the event $\event$ where
(\ref{eq:pointwiseW2convergence}) holds for each $m \geq 2$ and
$\{t_0,t_1,t_2,\ldots,t_m\}=\{0,\gamma,2\gamma,\ldots,T\}$ with $\gamma=T/m$.
Let $\{\tilde\btheta^t\}_{t \in [0,T]}$ be a piecewise linear interpolation
of $\{\btheta^t\}_{t \in [0,T] \cap \gamma \Z_+}$, and similarly let
$\{\tilde\theta^t\}_{t \in [0,T]}$ be a piecewise linear interpolation of the
DMFT process $\{\theta^t\}_{t \in [0,T]}$. For
any pseudo-Lipschitz function $f:\R \times C([0,T]) \to \R$, we have
\begin{equation}\label{eq:PL2bound}
\Big|\frac{1}{d}\sum_{j=1}^d f(\theta_j^*,\{\theta_j^t\}_{t \in [0,T]})
-\E[f(\theta^*,\{\theta^t\}_{t \in [0,T]})]\Big| \leq (I)+(II)+(III)
\end{equation}
with
\begin{align*}
(I)&=\Big|\frac{1}{d}\sum_{j=1}^d f(\theta_j^*,\{\tilde\theta_j^t\}_{t \in [0,T]})
-\E[f(\theta^*,\{\tilde\theta^t\}_{t \in [0,T]})]\Big|\\
(II)&=\Big|\frac{1}{d}\sum_{j=1}^d f(\theta_j^*,\{\theta_j^t\}_{t \in [0,T]})
-f(\theta_j^*,\{\tilde\theta_j^t\}_{t \in [0,T]})\Big|\\
(III)&=\Big|\E[f(\theta^*,\{\theta^t\}_{t \in [0,T]})]
-\E[f(\theta^*,\{\tilde\theta^t\}_{t \in [0,T]})]\Big|.
\end{align*}
For a piecewise linear process $\tilde \theta$ with knots at $\gamma\Z_+$,
$f(\theta_j^*,\{\tilde\theta^t\}_{t \in [0,T]})$ may be understood as a function
of
$(\theta_j^*,\tilde\theta^0,\tilde\theta^\gamma,\tilde\theta^{2\gamma},\ldots,\tilde\theta^T)$,
where this function is pseudo-Lipschitz on $\R^{m+2}$
by the pseudo-Lipschitz property (\ref{eq:pseudolipschitz}) for $f$. Then 
on the above event $\event$, the
Wasserstein-2 convergence (\ref{eq:pointwiseW2convergence}) implies
\[\lim_{n,d \to \infty} (I)=0.\]

To bound $(II)$, let $C,C'>0$ be constants depending on $T$ (but not $\gamma$)
and changing from instance to instance.
Writing $\theta_j=(\theta_j^*,\{\theta_j^t\}_{t \in [0,T]}) \in \R \times C([0,T],\R)$
and applying (\ref{eq:pseudolipschitz}), we have
\begin{equation}\label{eq:thetamodulusbound}
(II) \leq \frac{C}{d}\sum_{j=1}^d
\|\theta_j-\tilde\theta_j\|_\infty\big(1+\|\theta_j\|_\infty\big)
\leq C'\Big(\frac{1}{d}\sum_{j=1}^d
\|\theta_j-\tilde\theta_j\|_\infty^2\Big)^{1/2}
\Big(1+\frac{1}{d}\sum_{j=1}^d \|\theta_j\|_\infty^2\Big)^{1/2}.
\end{equation}
Set
\[F(\btheta,\widehat\alpha)={-}\beta\X^\top(\X\btheta-\y)+s(\btheta,\widehat\alpha)\]
so that by definition,
$\theta_j^t=\theta_j^0+\int_0^t \e_j^\top F(\btheta^s,\widehat\alpha^s)\d
s+\sqrt{2}\,b_j^t$. Hence
\[\sup_{t \in [0,T]} (\theta_j^t)^2 \leq C\Big((\theta_j^0)^2+\int_0^T 
(\e_j^\top F(\btheta^s,\widehat\alpha^s))^2\d s+\|b_j\|_\infty^2\Big),\]
so
\[\frac{1}{d}\sum_{j=1}^d \|\theta_j\|_\infty^2
\leq C\Big(\frac{1}{d}\sum_{j=1}^d (\theta_j^*)^2+(\theta_j^0)^2
+ \frac{1}{d}\sup_{t \in [0,T]} \|F(\btheta^t,\widehat\alpha^t)\|^2
+\frac{1}{d}\sum_{j=1}^d \|b_j\|_\infty^2\Big).\]
On an almost-sure event $\event'$, for all large $n,d$, we have that
$d^{-1}\sum_j (\theta_j^*)^2+d^{-1}\sum_j (\theta_j^0)^2 \leq C$ by
Assumption \ref{assump:prior}, that $\sup_{t \in [0,T]} d^{-1}\|F(\btheta^t,\widehat\alpha^t)\|^2
\leq C$ by the
definition of $F(\cdot)$ together with Assumption \ref{assump:prior}
and Lemma \ref{lem:alpha_boundedness},
and that $d^{-1}\sum_j \|b_j\|_\infty^2 \leq C$ by Doob's maximal inequality
$\P[\|b_j\|_\infty>x] \leq 2e^{-x^2/(2T)}$ and Bernstein's inequality for a sum
of independent subexponential random variables \cite[Theorem 2.8.1]{vershynin2018high}.
Thus, on $\event'$,
\begin{equation}\label{eq:thetajinfty}
\frac{1}{d}\sum_{j=1}^d \|\theta_j\|_\infty^2 \leq C'.
\end{equation}
Now fixing any $\alpha \in (0,1/2)$, define the H\"older semi-norm
$\|\theta_j\|_\alpha=\sup_{s,t \in [0,T]} |\theta_j^t-\theta_j^s|/|t-s|^\alpha$.
Then, since $\tilde\theta_j$ linearly interpolates $\theta_j$ at the knots
$\gamma\Z_+$,
\[\|\theta_j-\tilde\theta_j\|_\infty \leq \gamma^\alpha\|\theta_j\|_\alpha.\]
We have by definition $\theta_j^t-\theta_j^s=
\int_s^t \e_j^\top F(\btheta^r,\widehat\alpha^r)\d r+\sqrt{2}(b_j^t-b_j^s)$, so
by H\"older's inequality,
\begin{align*}
|\theta_j^t-\theta_j^s|
&\leq |t-s|^\alpha \Big(\int_s^t \big|\e_j^\top
F(\btheta^r,\widehat\alpha^r)\big|^{\frac{1}{1-\alpha}}\d r\Big)^{1-\alpha}
+\sqrt{2}|b_j^t-b_j^s|\\
&\leq C|t-s|^\alpha \Big(\Big(\int_0^T (\e_j^\top
F(\btheta^r,\widehat\alpha^r))^2\Big)^{1/2}+\|b_j\|_\alpha\Big)
\end{align*}
and hence
\[\frac{1}{d}\sum_{j=1}^d \|\theta_j\|_\alpha^2
\leq C\Big(\frac{1}{d}\sup_{t \in [0,T]} \|F(\btheta^t,\widehat\alpha^t)\|^2
+\frac{1}{d}\sum_{j=1}^d \|b_j\|_\alpha^2\Big).\]
On an almost-sure event $\event''$, for all large $n,d$, we have
$\sup_{t \in [0,T]} d^{-1}\|F(\btheta^t,\widehat\alpha^t)\|^2 \leq C$ as
above, and $d^{-1}\sum_j \|b_j\|_\alpha^2 \leq C$ by the tail bound
$\P[\|b_j\|_\alpha>C+x] \leq e^{-cx^2}$ for some $C,c>0$ (see e.g.\
\cite[Theorem 5.32, Example 5.37]{van2014probability})
and Bernstein's inequality. Thus on $\event''$,
\begin{equation}\label{eq:thetajholder}
\frac{1}{d}\sum_{j=1}^d \|\theta_j-\tilde\theta_j\|_\infty^2
\leq C\gamma^{2\alpha}.
\end{equation}
Applying (\ref{eq:thetajinfty}) and (\ref{eq:thetajholder}) to
(\ref{eq:thetamodulusbound}), on $\event' \cap \event''$,
\[\limsup_{n,d \to \infty} (II) < C\gamma^\alpha.\]

To bound $(III)$, similarly we have
\begin{equation}\label{eq:DMFTmodulusbound}
(III) \leq C\Big(\E\|\theta-\tilde\theta\|_\infty^2\Big)^{1/2}
\Big(1+\E\|\theta\|_\infty^2\Big)^{1/2}.
\end{equation}
By definition
\[\theta^t=\theta^0+\int_0^t \Big[{-}\delta\beta(\theta^s-\theta^*)
+s(\theta^s,\alpha^s)+\int_0^s R_\eta(s,s')(\theta^{s'}-\theta^*)\d s'
+u^s\Big]\d s+\sqrt{2}b^t.\]
Hence, applying $|s(\theta^s,\alpha^s)| \leq C(1+|\theta^s|+\|\alpha^s\|)$ by
Assumption \ref{assump:prior} and uniform boundedness of the continuous
functions $\alpha^s$ and $R_\eta(s,s')$ over $[0,T]$,
\[(\theta^t)^2 \leq C\Big(1+(\theta^0)^2
+(\theta^*)^2+\|u\|_\infty^2+\|b\|_\infty^2
+\int_0^t \Big(\sup_{r \in [0,s]} (\theta^s)^2\Big) \d s\Big).\]
Then Gronwall's lemma gives
\[\sup_{t \in [0,T]} (\theta^t)^2
\leq C\Big(1+(\theta^0)^2+(\theta^*)^2+\|u\|_\infty^2+\|b\|_\infty^2\Big).\]
We have $\E(\theta^0)^2,\E(\theta^*)^2 \leq C$ by assumption.
Since $\{u^t\}_{t \in [0,T]}$ has covariance $C_\eta(t,s)$ satisfying
$|C_\eta(t,s)| \leq C|t-s|$ by the condition (\ref{eq:Ceta_cond_2}) defining
$\cS(T)^\text{cont}$, we have $\P[\|u\|_\infty \geq C+t] \leq e^{-ct^2}$
for some constants $C,c>0$ by a standard application of Dudley's inequality
\cite[Theorem 8.1.6]{vershynin2018high}, so $\E\|u\|_\infty^2 \leq C$.
Similarly $\E\|b\|_\infty^2 \leq C$, so this gives
\begin{equation}\label{eq:DMFTinftybound}
\E\|\theta\|_\infty^2 \leq C.
\end{equation}
By definition we have also
\[\theta^t-\theta^s=\int_s^t \Big[{-}\delta\beta(\theta^r-\theta^*)
+s(\theta^r,\alpha^r)+\int_0^r R_\eta(r,r')(\theta^{r'}-\theta^*)\d r'
+u^r\Big]\d r+\sqrt{2}(b^t-b^s),\]
so
\begin{align*}
|\theta^t-\theta^s| &\leq C|t-s|^\alpha\Big(\int_s^t\Big(
1+|\theta^*|+\sup_{r'\in [0,r]} |\theta^{r'}|+|u^r|\Big)^{\frac{1}{1-\alpha}}\d r \Big)^{1-\alpha}+\sqrt{2}|b^t-b^s|\\
&\leq C'|t-s|^\alpha\Big(1+\|\theta\|_\infty+\|u\|_\infty+\|b\|_\alpha\Big).
\end{align*}
Then
\[\E\|\theta\|_\alpha^2 \leq C(1+\E\|\theta\|_\infty^2+\E\|u\|_\infty^2
+\E\|b\|_\alpha^2) \leq C',\]
so
\begin{equation}\label{eq:DMFTholder}
\E\|\theta-\tilde\theta\|_\infty^2 \leq \gamma^{2\alpha}\E\|\theta\|_\alpha^2
\leq C'\gamma^{2\alpha}.
\end{equation}
Applying (\ref{eq:DMFTinftybound}) and (\ref{eq:DMFTholder}) to
(\ref{eq:DMFTmodulusbound}) shows
\[(III) \leq C\gamma^\alpha.\]

Applying these bounds for $(I)$, $(II)$, and $(III)$ to take the limit $n,d \to
\infty$ followed by $\gamma \to 0$ in (\ref{eq:PL2bound}), this shows that on
the almost-sure event $\event \cap \event' \cap \event''$ (which does not depend
on $f$), for every pseudo-Lipschitz function $f:\R \times C([0,T]) \to \R$,
\[\lim_{n,d \to \infty}
\frac{1}{d}\sum_{j=1}^d f(\theta_j^*,\{\theta_j^t\}_{t \in [0,T]})
=\E[f(\theta^*,\{\theta^t\}_{t \in [0,T]})].\]
This implies on $\event \cap \event' \cap \event''$ that
\[\frac{1}{d}\sum_{j=1}^d \delta_{\theta_j^*,\{\theta_j^t\}_{t \in [0,T]}}
\overset{W_2}{\to} \sP(\theta^*,\{\theta^t\}_{t \in [0,T]}).\]

For the convergence for $\bbeta$, we note that
$\eta_i^t=\e_i^\top \X\btheta^t=\e_i^\top\X\btheta^t+\int_0^t \e_i^\top \X
F(\btheta^s,\widehat\alpha^s)\d s+\sqrt{2}\,\e_i^\top \X\b^t$. Then applying
similar arguments as above, fixing any $\alpha \in (0,1/2)$,
on an almost-sure event we have for all large $n,d$ that
\[\frac{1}{n}\sum_{i=1}^n \|\eta_i\|_\infty^2 \leq C,
\qquad \frac{1}{n}\sum_{i=1}^n \|\eta_i\|_\alpha^2 \leq C\gamma^{2\alpha}.\]
For the DMFT process we have $\eta^t={-}\beta \int_0^t
R_\theta(t,s)(\eta^s+w^*-\eps)\d s-w^t$, hence
\[(\eta^t)^2 \leq C\Big((w^*)^2+(w^t)^2+\eps^2+\int_0^t (\eta^s)^2\d s\Big)\]
so Gronwall's lemma and a similar argument as above
gives $\E\|\eta\|_\infty^2 \leq C$. Also
\[|\eta^t-\eta^s|
\leq C|t-s|^\alpha\Big(\int_s^t
\Big(|w^*|+|\eps|+|\eta^r|\Big)^{\frac{1}{1-\alpha}}\d r\Big)^{1-\alpha}
+|w^t-w^s|
\leq C'|t-s|^\alpha\Big(|w^*|+|\eps|+\|\eta\|_\infty+\|w\|_\alpha\Big)\]
so
\[\E\|\eta\|_\alpha^2 \leq C\Big(\E(w^*)^2+\E \eps^2+\E\|\eta\|_\infty^2
+\E\|w\|_\alpha^2\Big).\]
We recall that $\{w^t\}_{t \in [0,T]}$ has covariance satisfying
$|C_\theta(t,s)| \leq C|t-s|$, so $\P[\|w\|_\alpha>C+x] \leq 2e^{-cx^2}$
for some $C,c>0$ (c.f.\ \cite[Theorem 5.32]{van2014probability}).
Thus $\E\|\eta\|_\alpha^2 \leq C$. Applying these bounds,
the same arguments as above show the almost-sure convergence
\[\frac{1}{n}\sum_{i=1}^n \delta_{\eta_i^*,\eps_i,\{\eta_i^t\}_{t \in [0,T]})}
\overset{W_2}{\to} \sP(\eta^*,\eps,\{\eta^t\}_{t \in [0,T]})\]
where we recall that $\eta^*$ on the right side is, by definition,
$\eta^*={-}w^*$.
\end{proof}

\section{Convergence of the linear response}\label{sec:dmftresponse}

In this section, we prove Theorem \ref{thm:dmftresponse}. We assume throughout Assumptions \ref{assump:model}, \ref{assump:prior}, \ref{assump:gradient} and the H\"older-continuity conditions of Theorem \ref{thm:dmftresponse}.
We first state and prove in Section \ref{sec:discreteresponse} an
analogue of Theorem \ref{thm:dmftresponse} for the discrete-time dynamics
introduced previously in Section \ref{subsec:discretedmft}, and
then analyze the discretization error and complete the proof of
Theorem \ref{thm:dmftresponse} in Section 
\ref{subsec:discretize_langevin_response}.

\subsection{Convergence of response functions for discrete dynamics}\label{sec:discreteresponse}

We recall the discrete, integer-indexed dynamics
(\ref{eq:langevin_discrete_1}--\ref{eq:langevin_discrete_2}), which we reproduce
here as
\begin{align}
\btheta_\gamma^{t+1} &= \btheta_\gamma^{t} - \gamma\Big[\beta\X^\top(\X\btheta_\gamma^{t} - \y) -
s(\btheta_\gamma^{t}, \widehat\alpha^t_\gamma)\Big] +
\sqrt{2}(\b_\gamma^{t+1} - \b_\gamma^t),\quad \bbeta_\gamma^t =
\X\btheta_\gamma^t\notag\\
\widehat{\alpha}_\gamma^{t+1} &= \widehat{\alpha}_\gamma^t + \gamma \cdot
\cG\Big(\widehat{\alpha}_\gamma^t, \widehat{\sP}(\btheta^t_\gamma)\Big),
\quad \widehat{\sP}(\btheta)=\frac{1}{d}\sum_{j=1}^d \delta_{\theta_j}.
 \label{eq:original_alpha}
\end{align}
We first show an analogue of Theorem \ref{thm:dmftresponse} for these discrete
dynamics.

For any $s\in \Z_+$ and any $j\in[d]$ or $i\in[n]$, letting
$\e_j$ denote the $j^\text{th}$ standard basis vector in either $\R^d$ or
$\R^n$, define two sets of perturbed dynamics
\begin{align}
\btheta_\gamma^{t+1, (s,j), \eps} &= \btheta_\gamma^{t,(s,j),\eps} -
\gamma\Big[\beta\X^\top(\X\btheta_\gamma^{t,(s,j),\eps} - \y) -
s(\btheta_\gamma^{t,(s,j),\eps}, \widehat{\alpha}_\gamma^{t,(s,j),\eps}) - \eps
\e_j \bm{1}_{t = s}\Big] + \sqrt{2}(\b_\gamma^{t+1} - \b_\gamma^t)\notag\\
\widehat{\alpha}_\gamma^{t+1, (s,j),\eps} &= \widehat{\alpha}_\gamma^{t,
(s,j),\eps} + \gamma \cdot \cG\Big(\widehat{\alpha}_\gamma^{t, (s,j),\eps},
\widehat{\sP}(\btheta^{t,(s,j),\eps}_\gamma)\Big),\label{eq:alpha_col_perturb}
\end{align}
and
\begin{align}
\btheta_\gamma^{t+1, [s,i], \eps} &= \btheta_\gamma^{t,[s,i],\eps} -
\gamma\Big[\beta\X^\top(\X\btheta_\gamma^{t,[s,i],\eps} - \y) -
s(\btheta_\gamma^{t,[s,i],\eps}, \widehat{\alpha}_\gamma^{t,[s,i],\eps}) - \eps
\X^\top  \e_i\bm{1}_{t = s}\Big] + \sqrt{2}(\b_\gamma^{t+1} - \b_\gamma^t)\notag\\
\widehat{\alpha}_\gamma^{t+1, [s,i], \eps} &= \widehat{\alpha}_\gamma^{t,
[s,i], \eps} + \gamma \cdot \cG\Big(\widehat{\alpha}_\gamma^{t, [s,i], \eps},
\widehat{\sP}(\btheta^{t,[s,i],\eps}_\gamma)\Big)\label{eq:alpha_row_perturb}
\end{align}
with the same initial conditions as (\ref{eq:original_alpha}). We set
\begin{align}\label{eq:eta_row_perturb}
\bbeta_\gamma^{t,[s,i],\eps} = \X\btheta_\gamma^{t, [s,i], \eps}.
\end{align}
Comparing with (\ref{eq:original_alpha}), these
dynamics have a perturbation to the drift in the direction of $\e_j$ or
$\X^\top \e_i$ at the single time $s \in \Z_+$.
Let $\bR_\theta^\gamma(t,s) = (\bR_\theta^\gamma(t,s))_{i,j=1}^d \in\R^{d\times
d}$ and $\bR_\eta^\gamma(t,s) = (\bR_\eta^\gamma(t,s))_{i,j=1}^n \in\R^{n\times
n}$ be matrices of response functions defined by
\begin{align*}
(\bR_\theta^\gamma(t,s))_{i,j} = \partial_\eps|_{\eps = 0}
\langle \theta_{\gamma,i}^{t,(s,j),\eps}\rangle,
\qquad (\bR_\eta^\gamma(t,s))_{i,j} = \delta\beta^2 
\cdot \partial_\eps|_{\eps = 0} \langle \eta_{\gamma,i}^{t,[s,j],\eps} \rangle,
\end{align*}
where $\langle \cdot \rangle$ denotes the expectation over only the randomness
of $\{\b_\gamma^t\}_{t \in \Z_+}$,
i.e.\ conditional on $(\X,\btheta^*,\beps)$ and on the initial conditions
$(\btheta_\gamma^{0,(s,j),\eps},\widehat{\alpha}_\gamma^{0,(s,j),\eps})=(\btheta_\gamma^{0,[s,i],\eps},\widehat{\alpha}_\gamma^{0,[s,i],\eps})=(\btheta^0,\widehat\alpha^0)
\in \R^{d+K}$.

Recall also the discrete-time DMFT response functions
$R_\theta^\gamma(t,s),R_\eta^\gamma(t,s)$ defined by
(\ref{eq:dmft_CRtheta_discrete}) and (\ref{eq:dmft_CReta_discrete}).
The goal of this section is to prove the following analogue of the
convergence statements for the response functions in Theorem \ref{thm:dmftresponse}.

\begin{lemma}\label{lem:discrete_time_response}
For any fixed $s,t \in \Z_+$ with $s<t$, almost surely
 \begin{align*}
\lim_{n,d \to \infty}
\frac{1}{d}\Tr \bR_\theta^\gamma(t,s)=R_\theta^{\gamma}(t,s), \qquad 
\lim_{n,d \to \infty} \frac{1}{n}\Tr \bR_\eta^\gamma(t,s)=R_\eta^{\gamma}(t,s).
 \end{align*}
 \end{lemma}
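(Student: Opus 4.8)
The plan is to reduce everything to the AMP construction already carried out in the proof of Lemma~\ref{lem:finite_dim_converge}, and then to track one additional \emph{linearized} (perturbation) iterate whose state-evolution limit I will identify with the discrete DMFT response recursions (\ref{eq:response_1}) and (\ref{eq:response_2}).

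\textbf{Step A (reduce to the non-adaptive dynamics).} Differentiating the coupled recursions in $\eps$ gives linear matrix recursions for the response matrices. For the non-adaptive auxiliary dynamics (\ref{def:langevin_auxiliary}) one gets $\tilde\bR_\theta^\gamma(s+1,s)=\gamma I$ and $\tilde\bR_\theta^\gamma(t+1,s)=\big(I-\gamma\beta\X^\top\X+\gamma\diag(\partial_\theta s(\tilde\theta^t_{\gamma,j},\alpha^t_\gamma))_j\big)\tilde\bR_\theta^\gamma(t,s)$, together with the analogous recursion for $\tilde\bR_\eta^\gamma$ after multiplying by $\X$ on the left and using the perturbation direction $\X^\top\e_j$; for the adaptive dynamics there are extra terms of the form $\gamma\,\big(\nabla_\alpha s(\btheta^t_\gamma,\widehat\alpha^t_\gamma)\big)\big(\partial_\eps\widehat\alpha^t_\gamma\big)^\top$ and contributions from the $\widehat\alpha$-response. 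Using $d^{-1}\|\btheta^t_\gamma-\tilde\btheta^t_\gamma\|^2\to 0$ and $\widehat\alpha^t_\gamma\to\alpha^t_\gamma$ from Lemma~\ref{lem:finite_dim_converge}, boundedness of $\|\X\|_\op$, and the uniform H\"older continuity of $\nabla^2 s$ and $\nabla^2\cG_k$ assumed in Theorem~\ref{thm:dmftresponse} (to compare the diagonal factor matrices $\diag(\partial_\theta s(\cdot))$ in a per-coordinate Frobenius norm), I would show that $d^{-1}\Tr\bR_\theta^\gamma(t,s)$ and $n^{-1}\Tr\bR_\eta^\gamma(t,s)$ differ from their non-adaptive analogues only by $o(1)$ as $n,d\to\infty$ --- this is the content of the Remark following Theorem~\ref{thm:dmftresponse}.

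\textbf{Step B (identify the trace with an AMP Jacobian and probe with a Gaussian).} In the AMP of Lemma~\ref{lem:finite_dim_converge}, $\tilde\theta^r_{\gamma,j}=g_{r,1}(\U^1_j,\ldots,\U^r_j;\V_j)$ row-wise, and since $\partial g_{s+1,1}/\partial U_1^{s+1}=\gamma\delta$, perturbing the time-$s$ drift of coordinate $j$ by $\eps\e_j$ is, to first order and for all later times, the same as perturbing $\U^{s+1}_{1,j}$ by $\eps/\delta$; hence $d^{-1}\Tr\tilde\bR_\theta^\gamma(t,s)=(d\delta)^{-1}\langle\Tr[\partial\tilde\btheta^t_\gamma/\partial\U^{s+1}_1]\rangle$, and similarly $n^{-1}\Tr\tilde\bR_\eta^\gamma(t,s)$ relates to the trace of the Jacobian $\partial\tilde\bbeta^t_\gamma/\partial\W^s_1$. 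To prove convergence of such a trace I would introduce an independent Gaussian probe $\g\sim\N(0,I_d/d)$ (treating $\sqrt d\,\g$ as extra side information in $\V$), so that by the Hanson--Wright inequality $d^{-1}\Tr M=\g^\top M\g+o(1)$ for the deterministic-given-$(\X,\btheta^*,\beps)$ response matrix $M$, whose operator norm is bounded; and $\langle\g^\top(\partial_\eps\tilde\btheta^t_\gamma)\rangle=\g^\top M\g$ for the perturbation taken in direction $\g$. Augmenting the AMP with the (suitably rescaled) linearized iterates $\bar\Delta^r$ --- governed by $\bar\Delta^{s+1}=\gamma\sqrt d\,\g$, $\bar\Delta^{r+1}=\bar\Delta^r-\gamma\beta\X^\top\X\bar\Delta^r+\gamma\diag(\partial_\theta s(\tilde\theta^r_{\gamma,j},\alpha^r_\gamma))\bar\Delta^r$, with the $\X^\top\X$ realized as two AMP half-steps --- the state evolution (\cite[Theorem~2.21]{wang2024universality}) yields a.s.\ joint convergence of the empirical distribution of $(\tilde\theta^*_j,\{\tilde\theta^r_{\gamma,j}\}_{r\le T},\sqrt d\,g_j,\{\bar\Delta^r_j\}_{r\le T})$, which controls not only $\g^\top M\g=d^{-1}\sum_j (\sqrt d\,g_j)(\bar\Delta^t_j)/\gamma'$-type quantities but also all the weighted traces $d^{-1}\Tr[\diag(\partial_\theta s(\tilde\theta^t_{\gamma,j},\cdot))\,\bar\bR^\gamma_\theta(t,s)]$ that appear when running the trace recursion.

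\textbf{Step C (match with the DMFT response recursion).} Inductively in $t$, I would check that the limiting recursion for the scalar quantities $\E[\,g^\infty\bar\Delta^t\,]$ produced by the state evolution is exactly the discrete DMFT recursion (\ref{eq:response_1}) on the $\theta$-side (resp.\ (\ref{eq:response_2}) on the $\eta$-side): the two half-steps $\X$ then $\X^\top$, combined with their Onsager corrections, produce precisely the $-\delta\gamma\beta$ damping together with the memory-kernel term $\sum_{r} R_\eta^\gamma(t,r)(\cdots)$ (resp.\ $\sum_r R_\theta^\gamma(t,r)(\cdots)$), using also the identifications $R_\theta^\gamma(t,s)=\tfrac1\delta(\zeta_{t,s})_{11}$ and $R_\eta^\gamma(t,s)=\delta(\xi_{t,s})_{11}$ already established in the proof of Lemma~\ref{lem:finite_dim_converge}. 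This closes the induction and gives the claim.

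\textbf{Main obstacle.} I expect the crux to be Step~C: making rigorous the dynamical cavity bookkeeping that the feedback of the perturbation through the bulk returns to the probed direction exactly as the DMFT memory/response kernel, and that the Onsager corrections cancel the self-interaction term, so that the limiting recursion closes into (\ref{eq:response_1})--(\ref{eq:response_2}) rather than into some strictly larger system --- this requires careful accounting of the two AMP half-steps and uniform control of error terms over the fixed horizon. A secondary difficulty is Step~A: since the $\widehat\alpha$-mediated feedback enters the response recursion through second derivatives of $s$ and $\cG_k$, controlling it is precisely what forces the extra H\"older-continuity hypotheses of Theorem~\ref{thm:dmftresponse}.
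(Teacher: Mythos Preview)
Your route is genuinely different from the paper's. The paper does not augment the AMP or use a Gaussian probe; instead it carries out a direct dynamical cavity argument on the \emph{adaptive} dynamics. Concretely, it first shows (Lemma~\ref{lem:A_trace}) that $d^{-1}\sum_j\deps\theta_j^{t,(s,j),\eps}$ equals $\gamma d^{-1}\Tr(\bOmega^{t-1}\cdots\bOmega^{s+1})$ up to $o(1)$ coming from the $\widehat\alpha$-feedback, where $\bOmega^t=I-\gamma\beta\X^\top\X+\gamma\diag(\partial_\theta s(\btheta^t,\widehat\alpha^t))$. Then for each $j$ it expands $\deps\theta_j^{t+1,\eps}$ by peeling off the column $\x_j$, compares with the leave-one-out dynamics $\btheta^{t,(j)}$ (Lemma~\ref{lem:loo_error}), and uses independence of $\x_j$ from the cavity system to replace each quadratic form $\x_j^\top\X_{-j}\bOmega_{-j}^{(j)}\cdots\X_{-j}^\top\x_j$ by its normalized trace via Hanson--Wright. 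The inductive hypothesis is a per-coordinate $\ell^p$ estimate $d^{-1}\sum_j|E_\theta^{t,(s,j)}|^p\to0$ with $E_\theta^{t,(s,j)}=\deps\theta_j^{t,(s,j),\eps}-r_\theta^{(\theta_j,\widehat\alpha)}(t,s)$; this stronger-than-trace hypothesis is what makes the recursion close. The paper never first reduces to the non-adaptive dynamics: the $\widehat\alpha$-mediated terms in the cavity expansion are shown negligible directly from the dimensional scaling (\ref{eq:G_smooth2}), and H\"older continuity is not used for this discrete lemma at all---it enters only in the continuous-time discretization of Section~\ref{subsec:discretize_langevin_response}.

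Your Step~B has a real technical gap: the linearized nonlinearity $(\theta,\Delta)\mapsto\partial_\theta s(\theta,\alpha)\,\Delta$ is bilinear, hence only pseudo-Lipschitz of order~$2$, so the augmented iterates $\bar\Delta^r$ do not satisfy the Lipschitz hypothesis of the AMP state evolution you invoke. You would need either a state evolution valid for polynomial nonlinearities, or a truncation of $\bar\Delta$ backed by an a~priori bound on its empirical second moment---but the latter is exactly what state evolution would give, so the argument risks circularity. The paper's cavity route sidesteps this because it never feeds the response back into an AMP: the only empirical laws it needs are those of $\{\btheta^t\}$ themselves (already handled by Lipschitz $g_j$), and the response enters solely through deterministic traces of products of the bounded-operator-norm matrices $\bOmega^t$.
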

 
To ease notation, in the remainder of this section
we will drop all subscripts $\gamma$ and write simply
$\btheta^t=\btheta^t_\gamma$, $\widehat\alpha^t=\widehat\alpha^t_\gamma$,
$\b^t=\b^t_\gamma$ etc.\ to refer to the above discrete-time processes.
We first establish in Section \ref{subsec:loo} a set of dynamical cavity
estimates, which we will then use to prove Lemma
\ref{lem:discrete_time_response} in Section
\ref{subsec:discrete_response_converge}.

\subsubsection{Dynamical cavity estimates}\label{subsec:loo}

We introduce the following notations: For any $j\in[d]$ and $i \in
[n]$, denote
\[\btheta^t=(\theta^t_j,\btheta^t_{-j}) \in \R^d, \qquad
\theta^t_j \in \R, \qquad \btheta^t_{-j} \in \R^{d-1},\]
\[\bbeta^t=(\eta^t_i,\bbeta^t_{-i}) \in \R^n, \qquad
\eta^t_i \in \R, \qquad \bbeta^t_{-i} \in \R^{n-1},\]
where $\{\btheta^t,\bbeta^t\}_{t \in \Z_+}$ are the components of the 
discrete-time process (\ref{eq:original_alpha}), and
$\btheta_{-j}^t$ are the coordinates of $\btheta^t$ excluding the $j^\text{th}$
(and similarly for $\bbeta^t$).

We consider the following leave-one-out versions of
(\ref{eq:original_alpha}): For $j \in [d]$, let
\begin{equation}\label{eq:leaveoutX}
\X^{(j)}=(X_{ik}\1_{k \neq j})_{i,k} \in \R^{n\times d},\qquad
\y^{(j)}=\X^{(j)}\btheta^*+\beps \in \R^n
\end{equation}
where $\X^{(j)}$ denotes $\X$ with $j^\text{th}$ column set to 0. Define
\begin{align}\label{eq:leave_col_out}
\notag\btheta^{t+1,(j)} &= \btheta^{t,(j)} -
\gamma\Big[\beta(\X^{(j)})^\top(\X^{(j)}\btheta^{t,(j)}-\y^{(j)}) -
s(\btheta^{t,(j)}, \widehat{\alpha}^{t,(j)})\Big] + \sqrt{2}(\b^{t+1}-
\b^t)\in\R^d\\
\widehat{\alpha}^{t+1,(j)} &= \widehat{\alpha}^{t,(j)} + \gamma \cdot
\cG\Big(\widehat{\alpha}^{t,(j)},\widehat{\sP}(\btheta^{t,(j)})\Big)
\end{align}
with initialization $(\btheta^{0,(j)},\widehat{\alpha}^{0,(j)})=(\btheta^0, \widehat{\alpha}^0)$, and write as above
\[\btheta^{t,(j)}=(\theta^{t,(j)}_j,\btheta^{t,(j)}_{-j}) \in
\R^d, \qquad \theta^{t,(j)}_j \in \R,
\qquad \btheta^{t,(j)}_{-j} \in \R^{d-1}.\]
We note that for convenience of the proof, we define $\btheta^{t,(j)}$ to be of
the same dimension as $\btheta$, where one may check from (\ref{eq:leave_col_out}) that the dynamics of $\btheta^{t,(j)}_{-j}$ do not involve $\theta^{t,(j)}_j$.
Similarly, for $i\in[n]$, let
\[\X^{[i]}=(X_{kj}\1_{k \neq i})_{k,j} \in \R^{n\times d},
\qquad \y^{[i]} = \X^{[i]}\btheta^\ast + \beps,\]
where $\X^{[i]}$ sets the $i^\text{th}$ row of $\X$ to 0. Define
\begin{align}\label{eq:leave_row_out}
\notag\btheta^{t+1,[i]} &= \btheta^{t,[i]} -
\gamma\Big[\beta(\X^{[i]})^\top(\X^{[i]}\btheta^{t,[i]} - \y^{[i]}) -
s(\btheta^{t,[i]},\widehat{\alpha}^{t,[i]})\Big] + \sqrt{2}(\b^{t+1} -
\b^t)\in\R^d\\
 \widehat{\alpha}^{t+1,[i]} &= \widehat{\alpha}^{t,[i]} + \gamma \cdot
\cG\Big(\widehat{\alpha}^{t,[i]}, \widehat{\sP}(\btheta^{t,[i]})\Big)
\end{align}
also with initialization
$(\btheta^{0,[i]},\widehat{\alpha}^{0,[i]})=(\btheta^0,
\widehat{\alpha}^0)$, and write as above
\[\bbeta^{t,[i]}=(\eta^{t,[i]}_i,\bbeta^{t,[i]}_{-i}) \in
\R^n, \qquad \eta^{t,[i]}_i \in \R,
\qquad \bbeta^{t,[i]}_{-i} \in \R^{n-1}.\]
By construction, $\{\btheta^{t,(j)}, \widehat\alpha^{t,(j)}\}$
is independent of the $j^\text{th}$ column of $\X$, and $\{\btheta^{t,[i]},
\widehat\alpha^{t,[i]}\}$ is independent of the $i^\text{th}$ row of $\X$.

The following lemma gives $\ell_2$ estimates on the original
dynamics (\ref{eq:original_alpha}) as well as on its
difference with the cavity versions
(\ref{eq:leave_col_out}) and (\ref{eq:leave_row_out}).

\begin{lemma}\label{lem:loo_error}
Fix any $T>0$. Then there exists a constant $C>0$ (depending on $T$ but not
$\gamma$) such that for any $\gamma>0$, almost surely for all large $n,d$,
we have
for all $0\leq t\leq T/\gamma$ and all $j \in [d]$, $i \in [n]$ that
\begin{align}
&\frac{\pnorm{\btheta^t}{}}{\sqrt{d}}+\|\widehat\alpha^t\| \leq C,
\quad
\frac{\pnorm{\btheta^{t,(j)}}{}}{\sqrt{d}}+\pnorm{\widehat{\alpha}^{t,(j)}}{} \leq C,
\quad \frac{\pnorm{\btheta^{t,[i]}}{}}{\sqrt{d}}
+\pnorm{\widehat{\alpha}^{t,[i]}}{} \leq C, \label{eq:rough_l2}\\
&|\theta^{t,(j)}_j| \leq C(1+|\theta^0_j|
+\max_{t \in [0,T/\gamma]} |b^t_j|),\label{eq:loo_inactive_bound}\\
& \pnorm{\btheta^{t,(j)} - \btheta^t}{} +
\sqrt{d}\,\pnorm{\widehat{\alpha}^{t,(j)}-\widehat{\alpha}^t}{} \leq
C(|\theta^0_j| + |\theta^\ast_j| + \max_{t\in[0,T/\gamma]}
|b^t_j| + \sqrt{\log d}),\label{eq:rough_loo_l2_col}\\
&\pnorm{\btheta^{t,[i]} - \btheta^t}{} +
\sqrt{d}\,\pnorm{\widehat{\alpha}^{t,[i]}-\widehat{\alpha}^t}{} \leq
C(|\eps_i| + \sqrt{\log d}) \label{eq:rough_loo_l2_row}.
\end{align}
\end{lemma}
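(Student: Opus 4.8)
The plan is to run everything on a single almost-sure event $\event_0$ on which, for all large $n,d$: the quantities $\|\X\|_\op$, $\max_{j\in[d]}\|\X\e_j\|$, $\max_{i\in[n]}\|\X^\top\e_i\|$, $\|\X^\top\y\|/\sqrt d$, $\|\btheta^0\|/\sqrt d$, $\|\widehat\alpha^0\|$, $\|\btheta^*\|/\sqrt d$, $\|\beps\|/\sqrt n$ and $\sup_{0\le t\le T/\gamma}\|\b^t\|/\sqrt d$ are all bounded by a constant $C$ depending only on $T$; and, for every $j\in[d]$, $i\in[n]$ and $0\le t\le T/\gamma$, the cavity inner products $|(\X\e_j)^\top(\X^{(j)}\btheta^{t,(j)}-\y^{(j)})|$ and $|(\X^\top\e_i)^\top(\btheta^{t,[i]}-\btheta^*)|$ are at most $C\sqrt{\log d}$. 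The first group of bounds is standard for subgaussian $\X$ under Assumption~\ref{assump:model}, the Brownian bound being the restriction of the first claim of Lemma~\ref{lem:BM_maximal} to the grid $\gamma\Z_+$. The cavity-inner-product bounds hold because, by construction of \eqref{eq:leave_col_out} and \eqref{eq:leave_row_out}, $\X\e_j$ is independent of $(\X^{(j)},\btheta^*,\beps,\{\b^t\})$ and $\X^\top\e_i$ is independent of $(\X^{[i]},\btheta^*,\beps,\{\b^t\})$; conditioning on the latter, each inner product is a sum of independent mean-zero subgaussian variables of variance $O(1/d)$ against a fixed vector of norm $O(\sqrt d)$ (using \eqref{eq:rough_l2}, proved first), so a crude union bound over the $O(d/\gamma)$ pairs $(j,t)$ (resp.\ $(i,t)$) suffices --- the threshold on how large $n,d$ must be depends on $\gamma$, but the constant $C$ does not.

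I would prove \eqref{eq:rough_l2} first by discrete Gr\"onwall. Unrolling \eqref{eq:original_alpha} gives $\btheta^t=\btheta^0-\gamma\sum_{s<t}[\beta\X^\top(\X\btheta^s-\y)-s(\btheta^s,\widehat\alpha^s)]+\sqrt2\,\b^t$; bounding $\|s(\btheta^s,\widehat\alpha^s)\|\le C(\sqrt d+\|\btheta^s\|+\sqrt d\,\|\widehat\alpha^s\|)$ by Assumption~\ref{assump:prior} and $\|\cG(\widehat\alpha^s,\widehat\sP(\btheta^s))\|\le C(1+\|\widehat\alpha^s\|+\|\btheta^s\|/\sqrt d)$ by Assumption~\ref{assump:gradient}, the quantity $a_t:=\|\btheta^t\|/\sqrt d+\|\widehat\alpha^t\|$ satisfies $a_t\le C+C\gamma\sum_{s<t}a_s$ on $\event_0$, so iterating yields $a_t\le Ce^{CT}$ for $t\le T/\gamma$. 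The same computation applies verbatim to \eqref{eq:leave_col_out} and \eqref{eq:leave_row_out}, since $\|\X^{(j)}\|_\op,\|\X^{[i]}\|_\op\le\|\X\|_\op$ and $\|(\X^{(j)})^\top\y^{(j)}\|/\sqrt d$, $\|(\X^{[i]})^\top\y^{[i]}\|/\sqrt d\le C$ uniformly in $j,i$. For \eqref{eq:loo_inactive_bound}, note that in \eqref{eq:leave_col_out} the $j$-th coordinate decouples from the disorder because $\X^{(j)}\e_j=0$ forces $\e_j^\top(\X^{(j)})^\top(\cdot)=0$, so $\theta_j^{t,(j)}=\theta_j^0+\gamma\sum_{s<t}s(\theta_j^{s,(j)},\widehat\alpha^{s,(j)})+\sqrt2\,b_j^t$; using $|s(\theta,\alpha)|\le C(1+|\theta|+\|\alpha\|)$ and $\|\widehat\alpha^{s,(j)}\|\le C$ from \eqref{eq:rough_l2}, a scalar discrete Gr\"onwall gives \eqref{eq:loo_inactive_bound}.

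For \eqref{eq:rough_loo_l2_col}, set $\Delta^t=\btheta^{t,(j)}-\btheta^t$. Using $\X=\X^{(j)}+(\X\e_j)\e_j^\top$ and $\y=\y^{(j)}+(\X\e_j)\theta_j^*$, one expands
\begin{multline*}
(\X^{(j)})^\top(\X^{(j)}\btheta^{t,(j)}-\y^{(j)})-\X^\top(\X\btheta^t-\y)
=(\X^{(j)})^\top\X^{(j)}\Delta^t-(\X^{(j)})^\top(\X\e_j)(\theta_j^t-\theta_j^*)\\
-\e_j(\X\e_j)^\top(\X^{(j)}\btheta^t-\y^{(j)})-\e_j\|\X\e_j\|^2(\theta_j^t-\theta_j^*).
\end{multline*}
Substituting this into the recursion for $\Delta^{t+1}$: the $\Delta^t$ piece combines to $(I-\gamma\beta(\X^{(j)})^\top\X^{(j)})\Delta^t$, of norm $\le(1+C\gamma)\|\Delta^t\|$; the two terms with coefficient $\theta_j^t-\theta_j^*$ are handled via $|\theta_j^t-\theta_j^*|\le|\theta_j^{t,(j)}|+|\theta_j^*|+\|\Delta^t\|$ and \eqref{eq:loo_inactive_bound}, contributing $C\gamma(|\theta_j^0|+|\theta_j^*|+\max_t|b_j^t|+\|\Delta^t\|)$; the term $\e_j(\X\e_j)^\top(\X^{(j)}\btheta^t-\y^{(j)})$, after replacing $\btheta^t=\btheta^{t,(j)}-\Delta^t$, splits into the cavity inner product $(\X\e_j)^\top(\X^{(j)}\btheta^{t,(j)}-\y^{(j)})=O(\sqrt{\log d})$ on $\event_0$ plus $(\X\e_j)^\top\X^{(j)}\Delta^t=O(\|\Delta^t\|)$; and the drift difference $s(\btheta^{t,(j)},\widehat\alpha^{t,(j)})-s(\btheta^t,\widehat\alpha^t)$ is $O(\|\Delta^t\|+\sqrt d\,\|\widehat\alpha^{t,(j)}-\widehat\alpha^t\|)$ by Lipschitzness. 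Combined with $\sqrt d\,\|\widehat\alpha^{t+1,(j)}-\widehat\alpha^{t+1}\|\le(1+C\gamma)\sqrt d\,\|\widehat\alpha^{t,(j)}-\widehat\alpha^t\|+C\gamma\|\Delta^t\|$ from Assumption~\ref{assump:gradient} and $W_2(\widehat\sP(\btheta^{t,(j)}),\widehat\sP(\btheta^t))\le\|\Delta^t\|/\sqrt d$, the pair $b_t:=\|\Delta^t\|+\sqrt d\,\|\widehat\alpha^{t,(j)}-\widehat\alpha^t\|$ obeys $b_{t+1}\le(1+C\gamma)b_t+C\gamma B$ with $B=|\theta_j^0|+|\theta_j^*|+\max_t|b_j^t|+\sqrt{\log d}$ and $b_0=0$, so discrete Gr\"onwall gives \eqref{eq:rough_loo_l2_col}. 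Estimate \eqref{eq:rough_loo_l2_row} is identical using $\X=\X^{[i]}+\e_i(\X^\top\e_i)^\top$: here $(\X^{[i]})^\top\e_i=0$ kills the analogue of the $(\theta_j^*)$-term and turns $\e_i^\top(\X^{[i]}\btheta^t-\y^{[i]})$ into $-\eps_i$, leaving only the driving terms $-\gamma\beta\eps_i(\X^\top\e_i)=O(\gamma|\eps_i|)$ and $\gamma\beta(\X^\top\e_i)(\X^\top\e_i)^\top(\btheta^t-\btheta^*)$ whose coefficient $(\X^\top\e_i)^\top(\btheta^t-\btheta^*)=(\X^\top\e_i)^\top(\btheta^{t,[i]}-\btheta^*)+O(\|\Delta^t\|)=O(\sqrt{\log d}+\|\Delta^t\|)$ on $\event_0$; Gr\"onwall then gives $C(|\eps_i|+\sqrt{\log d})$.

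The main work is bookkeeping rather than a single hard estimate: one must expand $\X^\top\X$ correctly after the rank-one splitting, observe that the ``self'' pieces $\|\X\e_j\|^2\e_j\e_j^\top$ and $(\X^\top\e_i)(\X^\top\e_i)^\top$ act in a single coordinate/row so that their coefficients $\theta_j^t-\theta_j^*$ and $\eta_i^t-\eta_i^*$ must be re-expressed through a cavity quantity plus $\|\Delta^t\|$, and respect the dependency order --- \eqref{eq:rough_l2} feeds the norm bounds used in the concentration step of $\event_0$, and \eqref{eq:loo_inactive_bound} is an a priori input to \eqref{eq:rough_loo_l2_col}. The only real subtlety is the $\gamma$-uniformity of $C$, which is secured by noting that the union bound over the $O(d/\gamma)$ space--time pairs only affects the $n,d$-threshold, not the constant.
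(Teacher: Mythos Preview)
Your proposal is correct and follows essentially the same approach as the paper: establish an almost-sure event controlling $\|\X\|_\op$, the initializations, and $\sup_t\|\b^t\|$; prove \eqref{eq:rough_l2} and \eqref{eq:loo_inactive_bound} by discrete Gr\"onwall; then use the rank-one splitting $\X=\X^{(j)}+\x_j\e_j^\top$ (resp.\ $\X=\X^{[i]}+\e_i\x_i^\top$) together with subgaussian concentration of the cavity inner products, exploiting independence of the deleted column/row from the leave-one-out dynamics, and close by Gr\"onwall. The only organizational difference is that the paper writes the error term directly against the cavity iterate $\btheta^{t,(j)}$ (so that independence is immediate without your extra substitution $\btheta^t=\btheta^{t,(j)}-\Delta^t$), and defines the concentration event $\event'$ separately after \eqref{eq:rough_l2} is in hand rather than folding it into a single $\event_0$; but the content and dependency order are identical.
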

\begin{proof}
Fixing a constant $C_0>0$ large enough (depending on $T$)
and any $\gamma>0$, define the event
\begin{align*}
\event&=\Big\{\|\X\|_\op \leq C_0,\,\|\widehat\alpha^0\|\leq C_0,
\,\|\btheta^*\|_2,\|\btheta^0\|_2
\leq C_0\sqrt{d},\,\|\beps\|_2 \leq C_0\sqrt{d},\,\\
&\hspace{1in} \max_{t \in [0,T/\gamma]}
\|\b^t\|_2 \leq C_0\sqrt{d} \text{ for all large } n,d\Big\}.
\end{align*}
Note that we have $\b^t \sim \N(0,t\gamma \I)$,
so $\P[\|\b^t\|_2>C_0\sqrt{t\gamma d}] \leq e^{-cd}$
for some constants $C_0,c>0$ and all large $n,d$
by a chi-squared tail bound. Then, taking a union
bound over all $t \in [0,T/\gamma] \cap \Z_+$ and applying the
conditions of Assumption \ref{assump:model} together with the Borel-Cantelli
lemma, we see that this event $\event$ holds almost surely.

We restrict to the event $\event$.
Let $C,C'>0$ denote constants depending
on $C_0,T$ (but not on $\gamma$) and changing from instance to instance.
For (\ref{eq:rough_l2}), we have by definition of
$\{\btheta^t,\widehat\alpha^t\}$ in
(\ref{eq:original_alpha}) that
\begin{align*}
\btheta^t&=\btheta^0{-}\gamma \sum_{s=0}^{t-1}
\Big[\beta \X^\top(\X\btheta^s-\y)-s(\btheta^s,\widehat\alpha^s)\Big]
+\sqrt{2}\b^t\\
\widehat\alpha^t&=\widehat\alpha^0+\gamma \sum_{s=0}^{t-1}
\cG\Big(\widehat\alpha^s,\widehat{\sP}(\btheta^s)\Big).
\end{align*}
Applying the bounds for $s(\cdot)$ and $\cG(\cdot)$
in Assumptions \ref{assump:prior} and \ref{assump:gradient} and the conditions
of $\event$,
\begin{align*}
\|\btheta^t\|& \leq C\gamma \sum_{s=0}^{t-1}
\Big(\|\btheta^s\|+\sqrt{d}\,\|\widehat\alpha^s\|+\sqrt{d}\Big)
+\|\btheta^0\|+\sqrt{2}\,\|\b^t\|\\
\|\widehat\alpha^t\|&\leq C\gamma \sum_{s=0}^{t-1}
\Big(\|\widehat\alpha^s\|+\|\btheta^s\|/\sqrt{d}+1\Big)
+\|\widehat\alpha^0\|
\end{align*}
so
\[1+\frac{\|\btheta^t\|}{\sqrt{d}}
+\|\widehat\alpha^t\|
\leq C\gamma\sum_{s=0}^{t-1}
\Big(1+\frac{\|\btheta^s\|}{\sqrt{d}}
+\|\widehat\alpha^s\|\Big)
+1+\frac{\|\btheta^0\|}{\sqrt{d}}+\|\widehat\alpha^0\|
+\sqrt{\frac{2}{d}}\|\b^t\|.\]
Iterating this bound over $t$ shows
\begin{align*}
\frac{\|\btheta^t\|}{\sqrt{d}}
+\|\widehat\alpha^t\|
&\leq (1+C\gamma)^t \bigg[
1+\frac{\|\btheta^0\|}{\sqrt{d}}+\|\widehat\alpha^0\|
+\sqrt{\frac{2}{d}}\max_{s \in [0,t]} \|\b^s\|\bigg] \leq C',
\end{align*}
the last bound holding for $t \leq T/\gamma$ and on $\event$. This establishes
the first claim of (\ref{eq:rough_l2}). The other two claims of
(\ref{eq:rough_l2}) for the cavity dynamics hold by the same argument, noting
that on $\event$ we have also
$\|\X^{(j)}\|_\op,\|\X^{[i]}\|_\op \leq C_0$ for all
$j \in [d]$ and $i \in [n]$.

For (\ref{eq:loo_inactive_bound}), we have by definition of
(\ref{eq:leave_col_out}) that
\[\theta^{t+1,(j)}_j=\theta^{t,(j)}_j
+\gamma\,s(\theta^{t,(j)}_j,\widehat\alpha^{t,(j)})
+\sqrt{2}(b^{t+1}_j-b^t_j).\]
Then
\[\theta_j^{t,(j)}
=\theta_j^0+\gamma \sum_{s=0}^{t-1} s(\theta_j^{s,(j)},\widehat
\alpha^{t,(j)})+\sqrt{2}\,b_j^t\]
so applying the bound for $s(\cdot)$ in Assumption \ref{assump:prior}
and the bound $\|\widehat \alpha^{t,(j)}\| \leq C$ already shown in
(\ref{eq:rough_l2}),
\[1+|\theta_j^{t,(j)}|
\leq C\gamma \sum_{s=0}^{t-1} (1+|\theta_j^{s,(j)}|)
+1+|\theta_j^0|+\sqrt{2}\,|b^t_j|.\]
Iterating this bound gives, for all $t \leq T/\gamma$,
\[1+|\theta_j^{t,(j)}| \leq
(1+C\gamma)^t(1+|\theta_j^0|+\sqrt{2}\max_{s \in [0,t]} |b_j^t|)
\leq C'(1+|\theta_j^0|+\max_{t \leq T/\gamma} |b_j^t|)\]
which shows (\ref{eq:loo_inactive_bound}).

For (\ref{eq:rough_loo_l2_col}), by definition, 
\begin{align*}
\btheta^{t+1} - \btheta^{t+1,(j)}&=\Big(\I - \gamma\beta \X^\top
\X\Big)\btheta^t - \Big(\I - \gamma\beta \X^{(j)\top}
\X^{(j)}\Big)\btheta^{t,(j)}+\gamma\beta\Big(\X^\top\y - \X^{(j)\top}
\y^{(j)}\Big)\\
&\quad + \gamma
\Big(s(\btheta^t,\widehat{\alpha}^t)-s(\btheta^{t,(j)},\widehat\alpha^{t,(j)})\Big).
\end{align*}
Then, applying the Lipschitz bound for $s(\cdot)$ in Assumption
\ref{assump:prior} and the conditions defining $\event$,
\begin{align}
\notag\pnorm{\btheta^{t+1}-\btheta^{t+1,(j)}}{}
&\leq (1+C\gamma)\pnorm{\btheta^t-\btheta^{t,(j)}}{} +
C\gamma\sqrt{d}\pnorm{\widehat\alpha^{t-1} - \widehat\alpha^{t-1,(j)}}{}\\
&\hspace{1in} + C\gamma\underbrace{\big(\pnorm{(\X^{(j)\top}
\X^{(j)} - \X^\top \X)\btheta^{t,(j)}}{} +\pnorm{\X^\top\y - \X^{(j)\top}
\y^{(j)}}{}\big)}_{:=\Delta_{t,j}}.\label{eq:leave_col_out_bound}
\end{align}
Similarly, by the Lipschitz bound for $\cG(\cdot)$ in Assumption
\ref{assump:gradient}, 
\begin{align*}
\pnorm{\widehat\alpha^{t+1} - \widehat\alpha^{t+1,(j)}}{} \leq
(1+C\gamma)\pnorm{\widehat\alpha^t-\widehat\alpha^{t,(j)}}{} +
C\gamma\pnorm{\btheta^t - \btheta^{t,(j)}}{}/\sqrt{d}. 
\end{align*}
Combining the above two inequalities yields
\begin{align}\label{eq:one_step_bound}
\pnorm{\btheta^{t+1} - \btheta^{t+1,(j)}}{} +
\sqrt{d}\pnorm{\widehat\alpha^{t+1}-\widehat\alpha^{t+1,(j)}}{} \leq
(1+C\gamma)\big(\pnorm{\btheta^t - \btheta^{t,(j)}}{} +
\sqrt{d}\pnorm{\widehat\alpha^t - \widehat\alpha^{t,(j)}}{}\big) +
C\gamma\Delta_{t,j},
\end{align}
and hence iterating this bound and using
$(\btheta^{0,(j)},\widehat\alpha^{0,(j)})=(\btheta^0,\widehat\alpha^0)$,
for any $t \leq T/\gamma$,
\begin{align*}
\pnorm{\btheta^t - \btheta^{t,(j)}}{} + \sqrt{d}\pnorm{\widehat\alpha^t -
\widehat\alpha^{t,(j)}}{} \leq \sum_{s=0}^{t-1} (1+C\gamma)^s\max_{s=0}^{t-1}
C\gamma\Delta_{s,j} \leq C'\max_{s=0}^{t-1}\Delta_{s,j}.
\end{align*}

Let us now bound $\Delta_{t,j}$. Writing $\x_j \in \R^n$ for the
$j^\text{th}$ column of $\X$, we have
$\X^{(j)} = \X - \x_j \e_j^\top$, hence
$\X^\top \X - \X^{(j)\top} \X^{(j)} = \X^\top \x_j \e_j^\top + \e_j \x_j^\top \X
- \e_j \x_j^\top \x_j\e_j^\top$, and
\begin{align}\label{ineq:loo_1}
\notag\pnorm{(\X^{(j)\top} \X^{(j)} - \X^\top \X)\btheta^{t,(j)}}{} &\leq
\pnorm{\X^\top \x_j}{}|\theta^{t,(j)}_j| + |\x_j^\top \X
\btheta^{t,(j)}| + \pnorm{\X}{\op}^2 |\theta^{t,(j)}_j|\\
&\leq C\pnorm{\X}{\op}^2|\theta^{t,(j)}_j| + |\x_j^\top
\X^{(j)}\btheta^{t,(j)}|.
\end{align}
Similarly, we have
$\X^\top\y - \X^{(j)\top} \y^{(j)} = (\X^\top \X - \X^{(j)\top}
\X^{(j)})\btheta^\ast + (\X - \X^{(j)})^\top \beps$ so
\begin{align}\label{ineq:loo_2}
\pnorm{\X^\top\y - \X^{(j)\top} \y^{(j)}}{} \leq
C\pnorm{\X}{\op}^2|\theta^\ast_j| + |\x_j^\top \X^{(j)}\btheta^\ast| +
|\x_j^\top \beps|. 
\end{align}
By (\ref{eq:loo_inactive_bound}), we have 
$|\theta^{t,(j)}_j| \leq C(1 +
|\theta_j^0|+\sup_{t\in[0,T/\gamma]} |b^t_j|)$ for $t\leq
\floor{T/\gamma}$. Applying this in the above two bounds yields, on $\event$,
\begin{align*}
\Delta_{t,j} \leq C\Big[1+|\theta^0_j| + |\theta^\ast_j| +
\sup_{t \in [0,T/\gamma]} |b_j^t| + |\x_j^\top
\X^{(j)}\btheta^\ast| + |\x_j^\top \X^{(j)}\btheta^{t,(j)}|  + |\x_j^\top \beps|\Big].
\end{align*}
Define the additional event $\event'$ where
\[\sup_{j \in [d]} \max_{t \in [0,T/\gamma]}
|\x_j^\top \X^{(j)}\btheta^\ast| + |\x_j^\top \X^{(j)}\btheta^{t,(j)}|  +
|\x_j^\top \beps| \leq C_0\sqrt{\log d} \text{ for all large } n,d.\]
Then the desired bound (\ref{eq:rough_loo_l2_col}) holds for all large $n,d$
on the event $\event \cap \event'$,
so it remains to show that $\event'$ holds almost surely for sufficiently large $C_0>0$.
For each $j \in [d]$ and $t \in [0,T/\gamma]$,
by independence between $\x_j$ and
$\X^{(j)},\btheta^{t,(j)}$, we have that $\x_j^\top
\X^{(j)}\btheta^{t,(j)}$ is subgaussian conditional on
$\X^{(j)}\btheta^{t,(j)}$, so
\[\P\bigg[|\x_j^\top \X^{(j)}\btheta^{t,(j)}| \geq C\sqrt{\frac{
\|\X^{(j)}\btheta^{t,(j)}\|\log d}{d}}\bigg] \leq e^{-cd}\]
for some constants $C,c>0$ (conditional on $\X^{(j)}\btheta^{t,(j)}$, and
hence also unconditionally). Then, taking a union bound over
$j \in [d]$ and $t \in [0,T/\gamma] \cap \Z_+$ and applying
the Borel-Cantelli lemma, almost surely for all large $n,d$,
\[\sup_{j,t} |\x_j^\top \X^{(j)}\btheta^{t,(j)}|
\leq C\sup_{j,t} \sqrt{\frac{\|\X^{(j)}\btheta^{t,(j)}\|\log d}{d}}.\]
On the event $\event$ we have $\sup_{j,t} \|\X^{(j)}\btheta^{t,(j)}\| \leq C\sqrt{d}$ by (\ref{eq:rough_l2}) already shown, so 
$\sup_{j,t} |\x_j^\top \X^{(j)}\btheta^{t,(j)}| \leq C'\sqrt{\log d}$
a.s.\ for all large $n,d$. The terms
$|\x_j^\top \X^{(j)}\btheta^\ast|$ and $|\x_j^\top \beps|$
are bounded similarly, verifying that $\event'$ holds almost surely
as claimed, and concluding the proof of (\ref{eq:rough_loo_l2_col}).

For (\ref{eq:rough_loo_l2_row}), similar to above, we have
\begin{align*}
\pnorm{\btheta^{t+1} - \btheta^{t+1,[i]}}{} &\leq (1+C\gamma)\pnorm{\btheta^t
- \btheta^{t,[i]}}{} + C\gamma\sqrt{d}\pnorm{\widehat\alpha^t -
\widehat\alpha^{t,[i]}}{}\\
&\hspace{1in} +C\gamma \underbrace{\Big(\pnorm{\X^{[i]\top}
\X^{[i]} - \X^\top \X)\btheta^{t,[i]}}{} +\pnorm{\X^\top\y - \X^{[i]\top}
\y^{[i]}}{}\Big)}_{\Delta_{t,i}},\\
\pnorm{\widehat\alpha^{t+1} - \widehat\alpha^{t+1,[i]}}{} &\leq
\pnorm{\widehat\alpha^t - \widehat\alpha^{t,[i]}}{} +
C\gamma\big(\pnorm{\widehat\alpha^t - \widehat\alpha^{t,[i]}}{} +
\pnorm{\btheta^t - \btheta^{t,[i]}}{}/\sqrt{d}\big).
\end{align*}
which implies
\begin{align*}
\pnorm{\btheta^t - \btheta^{t,[i]}}{} + \sqrt{d}\,\pnorm{\widehat\alpha^t -
\widehat\alpha^{t,[i]}}{} \leq C\max_{s=0}^{t-1}\Delta_{s,i}.
\end{align*}
Using $\X=\X^{[i]} + \e_i\x_i^\top$, where 
$\x_i\in\R^d$ now denotes (the transpose of) the $i^\text{th}$ row of $\X$, we
have $\X^\top \X = \X^{[i]\top} \X^{[i]} + \x_i\x_i^\top$ and $\X^\top\y -
\X^{[i]\top} \y^{[i]} = \x_i(\x_i^\top \btheta^\ast + \eps_i)$, so
\begin{align*}
\Delta_{t,i} \leq \|\X\|_\op\Big(|\x_i^\top \btheta^{t,[i]}|
+|\x_i^\top \btheta^*|+|\eps_i|\Big).
\end{align*}
Using independence between $\x_i$ and $\btheta^{t,[i]},\btheta^*$, we obtain
as above that on an almost sure event $\event'$, for all large $n,d$
we have $\Delta_{t,i} \leq C(|\eps_i| + \sqrt{\log d})$ for all $t \in
[0,T/\gamma] \cap \Z_+$ and $i \in [n]$, showing (\ref{eq:rough_loo_l2_row}).
\end{proof}

\subsubsection{Proof of Lemma \ref{lem:discrete_time_response}}\label{subsec:discrete_response_converge}

\begin{lemma}\label{lem:response_l2_estimate}
For any $T>0$, on the event where $\|\X\|_\op \leq C_0$, there exists a constant
$C>0$ (depending on $T,C_0$ but not $\gamma$) such that
\begin{align*}
\max_{0\leq s \leq t\leq T/\gamma} \max_{j\in [d]} \Big[\pnorm{\deps \btheta^{t,(s,j),\eps} }{} + \sqrt{d}\pnorm{\deps \widehat\alpha^{t,(s,j),\eps}}{} \Big]\leq C\gamma,\\
\max_{0\leq s \leq t\leq T/\gamma} \max_{i\in [n]} \Big[\pnorm{\deps \btheta^{t,[s,i],\eps} }{} + \sqrt{d}\pnorm{\deps \widehat\alpha^{t,[s,i],\eps}}{} \Big]\leq C\gamma.
\end{align*}
\end{lemma}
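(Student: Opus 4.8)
The plan is to differentiate the perturbed recursions in $\eps$ at $\eps=0$ and then run a discrete Gronwall estimate on the resulting linearized system. Existence of the pathwise derivatives is immediate: by Assumptions~\ref{assump:prior} and~\ref{assump:gradient}, the maps $(\theta,\alpha)\mapsto s(\theta,\alpha)$ and $(\btheta,\alpha)\mapsto\cG_k(\alpha,\widehat\sP(\btheta))$ are continuously differentiable, and for fixed $T$ the map $\eps\mapsto(\btheta^{t,(s,j),\eps},\widehat\alpha^{t,(s,j),\eps})$ is a composition of at most $T/\gamma$ such maps, hence continuously differentiable, with the analogous statement for the row-perturbed dynamics~(\ref{eq:alpha_row_perturb}). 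Write $D\btheta^t=\deps\btheta^{t,(s,j),\eps}$ and $D\widehat\alpha^t=\deps\widehat\alpha^{t,(s,j),\eps}$; these vanish at $t=0$ since the initialization does not depend on $\eps$, and they also vanish for all $t\le s$ since the perturbation first enters the recursion at time $t=s$. Recall also that at $\eps=0$ the perturbed trajectory coincides with the unperturbed discrete process $(\btheta^t,\widehat\alpha^t)$ of~(\ref{eq:original_alpha}).

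Applying the chain rule to~(\ref{eq:alpha_col_perturb}) yields, for $t\ge s$,
\[D\btheta^{t+1}=M_t\,D\btheta^t+\gamma\,N_t\,D\widehat\alpha^t+\gamma\,\e_j\bm{1}_{t=s},
\qquad D\widehat\alpha^{t+1}=\widehat M_t\,D\widehat\alpha^t+\gamma\,\widehat N_t\,D\btheta^t,\]
where $M_t=\I-\gamma\beta\X^\top\X+\gamma\,\diag\big(\partial_\theta s(\theta_k^t,\widehat\alpha^t)\big)_{k=1}^d$, the rows of $N_t\in\R^{d\times K}$ are $\nabla_\alpha s(\theta_k^t,\widehat\alpha^t)$, $\widehat M_t=\I+\gamma\,\nabla_\alpha\cG(\widehat\alpha^t,\widehat\sP(\btheta^t))$, and the rows of $\widehat N_t\in\R^{K\times d}$ are $\nabla_\btheta\cG_k(\widehat\alpha^t,\widehat\sP(\btheta^t))$. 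On the event $\{\|\X\|_\op\le C_0\}$, the matrix $\X^\top\X$ is positive semidefinite with $\|\X^\top\X\|_\op\le C_0^2$, so $\|\I-\gamma\beta\X^\top\X\|_\op\le 1+\gamma|\beta|C_0^2$; combined with the bound $\|\nabla_{(\theta,\alpha)}s\|_2\le C$ of Assumption~\ref{assump:prior}, this gives $\|M_t\|_\op\le 1+C\gamma$ and $\|N_t\|_\op\le C\sqrt d$, while the bounds $\sqrt d\,\|\nabla_\btheta\cG_k\|_2\le C$ and $\|\nabla_\alpha\cG_k\|_2\le C$ of~(\ref{eq:G_smooth2}) give $\|\widehat M_t\|_\op\le 1+C\gamma$ and $\|\widehat N_t\|_\op\le C/\sqrt d$. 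Setting $A^t=\pnorm{D\btheta^t}{}+\sqrt d\,\pnorm{D\widehat\alpha^t}{}$ and adding the two recursions, these operator-norm bounds collapse the cross terms into $O(\gamma)$ contributions after the $\sqrt d$ rescaling and produce the scalar recursion
\[A^{t+1}\le(1+C\gamma)\,A^t+\gamma\,\bm{1}_{t=s},\qquad A^0=0.\]
Hence $A^t=0$ for $t\le s$, $A^{s+1}\le\gamma$, and iterating, $A^t\le\gamma(1+C\gamma)^{t-s-1}$ for $t>s$, so $A^t\le\gamma(1+C\gamma)^{T/\gamma}\le e^{CT}\gamma$ for all $t\le T/\gamma$. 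Since the constant $e^{CT}$ does not depend on $j$, this is the first claim, with the maximum over $j\in[d]$. For the row-perturbed dynamics~(\ref{eq:alpha_row_perturb}), the only change is that the forcing term $\gamma\,\e_j\bm{1}_{t=s}$ is replaced by $\gamma\,\X^\top\e_i\bm{1}_{t=s}$, and $\pnorm{\X^\top\e_i}{}\le\|\X\|_\op\le C_0$ on the stated event, so $A^{s+1}\le C_0\gamma$ and the identical Gronwall estimate gives the second claim uniformly over $i\in[n]$.

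The computation is routine; the only point requiring a little care is to track $\pnorm{D\btheta^t}{}$ and $\sqrt d\,\pnorm{D\widehat\alpha^t}{}$ as a coupled pair, since it is precisely the $\sqrt d$-scaled smoothness bounds of Assumption~\ref{assump:gradient} that make the cross terms $\gamma N_t D\widehat\alpha^t$ and $\gamma\widehat N_t D\btheta^t$ enter with $O(\gamma)$ coefficients relative to $A^t$, yielding the single scalar recursion above rather than a genuinely two-dimensional one. No appeal to Lemma~\ref{lem:loo_error} is needed here, as all the linearization bounds hold uniformly over the state space; the unperturbed trajectory $(\btheta^t,\widehat\alpha^t)$ enters only through $\diag(\partial_\theta s)$, $\nabla_\alpha s$, $\nabla_\alpha\cG$, and $\nabla_\btheta\cG_k$, all of which are controlled by the assumptions alone.
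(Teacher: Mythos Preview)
Your proof is correct and follows essentially the same approach as the paper: both differentiate the perturbed recursions at $\eps=0$, bound the operator norms of the resulting linearized matrices via Assumptions~\ref{assump:prior}--\ref{assump:gradient} and $\|\X\|_\op\le C_0$, and then run a discrete Gronwall on the coupled quantity $\pnorm{\deps\btheta^t}{}+\sqrt d\,\pnorm{\deps\widehat\alpha^t}{}$, using the initial data $\deps\btheta^{s+1}=\gamma\e_j$ (respectively $\gamma\X^\top\e_i$) and $\deps\widehat\alpha^{s+1}=0$. Your write-up is in fact slightly more explicit about the forcing term and the row-perturbed case than the paper's.
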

\begin{proof}

For the first statement, we fix $s,j$, and shorthand
$\btheta^{t,(s,j),\eps},\widehat\alpha^{t,(s,j),\eps}$ as $\btheta^{t,\eps},
\widehat\alpha^{t,\eps}$. By definition of the process
(\ref{eq:alpha_col_perturb}), we have for $t\geq s+1$
\begin{align*}
\deps \btheta^{t+1,\eps} &= \Big(\I - \gamma\beta\X^\top \X + \gamma \diag(\partial_\theta s(\btheta^{t}, \widehat\alpha^{t}))\Big) \deps \btheta^{t,\eps} + \gamma \nabla_\alpha s(\btheta^{t}, \widehat\alpha^{t})\deps \widehat\alpha^{t,\eps},\\
\deps \widehat\alpha^{t+1,\eps} &= \Big(1 + \gamma \d_\alpha \cG(\widehat\alpha^{t}, \widehat\sP(\btheta^{t}))\Big) \deps \widehat\alpha^{t,\eps} + \gamma\d_{\btheta} \cG(\widehat\alpha^{t}, \widehat\sP(\btheta^{t})) \deps \btheta^{t,\eps}.
\end{align*}
Then applying the conditions for $s(\cdot)$ and $\cG(\cdot)$ in
Assumptions \ref{assump:prior} and \ref{assump:gradient}
and $\|\X\|_\op \leq C_0$,
\begin{align*}
\pnorm{\deps \btheta^{t+1,\eps}}{} &\leq (1+C\gamma)\pnorm{\deps \btheta^{t,\eps}}{} + C\sqrt{d}\pnorm{\deps \widehat\alpha^{t,\eps}}{},\\
\pnorm{\deps \widehat\alpha^{t+1,\eps}}{} &\leq (1+C\gamma)\pnorm{\deps \widehat\alpha^{t,\eps}}{} + C\gamma \pnorm{\deps \btheta^{t,\eps}}{}/\sqrt{d}, 
\end{align*}
where $C>0$ is a constant independent of $\gamma$. Combining and iterating these
inequalities yields, for all $t \in [s+1,T/\gamma]$,
\begin{align}\label{eq:alpha_theta_derivative_bound}
\pnorm{\deps \btheta^{t+1,\eps}}{} + \sqrt{d}\pnorm{\deps
\widehat\alpha^{t+1,\eps}}{} \leq (1+C\gamma)^{t-s}\big(\pnorm{\deps
\btheta^{s+1,(s,j),\eps}}{} + \sqrt{d}\pnorm{\deps
\widehat\alpha^{s+1,(s,j),\eps}}{}\big) \leq C'\gamma, 
\end{align}
using $\deps \btheta^{s+1,(s,j),\eps} = \gamma \e_j$ and $\deps
\widehat\alpha^{s+1,(s,j),\eps} = 0$. This holds for all $s \in [0,T/\gamma]$
and $j \in [d]$, showing the first claim. The proof of the second claim is
analogous, and omitted for brevity.
\end{proof}

\begin{lemma}\label{lem:A_trace}
Let $\{\btheta^t,\widehat\alpha^t\}$ be given by (\ref{eq:original_alpha}). For each $t\in\Z_+$ define the matrix
\begin{align}\label{def:A_matrix}
\bOmega^t = \I - \gamma\beta\X^\top \X + \gamma\diag(\partial_\theta s(\btheta^{t}, \widehat\alpha^{t})) \in \R^{d\times d}.
\end{align}
Then for any fixed $s,t \in [0,T/\gamma]$ with $t \geq s+1$,
almost surely
\begin{align*}
\lim_{n,d \to \infty}
\frac{1}{d}\sum_{j=1}^{d} \partial_\eps|_{\eps = 0} \theta_j^{t,(s,j), \eps} -
\frac{\gamma}{d}\Tr\Big(\bOmega^{t-1}\ldots \bOmega^{s+1}\Big) &=0\\
\lim_{n,d \to \infty}
\frac{1}{n}\sum_{i=1}^n \partial_\eps|_{\eps = 0}\eta_i^{t,[s,i], \eps} -
\frac{\gamma}{n}\Tr\Big(\X \bOmega^{t-1}\ldots \bOmega^{s+1} \X^\top\Big) &=0
\end{align*}
where by convention we set $\bOmega^{t-1}\ldots \bOmega^{s+1}=\I$ for $t=s+1$.
\end{lemma}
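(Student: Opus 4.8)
The plan is to linearize the perturbed dynamics around the unperturbed trajectory $\{\btheta^t,\widehat\alpha^t\}$ and to exploit that the feedback of a single-coordinate perturbation through the $K$-dimensional drift parameter is a low-rank effect. First I would differentiate the column dynamics (\ref{eq:alpha_col_perturb}) in $\eps$ at $\eps=0$. Writing $\x^{t,(s,j)}=\deps\btheta^{t,(s,j),\eps}$ and $\a^{t,(s,j)}=\deps\widehat\alpha^{t,(s,j),\eps}$, and using that the perturbed and unperturbed processes coincide for $t\le s$, one obtains for $t\ge s+1$ the coupled linear recursion $\x^{t+1,(s,j)}=\bOmega^t\x^{t,(s,j)}+\gamma\,\nabla_\alpha s(\btheta^t,\widehat\alpha^t)\,\a^{t,(s,j)}$ and $\a^{t+1,(s,j)}=(\I+\gamma\,\d_\alpha\cG)\a^{t,(s,j)}+\gamma\,\d_\btheta\cG\,\x^{t,(s,j)}$ (with $\cG$ and its derivatives evaluated at $(\widehat\alpha^t,\widehat\sP(\btheta^t))$, and $\bOmega^t$ as in (\ref{def:A_matrix})), with initial condition $(\x^{s+1,(s,j)},\a^{s+1,(s,j)})=(\gamma\e_j,0)$. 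The crucial observation is that all coefficient matrices in this recursion are evaluated along the \emph{unperturbed} trajectory and hence do not depend on $(s,j)$; so for fixed $s\le t$ there is a single matrix $G\equiv G^{t,s+1}\in\R^{d\times d}$, namely the $\btheta\to\btheta$ block of the transition operator of the coupled system from time $s+1$ to $t$, with $\x^{t,(s,j)}=\gamma\,G\,\e_j$ for every $j$. Differentiating the row dynamics (\ref{eq:alpha_row_perturb}) identically, whose only change is the initial perturbation $\gamma\X^\top\e_i$, gives $\deps\btheta^{t,[s,i],\eps}=\gamma\,G\,\X^\top\e_i$ with the \emph{same} $G$. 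Therefore $\tfrac1d\sum_j\deps\theta^{t,(s,j),\eps}_j=\tfrac{\gamma}{d}\Tr G$ and, using (\ref{eq:eta_row_perturb}), $\tfrac1n\sum_i\deps\eta^{t,[s,i],\eps}_i=\tfrac{\gamma}{n}\Tr(\X G\X^\top)$, so the task reduces to comparing $\Tr G$ (resp.\ $\Tr(\X G\X^\top)$) with the corresponding trace of $\bOmega^{t-1}\cdots\bOmega^{s+1}$.

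Next I would unroll the $\x$-recursion from $t=s+1$, collecting the part propagated from each step, to write $G=\bOmega^{t-1}\cdots\bOmega^{s+1}+\Delta$ with $\Delta=\gamma^{-1}\sum_{r=s+1}^{t-1}\big(\bOmega^{t-1}\cdots\bOmega^{r+1}\big)\,\nabla_\alpha s(\btheta^r,\widehat\alpha^r)\,A^r$, where $A^r\in\R^{K\times d}$ is the matrix whose $j$-th column is $\a^{r,(s,j)}$ (the $r$-sum being empty when $t=s+1$, consistent with the stated convention). Each summand of $\Delta$ factors through $\R^K$, hence has rank at most $K$, so its trace equals a trace over $\R^{K\times K}$ and is bounded in absolute value by $K\,\|A^r\|_\op\,\|\bOmega^{t-1}\cdots\bOmega^{r+1}\|_\op\,\|\nabla_\alpha s(\btheta^r,\widehat\alpha^r)\|_\op$. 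On the almost-sure event where $\|\X\|_\op\le C_0$ for all large $n,d$ one has $\|\bOmega^{t-1}\cdots\bOmega^{r+1}\|_\op\le(1+C\gamma)^{T/\gamma}\le e^{CT}$ (using the uniform bound on $\partial_\theta s$ from Assumption \ref{assump:prior}); the matrix $\nabla_\alpha s(\btheta^r,\widehat\alpha^r)$ has $d$ rows each of norm $\le C$ by Assumption \ref{assump:prior}, so $\|\nabla_\alpha s(\btheta^r,\widehat\alpha^r)\|_\op\le\|\nabla_\alpha s(\btheta^r,\widehat\alpha^r)\|_F\le C\sqrt d$; and $A^r$ has $d$ columns each of norm $\le C\gamma/\sqrt d$ by Lemma \ref{lem:response_l2_estimate}, so $\|A^r\|_\op\le\|A^r\|_F\le C\gamma$. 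Multiplying these gives $|\Tr|\le CK\gamma\sqrt d$ per summand; since there are at most $T/\gamma$ summands, $|\Tr\Delta|\le C'\sqrt d/\gamma$, whence $\tfrac{\gamma}{d}|\Tr\Delta|\le C'/\sqrt d\to0$, which proves the first claim. For the second claim I would run the same estimate on $\Tr(\X\Delta\X^\top)$, inserting an extra factor $\|\X\|_\op^2\le C_0^2$ via the cyclic property of the trace on the $\R^{K\times K}$ side; this yields $\tfrac{\gamma}{n}|\Tr(\X\Delta\X^\top)|\le C'\sqrt d/n\to0$ since $n/d\to\delta\in(0,\infty)$, while $\tfrac{\gamma}{n}\Tr(\X(\bOmega^{t-1}\cdots\bOmega^{s+1})\X^\top)$ is exactly the target. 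All of these bounds are pathwise on the event $\{\|\X\|_\op\le C_0\}$, so almost-sure convergence is immediate and no probabilistic input beyond Lemma \ref{lem:response_l2_estimate} enters.

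The main, and essentially the only, obstacle is recognizing that the feedback through $\widehat\alpha$ must be controlled via its low-rank structure rather than by a direct operator-norm estimate of $\Delta$: because $\nabla_\alpha s(\btheta^r,\widehat\alpha^r)$ has operator norm of order $\sqrt d$, a naive bound on $\|\Delta\|_\op$ gives only an $O(\gamma)$ error in $\tfrac1d\Tr\Delta$, which does \emph{not} vanish as $n,d\to\infty$ for fixed $\gamma$. The cancellation that rescues the argument is between the $\sqrt d$ growth of $\nabla_\alpha s$ and the $1/\sqrt d$ decay carried by the columns of $A^r$ (ultimately by $\d_\btheta\cG$, per Assumption \ref{assump:gradient}), and this cancellation only becomes visible after pushing each summand's trace to the $K$-dimensional space through which the perturbation is funneled. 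The remaining ingredients --- writing the linearized recursion, noting that its coefficients live on the unperturbed path so that a single $G$ governs all of the derivatives $\x^{t,(s,j)}$ and $\x^{t,[s,i]}$, and the Gronwall-type size estimates already supplied by Lemma \ref{lem:response_l2_estimate} --- are routine.
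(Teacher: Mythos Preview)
Your proof is correct and follows essentially the same route as the paper. Both arguments linearize the perturbed dynamics, iterate the recursion to isolate the product $\bOmega^{t-1}\cdots\bOmega^{s+1}$, and then show that the correction coming from the $\widehat\alpha$-feedback contributes $O(\sqrt d)$ to the relevant sum, hence $o(1)$ after dividing by $d$. The paper bounds $\sum_j \e_j^\top \bOmega^{t-1}\cdots\bOmega^{\ell+1}\nabla_\alpha s(\btheta^\ell,\widehat\alpha^\ell)\,\deps\widehat\alpha^{\ell,(s,j),\eps}$ via Cauchy--Schwarz, pairing the Frobenius norm $\|\bOmega\cdots\bOmega\,\nabla_\alpha s\|_F\le C\sqrt d$ against $(\sum_j\|\deps\widehat\alpha^{\ell,(s,j),\eps}\|^2)^{1/2}\le C$; your ``push the trace to $\R^{K\times K}$'' argument is the same cancellation between $\|\nabla_\alpha s\|_F\lesssim\sqrt d$ and $\|A^r\|_F\lesssim\gamma$, phrased in terms of rank rather than Cauchy--Schwarz.

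Two small points. First, your expression for $\Delta$ carries a spurious $\gamma^{-1}$: iterating $\x^{r+1}=\bOmega^r\x^r+\gamma\,\nabla_\alpha s\,\a^r$ from $\x^{s+1}=\gamma\e_j$ and dividing by $\gamma$ gives $\Delta=\sum_{r=s+1}^{t-1}\bOmega^{t-1}\cdots\bOmega^{r+1}\,\nabla_\alpha s(\btheta^r,\widehat\alpha^r)\,A^r$ with no prefactor. This only strengthens your final bound, so the conclusion is unaffected. Second, your observation that a single transition matrix $G$ (the $\btheta\!\to\!\btheta$ block of the linearized flow, evaluated on the unperturbed path) governs \emph{all} column and row derivatives is a clean way to get the $\eta$-claim immediately from the $\theta$-claim; the paper instead says the second claim is analogous and omits it. This unification is a mild expository improvement but not a different method.
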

\begin{proof}
Let us denote
\[\nabla_\alpha s(\btheta^t,\widehat\alpha^t)
=\Big(\nabla_\alpha s(\theta_i^t,\widehat\alpha^t)^\top\Big)_{i=1}^d
\in \R^{d \times K},
\quad \r^{t,(s,j)} = \nabla_\alpha
s(\btheta^t,\widehat\alpha^t) \deps \widehat\alpha^{t,(s,j),\eps} \in
\R^d.\]
Then
\[\partial_\eps|_{\eps = 0} \btheta^{t+1,(s,j), \eps}=\underbrace{\Big(I
- \gamma\beta\X^\top \X + \gamma\diag(\partial_\theta s(\btheta^t,
\widehat\alpha^t))\Big)}_{\bOmega^t}\notag\partial_\eps|_{\eps = 0}
\btheta^{t,(s,j), \eps} + \gamma \r^{t,(s,j)}.\]
Iterating this identity with $\deps\btheta^{s+1,(s,j), \eps}=\gamma\e_j$ shows
\begin{align}\label{eq:expansion_derivative}
\partial_\eps|_{\eps = 0} \btheta^{t,(s,j), \eps}
&= \gamma \bOmega^{t-1}\ldots \bOmega^{s+1} \e_j + \gamma\sum_{\ell=s+1}^{t-1}
\bOmega^{t-1}\ldots \bOmega^{\ell+1} \r^{\ell,(s,j)}
\end{align}
(where $\bOmega^{t-1}\ldots \bOmega^{\ell+1}=\I$ for $\ell=t-1$).
This implies
\begin{align}\label{eq:avgderivtmp}
\frac{1}{d}\sum_{j=1}^d \partial_\eps|_{\eps = 0} \theta_j^{t,(s,j), \eps} =
\frac{\gamma}{d}\Tr \big(\bOmega^{t-1}\ldots \bOmega^{s+1}\big) +
\frac{\gamma}{d} \sum_{\ell=s+1}^{t-1} \sum_{j=1}^d \e_j^\top
\bOmega^{t-1}\ldots \bOmega^{\ell+1} \r^{\ell,(s,j)}.
\end{align}
On an event where $\|\X\|_\op \leq C_0$ for all large $n,d$ (which holds almost
surely), by the Lipschitz continuity of $s(\cdot)$ in
Assumption \ref{assump:model} and bound in Lemma \ref{lem:response_l2_estimate},
we have $\|\bOmega^t\|_\op \leq C$,
$\|\nabla_\alpha s(\btheta^t,\widehat\alpha^t)\|_F \leq C\sqrt{d}$, and
$\|\deps \widehat\alpha^{t,(s,j),\eps}\| \leq C/\sqrt{d}$ for all $t$,
where $C>0$ is a constant (possibly depending on $\gamma$) changing
from instance to instance. Then by Cauchy-Schwarz,
\begin{align*}
\Big|\sum_{j=1}^d \e_j^\top \bOmega^{t-1}\ldots \bOmega^{\ell+1}
\r^{\ell,(s,j)}\Big|
\leq \sqrt{\Big\|\bOmega^{t-1}\ldots \bOmega^{\ell+1}
\nabla_\alpha s(\btheta^\ell,\widehat\alpha^\ell)\Big\|_F^2}
\cdot \sqrt{\sum_{j=1}^d \pnorm{\deps\widehat\alpha^{\ell,(s,j),\eps}}{}^2} \leq
C\sqrt{d},
\end{align*}
which implies that the second term of (\ref{eq:avgderivtmp}) converges to 0
a.s.\ as $n,d \to \infty$. This proves the first claim.
The proof of the second claim is analogous, and omitted for brevity.
\end{proof}

Let us now introduce a notation for the discrete DMFT response process
(\ref{eq:dmft_theta_discrete1}) prior to taking an expectation.
Fixing a univariate process $\theta=\{\theta^t\}_{t\in \Z_+}$ and $\R^K$-valued
process $\alpha=\{\alpha^t\}_{t\in\Z_+}$ as inputs, define the following
auxiliary process $\{r_\theta^{(\theta,\alpha)}(t,s)\}_{s<t}$:
\begin{align}\label{eq:dmft_response_rho}
\notag r_\theta^{(\theta,\alpha)}(t+1,s) &= 
\begin{cases} \gamma & \text{ for } s=t,\\
\Big(1 - \gamma\delta\beta +
\gamma \partial_\theta s(\theta^t,\alpha^t)\Big) r_\theta^{(\theta,\alpha)}(t,s) + \gamma
\sum_{\ell=s+1}^{t-1} R_\eta^{\gamma}(t,\ell)
r_\theta^{(\theta,\alpha)}(\ell,s) & \text{ for } s<t.
\end{cases}\\
\end{align}
Note that if the inputs $\{\theta^t,\alpha^t\}$ are given by the
discrete-time DMFT processes defined in (\ref{eq:dmft_theta_discrete1}) and 
(\ref{eq:dmft_CRtheta_discrete}), then
\begin{align*}
r_\theta^{(\theta,\alpha)}(t,s)=\frac{\partial \theta_\gamma^t}{\partial
u^s_\gamma}, \qquad \E[r_\theta^{(\theta,\alpha)}(t,s)]=R_\theta^\gamma(t,s)
 \end{align*}
which are precisely the auxiliary process defined in (\ref{eq:response_1}) and DMFT
response function in (\ref{eq:dmft_CRtheta_discrete}). We will instead consider
(\ref{eq:dmft_response_rho}) with inputs $\{\theta_j^t,\widehat\alpha^t\}_{t \in
\Z_+}$ given by the coordinates of $\{\btheta^t,\widehat\alpha^t\}$ solving
(\ref{eq:original_alpha}).

\begin{lemma}\label{lem:dmft_emp_concentrate}
Let $\{\btheta^t,\widehat\alpha^t\}_{t \in \Z_+}$ be defined by
(\ref{eq:original_alpha}), and let
$R_\theta^\gamma(t,s)$ be the response function of its DMFT limit
defined in (\ref{eq:dmft_CRtheta_discrete}).
Then for any fixed $s,t \in \Z_+$ with $s<t$, almost surely
\begin{align*}
\lim_{n,d \to \infty}
\frac{1}{d}\sum_{j=1}^d r_{\theta}^{(\theta_j,\widehat\alpha)}(t,s)=
R_\theta^\gamma(t,s).
\end{align*}
\end{lemma}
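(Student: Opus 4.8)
The plan is to recognize $\frac1d\sum_{j=1}^d r_\theta^{(\theta_j,\widehat\alpha)}(t,s)$ as an empirical average of a single fixed, bounded, Lipschitz test function evaluated on the coordinate trajectories of $\{\btheta^t,\widehat\alpha^t\}$ solving (\ref{eq:original_alpha}), and then to invoke the discrete DMFT convergence of Lemma \ref{lem:finite_dim_converge}. First I would fix $s<t$ and unroll the linear recursion (\ref{eq:dmft_response_rho}) over the time index: since $r_\theta^{(\theta,\alpha)}(s+1,s)=\gamma$ and each subsequent step multiplies by a factor $1-\gamma\delta\beta+\gamma\,\partial_\theta s(\theta^u,\alpha^u)$ and adds $\gamma\sum_{\ell} R_\eta^\gamma(u,\ell)\,r_\theta^{(\theta,\alpha)}(\ell,s)$, the value $r_\theta^{(\theta,\alpha)}(t,s)$ is a fixed polynomial in the finitely many numbers $\partial_\theta s(\theta^u,\alpha^u)$ for $s<u<t$, with coefficients depending only on $\gamma,\delta,\beta$ and on the (fixed) DMFT kernel values $R_\eta^\gamma(u,\ell)$. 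Because $(\theta,\alpha)\mapsto\partial_\theta s(\theta,\alpha)$ is bounded with bounded derivatives by Assumption \ref{assump:prior}, and $|R_\eta^\gamma(u,\ell)|\le\Phi_{R_\eta}(u-\ell)$ by Lemma \ref{lem:discrete_dmft_rewrite}, an easy induction on $t$ shows that the composite map
\[
\widetilde\Psi_{t,s}\big(\theta^s,\ldots,\theta^{t-1};\widehat\alpha^s,\ldots,\widehat\alpha^{t-1}\big):=r_\theta^{(\theta,\widehat\alpha)}(t,s)
\]
is bounded and Lipschitz in all of its arguments, with constants depending only on $\gamma,t,s,\delta,\beta$ and the bounds on $R_\eta^\gamma$; in particular it is pseudo-Lipschitz in $(\theta^s,\ldots,\theta^{t-1})$ for every fixed value of $(\widehat\alpha^s,\ldots,\widehat\alpha^{t-1})$.

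Next I would pass from the random drift parameter to its deterministic limit. By Lemma \ref{lem:finite_dim_converge}, almost surely $\widehat\alpha^u\to\alpha_\gamma^u$ for each $u\le t$. Using the Lipschitz dependence of $\widetilde\Psi_{t,s}$ on its $\widehat\alpha$-arguments together with its uniform boundedness, almost surely
\[
\Big|\frac1d\sum_{j=1}^d\widetilde\Psi_{t,s}\big(\theta_j^s,\ldots,\theta_j^{t-1};\widehat\alpha^s,\ldots,\widehat\alpha^{t-1}\big)-\frac1d\sum_{j=1}^d\widetilde\Psi_{t,s}\big(\theta_j^s,\ldots,\theta_j^{t-1};\alpha_\gamma^s,\ldots,\alpha_\gamma^{t-1}\big)\Big|\le C\max_{u\le t}\|\widehat\alpha^u-\alpha_\gamma^u\|\to0.
\]
The remaining average is of the now-fixed pseudo-Lipschitz function $(\theta^s,\ldots,\theta^{t-1})\mapsto\widetilde\Psi_{t,s}(\theta^s,\ldots,\theta^{t-1};\alpha_\gamma^s,\ldots,\alpha_\gamma^{t-1})$, so the Wasserstein-$2$ convergence of $\frac1d\sum_j\delta_{(\theta_j^\ast,\theta_{\gamma,j}^0,\ldots,\theta_{\gamma,j}^T)}$ in Lemma \ref{lem:finite_dim_converge}, together with the characterization of $W_2$-convergence \cite[Theorem 6.9]{villani2008optimal} exactly as used in Corollary \ref{cor:dmft_approx}, gives almost surely
\[
\frac1d\sum_{j=1}^d\widetilde\Psi_{t,s}\big(\theta_j^s,\ldots,\theta_j^{t-1};\alpha_\gamma^s,\ldots,\alpha_\gamma^{t-1}\big)\to\E\Big[\widetilde\Psi_{t,s}\big(\theta_\gamma^s,\ldots,\theta_\gamma^{t-1};\alpha_\gamma^s,\ldots,\alpha_\gamma^{t-1}\big)\Big],
\]
where $\{\theta_\gamma^u\}$ is the discrete DMFT process of (\ref{eq:dmft_theta_discrete1}).

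Finally I would identify the limit. When the inputs are the discrete DMFT processes $\{\theta_\gamma^u,\alpha_\gamma^u\}$, the recursion (\ref{eq:dmft_response_rho}) coincides term-by-term with (\ref{eq:response_1}), so $\widetilde\Psi_{t,s}(\theta_\gamma^s,\ldots,\theta_\gamma^{t-1};\alpha_\gamma^s,\ldots,\alpha_\gamma^{t-1})=\partial\theta_\gamma^t/\partial u_\gamma^s$, whose expectation is $R_\theta^\gamma(t,s)$ by definition (\ref{eq:dmft_CRtheta_discrete}). Combining the three steps, and intersecting the relevant almost-sure events over the countably many admissible pairs $(t,s)$, yields $\frac1d\sum_{j=1}^d r_\theta^{(\theta_j,\widehat\alpha)}(t,s)\to R_\theta^\gamma(t,s)$ almost surely.

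The only point that requires real care is the first step: verifying that unrolling (\ref{eq:dmft_response_rho}) genuinely produces a function of the coordinate trajectory that is bounded and Lipschitz (hence pseudo-Lipschitz), uniformly enough that both the replacement of $\widehat\alpha$ by $\alpha_\gamma$ and the passage to the $W_2$-limit are legitimate. I expect this to be routine, since it relies only on the uniform boundedness and Lipschitzness of $\partial_\theta s(\cdot,\cdot)$ from Assumption \ref{assump:prior} and the uniform bound $|R_\eta^\gamma(u,\ell)|\le\Phi_{R_\eta}(u-\ell)$ already established; no new estimate beyond these is needed. (An entirely analogous argument, using the row-leave-out version, will give the corresponding statement for $R_\eta^\gamma$ later.)
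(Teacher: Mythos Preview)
Your proposal is correct and follows essentially the same approach as the paper's proof: replace $\widehat\alpha$ by its deterministic limit $\alpha_\gamma$ using the Lipschitz dependence of $r_\theta^{(\theta,\alpha)}(t,s)$ on its $\alpha$-inputs (via an induction on $t$ using boundedness and Lipschitzness of $\partial_\theta s$), then apply the empirical Wasserstein-$2$ convergence of Lemma~\ref{lem:finite_dim_converge} to the resulting fixed Lipschitz test function, and finally identify the limit as $\E[\partial\theta_\gamma^t/\partial u_\gamma^s]=R_\theta^\gamma(t,s)$. The paper's proof is terser but structurally identical; your explicit unrolling and the remark about the bound on $R_\eta^\gamma$ are unnecessary (since $t,s$ are fixed, only finitely many deterministic values $R_\eta^\gamma(u,\ell)$ enter) but harmless.
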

\begin{proof}
First note that by the Lipschitz bound for $\partial_\theta s(\cdot)$
in Assumption \ref{assump:prior}, the a.s.\ convergence
$\{\widehat\alpha^t\} \to \{\alpha^t\}$ in Lemma \ref{lem:finite_dim_converge},
and a simple induction argument, we have almost surely
\[\lim_{n,d \to \infty}
\Big(\frac{1}{d}\sum_{j=1}^d r_{\theta}^{(\theta_j,\widehat\alpha)}(t,s)
-\frac{1}{d}\sum_{j=1}^d r_{\theta}^{(\theta_j,\alpha)}(t,s)\Big)=0.\]
By a similar induction argument using the boundedness and
Lipschitz-continuity of $\partial_\theta s(\cdot)$, for each fixed $s<t$,
the map $(\theta_j^0,\ldots,\theta_j^t) \mapsto
r_\theta^{(\theta_j,\alpha)}(t+1,s)$ is Lipschitz for each $j$.
Then by the empirical Wasserstein-2 convergence for $\btheta^0,\ldots,\btheta^T$
in (\ref{eq:finite_dim_converge_knot_theta}) of
Lemma \ref{lem:finite_dim_converge}, almost surely
\[\lim_{n,d \to \infty}
\frac{1}{d}\sum_{j=1}^d r_{\theta}^{(\theta_j,\alpha)}(t,s)
=\E[r_{\theta}^{(\theta,\alpha)}(t,s)]\]
where the inputs
$(\theta,\alpha)$ on the right side are the discrete-time DMFT processes
(\ref{eq:dmft_theta_discrete1}) and 
(\ref{eq:dmft_CRtheta_discrete}), and $\E[\cdot]$ is the expectation over their
law. The lemma follows from noting that, by
definition, $R_\theta^\gamma(t,s)=\E[r_{\theta}^{(\theta,\alpha)}(t,s)]$.
\end{proof}

We now proceed to prove Lemma \ref{lem:discrete_time_response}.

\begin{proof}[Proof of Lemma \ref{lem:discrete_time_response}]
For any fixed $s,t \in \Z_+$ with $s<t$,
set also
\[r_\eta(t,s)=\frac{\partial \eta^t}{\partial w^s}
=(\delta\beta)^{-1}R_\eta^\gamma(t,s)\]
and define the error terms
\begin{align}
E_\theta^{t,(s,j)}&=\partial_\eps|_{\eps = 0} \theta_j^{t,(s,j), \eps}-
r_\theta^{(\theta_j,\widehat{\alpha})}(t,s),\label{eq:induce_theta}\\
E_\eta^{t,[s,i]}&=\partial_\eps|_{\eps = 0} \eta_i^{t,[s,i],
\eps}-\beta^{-1}r_\eta(t,s).\label{eq:induce_eta}
\end{align}
We first prove by induction on $t$ that for any $p \geq 1$
and $s,t \in \Z_+$ with $s<t$, almost surely
\begin{align}\label{eq:induction_moment}
\lim_{n,d \to \infty} \frac{1}{d} \sum_{j=1}^d |E_\theta^{t,(s,j)}|^p=0,
\quad
\lim_{n,d \to \infty} \frac{1}{n} \sum_{i=1}^n |E_\theta^{t,[s,i]}|^p=0.
\end{align} 
Fixing any $s \in \Z_+$, the base case $t=s+1$ holds, as
direct calculation via (\ref{eq:alpha_col_perturb}) shows that
\begin{align*}
\partial_\eps|_{\eps = 0} \theta_j^{s+1,(s,j),\eps} = \gamma =
r_\theta^{(\theta_j,\widehat\alpha)}(s+1,s), \quad j\in[d],
\end{align*}
and similarly via (\ref{eq:alpha_row_perturb}),
\begin{align*}
\partial_\eps|_{\eps = 0} \eta_i^{s+1,[s,i],\eps} = \gamma \pnorm{\X^\top
\e_i}{}^2 = \gamma + E_\eta^{s+1,[s,i]}
\end{align*}
where $n^{-1}\sum_{i=1}^n |E_\eta^{s+1,[s,i]}|^p \to 0$ a.s.\ under Assumption \ref{assump:model} while $r_\eta(s+1,s) = \beta
R_\theta^{\gamma}(s+1,s)=\gamma\beta$.

Suppose by induction that (\ref{eq:induction_moment}) holds for this fixed $s
\in \Z_+$ up to time $t$.
Note that Lemmas \ref{lem:A_trace} and \ref{lem:dmft_emp_concentrate}
then imply, with the matrix
$\bOmega^t$ defined in (\ref{def:A_matrix}), for each $s<t$, almost surely
\begin{equation}\label{eq:induction_consequence}
\begin{aligned}
\lim_{n,d \to \infty}
\bigg\{\gamma \cdot \frac{1}{d}\Tr\Big(\bOmega^{t-1}\ldots \bOmega^{s+1}\Big),
\;\frac{1}{d}\sum_{j=1}^d \partial_\eps|_{\eps = 0} \theta_j^{t,(s,j), \eps}
\bigg\}&=R_\theta^{\gamma}(t,s),\\
\lim_{n,d \to \infty}
\bigg\{\gamma \cdot \frac{1}{n}\Tr\Big(\X \bOmega^{t-1}\ldots \bOmega^{s+1}
\X^\top \Big), \;\frac{1}{n}\sum_{i=1}^n \partial_\eps|_{\eps = 0}
\eta_i^{t,[s,i], \eps}\bigg\}&=\beta^{-1}r_\eta(t,s)=(\delta\beta^2)^{-1}R_\eta^{\gamma}(t,s).
\end{aligned}
\end{equation}

\textbf{Claim for $\partial_\eps|_{\eps = 0} \theta_j^{t,(s,j), \eps}$.} 
We establish the claim (\ref{eq:induction_moment}) for $E_\theta^{t+1,(s,j)}$. Fixing both $s \in
\Z_+$ and $j \in [d]$, let us shorthand
$\btheta^{t,(s,d),\eps}$ as $\btheta^{t,\eps}$
and recall the notations $\btheta^t=(\theta^t_j,\btheta^t_{-j})$ from Section
\ref{subsec:loo} where $\theta_j^t$ is the $j$th coordinate
of $\btheta^t$. Writing correspondingly $\X=(\x_j,\X_{-j})$, we have
\begin{align*}
\btheta_{-j}^{t+1,\eps} &= \Big(\I - \gamma\beta\X_{-j}^\top
\X_{-j}\Big)\btheta_{-j}^{t,\eps}-\gamma\beta \X_{-j}^\top (\x_j
\theta_j^{t,\eps} - \y) + \gamma
s(\btheta_{-j}^{t,\eps},\widehat\alpha^{t,\eps}) +
\sqrt{2}(\b_{-j}^{t+1}-\b_{-j}^t)\\
\theta_j^{t+1,\eps} &= \Big(1-\gamma\beta\pnorm{\x_j}{}^2\Big)\theta_j^{t,\eps}
- \gamma\beta\x_j^\top (\X_{-j}\btheta_{-j}^{t,\eps} - \y) + \gamma
s(\theta_j^{t,\eps}, \widehat\alpha^{t,\eps}) + \sqrt{2}(b_j^{t+1} - b_j^t)\\
\widehat\alpha^{t+1,\eps} &= \widehat\alpha^{t,\eps} + \gamma\cdot
\cG(\widehat\alpha^{t,\eps}, \widehat\sP(\btheta^{t,\eps}))
\end{align*}
Define
\[\nabla_\alpha s(\btheta^t,\widehat\alpha^t)
=\Big(\nabla_\alpha s(\theta_i^t,\widehat\alpha^t)^\top\Big)_{i=1}^d
\in \R^{d \times K},
\quad \r^t = \nabla_\alpha
s(\btheta^t,\widehat\alpha^t) \deps \widehat\alpha^{t,\eps} \in
\R^d.\]
Then, taking the derivative of $\theta_j^{t+1,\eps}$ in $\eps$,
\begin{align}\label{eq:mu_expand_1}
\partial_\eps|_{\eps = 0} \theta_j^{t+1,\eps} &= \Big(1 -
\gamma\beta\pnorm{\x_j}{}^2 + \gamma \partial_\theta s(\theta_j^t,
\widehat\alpha^{t})\Big)\partial_\eps|_{\eps = 0} \theta_j^{t,\eps} +
\gamma r_j^t
- \gamma\beta \x_j^\top \X_{-j}\partial_\eps|_{\eps=0}\btheta_{-j}^{t,\eps}.
\end{align}
Taking the derivative of $\btheta_{-j}^{t,\eps}$ in $\eps$,
\[\deps \btheta_{-j}^{t,\eps} = \underbrace{\Big(\I -
\gamma\beta\X_{-j}^\top \X_{-j} + \gamma \diag(\partial_\theta
s(\btheta_{-j}^{t-1}, \widehat\alpha^{t-1}))\Big)}_{:=\bOmega_{-j}^{t-1}} \deps
\btheta_{-j}^{t-1,\eps} - \gamma\beta \X_{-j}^\top \x_j \deps
\theta_j^{t-1,\eps} + \gamma \r_{-j}^{t-1}.\]
Then, iterating this equality and using $\deps \btheta_{-j}^{s+1,(s,j),\eps}=0$ gives
\begin{align*}
\deps \btheta_{-j}^{t,\eps}
&= - \gamma\beta \sum_{k=s+1}^{t-1} \bOmega_{-j}^{t-1}\ldots
\bOmega_{-j}^{k+1} \X_{-j}^\top \x_j \cdot \deps \theta_j^{k,\eps}
+ \gamma \sum_{k=s+1}^{t-1} \bOmega_{-j}^{t-1}\ldots \bOmega_{-j}^{k+1,\eps}
\r_{-j}^k.
\end{align*}
Plugging the above expression into (\ref{eq:mu_expand_1}), we have
\begin{align}
\deps \theta_j^{t+1,\eps} &= \underbrace{\Big(1 -  \gamma\beta\pnorm{\x_j}{}^2 +
\gamma \partial_\theta s(\theta_j^t,\widehat\alpha^t)\Big) \cdot \deps
\theta_j^{t,\eps}}_{(I_j)}\notag\\
&\hspace{1in}+(\gamma\beta)^2 \sum_{k=s+1}^{t-1} \underbrace{\x_j^\top \X_{-j}
\bOmega_{-j}^{t-1}\ldots \bOmega_{-j}^{k+1} \X_{-j}^\top \x_j \cdot \deps
\theta_j^{k,\eps}}_{(II_{j,k})}\notag\\
&\hspace{1in} - \gamma^2\beta \sum_{k=s+1}^{t-1} \underbrace{\x_j^\top
\X_{-j}\bOmega_{-j}^{t-1}\ldots \bOmega_{-j}^{k+1} \r_{-j}^k}_{(III_{j,k})} +
\underbrace{\gamma r_j^t}_{(IV_j)}.\label{eq:inductivedecomp}
\end{align}

\noindent \textbf{Analysis of $(I_j)$.} We have
\begin{equation}\label{eq:analysisIj}
(I_j) = (1-
\gamma\delta\beta + \gamma \partial_\theta s(\theta_j^t, \widehat\alpha^t))
r_\theta^{(\theta_j,\widehat\alpha)}(t,s) + \mathsf{r}_1^{(j)},
\end{equation}
where
\begin{align*}
\mathsf{r}_1^{(j)} = \gamma\beta(\delta -
\pnorm{\x_j}{}^2)(\deps \theta_j^{t,\eps})
+  \Big(1- \gamma\delta\beta + \gamma \partial_\theta
s(\theta_j^t,\widehat\alpha^t)\Big) E^{t,(s,j)}_\theta. 
\end{align*}
For any $p \geq 1$,
by the induction hypothesis we have
$d^{-1}\sum_j |E^{t,(s,j)}_\theta|^p \to 0$ a.s. By the conditions for $\X$ in Assumption \ref{assump:model}, $d^{-1}\sum_j |\delta-\|\x_j\|^2|^p \to 0$ a.s. By Lemma \ref{lem:response_l2_estimate},
$\max_j |\deps \theta_j^{t,\eps}| \leq C$ a.s.\ for all large $n,d$, while by Assumption \ref{assump:prior}, $\partial_\theta s(\cdot)$ is also bounded.
Combining these bounds gives
$d^{-1}\sum_j |\mathsf{r}_1^{(j)}|^p \to 0$
a.s.\ for any $p \geq 1$.\\

\noindent\textbf{Analysis of $(II_{j,k})$.} Let
\[\bOmega_{-j}^{t,(j)}
=\I-\gamma\beta\X_{-j}^\top \X_{-j}+\gamma\diag(\partial_\theta
s(\btheta_{-j}^{t,(j)},\widehat\alpha^{t,(j)}))\]
be the analogue of $\bOmega_{-j}^t$ defined by the cavity dynamics
$\{\btheta^{t,(j)},\widehat\alpha^{t,(j)}\}$.
We first show that a.s.\ for all large $n,d$, we have for every $j \in [d]$ that
\begin{align}\label{eq:loo_approximation}
|\mathsf{r}^{(j,k)}_{2,1}|&:= \Big|\x_j^\top \X_{-j} \bOmega_{-j}^{t-1}\ldots
\bOmega_{-j}^{k+1} \X_{-j}^\top \x_j - \x_j^\top \X_{-j}
\bOmega_{-j}^{t-1,(j)}\ldots \bOmega_{-j}^{k+1,(j)} \X_{-j}^\top
\x_j\Big|\notag\\
&\leq C\sqrt{\frac{\log d}{d}}\Big(|\theta^0_j| + |\theta^\ast_j| +
\max_{u\in[0,t]}|b^u_j| + \sqrt{\log d}\Big). 
\end{align}
To see this, note that
\begin{align*}
|\mathsf{r}^{(j,k)}_{2,1}|
&\leq \sum_{\ell = k+1}^{t-1} \underbrace{\Big|\x_j^\top
\X_{-j}\bOmega_{-j}^{t-1,(j)} \ldots \bOmega_{-j}^{\ell+1,(j)}
(\bOmega_{-j}^\ell - \bOmega_{-j}^{\ell,(j)}) \bOmega_{-j}^{\ell-1} \ldots
\bOmega_{-j}^{k+1}\X_{-j}^\top \x_j\Big|}_{:=T^\ell_{(j,k)}}.
\end{align*}
Here
$\bOmega_{-j}^{\ell}-\bOmega_{-j}^{\ell,(j)}=\gamma \diag(\partial_\theta
s(\btheta_{-j}^\ell,\widehat\alpha^\ell)-\partial_\theta
s(\btheta_{-j}^{\ell,(j)}, \widehat\alpha^{\ell,(j)}))$.
Then we may bound
\begin{align}
|T^\ell_{(j,k)}|
&\leq \gamma\pnorm{\bOmega_{-j}^{\ell+1,(j)} \ldots
\bOmega_{-j}^{t-1,(j)}\X_{-j}^\top \x_j}{\infty} \cdot
\pnorm{\bOmega_{-j}^{\ell-1} \ldots \bOmega_{-j}^{k+1}\X_{-j}^\top \x_j}{2}
\cdot \pnorm{\partial_\theta
s(\btheta_{-j}^\ell,\widehat\alpha^\ell)-\partial_\theta
s(\btheta_{-j}^{\ell,(j)}, \widehat\alpha^{\ell,(j)})}{2}\notag\\
&\leq C\gamma\pnorm{\bOmega_{-j}^{\ell+1,(j)} \ldots
\bOmega_{-j}^{t-1,(j)}\X_{-j}^\top \x_j}{\infty} \cdot
\pnorm{\bOmega_{-j}^{\ell-1}}{\op} \ldots \pnorm{\bOmega_{-j}^{k+1}}{\op}
\pnorm{\X_{-j}^\top \x_j}{2}\notag\\
&\hspace{1in} \cdot(\pnorm{\btheta_{-j}^{\ell,(j)} -
\btheta_{-j}^\ell}{2} + \sqrt{d}  \pnorm{\widehat\alpha^{\ell,(j)} - \widehat\alpha^\ell}{2}).\label{eq:Tjklbound}
\end{align}
Since $\x_j$ is independent of $\bOmega_{-j}^{t,(j)}$ and $\X_{-j}$, we have by a
subgaussian tail bound
\[\P\Big[\e_i^\top \bOmega_{-j}^{\ell+1,(j)} \ldots
\bOmega_{-j}^{t-1,(j)}\X_{-j}^\top \x_j \geq C\sqrt{\frac{\log d}{d}}\,
\|\e_i^\top \bOmega_{-j}^{\ell+1,(j)} \ldots
\bOmega_{-j}^{t-1,(j)}\X_{-j}^\top\|_2\Big] \leq e^{-cd}\]
for each $i=1,\ldots,d$ and some constants $C,c>0$. Then, taking a union bound
and applying the Borel-Cantelli lemma, almost surely for all large $n,d$,
\[\sup_{j,\ell} \|\bOmega_{-j}^{\ell+1,(j)} \ldots
\bOmega_{-j}^{t-1,(j)}\X_{-j}^\top \x_j\|_\infty
\leq \sup_{j,\ell} C\sqrt{\frac{\log d}{d}}\,\|\bOmega_{-j}^{\ell+1,(j)}\|_\op \ldots
\|\bOmega_{-j}^{t-1,(j)}\|_\op \|\X_{-j}\|_\op.\]
The right side is bounded by $C'\sqrt{(\log d)/d}$ on an almost-sure event
where $\|\X\|_\op \leq C_0$ holds for all large $n,d$. Then, applying this to (\ref{eq:Tjklbound}) and applying also
Lemma \ref{lem:loo_error} to bound the last term of (\ref{eq:Tjklbound}), this shows (\ref{eq:loo_approximation}).
By the conditions of Assumption \ref{assump:model} and
the tail estimates of the Brownian motion in Lemma
\ref{lem:BM_maximal}, this bound (\ref{eq:loo_approximation}) in turn implies
$d^{-1}\sum_j |\mathsf{r}_{2,1}^{(j,k)}|^p \to 0$ a.s.\
for any $p \geq 1$.

Now consider
\begin{align*}
\mathsf{r}^{(j,k)}_{2,2}:=\x_j^\top \X_{-j}\bOmega_{-j}^{t-1,(j)}\ldots
\bOmega_{-j}^{k+1,(j)} \X_{-j}^\top \x_j -
\frac{1}{d}\Tr\Big(\X_{-j}\bOmega_{-j}^{t-1,(j)}\ldots
\bOmega_{-j}^{k+1,(j)}\X_{-j}^\top \Big).
\end{align*}
Since $\x_j$ is independent of $\bOmega_{-j}^{t,(j)}$ and $\X_{-j}$, the
Hanson-Wright inequality yields
\[\P\Big[|\mathsf{r}^{(j,k)}_{2,2}|
 \geq \max\Big(\frac{C\sqrt{\log d}}{d}\,\|\W\|_F,
\frac{C\log d}{d}\,\|\W\|_\op\Big)\Big] \leq e^{-cd}\]
for some $C,c>0$, where
$\W=\X_{-j}\bOmega_{-j}^{t-1,(j)}\ldots \bOmega_{-j}^{k+1,(j)}\X_{-j}^\top$.
Again taking a union bound over $j \in [d]$ and applying $\|\W\|_\op \leq C_0$
and $\|\W\|_F \leq C_0\sqrt{d}$ a.s.\ for all large $n,d$, this implies
$d^{-1}\sum_j |\mathsf{r}^{(j,k)}_{2,2}|^p \to 0$ a.s.\
for any $p \geq 1$.

Finally, let $\X^{(j)} \in \R^{n \times d}$ be the embedding of $\X_{-j}$ with
$j^\text{th}$ column set to 0 as defined in (\ref{eq:leaveoutX}), and let
\[\bOmega^{t,(j)}=\I-\gamma \beta \X^{(j)\top}\X^{(j)}
+\gamma \diag(\partial_\theta s(\btheta^{t,(j)},\widehat\alpha^{t,(j)}))
 \in \R^{d \times d}.\]
Consider
\begin{align*}
\mathsf{r}_{2,3}^{(j,k)}
&:=\frac{1}{d}\Tr\Big(\X_{-j}\bOmega_{-j}^{t-1,(j)}\ldots
\bOmega_{-j}^{k+1,(j)}\X_{-j}^\top \Big)
-\frac{1}{d}\Tr\Big(\X\bOmega^{t-1}\ldots
\bOmega^{k+1}\X^\top \Big)\\
&=\frac{1}{d}\Tr\Big(\X^{(j)}\bOmega^{t-1,(j)}\ldots
\bOmega^{k+1,(j)}\X^{(j)\top} \Big)
-\frac{1}{d}\Tr\Big(\X\bOmega^{t-1}\ldots \bOmega^{k+1}\X^\top \Big)\\
&=\frac{1}{d}\Tr \Big(\X^{(j)}\bOmega^{t-1,(j)}\ldots
\bOmega^{k+1,(j)}(\X^{(j)}-\X)^\top \Big)\\
&\hspace{1in}
+\sum_{\ell={k+1}}^{t-1}
\frac{1}{d}\Tr \Big(\X^{(j)}\bOmega^{t-1,(j)}\ldots \bOmega^{\ell+1,(j)}
(\bOmega^{\ell}-\bOmega^{\ell,(j)})
\bOmega^{\ell-1}\ldots
\bOmega^{k+1}\X^{\top} \Big)\\
&\hspace{1in}
+\frac{1}{d}\Tr \Big((\X^{(j)}-\X)\bOmega^{t-1}\ldots
\bOmega^{k+1}\X^\top\Big)
\end{align*}
Almost surely for all large $n,d$, for every $j \in [d]$ we have
$\|\X^{(j)}-\X\|_F=\|\x_j\| \leq C$ and
\begin{align*}
\|\bOmega^{t,(j)}-\bOmega^t\|_F &\leq
\gamma\beta\|\X^{(j)\top}\X^{(j)}-\X^\top \X\|_F
+\gamma \|\partial_\theta s(\btheta^{t,(j)},\widehat\alpha^{t,(j)})
-\partial_\theta s(\btheta^t,\widehat\alpha^t)\|\\
&\leq C\Big(1+|\theta_j^0|+|\theta_j^*|+\max_{u \in [0,t]} |b_j^u|+\sqrt{\log
d}\Big),
\end{align*}
the second inequality applying the Lipschitz continuity of $s(\cdot)$ in
Assumption \ref{assump:prior} and Lemma \ref{lem:loo_error}. Then, applying
$\Tr (A-B)C \leq \|A-B\|_F\|C\|_F \leq \sqrt{d}\,\|A-B\|_F\|C\|_\op$, we obtain
a.s.\ for all large $n,d$ that for every $j \in [d]$,
\[|r_{2,3}^{(j,k)}|
\leq \frac{C}{\sqrt{d}}\Big(1+|\theta_j^0|+|\theta_j^*|+\max_{u \in [0,t]}
|b_j^u|+\sqrt{\log d}\Big),\]
which implies as above that
$d^{-1}\sum_j |\mathsf{r}^{(j,k)}_{2,3}|^p \to 0$ a.s.\
for any $p \geq 1$.

Combining these bounds for
$\mathsf{r}_{2,1}^{(j,k)},\mathsf{r}_{2,2}^{(j,k)},\mathsf{r}_{2,3}^{(j,k)}$,
the second statement of 
(\ref{eq:induction_consequence}) for
almost sure convergence of
$d^{-1}\Tr(\X\bOmega^{t-1}\ldots \bOmega^{k+1}\X^\top)$,
the induction hypothesis for
approximation of $\deps \theta_j^{k,\eps}$ by $r_\theta^{(\theta_j,\widehat\alpha)}$, and
the bound $|\deps \theta_j^{k,\eps}| \leq C$ a.s.\ for all large $n,d$ by Lemma \ref{lem:response_l2_estimate}, we get that
\begin{equation}\label{eq:analysisIIjk}
(II_{j,k})=\frac{1}{\gamma
\beta^2}R_\eta^\gamma(t,k)r_\theta^{(\theta_j,\widehat\alpha)}(k,s)
+\mathsf{r}_{2}^{(j,k)}
\end{equation}
where $d^{-1}\sum_j |\mathsf{r}_{2}^{(j,k)}|^p \to 0$ a.s.\ for any $p \geq 1$.\\

\noindent\textbf{Analysis of $(III_{j,k})$.} We apply a similar leave-one-out argument as above. Let
\[\r_{-j}^{t,(j)}=\nabla_\alpha s(\btheta_{-j}^{t,(j)},\widehat\alpha^{t,(j)})
\deps \widehat\alpha^{t,\eps} \in \R^{d-1}.\]
(Note that we replace only the first factor
$\nabla_\alpha s(\btheta_{-j}^{t,(j)},\widehat\alpha^{t,(j)})$ by the cavity
dynamics, leaving the second factor unchanged.) Then
\begin{align}\label{eq:theta_loo_arg}
&\Big|\x_j^\top \X_{-j} \bOmega_{-j}^{t-1}\ldots
\bOmega_{-j}^{k+1}\r_{-j}^k - \x_j^\top \X_{-j} \bOmega_{-j}^{t-1,(j)}\ldots
\bOmega_{-j}^{k+1,(j)}\r_{-j}^{k,(j)}\Big|\notag\\
&\leq \underbrace{\sum_{\ell = k+1}^{t-1} \Big|\x_j^\top
\X_{-j}\bOmega_{-j}^{t-1,(j)} \ldots \bOmega_{-j}^{\ell+1,(j)}
(\bOmega_{-j}^{\ell}-\bOmega_{-j}^{\ell,(j)}) \bOmega_{-j}^{\ell-1} \ldots
\bOmega_{-j}^{k+1}\r_{-j}^k\Big|}_{:=u^{(j,k)}}\notag\\
&\hspace{1in}+\underbrace{\Big|\x_j^\top \X_{-j}\bOmega_{-j}^{t-1,(j)} \ldots
\bOmega_{-j}^{k+1,(j)}(\r_{-j}^k - \r_{-j}^{k,(j)})\Big|}_{:=v^{(j,k)}}.
\end{align}
We note that a.s.\ for all large $n,d$, we have
$\|\r^k\| \leq C\sqrt{d} \cdot C/\sqrt{d} \leq C'$
by the Lipschitz bound for $s(\cdot)$ in Assumption \ref{assump:prior}
and Lemma \ref{lem:response_l2_estimate}. Then, using
similar arguments as in the analysis of $\mathsf{r}_{2,1}^{(j,k)}$ above,
we have $d^{-1} \sum_j |u^{(j,k)}|^p \to 0$ a.s.\ for any $p \geq 1$.
For the second term, we have
\begin{align*}
v^{(j,k)} &= \Big|\x_j^\top \X_{-j}\bOmega_{-j}^{t-1,(j)} \ldots
\bOmega_{-j}^{j+1,(j)} \Big(\nabla_\alpha s(\btheta_{-j}^k,\widehat\alpha^k) -
\nabla_\alpha s(\btheta_{-j}^{k,(j)},\widehat\alpha^{k,(j)})\Big) \deps
\widehat \alpha^{k,\eps}\Big|\\
&\leq
C\pnorm{\x_j}{}\pnorm{\X_{-j}}{\op}\pnorm{\bOmega_{-j}^{t-1,(j)}}{\op}\ldots
\pnorm{\bOmega_{-j}^{k+1,(j)}}{\op}(\pnorm{\btheta_{-j}^k-\btheta_{-j}^{k,(j)}}{}
+ \sqrt{d}\pnorm{\widehat\alpha^k - \widehat\alpha^{k,(j)}}{})
\pnorm{\deps \widehat\alpha^{k,\eps}}{},
\end{align*}
which satisfies $d^{-1}\sum_j |v^{(j,k)}|^p \to 0$ a.s.\
for all $p \geq 1$ by Lemmas \ref{lem:loo_error} and \ref{lem:response_l2_estimate}. Thus
\[\x_j^\top \X_{-j} \bOmega_{-j}^{t-1}\ldots
\bOmega_{-j}^{k+1}\r_{-j}^k=\x_j^\top \X_{-j} \bOmega_{-j}^{t-1,(j)}\ldots
\bOmega_{-j}^{k+1,(j)}\r_{-j}^{k,(j)}+\mathsf{r}^{(j,k)}\]
where $d^{-1}\sum_j |\mathsf{r}^{(j,k)}|^p \to 0$ a.s.
On the other hand, we have
\begin{align*}
\Big|\x_j^\top \X_{-j} \bOmega_{-j}^{t-1,(j)}\ldots
\bOmega_{-j}^{k+1,(j)}\r_{-j}^{k,(j)}\Big| &= \Big|\x_j^\top \X_{-j}
\bOmega_{-j}^{t-1,(j)}\ldots \bOmega_{-j}^{k+1,(j)} \nabla_\alpha
s(\btheta_{-j}^{k,(j)},\widehat\alpha^{k,(j)}) \cdot \deps \widehat\alpha^{k,\eps}\Big|\\
&\leq \pnorm{\x_j^\top \X_{-j} \bOmega_{-j}^{t-1,(j)}\ldots
\bOmega_{-j}^{k+1,(j)} \nabla_\alpha
s(\btheta_{-j}^{k,(j)},\widehat\alpha^{k,(j)})}{}\pnorm{\deps \widehat\alpha^{k,\eps}}{}.  
\end{align*}
Since $\X_{-j},\bOmega_{-j}^{t,(j)},\btheta_{-j}^{k,(j)},\widehat\alpha^{k,(j)}$
are all independent of $\x_j$, a subgaussian tail bound and union bound shows,
a.s.\ for all large $n,d$, that for every $j \in [d]$,
\[\pnorm{\x_j^\top \X_{-j} \bOmega_{-j}^{t-1,(j)}\ldots
\bOmega_{-j}^{k+1,(j)} \nabla_\alpha
s(\btheta_{-j}^{k,(j)},\widehat\alpha^{k,(j)})}{}
\leq C\sqrt{\log d}.\]
Since $\pnorm{\deps \widehat\alpha^{k,\eps}}{} \leq C/\sqrt{d}$
by Lemma \ref{lem:response_l2_estimate}, this shows
\begin{equation}\label{eq:analysisIIIjk}
(III_{j,k})=\mathsf{r}_3^{(j,k)}
\end{equation}
where $\lim_{n,d \to \infty}
d^{-1}\sum_{j=1}^d |\mathsf{r}_3^{(j,k)}|^p=0$ a.s.\ for any $p \geq 1$.\\

\noindent\textbf{Analysis of $(IV_j)$.} 
By Assumption \ref{assump:prior} and Lemma \ref{lem:response_l2_estimate}, 
$|(IV_j)| \leq \gamma
\pnorm{\nabla_\alpha s(\theta_j^t,\widehat\alpha^t)}{}\pnorm{\deps
\widehat\alpha^{t,\eps}}{} \leq C/\sqrt{d}$ a.s.\ for all large $n,d$,
hence 
\begin{equation}\label{eq:analysisIVj}
(IV_j)=\mathsf{r}_4^{(j)}
\end{equation}
where $\lim_{n,d \to \infty}
d^{-1}\sum_{j=1}^d |\mathsf{r}_4^{(j)}|^p=0$ a.s.\ for any $p \geq 1$.\\

Applying (\ref{eq:analysisIj}), (\ref{eq:analysisIIjk}),
(\ref{eq:analysisIIIjk}), and (\ref{eq:analysisIVj}) back to
(\ref{eq:inductivedecomp}),
\begin{align*}
\partial_\eps|_{\eps = 0} \theta_j^{t+1,\eps} &= \big(1- \gamma\delta\beta +
\gamma \partial_\theta s(\theta_j^t,\widehat\alpha^t)\big)
r_\theta^{(\theta_j,\widehat\alpha)}(t,s) + \gamma\sum_{k=s+1}^{t-1}
R_\eta^{\gamma}(t,k) r_\theta^{(\theta_j,\widehat\alpha)}(k,s)
+E_\theta^{t+1,(s,j)}\\
&= r_\theta^{(\theta_j,\widehat\alpha)}(t+1,s) + E_\theta^{t+1,(s,j)}
\end{align*}
where $\lim_{n,d \to \infty} d^{-1}\sum_{j=1}^d |E_\theta^{t+1,(s,j)}|^p=0$
a.s.\ for each $p \geq 1$, concluding the proof the inductive claim (\ref{eq:induction_moment}) for $E_\theta^{t+1,(s,j)}$.\\

\textbf{Claim for $\partial_\eps|_{\eps = 0} \bbeta_i^{t,[s,i], \eps}$.} 
We now show the claim (\ref{eq:induction_moment}) for $E_\eta^{t+1,[s,i]}$. Again fixing $s \in \Z_+$
and $i \in [n]$, let us
shorthand $\bbeta^{t,[s,i],\eps}$ and $\btheta^{t,[s,i],\eps}$ as
$\bbeta^{t,\eps}$ and $\btheta^{t,\eps}$. Let us write
$\bbeta^t=(\eta_i^t,\bbeta_{-i}^t)$ as in Section \ref{subsec:loo},
and write correspondingly
$\y=(y_i,\y_{-i})$, $\beps=(\eps_i,\beps_{-i})$, and
$\X=[\x_i,\X_{-i}^\top]^\top$ where $\x_i \in \R^d$ denotes now (the transpose of) the $i^\text{th}$ \emph{row}
of $\X$, and $\X_{-i} \in \R^{(n-1)\times d}$. Then
\begin{align*}
\btheta^{t+1,\eps} &= \btheta^{t,\eps} + \gamma\Big[{-}\beta
\big(\X_{-i}^\top (\X_{-i}\btheta^{t,\eps} - \y_{-i}) + \x_i
(\eta_i^{t,\eps}-y_i)\big) + s(\btheta^{t,\eps},\widehat\alpha^{t,\eps})
\Big] + \sqrt{2}(\b^{t+1} - \b^t)\\
\eta_i^{t+1,\eps} &= \eta_i^{t,\eps} + \gamma\Big[{-}\beta \x_i^\top
\big(\X_{-i}^\top (\X_{-i}\btheta^{t,\eps} - \y_{-i}) + \x_i(\eta_i^{t,\eps} -
y_i)\big) + \x_i^\top s(\btheta^{t,\eps},\widehat\alpha^{t,\eps}) \Big] +
\sqrt{2}\x_i^\top (\b^{t+1} - \b^t)\\
\widehat\alpha^{t+1,\eps} &= \widehat\alpha^{t,\eps} + \gamma \cdot
\cG(\widehat\alpha^{t,\eps} , \widehat\sP(\btheta^{t,\eps})). 
\end{align*}
Set
\[\r^t=\nabla_\alpha s(\btheta^t,\widehat\alpha^t) \deps \widehat\alpha^{t,\eps}
\in \R^d.\]
Then, taking the derivative of $\eta_i^{t+1,\eps}$ yields
\begin{align}\label{eq:kappa_expand}
\partial_\eps|_{\eps = 0}\eta_i^{t+1,\eps}
= \Big(1 - \gamma\beta\pnorm{\x_i}{}^2\Big)\deps \eta_i^{t,\eps} +
\x_i^\top\Big({-}\gamma\beta\X_{-i}^\top \X_{-i}+\gamma \diag(\partial_\theta
s(\btheta^t,\widehat\alpha^t))\Big)\deps \btheta^{t,\eps} + \gamma \x_i^\top
\r^t.
\end{align}
Taking derivative of $\btheta^{t,\eps}$ yields
\begin{align*}
\partial_\eps|_{\eps = 0} \btheta^{t,\eps} &= 
\underbrace{\Big(\I-\gamma\beta\X_{-i}^\top \X_{-i} +
\gamma\diag(\partial_\theta s(\btheta^{t-1},
\widehat\alpha^{t-1}))\Big)}_{:=\bOmega_{-i}^{t-1}}\partial_\eps|_{\eps = 0}
\btheta^{t-1,\eps} - \gamma\beta\x_i \deps \eta_i^{t-1,\eps}+\gamma \r^{t-1}.
\end{align*}
Iterating this equality and using
$\partial_\eps|_{\eps = 0} \btheta^{s+1,[s,i],\eps} = \gamma \x_i$ gives
\begin{align*}
\partial_\eps|_{\eps = 0} \btheta^{t,\eps} 
&=\gamma \bOmega_{-i}^{t-1}\ldots \bOmega_{-i}^{s+1}\x_i -
\gamma\beta\sum_{k=s+1}^{t-1} \bOmega_{-i}^{t-1}\ldots \bOmega_{-i}^{k+1} \x_i
\cdot  \deps \eta_i^{k,\eps} + \gamma \sum_{k=s+1}^{t-1}
\bOmega_{-i}^{t-1}\ldots \bOmega_{-i}^{k+1} \r^k.
\end{align*}
Plugging the above expression into (\ref{eq:kappa_expand}), we have
\begin{align}\label{eq:kappa_expand_2}
\notag&\partial_\eps|_{\eps = 0}\eta_i^{t+1,\eps} = \underbrace{\Big(1 -
\gamma\beta\pnorm{\x_i}{}^2\Big)\deps \eta_i^{t,\eps}}_{(I_i)} + \gamma
\underbrace{\x_i^\top(\bOmega_{-i}^t-\I)\bOmega_{-i}^{t-1}\ldots
\bOmega_{-i}^{s+1}\x_i}_{(II_i)}\\
&\quad-\gamma\beta\sum_{k=s+1}^{t-1} \underbrace{\x_i^\top(\bOmega_{-i}^t-\I)
\bOmega_{-i}^{t-1}\ldots \bOmega_{-i}^{k+1}\x_i \cdot  \deps
\eta_i^{k,\eps}}_{(III_{i,k})}
+\gamma \sum_{k=s+1}^{t-1} \underbrace{\x_i^\top (\bOmega_{-i}^t-\I)
\bOmega^{t-1}\ldots \bOmega^{k+1}\r^k}_{(IV)_{i,k}}+\underbrace{\gamma
\x_i^\top \r^t}_{(V_i)}.
\end{align}
The arguments to analyze these terms are similar to the above, and we will
omit some details.\\

\noindent\textbf{Analysis of $(I_i)$.} By the induction hypothesis and
concentration of $\|\x_i\|^2$ around 1,
\begin{equation}\label{eq:analysisIi}
(I_i)=(1-\gamma\beta)\beta^{-1}r_\eta(t,s)+\mathsf{r}_1^{[i]}
\end{equation}
where $n^{-1}\sum_{i=1}^n |\mathsf{r}_1^{[i]}|^p \to 0$ a.s.\ for any $p \geq
1$.\\

\noindent\textbf{Analysis of $(II_i)$.} Let $\bOmega_{-i}^{t,[i]} = \I -
\gamma\beta \X_{-i}^\top \X_{-i} + \gamma \diag(\partial_\theta
s(\btheta^{t,[i]},\widehat\alpha^{t,[i]}))$ with
$\btheta^{t,[i]},\widehat\alpha^{t,[i]}$ given by the cavity dynamics of
(\ref{eq:leave_row_out}). Set
\begin{align*}
\mathsf{r}^{[i]}_{2,1}&=\x_i^\top \bOmega_{-i}^{t-1}\ldots
\bOmega_{-i}^{s+1}\x_i - \x_i^\top\bOmega_{-i}^{t-1,[i]}\ldots
\bOmega_{-i}^{s+1,[i]}\x_i\\
\mathsf{r}^{[i]}_{2,2}&=\x_i^\top\bOmega_{-i}^{t-1,[i]}\ldots
\bOmega_{-i}^{s+1,[i]}\x_i-\frac{1}{d}\Tr \bOmega_{-i}^{t-1,[i]}\ldots
\bOmega_{-i}^{s+1,[i]}\\
\mathsf{r}^{[i]}_{2,3}&=\frac{1}{d}\Tr\bOmega_{-i}^{t-1,[i]}\ldots
\bOmega_{-i}^{s+1} - \frac{1}{d}\Tr\bOmega^{t-1}\ldots
\bOmega^{s+1,[i]}.
\end{align*}
Then the same arguments above yield
$n^{-1}\sum_{i=1}^n |\mathsf{r}^{[i]}_{2,j}|^p \to 0$ for each $j=1,2,3$.
Applying the same arguments for $t$ in place of $t-1$, and
the first statement of (\ref{eq:induction_consequence}) for both $t$
and $t-1$,
\begin{equation}\label{eq:analysisIIi}
(II_i)=\gamma^{-1}R_\theta^\gamma(t+1,s)-\gamma^{-1}
R_\theta^\gamma(t,s)+\mathsf{r}_2^{[i]}
\end{equation}
where $n^{-1}\sum_{i=1}^n |\mathsf{r}_2^{[i]}|^p \to 0$ a.s.\\

\noindent\textbf{Analysis of $(III_{i,k})$, $(IV_{i,k})$, and $(V_i)$.}
Similar arguments as above show
\begin{align}
(III_{i,k})&=(\gamma\beta)^{-1}R_\theta^\gamma(t+1,k)r_\eta(k,s)
-(\gamma\beta)^{-1}R_\theta^\gamma(t,k)r_\eta(k,s)+\mathsf{r}_3^{[i,k]}\label{eq:analysisIIIik}\\
(IV)_{i,k}&=\mathsf{r}_4^{[i,k]}\label{eq:analysisIVik}\\
(V)_i&=\mathsf{r}_5^{[i]}\label{eq:analysisVi}
\end{align}
where $n^{-1}\sum_{i=1}^n |\mathsf{r}_3^{[i,k]}|^p \to 0$,
$n^{-1}\sum_{i=1}^n |\mathsf{r}_4^{[i,k]}|^p \to 0$, and
$n^{-1}\sum_{i=1}^n |\mathsf{r}_5^{[i]}|^p \to 0$ a.s.\\

Applying (\ref{eq:analysisIi}), (\ref{eq:analysisIIi}),
(\ref{eq:analysisIIIik}), (\ref{eq:analysisIVik}), and (\ref{eq:analysisVi})
back to (\ref{eq:kappa_expand_2}), for an error term
$E_\eta^{t+1,[s,i]}$ satisfying
$n^{-1}\sum_{i=1}^n |E_\eta^{t+1,[s,i]}|^p \to 0$ a.s., we have
\begin{align*}
\deps \eta_i^{t+1,\eps}&=(1 - \gamma\beta)\beta^{-1}r_\eta(t,s) + R_\theta^{\gamma}(t+1,s) 
- R_\theta^{\gamma}(t,s)\\
&\hspace{1in}
-\sum_{k=s+1}^{t-1} (R_\theta^{\gamma}(t+1,k)
- R_\theta^{\gamma}(t,s)) r_\eta(k,s)
+E_\eta^{t+1,[s,i]}\\
&= \Big[{-}\gamma r_\eta(t,s) + R_\theta^{\gamma}(t+1,s) - \sum_{k=s+1}^{t-1}
R_\theta^{\gamma}(t+1,k) r_\eta(k,s)\Big]\\
&\hspace{1in} + \underbrace{\Big[\beta^{-1} r_\eta(t,s) - R_\theta^{\gamma}(t,s)  +
\sum_{k=s+1}^{t-1} R_\theta^{\gamma}(t,k) r_\eta(k,s)\Big]}_{=0}
 + E_\eta^{t+1,[s,i]}
\\
&= R_\theta^{\gamma}(t+1,s) - \sum_{k=s+1}^{t} R_\theta^{\gamma}(t+1,k)
r_\eta(k,s) + E_\eta^{t+1,[s,i]}\\
&= \beta^{-1} r_\eta(t+1,s) + E_\eta^{t+1,[s,i]}.
\end{align*}
This shows the inductive claim (\ref{eq:induction_moment}) for $E_\eta^{t+1,[s,i]}$, and hence concludes the induction.

To conclude the proof, by boundedness of $\partial_\theta
s(\cdot)$ and the definition (\ref{eq:dmft_response_rho}),
$d^{-1}\sum_{j=1}^d r_\theta^{(\theta_j,\widehat\alpha)}(t,s)$ is bounded
by a constant. Furthermore, by the expansion (\ref{eq:avgderivtmp}) and its
following arguments (which hold also at non-zero $\eps>0$),
on the event $\|\X\|_\op \leq C_0$, we have that
$d^{-1}\sum_{j=1}^d \partial_\eps \theta_j^{t,(s,j),\eps}$ is also bounded by a
constant for all sufficiently small $\eps \geq 0$.
Then, writing $\langle \cdot \rangle$ for the expectation over only
the discrete Brownian motion $\{\b^t\}_{t \in \Z_+}$, we may apply the dominated
convergence theorem to the first statement of (\ref{eq:induction_moment}) to get, almost surely,
\begin{align*}
\lim_{n,d \to \infty} d^{-1}\Tr \bR_\theta(t,s)
&=\lim_{n,d \to \infty} \frac{1}{d}\sum_{j=1}^d
\deps \langle \theta_j^{t,(s,j),\eps} \rangle
=\lim_{n,d \to \infty} \frac{1}{d}\sum_{j=1}^d
\langle \deps \theta_j^{t,(s,j),\eps}  \rangle\\
&=\lim_{n,d \to \infty} \frac{1}{d}\sum_{j=1}^d
\langle r_\theta^{(\theta_j,\alpha)}(t,s) \rangle
=R_\theta^\gamma(t,s),
\end{align*}
the last equality holding by Lemma
\ref{lem:dmft_emp_concentrate}. Similarly we may apply the dominated convergence
theorem to the second statement of (\ref{eq:induction_moment}) to get, almost surely,
\[\lim_{n,d \to \infty} n^{-1}\Tr \bR_\eta(t,s)
=\lim_{n,d \to \infty} \delta\beta^2 \cdot \frac{1}{n}\sum_{i=1}^n
\deps \langle \eta_i^{t,[s,i],\eps} \rangle
=\lim_{n,d \to \infty} \delta\beta \cdot \frac{1}{n}\sum_{i=1}^n
\langle r_\eta(t,s) \rangle=R_\eta^\gamma(t,s),\]
concluding the proof.
\end{proof}

\subsection{Discretization of Langevin response function}\label{subsec:discretize_langevin_response}

In the following, we denote $\x=(\btheta,\widehat\alpha) \in \R^{d+K}$ and consider (\ref{eq:langevin_sde}--\ref{eq:gflow}) as a joint diffusion in the variables $\x^t=(\btheta^t,\widehat\alpha^t)$.
Let $u:\R^{d+K} \to \R^{d+K}$ and $\M \in \R^{(d+K) \times (d+K)}$ be defined by
\begin{equation}\label{eq:def_M_u}
\begin{aligned}
u(\x)&=u(\btheta,\widehat\alpha)=\Big({-}\beta \X^\top (\X\btheta-\y)
+\big(s(\theta_j,\widehat\alpha)\big)_{j=1}^d,\;
\cG(\widehat\alpha,\widehat\sP(\btheta))\Big)\\
\M&=\diag(\I_{d \times d},0_{K \times K})
\end{aligned}
\end{equation}
Given an initial condition $\x^0 \in \R^{d+K}$, we consider the continuous-time
dynamics for $\x^t \in \R^{d+K}$ and $\V^t \in \R^{(d+K) \times (d+K)}$
defined by
\begin{align}\label{eq:theta_V}
\notag\x^t &= \x^0+\int_0^t u(\x^s)\d s + \sqrt{2}\,\M \b^t\\
\qquad \V^t &= \I_{d+K}+\int_0^t [\d u(\x^s) \V^s]\d s
\end{align}
where $\d u(\x) \in \R^{(d+K) \times (d+K)}$ is the derivative of $u(\cdot)$ at
$\x$. We consider also a piecewise-constant version of these dynamics
\begin{align}\label{eq:theta_V_embed}
\notag\bar{\x}_\gamma^t &= \x+\int_0^{\floor{t}} u(\bar{\x}_\gamma^s)\d s +
\sqrt{2}\,\M \b^{\floor{t}}\\
\bar{\V}_\gamma^t &= \I_{d+K}+\int_0^{\floor{t}} [\d u(\bar{\x}_\gamma^s)
\bar{\V}_\gamma^s]\d s
\end{align}
where $\floor{t} \in \gamma\Z_+$ is as previously defined in
(\ref{eq:t_floor_ceil}). We note that the process
$\x^t=(\btheta^t,\widehat\alpha^t)$ in
(\ref{eq:theta_V}) is precisely our adaptive Langevin process of interest
(\ref{eq:langevin_sde}--\ref{eq:gflow}). Similarly, the process $\bar
\x_\gamma^t$ in (\ref{eq:theta_V_embed}) is the piecewise-constant embedding
from Section \ref{subsec:discretizelangevin} of
the discrete dynamics for $\x_\gamma^t=(\btheta_\gamma^t,\widehat\alpha_\gamma^t)$
which we have rewritten in (\ref{eq:original_alpha}). Denoting
$[t]=\floor{t}/\gamma \in \Z_+$ as in (\ref{eq:t_floor_ceil}), we have
\begin{equation}\label{eq:xVembedding}
\bar \x_\gamma^t=\x_\gamma^{[t]}=(\btheta_\gamma^{[t]},\widehat\alpha_\gamma^{[t]})
\text{ for all } t \geq 0.
\end{equation}
Throughout, we will write $\langle \cdot \rangle_{\x^0}$ for expectations only over
the Brownian motion $\b^t$, i.e.\ conditional
on $\X,\btheta^*,\beps$ and the initial condition $\x^0$.

\begin{lemma}\label{lem:theta_V_rough}
Let us write the block forms
\[\V^t=\begin{pmatrix} \U^t & \ast \\ \W^t & \ast
\end{pmatrix}, \quad
\bar{\V}_\gamma^t=\begin{pmatrix}\bar{\U}_\gamma^t & \ast \\
\bar{\W}_\gamma^t & \ast \end{pmatrix},
\quad \d u(\x^t)=\begin{pmatrix} \J^t_1 & \J^t_2 \\ \J^t_3 & \J^t_4
\end{pmatrix}, \quad \d u(\bar{\x}_\gamma^t)=
\begin{pmatrix} \bar\J_{\gamma,1}^t & \bar\J_{\gamma,2}^t \\
\bar\J_{\gamma,3}^t & \bar \J_{\gamma,4}^t \end{pmatrix}\]
with blocks of sizes $d$ and $K$.
Fixing any $T>0$, on the event $\{\|\X\|_\op \leq
C_0,\|\y\| \leq C_0\sqrt{d}\}$,
there is a constant $C>0$ (depending on $T,C_0$ but not on $\gamma$)
such that for any $\gamma>0$, we have
\begin{equation}\label{eq:Jbound}
\begin{gathered}
\sup_{t\in[0,T]} \pnorm{\J_1^t}{\op},\pnorm{\bar\J_{\gamma,1}^t}{\op} \leq C,
\qquad \sup_{t\in[0,T]}\pnorm{\J^t_2}{F},\pnorm{\bar\J_{\gamma,2}^t}{F} \leq C\sqrt{d}, \\
\sup_{t\in[0,T]}\pnorm{\J^t_3}{F},\pnorm{\bar\J_{\gamma,3}^t}{F} \leq
C/\sqrt{d}, \qquad\sup_{t\in[0,T]}\pnorm{\J^t_4}{F},\pnorm{\bar
\J_{\gamma,4}^t}{F}\leq C,
\end{gathered}
\end{equation}
\begin{equation}\label{eq:UWbound}
\sup_{t\in[0,T]}
\{\pnorm{\U^t}{\op},\sqrt{d}\pnorm{\W^t}{F},\pnorm{\bar{\U}_\gamma^t}{\op},\sqrt{d}\pnorm{\bar{\W}_\gamma^t}{F}\}
\leq C,
\end{equation}
\begin{equation}\label{eq:UWchangebound}
\sup_{t \in [0,T]} \|\bar\U_\gamma^{t+\gamma}-\bar\U_\gamma^t\|_\op
\leq C\gamma,
\quad \sup_{t \in [0,T]} \|\bar\W_\gamma^{t+\gamma}-\bar\W_\gamma^t\|_F
\leq C\gamma/\sqrt{d}.
\end{equation}
Furthermore, for some $\iota:\R_+ \to \R_+$ satisfying $\lim_{\gamma \to 0}
\iota(\gamma)=0$ and for any initial condition
$\x^0=(\btheta^0,\widehat\alpha^0)$,
\begin{equation}\label{eq:Jdiscretizebound}
\sup_{t\in[0,T]} \frac{\langle\pnorm{\J_1^t -
\bar\J_{\gamma,1}^t}{F}\rangle_{\x^0}}{\sqrt{d}},\frac{\langle\pnorm{\J_2^t-\bar\J_{\gamma,2}^t}{F}\rangle_{\x^0}}{\sqrt{d}},\sqrt{d}\langle\pnorm{\J_3^t
- \bar\J_{\gamma,3}^t}{F}\rangle_{\x^0},\langle \pnorm{\J_4^t - \bar\J_{\gamma,4}^t}{F}\rangle_{\x^0}
\leq \iota(\gamma)\Big(\frac{\pnorm{\btheta^0}{}}{\sqrt{d}} +
\pnorm{\widehat\alpha^0}{} + 1\Big).
\end{equation}
\end{lemma}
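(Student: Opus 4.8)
The plan is to establish the four displays in sequence, deriving each from the structure of the drift $u(\cdot)$ in \eqref{eq:def_M_u} together with the smoothness assumptions. First I would unpack the block structure of $\d u(\x)=\d u(\btheta,\widehat\alpha)$: the block $\J_1^t=\d_\btheta[{-}\beta\X^\top(\X\btheta^t-\y)+s(\btheta^t,\widehat\alpha^t)]={-}\beta\X^\top\X+\diag(\partial_\theta s(\theta_j^t,\widehat\alpha^t))$, so $\|\J_1^t\|_\op\le|\beta|\|\X\|_\op^2+C\le C'$ on the event $\{\|\X\|_\op\le C_0\}$ by Assumption \ref{assump:prior}; the block $\J_2^t=\d_{\widehat\alpha}[s(\btheta^t,\widehat\alpha^t)]_j=\nabla_\alpha s(\theta_j^t,\widehat\alpha^t)^\top$ stacked over $j$, which has Frobenius norm $\le C\sqrt d$ by \eqref{eq:s_smooth}; and $\J_3^t=\nabla_\btheta\cG(\widehat\alpha^t,\widehat\sP(\btheta^t))$, $\J_4^t=\nabla_\alpha\cG(\widehat\alpha^t,\widehat\sP(\btheta^t))$, bounded by $C/\sqrt d$ and $C$ respectively by \eqref{eq:G_smooth2}--\eqref{eq:G_smooth3}. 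The same bounds hold verbatim for the barred quantities since these depend only on $\X$ and on $\bar\x_\gamma^t=(\bar\btheta_\gamma^t,\bar{\widehat\alpha}_\gamma^t)$, without using any regularity of the trajectory. This gives \eqref{eq:Jbound}.

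For \eqref{eq:UWbound} I would work with $\V^t$ (and $\bar\V_\gamma^t$) in block form and read off from $\d\V^t=\d u(\x^t)\V^t\,\d t$ the coupled integral inequalities
\[\|\U^t\|_\op\le 1+\int_0^t\big(\|\J_1^s\|_\op\|\U^s\|_\op+\|\J_2^s\|_F\|\W^s\|_F\big)\d s,\quad
\|\W^t\|_F\le\int_0^t\big(\|\J_3^s\|_F\|\U^s\|_\op+\|\J_4^s\|_F\|\W^s\|_F\big)\d s.\]
Substituting the bounds from \eqref{eq:Jbound}, the quantity $\|\U^t\|_\op+\sqrt d\,\|\W^t\|_F$ satisfies a linear Gronwall inequality with $\gamma$-independent coefficients and initial value $\le 1$ (using that the $(1,2)$ and $(2,2)$ blocks of $\I_{d+K}$ vanish and contribute nothing to $\U,\W$); Gronwall then yields the uniform bound $C$ over $[0,T]$. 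The same argument applied to the piecewise-constant dynamics \eqref{eq:theta_V_embed}, with integrals $\int_0^{\floor t}$ replaced by $\int_0^t$ at the cost of a constant, gives the barred version. Then \eqref{eq:UWchangebound} follows immediately: $\bar\U_\gamma^{t+\gamma}-\bar\U_\gamma^t=\int_t^{t+\gamma}\bar\J_{\gamma,1}^s\bar\U_\gamma^s+\bar\J_{\gamma,2}^s\bar\W_\gamma^s\,\d s$ has operator norm $\le\gamma(\|\bar\J_{\gamma,1}^t\|_\op\|\bar\U_\gamma^t\|_\op+\|\bar\J_{\gamma,2}^t\|_F\|\bar\W_\gamma^t\|_F)\le C\gamma$, and similarly for $\bar\W_\gamma$.

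The genuine work is in \eqref{eq:Jdiscretizebound}, which is where I expect the main obstacle. The blocks $\J_i^t$ and $\bar\J_{\gamma,i}^t$ differ only through their arguments $\x^t=(\btheta^t,\widehat\alpha^t)$ versus $\bar\x_\gamma^t=(\bar\btheta_\gamma^t,\bar{\widehat\alpha}_\gamma^t)$, and by \eqref{eq:xVembedding} the latter equals the discrete embedding of Section \ref{subsec:discretizelangevin}. Hence Lemma \ref{lem:langevin_discretize_error} (or more precisely its proof, specifically the Gronwall bound \eqref{eq:joint_alpha_theta} combined with Lemma \ref{lem:BM_maximal}) gives $\langle d^{-1/2}\|\btheta^t-\bar\btheta_\gamma^t\|+\|\widehat\alpha^t-\bar{\widehat\alpha}_\gamma^t\|\rangle_{\x^0}\le C(\gamma\log(1/\gamma))^{1/2}(d^{-1/2}\|\btheta^0\|+\|\widehat\alpha^0\|+1)=:\iota_0(\gamma)(\cdots)$ uniformly over $t\in[0,T]$. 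The plan is then: (i) for $\J_1$ versus $\bar\J_{\gamma,1}$, the difference is $\diag(\partial_\theta s(\theta_j^t,\widehat\alpha^t)-\partial_\theta s(\bar\theta_{\gamma,j}^t,\bar{\widehat\alpha}_\gamma^t))$, whose Frobenius norm is $\le C(\|\btheta^t-\bar\btheta_\gamma^t\|+\sqrt d\,\|\widehat\alpha^t-\bar{\widehat\alpha}_\gamma^t\|)$ by Lipschitzness of $\nabla s$ from \eqref{eq:s_smooth}, so dividing by $\sqrt d$ and taking $\langle\cdot\rangle_{\x^0}$ gives the claimed bound; (ii) for $\J_2$ versus $\bar\J_{\gamma,2}$, the row-stacked difference has Frobenius norm controlled again by Lipschitzness of $\nabla_{(\theta,\alpha)}s$, giving $\le C(\|\btheta^t-\bar\btheta_\gamma^t\|+\sqrt d\,\|\widehat\alpha^t-\bar{\widehat\alpha}_\gamma^t\|)$, hence $\le\iota(\gamma)\sqrt d(\cdots)$ after dividing by $\sqrt d$; (iii) for $\J_3,\J_4$, I use the Lipschitz bounds on $\nabla_{(\btheta,\alpha)}\cG_k$ — here one must be careful, since $\cG_k$ depends on $\btheta$ through $\widehat\sP(\btheta)$, and the relevant Lipschitz constants of $\nabla_\btheta\cG_k$ and $\nabla_\alpha\cG_k$ in $(\btheta,\alpha)$ come from the Hölder/second-derivative bounds \eqref{eq:G_smooth3} together with the extra uniform Hölder-continuity hypothesis of Theorem \ref{thm:dmftresponse} assumed throughout this section. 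The scaling bookkeeping ($\sqrt d$ factors attached to $\nabla_\btheta\cG_k$, the $d$-factor on its second derivative) is the delicate point: one needs $\|\J_3^t-\bar\J_{\gamma,3}^t\|_F\le C d^{-1}(\|\btheta^t-\bar\btheta_\gamma^t\|+\sqrt d\|\widehat\alpha^t-\bar{\widehat\alpha}_\gamma^t\|)$ and $\|\J_4^t-\bar\J_{\gamma,4}^t\|_F\le Cd^{-1/2}(\cdots)$, which when combined with the discretization estimate $\iota_0(\gamma)(\cdots)$ for $d^{-1/2}\|\btheta^t-\bar\btheta_\gamma^t\|+\|\widehat\alpha^t-\bar{\widehat\alpha}_\gamma^t\|$ yield exactly the four bounds in \eqref{eq:Jdiscretizebound}. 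Collecting these, with $\iota(\gamma)$ a fixed multiple of $\iota_0(\gamma)=C(\gamma\log(1/\gamma))^{1/2}$, completes the proof.
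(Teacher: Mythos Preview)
Your proposal is correct and follows essentially the same approach as the paper: explicit identification of the Jacobian blocks for \eqref{eq:Jbound}, a Gronwall argument on $\|\U^t\|_\op+\sqrt{d}\,\|\W^t\|_F$ for \eqref{eq:UWbound}--\eqref{eq:UWchangebound}, and Lipschitz bounds on $\partial_\theta s$, $\nabla_\alpha s$, $\nabla_\btheta\cG_k$, $\nabla_\alpha\cG_k$ combined with the discretization estimate \eqref{eq:joint_alpha_theta} for \eqref{eq:Jdiscretizebound}. One small remark: the Lipschitz constants of $\nabla_\btheta\cG_k$ and $\nabla_\alpha\cG_k$ follow already from the second-derivative bounds \eqref{eq:G_smooth3} alone; the extra H\"older-continuity hypothesis of Theorem~\ref{thm:dmftresponse} is not needed for this lemma.
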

\begin{proof}
For (\ref{eq:Jbound}), we have by definition that
\begin{align*}
\J_1^t &= -\beta\X^\top \X + \diag\Big[\Big(\partial_\theta
s(\theta_j^t,\widehat\alpha^t)\Big)_{j=1}^d\Big], \quad \J_2^t =
\Big(\nabla_\alpha s(\theta_j^t,\widehat\alpha^t)^\top\Big)_{j=1}^d,\\
\J_3^t &= \d_{\btheta} \cG(\widehat\alpha^t,\sP(\btheta^t)),
\quad \J_4^t = \d_{\alpha} \cG(\widehat\alpha^t, \sP(\btheta^t)),
\end{align*}
and similarly for
$\bar\J_{\gamma,1}^t,\bar\J_{\gamma,2}^t,\bar\J_{\gamma,3}^t,\bar\J_{\gamma,4}^t$.
Then the desired bounds (\ref{eq:Jbound}) hold on the event where $\|\X\|_\op \leq C_0$,
by Assumptions \ref{assump:prior} and \ref{assump:gradient} for the derivatives
of $s(\cdot)$ and $\cG(\cdot)$.

For (\ref{eq:UWbound}), let us first prove the bounds for the discrete
dynamics $\pnorm{\bar{\U}_\gamma}{\op}$ and $\pnorm{\bar{\W}_\gamma}{F}$. By
definition, for each $t \in \gamma\Z_+$,
\begin{equation}\label{eq:UgammaWgamma}
\bar{\U}_\gamma^{t+\gamma}=(\I+\gamma \bar{\J}_{\gamma,1}^t)\bar{\U}_\gamma^t +
\gamma \bar{\J}^t_{\gamma,2} \bar{\W}_\gamma^t, \quad \bar{\W}_\gamma^{t+1} =
\gamma\bar{\J}^t_{\gamma,3} \bar{\U}_\gamma^t +
(\I+\gamma\bar{\J}^t_{\gamma,4})\bar{\W}_\gamma^t.
\end{equation}
Then applying (\ref{eq:Jbound}),
\begin{align*}
\pnorm{\bar{\U}_\gamma^{t+\gamma}}{\op} \leq
(1+C\gamma)\pnorm{\bar{\U}_\gamma^t}{\op} + C\gamma
\sqrt{d}\pnorm{\bar{\W}_\gamma^t}{F}, \quad
\pnorm{\bar{\W}_\gamma^{t+\gamma}}{F} \leq \frac{C\gamma}{\sqrt{d}}
\pnorm{\bar{\U}_\gamma^t}{\op} + (1+C\gamma)\pnorm{\bar{\W}_\gamma^t}{F},
\end{align*}
which further implies that
\begin{align*}
\pnorm{\bar{\U}_\gamma^{t+\gamma}}{\op} +
\sqrt{d}\pnorm{\bar{\W}_\gamma^{t+\gamma}}{F} \leq
(1+2C\gamma)\big(\pnorm{\bar{\U}_\gamma^{t}}{\op} +
\sqrt{d}\pnorm{\bar{\W}_\gamma^{t}}{F}\big).
\end{align*}
Iterating this bound from the initial conditions
$\bar{\U}_\gamma^0 = \I_d$ and $\bar{\W}_\gamma^0=0_{K\times d}$ shows
(\ref{eq:UWbound}) for $\bar\U_\gamma^t,\bar\W_\gamma^t$ and all $t \leq T$.
For the continuous version $\pnorm{\U^t}{\op}$ and $\pnorm{\W^t}{F}$, note that analogously
\begin{align*}
\U^t = \U^0 + \int_0^t (\J^s_1 \U^s + \J^s_2 \W^s)\d s, \quad \W^t = \W^0 +
\int_0^t (\J^s_3 \U^s + \J^s_4 \W^s)\d s
\end{align*}
so $\frac{\d}{\d t} (\pnorm{\U^t}{\op} + \sqrt{d}\pnorm{\W^t}{F}) \leq
C(\pnorm{\U^t}{\op} + \sqrt{d}\pnorm{\W^t}{F})$. Then (\ref{eq:UWbound})
follows by Gronwall's lemma.

For (\ref{eq:UWchangebound}), we have by (\ref{eq:UgammaWgamma}) and
(\ref{eq:Jbound})
\begin{align*}
\pnorm{\bar\U_\gamma^{t+\gamma} - \bar\U_\gamma^t}{\op} &\leq \gamma
\Big(\pnorm{\bar\J_{\gamma,1}^t}{\op}\pnorm{\bar\U_\gamma^t}{\op} +
\pnorm{\bar\J_{\gamma,2}^t}{F}\pnorm{\bar\W_\gamma^t}{F}\Big) \leq
C\gamma,\\
\pnorm{\bar\W_\gamma^{t+\gamma} - \bar\W_\gamma^t}{F} &\leq
\gamma\Big(\pnorm{\bar\J_{\gamma,3}^t}{F}\pnorm{\bar\U_\gamma^t}{\op} +
\pnorm{\bar\J_{\gamma,4}^t}{F}\pnorm{\bar\W_\gamma^t}{F}\Big) \leq C\gamma/\sqrt{d}.
\end{align*}

For (\ref{eq:Jdiscretizebound}), we have by the Lipschitz continuity of
$s(\cdot)$ in Assumption \ref{assump:prior}
that $\J_1^t-\bar\J_{\gamma,1}^t$ is diagonal with
$\pnorm{\J_1^t-\bar\J_{\gamma,1}^t}{F} \leq
C(\pnorm{\btheta^t-\bar\btheta_\gamma^t}{}+\sqrt{d}\pnorm{\widehat\alpha^t -
\bar{\widehat\alpha}_\gamma^t}{})$. Next using the arguments that led to
(\ref{eq:joint_alpha_theta}), we have that on the event
$\{\|\X\|_\op \leq C_0,\|\y\| \leq C_0\sqrt{d}\}$,
with $\x^0 = (\btheta^0, \widehat\alpha^0)$,
\begin{align}\label{eq:theta_bound_expectation}
\langle \pnorm{\btheta^t}{}\rangle_{\x^0} + \langle\pnorm{\bar\btheta_\gamma^t}{}\rangle_{\x^0} +
\sqrt{d}\langle\pnorm{\widehat\alpha^t}{}\rangle_{\x^0} +  \sqrt{d}\langle\pnorm{\bar{\widehat\alpha}_\gamma^t}{}\rangle_{\x^0} &\leq C(\pnorm{\btheta^0}{} + \sqrt{d}\pnorm{\widehat\alpha^0}{} + \sqrt{d})
\end{align}
and
\begin{align}\label{eq:theta_diff_expectation}
\langle \pnorm{\btheta^t - \bar\btheta_\gamma^t}{} \rangle_{\x^0} + \sqrt{d}\langle\pnorm{\widehat\alpha^t -
\bar{\widehat\alpha}_\gamma^t}{}\rangle_{\x^0} &\leq \iota(\gamma)(\pnorm{\btheta^0}{} +
\sqrt{d}\pnorm{\widehat\alpha^0}{} + \sqrt{d}).
\end{align}
This implies the desired bound for $\langle \pnorm{\J_1^t -
\bar\J_{\gamma,1}^t}{F}\rangle_{\x}$, and a similar argument leads to the bound for
$\langle\pnorm{\J_2^t - \bar\J_{\gamma,2}^t}{F}\rangle_{\x}$. Next, by the
derivative bounds for $\cG(\cdot)$ in Assumption \ref{assump:gradient}, we have
$\pnorm{\J^3_t - \bar\J_{\gamma,3}^t}{F} \leq C(\pnorm{\btheta^t -
\bar\btheta_\gamma^t}{}/d + \pnorm{\widehat\alpha^t -
\bar{\widehat\alpha}_\gamma^t}{}/\sqrt{d})$ and $\pnorm{\J_4^t -
\bar\J_{\gamma,4}^t}{F} \leq C(\pnorm{\btheta^t -
\bar\btheta_\gamma^t}{}/\sqrt{d} + \pnorm{\widehat\alpha^t -
\bar{\widehat\alpha}_\gamma^t}{})$, hence the desired bounds also follow by (\ref{eq:theta_diff_expectation}).
\end{proof}

\begin{lemma}\label{lem:semigroup_discre_error}
Define
\begin{equation}\label{eq:goodevent}
\event=\{\|\X\|_\op \leq C_0,\|\y\| \leq C_0\sqrt{d},
\|\btheta^0\| \leq C_0\sqrt{d},\|\widehat\alpha^0\| \leq C_0 \text{ for all
large } n,d\}.
\end{equation}
Fixing any $T>0$, there exists a constant $C>0$ (depending on $T,C_0$ but not on
$\gamma$) and a function $\iota:\R_+ \to \R_+$ satisfying $\lim_{\gamma \to 0}
\iota(\gamma)=0$, such that on $\event$, for any $\gamma>0$
and all $0\leq s\leq t\leq T$,
\begin{align}
|d^{-1}\Tr\bR_\theta(t,s) - d^{-1}\gamma^{-1}\Tr\bR_\theta^\gamma([t]+1,[s])| &\leq
\iota(\gamma)\label{eq:Rthetaapprox}\\
|n^{-1}\Tr\bR_\eta(t,s) - n^{-1}\gamma^{-1}\Tr\bR_\eta^\gamma([t]+1,[s])| &\leq
\iota(\gamma).\label{eq:Retaapprox}
\end{align}
\end{lemma}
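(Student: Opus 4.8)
The plan is to identify both the continuous and discrete response matrices with blocks of state-transition matrices, and then run a block-weighted Gronwall comparison between them. First I would record, using Proposition \ref{prop:responsegeneral} (for which the H\"older hypotheses of Theorem \ref{thm:dmftresponse} are assumed) together with a Duhamel formula obtained by differentiating the perturbed dynamics in $\eps$, that for the coordinate observables $e_j,x_j$ one has $\bR_\theta(t,s)=\langle \Phi^{t,s}_{\theta\theta}\rangle_{\x^0}$ and $\bR_\eta(t,s)=\delta\beta^2\langle \X\,\Phi^{t,s}_{\theta\theta}\,\X^\top\rangle_{\x^0}$, where $\{\Phi^{t,s}\}_{t\ge s}$ solves $\partial_t\Phi^{t,s}=\d u(\x^t)\,\Phi^{t,s}$ with $\Phi^{s,s}=\I_{d+K}$ and $u,\M$ as in (\ref{eq:def_M_u}), and $\Phi^{t,s}_{\theta\theta}$ (resp.\ $\Phi^{t,s}_{\alpha\theta}$) denotes the upper-left $d\times d$ (resp.\ lower-left $K\times d$) block, so that $\Phi^{t,0}=\V^t$ as in (\ref{eq:theta_V}). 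On the discrete side, unwinding the exact expansion (\ref{eq:expansion_derivative}) (and its counterpart for the $\eta$-perturbed dynamics (\ref{eq:alpha_row_perturb})) shows that, with $t'=([t]+1)\gamma$ and $s'=([s]+1)\gamma$, one has $\gamma^{-1}\bR^\gamma_\theta([t]+1,[s])=\langle \bar\Phi^{t',s'}_{\gamma,\theta\theta}\rangle_{\x^0}$ and $\gamma^{-1}\bR^\gamma_\eta([t]+1,[s])=\delta\beta^2\langle \X\,\bar\Phi^{t',s'}_{\gamma,\theta\theta}\,\X^\top\rangle_{\x^0}$, where $\bar\Phi^{\cdot,\cdot}_\gamma$ is the piecewise-constant discrete state-transition matrix of (\ref{eq:theta_V_embed}); here the ``$\alpha$-feedback'' terms $\gamma\sum_\ell \bOmega^{\cdot}\cdots\bOmega^{\cdot}\,\r^{\ell,\cdot}$ in (\ref{eq:expansion_derivative}) are exactly the off-$\bOmega$ part of the block $\bar\Phi^{t',s'}_{\gamma,\theta\theta}$, and Lemma \ref{lem:response_l2_estimate} (giving $\|\deps\widehat\alpha^{\cdot}\|\le C\gamma/\sqrt d$) is what keeps $\bar\Phi^{t',s'}_{\gamma,\alpha\theta}$ of size $O(1/\sqrt d)$, consistently with Lemma \ref{lem:theta_V_rough}.

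Given these identifications it suffices to prove, on the event $\event$ of (\ref{eq:goodevent}), the Frobenius bound $\langle\|\Phi^{t,s}_{\theta\theta}-\bar\Phi^{t',s'}_{\gamma,\theta\theta}\|_\fro\rangle_{\x^0}\le \sqrt d\,\iota(\gamma)$: then (\ref{eq:Rthetaapprox}) follows from $d^{-1}|\Tr(\cdot)|\le d^{-1}\sqrt d\,\|\cdot\|_\fro$, and (\ref{eq:Retaapprox}) follows from $\Tr(\X M\X^\top)=\Tr(\X^\top\X M)$, $\|\X^\top\X\|_\op\le C_0^2$, and boundedness of $d/n$ on $\event$. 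I would first absorb the $O(\gamma)$ mismatch between the endpoint pairs $(t,s)$ and $(t',s')$ (each of $t'-t$ and $s'-s$ lies in $(0,\gamma]$): writing $\Phi^{t,s}_{\theta\theta}-\Phi^{t',s'}_{\theta\theta}$ in terms of shifting each endpoint by at most $\gamma$ and using the block norms of (\ref{eq:Jbound}) and (the $s'$-shifted analogue of) (\ref{eq:UWbound}), the increment is $O(\sqrt d\,\gamma)$ in Frobenius — the point being that although $\Phi_{\theta\alpha}$ has Frobenius norm $\Theta(\sqrt d)$, inside the $\theta\theta$ block it is always contracted against an $\alpha$-row quantity of norm $O(1/\sqrt d)$. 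Then I would compare $\Phi^{\cdot,s'}$ and $\bar\Phi^{\cdot,s'}_\gamma$ on $[s',t']$ (both with honest grid endpoints): setting $\Delta_\theta^\tau=\Phi^{\tau,s'}_{\theta\theta}-\bar\Phi^{\tau,s'}_{\gamma,\theta\theta}$ and $\Delta_\alpha^\tau=\Phi^{\tau,s'}_{\alpha\theta}-\bar\Phi^{\tau,s'}_{\gamma,\alpha\theta}$, these solve coupled linear integral equations whose inhomogeneous terms are, up to standard Euler local-truncation errors of order $\gamma$ controlled via (\ref{eq:Jbound})--(\ref{eq:UWchangebound}), the Jacobian-discretization terms $(\J^\tau_i-\bar\J^\tau_{\gamma,i})$ applied to the blocks of $\bar\Phi^{\tau,s'}_\gamma$. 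Using (\ref{eq:Jbound}), (\ref{eq:UWbound}) and the discretization bound (\ref{eq:Jdiscretizebound}) (whose right side is $\le C\iota(\gamma)$ on $\event$), the quantity $\psi(\tau):=\langle\|\Delta_\theta^\tau\|_\fro\rangle_{\x^0}+\sqrt d\,\langle\|\Delta_\alpha^\tau\|_\fro\rangle_{\x^0}$ satisfies $\psi(\tau)\le C\int_{s'}^\tau\psi(\rho)\,\d\rho+C\sqrt d\,\iota(\gamma)$ with $\psi(s')=0$, so Gronwall gives $\psi(t')\le C'\sqrt d\,\iota(\gamma)$, hence the required bound on $\langle\|\Delta_\theta^{t'}\|_\fro\rangle_{\x^0}$.

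The crucial structural point, and the main source of friction, is the bookkeeping of the $\sqrt d$ versus $1/\sqrt d$ scalings of the $\theta$- and $\alpha$-blocks. The large Jacobian block $\J_2^\tau$ (Frobenius norm $\Theta(\sqrt d)$) occurs only multiplied by $\alpha$-row quantities of norm $O(1/\sqrt d)$ — namely $\Delta_\alpha$, $\bar\Phi_{\gamma,\alpha\theta}$, or $\J_2^\tau-\bar\J^\tau_{\gamma,2}$ (whose discretization bound (\ref{eq:Jdiscretizebound}) pairs it with $\bar\Phi_{\gamma,\alpha\theta}$) — while the small block $\J_3^\tau$ ($O(1/\sqrt d)$) multiplies $O(1)$ quantities; the $\sqrt d$-weighting of $\Delta_\alpha$ in $\psi$ is exactly what makes every cross-term close at rate $O(\sqrt d\,\iota(\gamma))$ rather than $O(\sqrt d)$. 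This is precisely why Lemma \ref{lem:theta_V_rough} is packaged in this block-normalized form, and once its bounds are in hand the remaining steps are routine Euler-scheme estimates plus Gronwall. The $\eta$-estimate (\ref{eq:Retaapprox}) then requires no new work beyond the sandwiching $\Tr(\X\Delta_\theta\X^\top)=\Tr(\X^\top\X\,\Delta_\theta)$, using $\|\X\|_\op\le C_0$ and boundedness of $d/n$ on $\event$; a minor additional check is that the time-index asymmetry between $[t]+1$ and $[s]$ contributes only the $O(\gamma)$ endpoint shift already handled.
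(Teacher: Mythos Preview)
Your proposal is correct and rests on the same core ingredients as the paper: the identification of $\bR_\theta,\bR_\eta$ (and their discrete analogues) with $\theta\theta$-blocks of the state-transition matrices $\V^t,\bar\V_\gamma^t$ via Propositions \ref{prop:responsegeneral}, \ref{prop:responsegeneraldiscrete} and Lemma \ref{lem:discrete_bismut}, followed by the block-weighted Frobenius Gronwall packaged in Lemma \ref{lem:theta_V_rough}. The organizational difference is that the paper factors through the semigroup identity and writes the error as $(I)+(II)$: term $(I)$ compares $\langle\Tr\U^{t-s}\rangle_\x$ to $\langle\Tr\bar\U_\gamma^{\floor{t}-\floor{s}}\rangle_\x$ at a \emph{fixed} start $\x$, then averages via $P_s$ and bounds the result using (\ref{eq:theta_bound_expectation}); term $(II)$ compares the outer semigroups $P_s$ and $P^\gamma_{[s]+1}$ via a separate Lipschitz Gronwall for the test function $\mathsf f(\x)=\langle\Tr\bar\U_\gamma^\tau\rangle_\x$, combined with (\ref{eq:theta_diff_expectation}). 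Your route instead compares $\Phi^{t,s}$ and $\bar\Phi^{t',s'}_\gamma$ directly along the coupled trajectories from $\x^0$, so the starting-distribution discrepancy of the paper's $(II)$ is absorbed into the Jacobian-difference term of a single Gronwall (via (\ref{eq:Jdiscretizebound}) applied at times $\tau\in[s',t']$). This is slightly more streamlined---you avoid the auxiliary Lipschitz argument for $\mathsf f$---while the paper's split makes the dependence on the semigroup regularity of Appendix \ref{sec:response} more transparent. One small point: your endpoint-shift step also needs the bounds $\|\Phi^{t',s'}_{\theta\alpha}\|_\fro\le C\sqrt d$ and $\|\Phi^{t',s'}_{\alpha\alpha}\|_\fro\le C$ on the right block-column of $\Phi$, which are not stated in (\ref{eq:UWbound}) but follow by the identical Gronwall.
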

\begin{proof}
\noindent\textbf{Discretization of $\bR_\theta$.}
Let $\{P_t^\gamma\}_{t \in \Z_+}$ be the Markov semigroup for the discrete
dynamics (\ref{eq:original_alpha}),
i.e.\ $P_t^\gamma f(\x)=\langle f(\x_\gamma^t) \rangle_\x$. Then
applying Proposition \ref{prop:responsegeneraldiscrete}, for any $s,t \in \Z_+$
with $s<t$,
\begin{align*}
\partial_\eps|_{\eps = 0} \langle \theta_{\gamma,j}^{t,(s,j),\eps}\rangle_\x
=\gamma P_{s+1}^\gamma \partial_j P_{t-s-1}^\gamma e_j(\x).
\end{align*}
This implies, for the given initial condition of the dynamics
$\x^0=(\btheta^0,\widehat\alpha^0)$, that
\begin{align*}
\gamma^{-1}\Tr \bR_\theta^\gamma(t,s) = \sum_{j=1}^d
P_{s+1}^\gamma \partial_j P_{t-s-1}^\gamma e_j(\x^0).
\end{align*}
Let $\{P_t\}_{t \geq 0}$ analogously denote the Markov semigroup of the
continuous dynamics (\ref{eq:langevin_sde}--\ref{eq:gflow}), i.e.\ $P_t
f(\x)=\langle f(\x^t) \rangle_\x$. Then
applying Proposition \ref{prop:responsegeneral}, for any $s,t \in \R_+$ with $s
\leq t$,
\[\Tr \bR_\theta(t,s)=\sum_{j=1}^d  P_{s} \partial_j P_{t-s} e_j(\x^0).\]
Thus, for all $s,t \in \R_+$ with $s \leq t$,
\begin{align*}
&\Big|\Tr \bR_\theta(t,s) - \gamma^{-1}\Tr \bR^\gamma_\theta([t]+1,[s])\Big| =
\Big|\sum_{j=1}^d  P_{s} \partial_j P_{t-s} e_j(\x^0) - \sum_{j=1}^d
P_{[s]+1}^\gamma \partial_j P_{[t]-[s]}^\gamma e_j(\x^0)\Big|\\
&\hspace{1in}\leq \underbrace{\Big|P_{s}\Big(\sum_{j=1}^d\partial_j P_{t-s} e_j -
\sum_{j=1}^d \partial_j P_{[t]-[s]}^\gamma e_j\Big)(\x^0)\Big|}_{(I)} +
\underbrace{\Big|(P_s - P_{[s]+1}^\gamma)\Big(\sum_{j=1}^d \partial_j
P_{[t]-[s]}^\gamma e_j\Big)(\x^0)\Big|}_{(II)}.
\end{align*}\\

\noindent\textbf{Bound of $(I)$.} By Proposition
\ref{prop:langevinsolution}(c), $\sum_{j=1}^d\partial_j P_{t-s} e_j(\x) =
\sum_{j=1}^d \langle (\V^{t-s})_{jj}\rangle_{\x}$, where $\{\x^t,\V^t\}_{t \geq
0}$ are the solution to (\ref{eq:theta_V}) with initial condition $\x^0=\x$.
Similarly, by Lemma \ref{lem:discrete_bismut} and the identification
(\ref{eq:xVembedding}), $\sum_{j=1}^d \partial_j
P_{[t]-[s]}^\gamma e_j(\x)=\sum_{j=1}^d \langle  (\bar\V_\gamma^{\floor{t} -
\floor{s}})_{jj} \rangle_{\x}$, where
$\{\bar\x_\gamma^t,\bar\V_\gamma^t\}_{t \geq 0}$ are the solution to
(\ref{eq:theta_V_embed}).
Let us write
\[\V^t=\begin{pmatrix} \U^t & \ast \\ \W^t & \ast \end{pmatrix},
\quad \bar{\V}_\gamma^t = \begin{pmatrix}
\bar{\U}_\gamma^t & \ast \\ \bar{\W}_\gamma^t & \ast \end{pmatrix},
\quad \d u(\x^t)=\begin{pmatrix} \J_1^t & \J_2^t \\ \J_3^t & \J_4^t
\end{pmatrix}, \quad
\d u(\bar\x_\gamma^t)=\begin{pmatrix} \bar\J_{\gamma,1}^t & \bar\J_{\gamma,2}^t
\\ \bar\J_{\gamma,3}^t & \bar\J_{\gamma,4}^t \end{pmatrix}
\]
with blocks of sizes $d$ and $K$. Then
\begin{align}\label{eq:semigroup_discre_1}
\Big|\sum_{j=1}^d \partial_j P_{t-s} e_j(\x) - \partial_j
P_{[t]-[s]}^\gamma \e_j(\x)\Big|
&= \Big|\langle \Tr \U^{t-s} - \Tr \bar{\U}_\gamma^{\floor{t}-\floor{s}}\rangle_{\x}\Big|\notag\\
&\leq \sqrt{d}\langle \pnorm{\U^{t-s} - \bar{\U}_\gamma^{t-s}}{F}\rangle_{\x} +
d\langle \pnorm{\bar{\U}_\gamma^{t-s} - \bar{\U}_\gamma^{\floor{t}-\floor{s}}}{\op} \rangle_{\x}.
\end{align}
Since $|(t-s)-(\floor{t}-\floor{s})| \leq C\gamma$,
the second term satisfies $\pnorm{\bar{\U}^{t-s} -
\bar{\U}^{t-s}}{\op} \leq C\gamma$ by (\ref{eq:UWchangebound}).
For the first term, note that by definition
\[
\begin{gathered}
\U^t = \U^0 + \int_0^t (\J^s_1 \U^s + \J^s_2 \W^s) \d s,\quad \W^t = \W^0 +
\int_0^t (\J^s_3 \U^s + \J^s_4 \W^s) \d s,\\
\bar{\U}_\gamma^t = \U^0 + \int_0^{\floor{t}} (\bar\J_{\gamma,1}^s \bar{\U}_\gamma^s +
\bar \J_{\gamma,2}^s \bar{\W}_\gamma^s) \d s,\quad \bar{\W}_\gamma^t = \W^0 +
\int_0^{\floor{t}} (\bar \J_{\gamma,3}^s \bar{\U}_\gamma^s + \bar\J_{\gamma,4}^s
\bar{\W}_\gamma^s) \d s.
\end{gathered}
\]
Hence
\begin{align*}
\langle\pnorm{\U^t - \bar\U_\gamma^t}{F}\rangle_{\x} &\leq \int_0^{\floor{t}}
\Big[\langle\pnorm{\J^s_1 - \bar\J_{\gamma,1}^s}{F}\pnorm{\U^s}{\op}\rangle_{\x}
+ \langle\pnorm{\bar\J_{\gamma,1}^s}{\op}\pnorm{\U^s - \bar{\U}_\gamma^s}{F}\rangle_{\x}\\
&\hspace{1in} + \langle\pnorm{\J^s_2 -
\bar\J_{\gamma,2}^s}{F}\pnorm{\W^s}{F}\rangle_{\x} +
\langle\pnorm{\bar\J_{\gamma,2}^s}{F}\pnorm{\W^s -
\bar{\W}_\gamma^s}{F}\rangle_{\x}\Big] \d s\\
&\hspace{0.2in}+\int_{\floor{t}}^t
\Big[\langle\|\J^s_1\|_F \|\U^s\|_\op\rangle_\x +
\langle\|\J^s_2\|_F\|\W^s\|_F\rangle_\x\Big] \d s
\end{align*}
Let $C,C'>0$ be constants depending on $T$ but not $\gamma$, and let
$\iota(\gamma),\iota'(\gamma)$ be constants depending also on $\gamma$
and satisfying $\iota(\gamma),\iota'(\gamma) \to 0$ as $\gamma \to 0$, all
changing from instance to instance.
By Lemma \ref{lem:theta_V_rough}, with $\x=(\btheta,\widehat\alpha)$, we
have
\[\pnorm{\U^s}{\op}\leq C, \quad \sqrt{d}\pnorm{\W^s}{F} \leq C, \quad
\pnorm{\J_1^s}{\op},\pnorm{\bar\J_{\gamma,1}^s}{\op}\leq C, \quad
\pnorm{\J_2^s}{F},\pnorm{\bar\J_{\gamma,2}^s}{F} \leq C\sqrt{d},\]
\[\langle\pnorm{\J^s_1 - \bar\J_{\gamma,1}^s}{F}
\rangle_\x \leq
\iota(\gamma)(\|\btheta\|+\sqrt{d}\|\widehat\alpha\|+\sqrt{d}), \quad
\langle \pnorm{\J^s_2 - \bar\J_{\gamma,2}^s}{F}\rangle_{\x} \leq
\iota(\gamma)(\|\btheta\|+\sqrt{d}\|\widehat\alpha\|+\sqrt{d}),\]
hence
\begin{align}\label{eq:U_bound}
\langle\pnorm{\U^t - \bar{\U}_\gamma^t}{F}\rangle_{\x}\leq C\int_0^t
\big(\langle\pnorm{\U^s - \bar{\U}_\gamma^s}{F}\rangle_{\x} +
\sqrt{d}\langle\pnorm{\W^s - \bar{\W}_\gamma^s}{F}\rangle_{\x}\big) \d s +
\iota(\gamma)(\|\btheta\|+\sqrt{d}\|\widehat\alpha\|+\sqrt{d}).
\end{align}
Next we have
\begin{align*}
\langle\pnorm{\W^t - \bar{\W}_\gamma^t}{F}\rangle_{\x} &\leq \int_0^{\floor{t}}
\Big[\langle\pnorm{\J^s_3 - \bar\J_{\gamma,3}^s}{F}\pnorm{\U^s}{\op}\rangle_{\x}
+ \langle\pnorm{\bar\J_{\gamma,3}^s}{F}\pnorm{\U^s - \bar{\U}_\gamma^s}{F}\rangle_{\x}\\
&\hspace{1in} + \langle\pnorm{\J^s_4 -
\bar\J_{\gamma,4}^s}{F}\pnorm{\W^s}{F}\rangle_{\x} +
\langle\pnorm{\bar\J_{\gamma,4}^s}{F}\pnorm{\W^s -
\bar{\W}_\gamma^s}{F}\rangle_\x\Big]
\d s\\
&\hspace{0.2in}+\int_{\floor{t}}^t
\Big[\langle\|\J^s_3\|_F\|\U^s\|_\op \rangle_\x+\langle
\|\J^s_4\|_F\|\W^s\|_F \rangle_\x\Big] \d s.
\end{align*}
By Lemma \ref{lem:theta_V_rough}, we have also
\[\pnorm{\J_3^s}{F},\pnorm{\bar\J_{\gamma,3}^s}{F} \leq C/\sqrt{d}, \quad
\pnorm{\J_4^s}{F},\pnorm{\bar\J_{\gamma,4}^s}{F} \leq C,\]
\[d\langle\pnorm{\J^s_3 - \bar\J_{\gamma,3}^s}{F}\rangle_{\x} \leq
\iota(\gamma)(\|\btheta\|+\sqrt{d}\|\widehat\alpha\|+\sqrt{d}),
\quad \sqrt{d}\pnorm{\J^s_4 - \bar\J_{\gamma,4}^s}{F} \leq \iota(\gamma)
(\|\btheta\|+\sqrt{d}\|\widehat\alpha\|+\sqrt{d}),\]
which implies that
\begin{align}\label{eq:W_bound}
\sqrt{d}\langle\pnorm{\W^t - \bar{\W}_\gamma^t}{F}\rangle_{\x} \leq C\int_0^t
(\langle\pnorm{\U^s - \bar{\U}^s}{F}\rangle_{\x} + \sqrt{d}\langle\pnorm{\W^s -
\bar{\W}^s}{F}\rangle_{\x})\d s + \iota(\gamma)
\Big(\frac{\|\btheta^0\|}{\sqrt{d}}+\|\widehat\alpha^0\|+1\Big).
\end{align}
Combining (\ref{eq:U_bound}) and (\ref{eq:W_bound}) yields
\begin{align*}
&\langle\pnorm{\U^t - \bar{\U}_\gamma^t}{F} + \sqrt{d}\pnorm{\W^t -
\bar{\W}_\gamma^t}{F}\rangle_{\x}\\
& \leq C\int_0^t (\langle\pnorm{\U^s -
\bar{\U}_\gamma^s}{F} +
\sqrt{d}\pnorm{\W^s - \bar{\W}_\gamma^s}{F}\rangle_{\x})\d s + \iota(\gamma)
(\|\btheta\|+\sqrt{d}\|\widehat\alpha\|+\sqrt{d}),
\end{align*}
so Gronwall's lemma gives
$\sup_{t \in [0,T]} \langle\pnorm{\U^t - \bar{\U}_\gamma^t}{F}\rangle_{\x} +
\sqrt{d}\langle\pnorm{\W^t - \bar{\W}_\gamma^t}{F}\rangle_{\x} \leq
\iota(\gamma)(\|\btheta\|+\sqrt{d}\|\widehat\alpha\|+\sqrt{d})$.

Hence the bound (\ref{eq:semigroup_discre_1}) reads, for
$\x=(\btheta,\widehat\alpha)$,
\begin{align}\label{eq:V_frob_norm}
\Big|\sum_{j=1}^d \partial_j P_{t-s} e_j(\x) - \partial_j P_{[t]-[s]-1}^\gamma
e_j(\x)\Big| \leq
\iota(\gamma)(\sqrt{d}\|\btheta\|+d\|\widehat\alpha\|+d).
\end{align}
Applying this with $\x=\x^s=(\btheta^s,\widehat\alpha^s)$,
this implies that
\begin{align*}
(I) \leq \iota(\gamma)(\sqrt{d}\langle\pnorm{\btheta^s}{}\rangle_{\x^0} + d\langle
\pnorm{\widehat\alpha^s}{}\rangle_{\x^0}+d) \leq \iota(\gamma)d,
\end{align*}
the last step using the bound (\ref{eq:theta_bound_expectation}) and conditions
for $(\btheta^0,\widehat\alpha^0)$ on the event (\ref{eq:goodevent}).\\

\noindent\textbf{Bound of $(II)$.} Let $\mathsf{f}(\x)=\sum_{j=1}^d \partial_j
P_{[t]-[s]}^\gamma e_j(\x)$. We first establish a Lipschitz bound for $\mathsf{f}$: Let $\{\bar\x_\gamma^t,\bar\V_\gamma^t\}_{t\in \Z_+}$ and
$\{\tilde{\x}_\gamma^t, \tilde{\V}_\gamma^t\}_{t\in \Z_+}$ be defined by
(\ref{eq:theta_V_embed}) with initializations $\x=(\btheta,\widehat\alpha)$
and $\tilde\x=(\tilde\btheta,\tilde{\widehat\alpha})$
respectively, coupled by the same Brownian motion. We write $\langle \cdot
\rangle$ for the average over this Brownian motion, and denote by $\tilde{\U}_\gamma^{t},\tilde{\W}_\gamma^{t}$ and
$\tilde\J_{\gamma,1}^t,\tilde\J_{\gamma,2}^t,\tilde\J_{\gamma,3}^t,\tilde\J_{\gamma,4}^t$
the blocks of $\tilde \V_\gamma^t$ and $\d u(\tilde \x_\gamma^t)$.
Then, using $\mathsf{f}(\x) = \langle \Tr \bar{\U}_\gamma^{\tau} \rangle$
with $\tau = \floor{t} - \floor{s}$ as established above,
\begin{align}\label{eq:f_modulus}
|\mathsf{f}(\x) - \mathsf{f}(\tilde\x)| \leq |\langle\Tr \bar{\U}_\gamma^{\tau}
- \Tr \tilde{\U}_\gamma^{\tau} \rangle | \leq \sqrt{d} \langle
\pnorm{\bar{\U}_\gamma^{\tau} - \tilde{\U}_\gamma^{\tau}}{F} \rangle.
\end{align}
We apply a similar argument as in term $(I)$, noting that
\begin{align*}
\pnorm{\bar{\U}_\gamma^{t+\gamma} - \tilde{\U}_\gamma^{t+\gamma}}{F} &\leq
\gamma\pnorm{\bar\J_{\gamma,1}^t -
\tilde\J_{\gamma,1}^t}{F}\pnorm{\bar{\U}_\gamma^t}{\op} +
(1+\gamma\pnorm{\tilde\J_{\gamma,1}^t}{\op})\pnorm{\bar{\U}_\gamma^{t} -
\tilde{\U}_\gamma^{t}}{F}\\
&\quad +  \gamma\pnorm{\bar\J_{\gamma,2}^t -
\tilde\J_{\gamma,2}^t}{F}\pnorm{\bar{\W}_\gamma^t}{F} +
\gamma\pnorm{\tilde\J_{\gamma,2}^t}{F}\pnorm{\bar{\W}_\gamma^{t} -
\tilde{\W}_\gamma^{t}}{F},\\
\pnorm{\bar{\W}_\gamma^{t+\gamma} - \tilde{\W}_\gamma^{t+\gamma}}{F} &\leq
\gamma\pnorm{\bar\J_{\gamma,3}^t -
\tilde\J_{\gamma,3}^t}{F}\pnorm{\bar{\U}_\gamma^t}{\op} +
\gamma\pnorm{\tilde\J_{\gamma,3}^t}{F}\pnorm{\bar{\U}_\gamma^{t} -
\tilde{\U}_\gamma^{t}}{F}\\
&\quad +  \gamma\pnorm{\bar\J_{\gamma,4}^t -
\tilde\J_{\gamma,4}^t}{F}\pnorm{\bar{\W}_\gamma^t}{F} + (1 +
\gamma\pnorm{\tilde\J_{\gamma,4}^t}{F})\pnorm{\bar{\W}_\gamma^{t} -
\tilde{\W}_\gamma^{t}}{F}.
\end{align*}
By Lemma \ref{lem:theta_V_rough}, we have
$\pnorm{\bar{\U}_\gamma^t}{\op},\sqrt{d} \pnorm{\bar{\W}_\gamma^t}{F}\leq C$,
and
$\pnorm{\tilde\J_{\gamma,1}^t}{\op},\frac{\pnorm{\tilde\J_{\gamma,2}^t}{F}}{\sqrt{d}},\sqrt{d}\pnorm{\tilde\J_{\gamma,3}^t}{F},\pnorm{\tilde\J_{\gamma,4}^t}{F}
\leq C$. Furthermore similar arguments to (\ref{eq:Jdiscretizebound}) in
Lemma \ref{lem:theta_V_rough} show that
\begin{align*}
&\langle \pnorm{\bar\J_{\gamma,1}^t - \tilde\J_{\gamma,1}^t}{F} \rangle,\langle
\pnorm{\bar\J_{\gamma,2}^t - \tilde\J_{\gamma,2}^t}{F}  \rangle,d \langle
\pnorm{\bar\J_{\gamma,3}^t - \tilde\J_{\gamma,3}^t}{F}\rangle,\sqrt{d}\langle
\pnorm{\bar\J_{\gamma,4}^t - \tilde\J_{\gamma,4}^t}{F}\rangle\\
&\leq
C\Big\langle\|\bar\btheta^t-\tilde\btheta^t\|+\sqrt{d}\|\bar{\widehat\alpha}^t
-\tilde{\widehat\alpha}^t\|\Big \rangle
\leq
C'\Big(\|\btheta-\tilde\btheta\|+\sqrt{d}\|\widehat\alpha-\tilde{\widehat\alpha}\|\Big),
\end{align*}
the quantities in the last expression denoting the differences in initial
conditions. Hence
\begin{align*}
&\langle\pnorm{\tilde{\U}_\gamma^{t+\gamma}- \bar{\U}_\gamma^{t+\gamma}}{F} +
\sqrt{d} \pnorm{\bar{\W}_\gamma^{t+\gamma} -
\tilde{\W}_\gamma^{t+\gamma}}{F}\rangle \\
&\leq (1+C\gamma)\langle\pnorm{\bar{\U}_\gamma^{t} - \tilde{\U}_\gamma^{t}}{F} +
\sqrt{d} \pnorm{\bar{\W}_\gamma^{t} - \tilde{\W}_\gamma^{t}}{F}\rangle +
C\gamma\Big(\|\btheta-\tilde\btheta\|+\sqrt{d}\|\widehat\alpha-\tilde{\widehat\alpha}\|\Big).
\end{align*}
Iterating this bound gives
$\langle\pnorm{\bar{\U}_\gamma^{\tau} - \tilde{\U}_\gamma^{\tau}}{F}\rangle \leq
C(\|\btheta-\tilde\btheta\|+\sqrt{d}\|\widehat\alpha-\tilde{\widehat\alpha}\|)$,
which applied to (\ref{eq:f_modulus}) yields our desired Lipschitz bound
\[|\mathsf{f}(\x)-\mathsf{f}(\tilde\x)| \leq 
C\sqrt{d}(\|\btheta-\tilde\btheta\|+\sqrt{d}\|\widehat\alpha-\tilde{\widehat\alpha}\|).\]

Then, writing $\x^0=(\btheta^0,\widehat\alpha^0)$ for the original initial
conditions,
\begin{align*}
(II)=\Big|(P_s - P_{[s]+1}^\gamma)\mathsf{f}(\x^0)\Big| = \Big|\langle \mathsf{f}(\x^s)
\rangle_{\x^0}- \langle \mathsf{f}(\bar\x_\gamma^{\floor{s}+\gamma})
\rangle_{\x^0}\Big| \leq C\sqrt{d}\langle \pnorm{\btheta^s -
\bar{\btheta}_\gamma^{\floor{s} + \gamma}}{} +\sqrt{d}\|\widehat\alpha^s
-\bar{\widehat\alpha}_\gamma^{\floor{s}+\gamma}\|\rangle_{\x^0}
\end{align*}
where we couple $\{(\btheta^t,\widehat\alpha^t)\}_{t \geq 0}$ and
$\{\bar{\btheta}_\gamma^t,\bar{\widehat\alpha}_\gamma^t\}_{t \geq 0}$ by
the same Brownian motion. Bounding
\[\langle \pnorm{\btheta^s -
\bar{\btheta}_\gamma^{\floor{s} + \gamma}}{}\rangle_{\x^0}
\leq \langle \pnorm{\btheta^s -
\bar{\btheta}_\gamma^s}{}\rangle_{\x^0}
+\langle \pnorm{\bar\btheta_\gamma^s -
\bar{\btheta}_\gamma^{\floor{s} + \gamma}}{}\rangle_{\x^0}\]
and similarly for $\widehat\alpha$, and then applying
(\ref{eq:theta_bound_expectation}) and (\ref{eq:theta_diff_expectation}),
we obtain on the event (\ref{eq:goodevent}) that
\[\langle \pnorm{\btheta^s -
\bar{\btheta}_\gamma^{\floor{s} + \gamma}}{} +\sqrt{d}\|\widehat\alpha^s
-\bar{\widehat\alpha}_\gamma^{\floor{s}+\gamma}\|\rangle_{\x^0}
\leq \iota(\gamma)(\|\btheta^0\|+\sqrt{d}\|\widehat\alpha^0\|+\sqrt{d})
\leq C\iota(\gamma)\sqrt{d}.\]
Hence also
\[(II) \leq \iota(\gamma)d.\]
The proof of (\ref{eq:Rthetaapprox}) is completed
by combining the bounds for $(I)$ and $(II)$.\\

\noindent\textbf{Discretization of $\bR_\eta$.}
Let $P_t^\gamma$ and $P_t$ be the discrete and continuous Markov semigroups
defined above. Let $x_i(\btheta,\widehat\alpha)=\e_i^\top \X\btheta$.
Introduce the matrix $\bE \in \R^{d \times (d+K)}$ defined by
$\bE(\btheta,\widehat\alpha)=\btheta$, so that this reads $x_i(\x)=\e_i^\top \X\bE\x$ for $\x=(\btheta,\widehat\alpha)$.
Then for any $s,t \in \Z_+$ with $s<t$, Proposition
\ref{prop:responsegeneraldiscrete} gives
\[\deps \langle \eta_i^{t,[s,i],\eps} \rangle_{\x}
=\gamma P_{s+1}^\gamma \e_i^\top \X \bE\nabla P_{t-s-1}^\gamma x_i(\x).\]
Let us introduce the shorthand $P_t^\gamma(\x)=\langle \x_\gamma^t \rangle_\x$
as a map $P_t^\gamma:\R^{d+K} \to \R^{d+K}$, so
$P_t^\gamma x_i(\x)=\e_i^\top \X\bE P_t^\gamma(\x)$. Denote also
$\d P_t^\gamma(\cdot):\R^{d+K} \to \R^{(d+K) \times (d+K)}$ as the derivative of
this map $\x \mapsto P_t^\gamma(\x)$. Then the above may be written as
\begin{align*}
\deps \langle \eta_i^{t,[s,i],\eps} \rangle_{\x}
=\gamma P_{s+1}^\gamma\Big(\e_i^\top
\X \bE \d P_{t-s-1}^\gamma(\cdot)^\top \bE^\top\X^\top\e_i\Big)(\x),
\end{align*}
implying that
\begin{align*}
\gamma^{-1}\Tr \bR_\eta^\gamma(t,s) = \delta\beta^2\sum_{i=1}^n
P_{s+1}^\gamma\Big(\e_i^\top
\X \bE \d P_{t-s-1}^\gamma(\cdot)^\top \bE^\top\X^\top\e_i\Big)(\x^0)
= \delta\beta^2 P_{s+1}^\gamma \Tr\Big[\d P_{t-s-1}^\gamma(\cdot) \bE^\top
\X^\top \X \bE\Big](\x^0).
\end{align*}
By Proposition \ref{prop:responsegeneral}, we have analogously
for any $s,t \in \R_+$ with $s \leq t$ that
\begin{align*}
\Tr \bR_\eta(t,s) = \delta\beta^2 P_s \Tr \big[\d P_{t-s}(\cdot)\bE^\top \X^\top
\X \bE\big](\x^0).
\end{align*}
Hence for all $s,t \in \R_+$ with $s \leq t$,
\begin{align*}
&\Big|\Tr \bR_\eta(t,s) - \gamma^{-1}\Tr \bR_\eta^\gamma([t]+1,[s])\Big|\\
&= \delta\beta^2\bigg[\underbrace{\Big|P_s \Tr\Big[\Big(\d P_{t-s}(\cdot)-\d
P_{[t]-[s]}^\gamma(\cdot)\Big)\bE^\top \X^\top \X \bE\Big](\x^0)\Big|}_{(I)} +
\underbrace{\Big|(P_s-P_{[s]+1}^\gamma)\Tr \Big[\d P_{[t] - [s]}^\gamma(\cdot)\bE^\top \X^\top \X \bE\Big](\x^0)\Big|}_{(II)}\bigg].
\end{align*}\\

\noindent\textbf{Bound of $(I)$.} Note that by Proposition
\ref{prop:langevinsolution}(c), $\Tr \d P_{t-s}(\x)\bE^\top \X^\top \X \bE =
\langle \Tr \V^{t-s} \bE^\top \X^\top \X\bE \rangle_{\x} = \langle \Tr \U^{t-s}
\cdot \X^\top \X \rangle_{\x} $, where $\{\x^t, \V^t\}_{t \geq 0}$ follow the
dynamics (\ref{eq:theta_V}) and $\U^t$ as before is the upper-left block of
$\V^t$. Similarly, Lemma \ref{lem:discrete_bismut} yields that $\Tr \d
P_{[t]-[s]}^\gamma(\x)\bE^\top \X^\top \X\bE = \langle \Tr
\bar\V_\gamma^{\floor{t} -\floor{s}}\bE^\top \X^\top \X\bE\rangle_{\x} =
\langle \Tr \bar{\U}_\gamma^{\floor{t}-\floor{s}}\X^\top \X
\rangle_{\x}$, where $\{\bar\x_\gamma^t, \bar\V_\gamma^t\}_{t \geq 0}$ follow
(\ref{eq:theta_V_embed}). Hence, with $\x=(\btheta,\widehat\alpha)$,
\begin{align*}
\Big|\Tr \Big(\d P_{t-s}(\x) - \d P_{[t]-[s]}^\gamma(\x)\Big) \bE^\top
\X^\top \X \bE \Big| &= \langle \Tr (\U^{t-s}-\bar{\U}_\gamma^{\floor{t} - \floor{s}})\X^\top \X \rangle_{\x}\\
&\leq \sqrt{d}\pnorm{\X}{\op}^2 \langle\pnorm{\U^{t-s} - \bar{\U}_\gamma^{\floor{t} - \floor{s}}}{F} \rangle_{\x}\\
&\leq \iota(\gamma)(\sqrt{d}\pnorm{\btheta}{} + d\pnorm{\widehat\alpha}{} + d)
\end{align*}
using the preceding bounds leading to (\ref{eq:V_frob_norm}). Then applying this with $\x=\x^s$ shows $(I) \leq
\iota(\gamma)d$.\\

\noindent\textbf{Bound of $(II)$.} Let $\mathsf{f}(\x) = \Tr \d P_{[t] -
[s]}^\gamma(\x) \bE^\top \X^\top \X \bE = \Tr \langle \bar{\U}_\gamma^\tau
\rangle_{\x}\X^\top \X$, where $\tau = \floor{t} - \floor{s}$. By the
same arguments as above, 
\begin{align*}
|\mathsf{f}(\x) - \mathsf{f}(\tilde\x)| \leq
C\sqrt{d}(\|\btheta-\tilde\btheta\|+\sqrt{d}\|\widehat\alpha-\tilde{\widehat\alpha}\|),
\end{align*}
leading to $(II) \leq \iota(\gamma)d$. Combining these bounds for $(I)$ and
$(II)$ shows (\ref{eq:Retaapprox}).
\end{proof}

We now conclude the proof of Theorem \ref{thm:dmftresponse}.

\begin{proof}[Proof of Theorem \ref{thm:dmftresponse}]
The claims for $d^{-1}\Tr \bC_\theta(t,s)$, $d^{-1}\Tr \bC_\theta(t,*)$, and
$n^{-1}\Tr \bC_\eta(t,s)$ follow immediately from the definitions of these
quantities, Corollary \ref{cor:dmft_approx} applied with $f_\theta(\theta^s,\theta^t)=\theta^s\theta^t$, $f_\theta(\theta^*,\theta^t)=\theta^*\theta^t$, $f_\eta(\eta^*,\eps,\eta^s,\eta^t)=\delta\beta^2(\eta^s-\eta^*-\eps)(\eta^t-\eta^*-\eps)$,
and an application of the dominated convergence theorem to take expectations over $\{\b^t\}_{t \in [0,T]}$ in the almost-sure convergence statements of
Corollary \ref{cor:dmft_approx}.

For the claim for $d^{-1}\Tr \bR_\theta(t,s)$, for any $s,t \in [0,T]$ with
$s \leq t$, by Lemma \ref{lem:semigroup_discre_error}, almost surely
\begin{align*}
\limsup_{n,d \to \infty}
\Big|\frac{1}{d}\Tr \bR_\theta(t,s)
-\frac{1}{\gamma} \cdot \frac{1}{d}\Tr \bR_\theta^\gamma([t]+1,[s])\Big|
\leq \iota(\gamma).
\end{align*}
By Lemma \ref{lem:discrete_time_response} 
and the identification (\ref{eq:discretefixedpoint}) of
Lemma \ref{lem:discrete_dmft_rewrite}, almost surely
\begin{align*}
\lim_{n,d \to \infty}
\frac{1}{\gamma} \cdot
\frac{1}{d} \Tr \bR_\theta^\gamma([t]+1,[s])=\frac{1}{\gamma}\,
R_\theta^\gamma([t]+1,[s])=\bar R_\theta^\gamma(t+\gamma,s).
\end{align*}
The bound (\ref{eq:XYdiscretization}) implies uniform convergence of
$\bar R_\theta^\gamma(t,s)$ to $R_\theta^\gamma(t,s)$ as $\gamma \to 0$,
and $R_\theta^\gamma(t,s)$ is continuous in
$s,t$ by Theorem \ref{thm:dmftsolexists} and the definition of the space
$\cS^\text{cont}$. Thus
\[\lim_{\gamma \to 0} |\bar R_\theta^\gamma(t+\gamma,s)-R_\theta(t,s)|=0.\]
Then, taking the limit $n,d
\to \infty$ followed by $\gamma \to 0$ shows almost surely
\[\lim_{n,d \to \infty}
\Big|\frac{1}{d}\Tr \bR_\theta(t,s)
-R_\theta(t,s)\Big|=0.\]
The proof of the claim for $n^{-1}\Tr \bR_\eta(t,s)$ is the same.
\end{proof}

\appendix

\section{Existence of linear response functions}\label{sec:response}

\subsection{Continuous dynamics}
Fix any dimension $m \geq 1$, and consider the function classes
\begin{align*}
\cA&=\Big\{f:\R^m \to \R \text{ twice continuously-differentiable}:
\,\nabla f(\x),\nabla^2 f(\x) \text{ are globally bounded}\Big\},\\
\cB&=\Big\{f:\R^m \to \R^m \text{ twice continuously-differentiable}:\\
&\hspace{0.5in}
\nabla f_i(\x),\nabla^2 f_i(\x) \text{ are globally bounded
and H\"older-continuous for each } i=1,\ldots,m\Big\}.
\end{align*}
We consider a general stochastic diffusion over $\x^t \in \R^m$ given by
\begin{equation}\label{eq:joint_sde}
\d\x^t=u(\x^t)\d t+\sqrt{2}\,\M\,\d\b^t
\end{equation}
where $\b^t \in \R^m$ is a standard Brownian motion, $u(\cdot)$ a Lipschitz
drift function, and $\M \in \R^{m \times m}$ a deterministic diffusion
coefficient matrix. We note that
the joint evolution of $\x^t=(\btheta^t,\widehat\alpha^t)$ in
(\ref{eq:langevin_sde}--\ref{eq:gflow}) is of this form, with $m=d+K$ and with $u(\cdot)$ and $\M$ as defined in
(\ref{eq:def_M_u}).
The conditions of Theorem \ref{thm:dmftresponse} ensure that this drift
function $u(\cdot)$ satisfies $u \in \cB$.

We prove in this section the following result:

\begin{proposition}\label{prop:responsegeneral}
Suppose $u \in \cB$, and let $\{\x^t\}_{t \geq 0}$ be the solution of
(\ref{eq:joint_sde}) with initial condition $\x^0=\x$.
For any $a \in \cA$, $b \in \cB$, and $\x \in \R^m$, define
\begin{equation}\label{eq:explicitresponse}
R(t,s)=P_s\big(b^\top \nabla P_{t-s}a \big)(\x)
\end{equation}
where $P_tf(\x)=\E[f(\x^t) \mid \x^0=\x]$. Then $\{R(t,s)\}_{0 \leq s \leq t}$
is the unique continuous function for which the following holds:

Let $h:[0,\infty) \to \R$ be any continuous bounded
function, and for each $\eps>0$ let $\{\x^{t,\eps}\}_{t \geq 0}$ be the solution
of the perturbed dynamics
\begin{equation}\label{eq:joint_sde_perturbed}
\d \x^{t,\eps}=\Big(u(\x^{t,\eps})+\eps h(t)b(\x^{t,\eps})\Big)\d t
+\sqrt{2}\,\M\,\d\b^t
\end{equation}
with the same initial condition $\x^{0,\eps}=\x$. Then for any $t>0$,
\[\lim_{\eps \to 0} \frac{1}{\eps}\Big(\E[a(\x^{t,\eps}) \mid \x^{0,\eps}=\x]
-\E[a(\x^t) \mid \x^0=\x]\Big)=\int_0^t R(t,s)h(s)\d s.\]
\end{proposition}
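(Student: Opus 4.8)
The plan is to verify the claimed formula (\ref{eq:explicitresponse}) via a semigroup/Duhamel argument, and then establish uniqueness using the arbitrariness of $h$. First I would set up the key analytic facts about the semigroup $\{P_t\}$ of the unperturbed diffusion (\ref{eq:joint_sde}): because $u \in \cB$ (so $\nabla u$ is bounded and H\"older), standard parabolic regularity for the Kolmogorov backward equation shows that for $a \in \cA$, the function $(t,\x) \mapsto P_t a(\x)$ is a classical solution of $\partial_t P_t a = \cL P_t a$ with $\cL = \sum_{ij}(\M\M^\top)_{ij}\partial_i\partial_j + u \cdot \nabla$, that $P_t a$ remains $C^1$ in $\x$ with $\nabla P_t a$ bounded on $[0,T]$ uniformly, and that the Bismut–Elworthy–Li formula gives $\nabla P_t a(\x) = \langle \V^t{}^\top \nabla a(\x^t)\rangle_\x$ (or the integration-by-parts version) where $\V^t$ is the first-variation process; this is essentially what Proposition~\ref{prop:langevinsolution}(c) records and is the input I would cite. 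In particular $b^\top \nabla P_{t-s}a$ is a bounded continuous function of $\x$, so $R(t,s) = P_s(b^\top\nabla P_{t-s}a)(\x)$ is well-defined and continuous in $(s,t)$ with $0 \le s \le t$.

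Next I would compute the first-order expansion of $\E[a(\x^{t,\eps})]$ in $\eps$. Write $F^\eps(s,\x) = \E[a(\x^{t,\eps}) \mid \x^{s,\eps}=\x]$ for the value function of the perturbed process run from time $s$ to the fixed terminal time $t$; it solves the backward equation $\partial_s F^\eps + \cL F^\eps + \eps h(s) b\cdot\nabla F^\eps = 0$ with $F^\eps(t,\cdot)=a$. Since the perturbation is smooth and bounded, $F^\eps$ and its spatial gradient depend smoothly on $\eps$, and differentiating at $\eps=0$ gives that $G(s,\x) := \partial_\eps|_{\eps=0} F^\eps(s,\x)$ solves the inhomogeneous backward equation $\partial_s G + \cL G = -h(s)\, b(\x)\cdot\nabla P_{t-s}a(\x)$ with terminal condition $G(t,\cdot)=0$, using $F^0(s,\x) = P_{t-s}a(\x)$. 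By the Duhamel/Feynman–Kac representation for this linear equation,
\[
G(0,\x) = \int_0^t P_s\big(h(s)\, b^\top \nabla P_{t-s}a\big)(\x)\, \d s
= \int_0^t R(t,s)\, h(s)\, \d s,
\]
which is exactly the asserted limit $\lim_{\eps\to 0}\eps^{-1}(\E[a(\x^{t,\eps})] - \E[a(\x^t)])$, since $\E[a(\x^{t,\eps})\mid \x^{0,\eps}=\x] = F^\eps(0,\x)$. To make the interchange of $\partial_\eps$ and the expectation rigorous I would instead argue at the level of processes: couple $\x^{t,\eps}$ and $\x^t$ by the same Brownian motion, control $\|\x^{t,\eps}-\x^t\|$ by $O(\eps)$ via Gronwall (using Lipschitzness of $u$ and boundedness of $b$), show $\eps^{-1}(\x^{t,\eps}-\x^t)$ converges in $L^2$ to the solution of the linearized SDE driven by $h(s)b(\x^s)$, and then pass the limit inside $\E[a(\cdot)]$ using that $a$ is $C^1$ with bounded gradient; identifying the resulting expression with $\int_0^t R(t,s)h(s)\d s$ uses the Bismut formula and the Markov property to reassemble $P_s(b^\top\nabla P_{t-s}a)(\x)$.

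For uniqueness: suppose $\tilde R(t,s)$ is another continuous function satisfying the defining property for all continuous bounded $h$. Then for every such $h$, $\int_0^t (R(t,s)-\tilde R(t,s))h(s)\,\d s = 0$. Fixing $t$ and letting $h$ range over a dense family (e.g. bump functions approximating point masses, or simply all continuous functions on $[0,t]$), the continuity of $s\mapsto R(t,s)-\tilde R(t,s)$ forces $R(t,s)=\tilde R(t,s)$ for all $s\in[0,t]$; since $t>0$ was arbitrary this gives equality on the whole domain. The main obstacle I anticipate is the regularity bookkeeping in the second step — justifying that $P_t a$ is genuinely $C^1$ in space with the Bismut representation valid under only the hypothesis $u\in\cB$ (Lipschitz with H\"older derivative, but no uniform ellipticity unless $\M$ is invertible), and correspondingly controlling the $\eps$-dependence uniformly on $[0,T]$; the pathwise coupling argument is the cleanest route and sidesteps needing ellipticity, so I would lean on that rather than on PDE smoothing estimates, invoking Proposition~\ref{prop:langevinsolution} for whatever differentiability of the semigroup is needed.
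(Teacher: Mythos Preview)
Your Duhamel/backward-equation strategy is essentially the paper's approach, and your uniqueness argument is identical to theirs. The organizational difference worth noting: where you propose to differentiate the backward equation $\partial_s F^\eps + \cL F^\eps + \eps h(s)b\cdot\nabla F^\eps = 0$ in $\eps$ and then solve the resulting inhomogeneous PDE by Duhamel, the paper instead derives the \emph{exact} (finite-$\eps$) Duhamel identity directly, by applying It\^o's formula to $r \mapsto P_{r,t}^\eps a(\x^r)$ along the \emph{unperturbed} process $\{\x^r\}$ and using $\partial_r P_{r,t}^\eps a = -\cL_r^\eps P_{r,t}^\eps a$ to get
\[
P_{s,t}^\eps a(\x) - P_{t-s}a(\x) = \eps\int_s^t h(r)\,P_{r-s}\big(b^\top \nabla P_{r,t}^\eps a\big)(\x)\,\d r.
\]
This sidesteps the need to justify smoothness of $F^\eps$ in $\eps$ up front; instead one differentiates the identity in $\x$ (using the uniform gradient bounds on $P_{r,t}^\eps a$ from the perturbed-semigroup analogue of Proposition~\ref{prop:langevinsolution}) to first show $\nabla P_{r,t}^\eps a \to \nabla P_{t-r}a$, and then passes to the limit in the identity itself. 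Your pathwise coupling fallback would also work and is a genuinely different route the paper does not take; note however that $b \in \cB$ need not be bounded (only its derivatives are), so in the Gronwall step you would control $\|b(\x^{s,\eps})\|$ via linear growth plus an a priori bound on $\|\x^{s,\eps}\|$, not via boundedness of $b$.
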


Statements similar to Proposition \ref{prop:responsegeneral} have been
established in \cite{dembo2010markovian,chen2020mathematical}. Our setting here
is somewhat non-standard, in that $\M$ may be rank-degenerate, so 
the PDE describing the law of $\{\x^t\}_{t \geq 0}$ is not uniformly elliptic.
We show Proposition \ref{prop:responsegeneral} in two steps, first deriving
regularity estimates for the Markov semigroup $\{P_t\}_{t \geq 0}$
in such settings using the results of \cite{kunita1984stochastic}, and then
applying the proof idea of
\cite[Theorem 3.9]{chen2020mathematical} with these regularity estimates in
place of the Schauder estimates derived therein from uniform ellipticity.

We will write
\begin{equation}\label{eq:semigroup}
P_tf(\x)=\langle f(\x^t) \rangle_{\x^0=\x}=\E[f(\x^t) \mid \x^0=\x]
\end{equation}
for the Markov semigroup associated to (\ref{eq:joint_sde}). When the initial
condition $\x^0=\x$ is clear from context, we will abbreviate
$\langle f(\x^t) \rangle=\langle f(\x^t) \rangle_{\x^0=\x}$.
We denote the infinitesimal generator $\L$ of this semigroup by
\begin{equation}\label{eq:generator}
\L f(\x)=u(\x)^\top \nabla f(\x)+\Tr \M\M^\top \nabla^2 f(\x).
\end{equation}
Throughout this section, constants $C,C',c>0$ may depend on the dimension $m$
and the functions $u,a,b$.

\begin{proposition}\label{prop:langevinsolution}
Suppose the assumptions of Proposition \ref{prop:responsegeneral} hold.
Let $u_i:\R^m \to \R$ be the $i^\text{th}$ coordinate of $u$,
and let $\partial_j u_i$ and $\partial_j\partial_k u_i$
be its first-order and second-order partial derivatives.

\begin{enumerate}[(a)]
\item For each $\x \in \R^m$, the diffusion (\ref{eq:joint_sde}) has a 
unique solution $\{\x^t\}_{t \geq 0}$ with initial condition $\x^0=\x$.
Furthermore, there exists a modification $\x^t(\x)$ of this solution for each
initial condition $\x^0=\x$ such that $\x^t(\x)$ is jointly continuous in
$(t,\x)$ and twice continuously-differentiable in $\x$.
\item For every $i=1,\ldots,m$, let $x_i^t(\x)$ be the $i^\text{th}$
coordinate of $\x^t(\x)$, and let $\v_i^t(\x)=\nabla x_i^t(\x) \in \R^m$
and $\H_i^t(\x)=\nabla^2 x_i^t(\x) \in \R^{m \times m}$ be its gradient and
Hessian in $\x$.
Then $(\v_i^t(\x),\H_i^t(\x))$ are solutions to the first and second
variation processes
\begin{equation}\label{eq:firstsecondvar}
\begin{cases}
\d \v_i^t=\sum_{j=1}^m \partial_j u_i(\x^t(\x)) \cdot \v_j^t\,\d t\\
\d \H_i^t=\big(\sum_{j,k=1}^m \partial_j\partial_k u_i(\x^t(\x)) \cdot
\v_j^t\v_k^{t\top}+\sum_{j=1}^m \partial_j u_i(\x^t(\x))
\cdot \H_j^t\big)\d t
\end{cases}
\end{equation}
with initial conditions $\v_i^0(\x)=\e_i$ (the $i^\text{th}$ standard basis vector
in $\R^m$) and $\H_i^0(\x)=0$.

Furthermore
$\|\v_i^t(\x)\|_2,\|\H_i^t(\x)\|_\op \leq e^{Ct}$ for some $C>0$ and
all $\x \in \R^m$ and $t \geq 0$.

\item For any $f \in \cA$, the map $(t,\x) \mapsto P_t f(\x)$ is
continuously-differentiable in $t$ and twice continuously-differentiable in
$\x$, and furthermore $\nabla P_t f(\x),\nabla^2 P_t f(\x)$ are uniformly
bounded over $t \in [0,T]$ and $\x \in \R^m$ for any fixed $T>0$.
For any $t \geq 0$ and initial condition $\x^0=\x$,
letting $(\x^t,\v_i^t,\H_i^t) \equiv (\x^t(\x),\v_i^t(\x),\H_i^t(\x))$
be as defined in parts (a) and (b), we have
\begin{equation}\label{eq:Ptspaceder}
\begin{aligned}
\nabla P_t f(\x) &=\bigg \langle \sum_{j=1}^m \partial_j
f(\x^t)\v_j^t\bigg\rangle\\
\nabla^2 P_t f(\x)
&=\bigg\langle \sum_{j,k=1}^m \partial_j\partial_k f(\x^t)
\v_j^t\v_k^{t\top}+\sum_{j=1}^m \partial_j f(\x^t)\H_j^t \bigg\rangle
\end{aligned}
\end{equation}
and
\begin{equation}\label{eq:forwardbackward}
\partial_t P_t f(\x)=P_t \L f(\x)=\L P_t f(\x).
\end{equation}
\end{enumerate}
\end{proposition}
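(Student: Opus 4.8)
The plan is to derive parts (a) and (b) from the theory of stochastic flows with differentiable dependence on the initial condition, and then to obtain part (c) by differentiating under the expectation together with It\^o's formula.

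\textbf{Parts (a) and (b).} Since $u\in\cB$ has globally bounded gradient it is globally Lipschitz, so (\ref{eq:joint_sde}) admits a unique strong solution for each deterministic initial condition by the classical existence--uniqueness theorem. To produce a modification that is jointly continuous in $(t,\x)$ and twice continuously differentiable in $\x$, I would invoke the stochastic flow results of \cite{kunita1984stochastic}: the drift $u$ is $C^2$ with bounded and H\"older-continuous second derivatives, and the diffusion coefficient $\sqrt 2\,\M$ is constant, so (\ref{eq:joint_sde}) generates a stochastic flow $\x\mapsto\x^t(\x)$ of $C^2$ maps, with $(t,\x)\mapsto\x^t(\x)$ jointly continuous after passing to a suitable modification (Kolmogorov's continuity criterion applied to the flow and its spatial derivatives). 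I would note explicitly that the possible rank-degeneracy of $\M$ plays no role, as the spatial regularity of the flow is governed only by the regularity of the coefficients, not by ellipticity. Differentiating the integral form $\x^t(\x)=\x+\int_0^t u(\x^s(\x))\d s+\sqrt 2\,\M\b^t$ in $\x$ — the Brownian term being $\x$-independent — shows formally that $\v_i^t(\x)=\nabla_\x x_i^t(\x)$ and $\H_i^t(\x)=\nabla^2_\x x_i^t(\x)$ solve the pathwise linear ODEs (\ref{eq:firstsecondvar}); this is made rigorous by the differentiability part of Kunita's theorem, which identifies the spatial derivatives of the flow with the solutions of precisely these variation equations. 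The growth bounds then follow from Gr\"onwall's inequality: collecting the $\v_i^t$ into the Jacobian $\v^t$ with $\v^0=\I_m$, the bound $\|\v^t\|_\op\le 1+C\int_0^t\|\v^s\|_\op\,\d s$ (valid since $\nabla u_i$ is bounded) gives $\|\v_i^t\|_2\le\|\v^t\|_\op\le e^{Ct}$; feeding this into the $\H_i^t$ equation, whose inhomogeneous term is bounded by $C\|\v^t\|_\op^2\le Ce^{2Ct}$ because $\nabla^2 u_i$ is bounded, and applying Gr\"onwall again yields $\|\H_i^t\|_\op\le e^{C't}$, uniformly in $\x$ and $\omega$.

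\textbf{Part (c).} Given the $C^2$-regularity of $\x\mapsto\x^t(\x)$ and the derivative bounds of part (b), and since $f\in\cA$ has bounded $\nabla f$ and $\nabla^2 f$, one differentiates $P_tf(\x)=\langle f(\x^t(\x))\rangle$ under the expectation by dominated convergence — dominating the first- and second-order difference quotients via $\sup|\partial_j f|$, $\sup|\partial_j\partial_k f|$ and the bounds $\|\v^t\|_\op,\|\H^t\|_\op\le e^{Ct}$ — which gives the formulas (\ref{eq:Ptspaceder}) and, with those same bounds, the uniform boundedness of $\nabla P_tf$ and $\nabla^2 P_tf$ over $t\in[0,T]$ and $\x\in\R^m$. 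For the time regularity, applying It\^o's formula to $f(\x^t)$, whose stochastic integral term is a true martingale since $\nabla f$ is bounded, and taking expectations yields $P_tf(\x)=f(\x)+\int_0^t P_s\L f(\x)\d s$; the map $s\mapsto P_s\L f(\x)$ is continuous because $\x^s$ is continuous in $s$, $\L f$ is continuous with at most linear growth, and the solution has moment bounds $\E\sup_{s\le T}\|\x^s\|^p<\infty$ for all $p\ge1$, so $t\mapsto P_tf(\x)$ is $C^1$ with $\partial_t P_tf=P_t\L f$. Applying the same identity with $f$ replaced by $P_tf$ — which, by the above, is $C^2$ with bounded first and second derivatives, hence has $\L(P_tf)$ of at most linear growth and continuous $s\mapsto P_s\L(P_tf)(\x)$ — together with the semigroup identity $P_{t+h}f=P_h(P_tf)$ gives $\partial_t P_tf=\L P_tf$, establishing (\ref{eq:forwardbackward}).

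\textbf{Main obstacle.} The heart of the argument is parts (a)--(b): selecting the precise version of Kunita's stochastic flow theorem valid for a drift whose derivatives (but not the drift itself) are bounded and H\"older, so as to obtain a genuinely $C^2$ flow together with the variation-process characterization of its spatial derivatives; once this is in hand, the Gr\"onwall estimates and the differentiation under the expectation in part (c) are routine. A secondary point requiring care is that $\L f$ generally fails to lie in $\cA$ — the terms $u^\top\nabla f$ and the $\nabla^2 f$-contribution grow linearly when $u$ does — so part (c) cannot be quoted recursively for $\L f$; instead one works throughout with the larger class of $C^2$ functions having bounded first and second derivatives and at most linearly growing image under $\L$, which is stable under $P_t$ and under the operations needed above.
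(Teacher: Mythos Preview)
Your proposal is correct and follows essentially the same approach as the paper: both invoke Kunita's stochastic flow theorems for parts (a)--(b), obtain the growth bounds by Gr\"onwall, and in part (c) differentiate under the expectation via dominated convergence and apply It\^o's formula. The only noteworthy difference is that for the second variation equation the paper lifts to the joint diffusion $(\x^t,\V^t)$ and applies Kunita's first-variation theorem once more to that extended system, whereas you appeal directly to the $C^2$-flow statement; and for $\partial_t P_tf=\L P_tf$ the paper routes through $\L g=\lim_{s\downarrow 0}\partial_s P_s g$ applied with $g=P_tf$ and a uniform-integrability argument to exchange limit and expectation, while you argue via $P_{t+h}f=P_h(P_tf)$ and differentiate at $h=0$---these are equivalent, and your ``secondary point'' about $\L f\notin\cA$ is exactly the issue the paper's uniform-integrability step is handling.
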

\begin{proof}
Since the coordinates of $u \in \cB$ are Lipschitz with bounded and
H\"older-continuous first and second derivatives, part (a) follows directly from
\cite[Theorems II.1.2, II.3.3]{kunita1984stochastic}.

For part (b), since $u \in \cB$ has bounded and H\"older-continuous first
derivative, \cite[Theorem II.3.1]{kunita1984stochastic} shows that
$\x^t(\x)$ has derivative $\V^t(\x)=\der_\x \x^t \in \R^{m \times m}$
solving the first-variation equation
\[\d\V^t=[\der u(\x^t)]\V^t\d t,\qquad \V^0=\I.\]
Noting that $\v_i^t=\nabla x_i^t(\x)$
is (the transpose of) the $i^\text{th}$ row of $\V^t$, this gives the first
equation of (\ref{eq:firstsecondvar}) with initial condition $\v_i^0=\e_i$.
Next, consider the joint diffusion
\begin{align*}
\d(\x^t,\V^t)=P(\x^t,\V^t) \d t
+\sqrt{2}(\M\,\d\b^t,0), \qquad P(\x,\V)=\Big(u(\x),[\der u(\x)]\V\Big).
\end{align*}
The condition $u \in \cB$ implies also that
$P(\x,\V)$ has bounded and H\"older-continuous first derivative
$\d P(\x,\V)$, which we identify as a square matrix of dimension $(m+m^2) \times (m+m^2)$
under the vectorization of $\V$. Then
\cite[Theorem II.3.1]{kunita1984stochastic} applied again
shows that $(\x^t(\x),\V^t(\x))$ has
derivative $\U^t=\der_{(\x,\V)}(\x^t,\V^t) \in \R^{(m+m^2) \times (m+m^2)}$
solving the second-variation equation
\begin{equation}\label{eq:Uevolution}
\d\U^t=[\der P(\x^t,\V^t)]\U^t\d t, \qquad \U^0=\I.
\end{equation}
Noting that $\H_i^t=\nabla^2 x_i^t(\x)$
is the block of $\U^t$ corresponding to $\d_\x \v_i^t$,
and that the block corresponding to $\d_\x \x^t$ is $\V^t$,
one may check that the restriction of (\ref{eq:Uevolution})
to the $\d_\x \v_i^t$ block gives exactly the second equation of
(\ref{eq:firstsecondvar}) with initialization $\H_i^0=0$.
If $C>0$ is an upper bound for $\sup_{\x \in \R^m} \|\der u(\x)\|_{\op}$
and $\sup_{\x \in \R^m} \|\der P(\x,\V)\|_\op$, then integrating these equations
gives $\|\V^t\|_\op \leq e^{Ct}\|\V^0\|_\op=e^{Ct}$
and $\|\U^t\|_\op \leq e^{Ct}\|\U^0\|_\op=e^{Ct}$, which implies
the bounds for $\v_i^t$ and $\H_i^t$.

For part (c), consider any $f \in \cA$. Applying (b) and the chain rule,
\begin{equation}\label{eq:fderivs}
\begin{aligned}
\nabla_\x f(\x^t(\x))&=\sum_{j=1}^m \partial_j f(\x^t)\v_i^t\\
\nabla_\x^2 f(\x^t(\x))&=\sum_{j,k=1}^m \partial_j\partial_k f(\x^t)
\v_j^t\v_k^{t\top}+\sum_{j=1}^m \partial_j f(\x^t)\H_j^t.
\end{aligned}
\end{equation}
By parts (a--b) and the condition $f \in \cA$, for any $T>0$,
the right sides of (\ref{eq:fderivs}) are uniformly bounded and
continuous in $(t,\x)$ over $t \in [0,T]$. Then dominated convergence
implies that $P_tf(\x)$ is twice continuously-differentiable in $\x$, that
$\nabla P_t f(\x)=\nabla_\x \langle f(\x^t) \rangle_{\x^0=\x}
=\langle \nabla_\x f(\x^t(\x)) \rangle$
and $\nabla^2 P_t f(\x)=\nabla_\x^2 \langle f(\x^t) \rangle_{\x^0=\x}
=\langle \nabla_\x^2 f(\x^t(\x)) \rangle$, and that
these are also uniformly bounded and continuous over $t \in [0,T]$ and $\x \in
\R^m$.

For the derivative in $t$, by It\^o's formula
\[f(\x^t)=f(\x)+\int_0^t \L f(\x^s)\d s+\int_0^t \nabla f(\x^s)^\top
\sqrt{2}\,\M\,\d\b^s\]
where $\L$ is the generator defined in (\ref{eq:generator}).
Since $\nabla f(\x^s)$ is bounded over $s \in [0,t]$ and $\x^s$ is adapted to
the filtration of $\{\b^s\}$, the last term is a
martingale, so taking expectations gives
\[P_t f(\x)=\langle f(\x^t) \rangle=f(\x)+\int_0^t \langle \L f(\x^s) \rangle\d s.\]
Hence, differentiating in $t$, for any $t>0$ we have
\begin{equation}\label{eq:generatorprop1}
\partial_t P_tf(\x)=\langle \L f(\x^t) \rangle=P_t\L f(\x).
\end{equation}
By Jensen's inequality, for any $s,t \geq 0$, we have
\[\langle (P_s \L f(\x^t))^2 \rangle
\leq \langle \L f(\x^{t+s})^2 \rangle
=\langle (u(\x^{t+s})^\top \nabla f(\x^{t+s})+\Tr \M\M^\top \nabla^2
f(\x^{t+s}))^2 \rangle \leq C(1+\langle \|\x^{t+s}\|_2^2 \rangle),\]
the last
inequality holding for some $C>0$ by boundedness of $\nabla f,\nabla^2 f$
and the Lipschitz continuity of $u$. Then
\cite[Theorem II.2.1]{kunita1984stochastic} implies that
$P_s \L f(\x^t(\x))$ is uniformly bounded in $L^2$ over compact domains of $s,t
\geq 0$ and of the initial condition $\x \in \R^m$,
and hence is also uniformly integrable over these domains.
This uniform integrability for $s=0$ and
dominated convergence shows that
$\langle \L f(\x^t) \rangle$ in (\ref{eq:generatorprop1})
is continuous in $(t,\x)$, and hence $P_tf$ is continuously-differentiable in
$t$. Taking the limit $t \to 0$ in (\ref{eq:generatorprop1}), also
$\L f(\x)=\lim_{t \to 0} \partial_t P_t f(\x)$. Then
applying this with $P_t f \in \cA$ in place of $f$,
\[\L P_t f(\x)=\lim_{s \to 0} \partial_s P_{t+s}f(\x)=\lim_{s \to 0}
\partial_s \langle P_sf(\x^t) \rangle
\overset{(*)}{=} \langle \L f(\x^t) \rangle=P_t \L f(\x).\]
Here, to justify $(*)$, we note that $\partial_s P_s f(\x^t)=P_s \L f(\x^t)$
by (\ref{eq:generatorprop1}), so $(*)$ follows from uniform integrability of
this quantity and dominated convergence to take the limit
$\lim_{s \to 0} \partial_s \langle P_s f(\x^t) \rangle
=\lim_{s \to 0} \langle P_s\L f(\x^t) \rangle=\langle \L f(\x^t) \rangle$.
Combining with (\ref{eq:generatorprop1}), this shows all claims about
$\partial_t P_tf$ in part (c).
\end{proof}

Now consider the perturbed dynamics (\ref{eq:joint_sde_perturbed}) for any
$\eps>0$. Let us denote the perturbed drift as
\[u^\eps(t,\x)=u(\x)+\eps h(t)b(\x).\]
For any $t \geq s \geq 0$, we define its (time inhomogeneous)
Markov semigroup and infinitesimal generator
\[P_{s,t}^\eps f(\x)=\langle f(\x^t) \rangle_{\x^s=\x}
=\E[f(\x^t) \mid \x^s=\x],
\qquad \L_t^\eps f(\x)=u^\eps(t,\x)^\top \nabla f(\x)+
\Tr \M\M^\top \nabla^2 f(\x).\]
The following extends the semigroup regularity estimates of
Proposition \ref{prop:langevinsolution} to this perturbed process.

\begin{proposition}\label{prop:perturbedsolution}
Suppose the assumptions of Proposition \ref{prop:responsegeneral} hold.
Then for any $f \in \cA$, the map $(s,t,\x) \mapsto P_{s,t}^\eps f(\x)$
is continuously-differentiable in $(s,t)$ and twice continuously-differentiable
in $\x$, and furthermore $\nabla P_{s,t}^\eps f(\x),\nabla^2 P_{s,t}^\eps f(\x)$
are uniformly bounded over $s,t \in [0,T]$ and $\x \in \R^m$ for any fixed $T>0$. We have
\begin{equation}\label{eq:backwardequation}
\partial_t P_{s,t}^\eps f(\x)=P_{s,t}^\eps \L_t^\eps f(\x),
\qquad \partial_s P_{s,t}^\eps f(\x)={-}\L_s^\eps P_{s,t}^\eps f(\x).
\end{equation}
\end{proposition}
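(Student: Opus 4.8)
The plan is to repeat the argument of Proposition~\ref{prop:langevinsolution}, now for the time-inhomogeneous drift $u^\eps(t,\x)=u(\x)+\eps h(t)b(\x)$. First I would observe that, since $u\in\cB$, $b\in\cB$, and $h$ is continuous and bounded, the map $(t,\x)\mapsto u^\eps(t,\x)$ is jointly continuous and, for each fixed $t$, twice continuously-differentiable in $\x$ with first and second spatial derivatives bounded and H\"older-continuous uniformly over $t\in[0,T]$ for every $T>0$. This is exactly the hypothesis needed to invoke the time-inhomogeneous stochastic-flow results of \cite{kunita1984stochastic} (Theorems II.1.2, II.3.1, II.3.3): for each $s\ge 0$ and $\x$, the SDE (\ref{eq:joint_sde_perturbed}) started at $\x^s=\x$ has a unique solution $\{\x^{s,t}(\x)\}_{t\ge s}$ admitting a modification that is jointly continuous in $(s,t,\x)$ and twice continuously-differentiable in $\x$. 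As in Proposition~\ref{prop:langevinsolution}, the possible rank-degeneracy of $\M$ plays no role here, since Kunita's results do not use ellipticity.

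Next I would record the first- and second-variation processes $\v_i^{s,t}(\x)=\nabla_\x x_i^{s,t}(\x)$ and $\H_i^{s,t}(\x)=\nabla_\x^2 x_i^{s,t}(\x)$, which solve the analogues of (\ref{eq:firstsecondvar}) with the time-dependent coefficients $\partial_j u^\eps(r,\x^{s,r})$ and $\partial_j\partial_k u^\eps(r,\x^{s,r})$. Because $\sup_{r\in[0,T],\,\x}\|\der u^\eps(r,\x)\|_\op<\infty$ (using $b\in\cB$ and boundedness of $h$), integrating these linear equations gives $\|\v_i^{s,t}(\x)\|_2,\|\H_i^{s,t}(\x)\|_\op\le e^{C(t-s)}$ uniformly in $\x$, for a constant $C$ depending only on $m,u,b,h$ and not on $s,t$. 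Then the chain rule, the formulas
\[
\nabla_\x f(\x^{s,t})=\sum_{j=1}^{m}\partial_j f(\x^{s,t})\v_j^{s,t},\qquad
\nabla_\x^2 f(\x^{s,t})=\sum_{j,k=1}^{m}\partial_j\partial_k f(\x^{s,t})\v_j^{s,t}\v_k^{s,t\top}+\sum_{j=1}^{m}\partial_j f(\x^{s,t})\H_j^{s,t},
\]
and dominated convergence (verbatim as in the proof of Proposition~\ref{prop:langevinsolution}(c)) show that $P_{s,t}^\eps f$ is twice continuously-differentiable in $\x$, with $\nabla P_{s,t}^\eps f$ and $\nabla^2 P_{s,t}^\eps f$ equal to the expectations of the two right-hand sides above, uniformly bounded and jointly continuous over $s\le t$ in $[0,T]$ and $\x\in\R^m$.

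For the forward equation I would apply It\^o's formula: $f(\x^{s,t})=f(\x)+\int_s^t\L_r^\eps f(\x^{s,r})\d r+\int_s^t\nabla f(\x^{s,r})^\top\sqrt2\,\M\,\d\b^r$, where the stochastic integral is a martingale since $\nabla f$ is bounded; taking expectations and differentiating in $t$ gives $\partial_t P_{s,t}^\eps f(\x)=P_{s,t}^\eps\L_t^\eps f(\x)$. Joint continuity of this derivative follows from uniform integrability of $\L_r^\eps f(\x^{s,r})$ over compact ranges of $(s,r,\x)$ --- provided by Kunita's moment bounds (Theorem II.2.1) together with the at-most-linear growth of $u^\eps$ --- and joint continuity of $(r,\x)\mapsto\L_r^\eps f(\x)$, which here uses the continuity of $h$ in $r$. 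For the backward equation I would use Chapman--Kolmogorov, $P_{s,t}^\eps f=P_{s,s'}^\eps\big(P_{s',t}^\eps f\big)$ for $s\le s'\le t$: since $P_{s',t}^\eps f\in\cA$ with derivatives bounded uniformly as just shown, differentiating in $s'$ at $s'=s$ and using the forward equation for the inner semigroup (and dominated convergence, exactly as in the derivation of $\L P_tf=P_t\L f$ in Proposition~\ref{prop:langevinsolution}(c)) yields $0=\L_s^\eps P_{s,t}^\eps f(\x)+\partial_s P_{s,t}^\eps f(\x)$, i.e.\ $\partial_s P_{s,t}^\eps f=-\L_s^\eps P_{s,t}^\eps f$; joint continuity of $\partial_s P_{s,t}^\eps f$ then follows from the regularity already established. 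I expect the main obstacle to be the bookkeeping forced by the time-inhomogeneity together with $h$ being only continuous: this precludes the usual reduction to a time-homogeneous diffusion by appending the time coordinate, so the two identities in (\ref{eq:backwardequation}) must be obtained directly via It\^o's formula and Chapman--Kolmogorov, with uniform-integrability arguments to justify differentiating under the expectation, and one must check that all constants in the variation-process bounds are uniform over $t\in[0,T]$ rather than merely locally in $t$.
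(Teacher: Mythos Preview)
Your proposal is correct and follows essentially the same route as the paper: Kunita's flow theorems for the spatial regularity and variation-process bounds, It\^o's formula for the forward equation, and Chapman--Kolmogorov plus the already-established regularity for the backward equation. The only cosmetic difference is that the paper computes the \emph{left} derivative in $s$ via $P_{s-h,t}^\eps f=P_{s-h,s}^\eps(P_{s,t}^\eps f)$ with $g=P_{s,t}^\eps f$ fixed (so no $h$-dependence in $g$), obtains $\partial_s^-P_{s,t}^\eps f=-\L_s^\eps P_{s,t}^\eps f$ from the one-sided identity $\L_s^\eps g=\lim_{h\downarrow 0}h^{-1}(P_{s-h,s}^\eps g-g)$, and then upgrades to full differentiability by continuity of the right-hand side; your ``differentiate Chapman--Kolmogorov in $s'$ at $s'=s$'' is the same computation once unpacked.
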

\begin{proof}
We omit the superscript $\eps$ and write $\x^t \equiv \x^{t,\eps}$.
The same arguments as in Proposition \ref{prop:langevinsolution} using
\cite[Theorems II.1.2, II.3.1, II.3.3]{kunita1984stochastic} show,
for each $s \geq 0$ and $\x
\in \R^m$, there exists a modification $\{\x^t(s,\x)\}_{t \geq s}$ of the
solution to (\ref{eq:joint_sde_perturbed}) with initial condition $\x^s=\x$,
such that $\x^t(s,\x)$ is jointly continuous in $(s,t,\x)$ and twice
continuously-differentiable in $\x$. Each component
$x_i^t(s,\x)$ of this solution has gradient $\v_i^t=\nabla_\x x_i^t(s,\x)$
and Hessian $\H_i^t=\nabla_\x^2 x_i^t(s,\x)$ solving
\[\begin{cases}
\d \v_i^t=\sum_{j=1}^m \partial_j u_i^\eps(t,\x^t(s,\x)) \cdot \v_j^t\,\d t\\
\d \H_i^t=\big(\sum_{j,k=1}^m \partial_j\partial_k u_i^\eps(t,\x^t(s,\x)) \cdot
\v_j^t{\v_k^t}^\top+\sum_{j=1}^m \partial_j u_i^\eps(t,\x^t(s,\x)) \cdot
\H_j^t\big)\d t
\end{cases}\]
with initial conditions $\v_i^s(s,\x)=\e_i$ and $\H_i^s(s,\x)=0$. Furthermore,
$\|\v_i^t(s,\x)\|_2,\|\H_i^t(s,\x)\|_\op \leq e^{C(t-s)}$ for some $C>0$
and all $\x \in \R^m$ and $t \geq s \geq 0$.

Then for any $f \in \cA$, the same dominated convergence argument
as in Proposition \ref{prop:langevinsolution}
shows that $P_{s,t}^\eps f(\x)$ is twice continuously-differentiable in $\x$,
where its first and second derivatives are uniformly bounded and continuous in
$(s,t,\x)$ over $s,t \in [0,T]$ and may be computed by
differentiating in $\x$ under the integral. The same argument as in
Proposition \ref{prop:langevinsolution} using It\^o's formula shows also that
$P_{s,t}^\eps f(\x)$ is continuously-differentiable in $t$, with
\[\partial_t P_{s,t}^\eps f(\x)=P_{s,t}^\eps \L_t^\eps f(\x)
=\langle\L_t^\eps f(\x^t) \rangle_{\x^s=\x}.\]
For the derivative in $s$, we have by It\^o's formula for any $h>0$,
\[P_{s-h,s}^\eps f(\x)=\langle f(\x^s) \rangle_{\x^{s-h}=\x}
=f(\x)+\int_{s-h}^s \langle \L_r^\eps f(\x^r) \rangle_{\x^{s-h}=\x} \,\d r.\]
The same argument as in Proposition \ref{prop:langevinsolution} shows that
$\L_t^\eps f(\x^t(s,\x))$ is uniformly integrable over compact domains of $t
\geq s \geq 0$ and of $\x \in \R^m$, so by dominated convergence
we have $\lim_{h \downarrow 0,\,\,r \uparrow s}
\langle \L_r^\eps f(\x^r) \rangle_{\x^{s-h}=\x}=\L_s^\eps(\x)$.
So taking the limit $h \to 0$ above and rearranging shows
\begin{equation}\label{eq:lowerderiv}
\L_s^\eps f(\x)=\lim_{h \downarrow 0} \frac{P_{s-h,s}^\eps f(\x)-f(\x)}{h}.
\end{equation}
Then for any $s \leq t$, applying this to $P_{s,t}^\eps f \in \cA$
in place of $f$ gives
\[\lim_{h \downarrow 0} \frac{P_{s,t}^\eps f(\x)-P_{s-h,t}^\eps f(\x)}{h}
=\lim_{h \downarrow 0}
\frac{P_{s,t}^\eps f(\x)-P_{s-h,s}^\eps(P_{s,t}^\eps f)(\x)}{h}
={-}\L_s^\eps P_{s,t}^\eps f(\x),\]
i.e.\ $P_{s,t}^\eps f(\x)$ is left-differentiable in $s$. Here
${-}\L_s^\eps P_{s,t}^\eps f(\x)
={-}u^\eps(s,\x)^\top \nabla P_{s,t}^\eps f(\x)
-\Tr \M\M^\top \nabla^2 P_{s,t}^\eps f(\x)$ is continuous in $(s,t,\x)$
by the continuity of $\nabla P_{s,t}^\eps f$ and
$\nabla^2 P_{s,t}^\eps f$ argued above. Then
$P_{s,t}^\eps f(\x)$ is also continuously-differentiable in $s$ with
$\partial_s P_{s,t}^\eps f(\x)={-}\L_s^\eps P_{s,t}^\eps f(\x)$.
\end{proof}

\begin{proof}[Proof of Proposition \ref{prop:responsegeneral}]
Let $\{\x^t\}_{t \geq 0}$ and
$\{\x^{t,\eps}\}_{t \geq 0}$ be the solutions to the
unperturbed and perturbed diffusions. Let $\{P_t\}$ and $\L$ be the semigroup
and infinitesimal generator for $\{\x^t\}_{t \geq 0}$, and let
$\{P^\eps_{s,t}\}$ and $\L^\eps_t$ be those for $\{\x^{t,\eps}\}_{t \geq 0}$.
We write $\partial_s,\partial_t$ for the derivatives in $s,t$ and reserve
$\nabla f(t,\x)$ for the gradient of $f$ in its second argument $\x$.

For any $t>s$ and $r \in [s,t]$, define
$f^\eps(r,\x)=P_{r,t}^\eps a(\x)$. Then by It\^o's formula applied to
the unperturbed process $\{\x^t\}_{t \geq 0}$,
\[f^\eps(t,\x^t)=f^\eps(s,\x^s)+\int_s^t (\partial_r+\L)f^\eps(r,\x^r)
\d r+\int_s^t \nabla f^\eps(r,\x^r)^\top \sqrt{2}\,\M\d\b^r.\]
Proposition \ref{prop:perturbedsolution} shows
$P_{r,t}^\eps a \in \cA$, so $\nabla
f^\eps(r,\x^r)$ is uniformly bounded and the last term is a
martingale. Then, taking expectations under the initial condition $\x^s=\x$
and applying (\ref{eq:backwardequation}),
\begin{align*}
\langle a(\x^t) \rangle_{\x^s=\x}=\langle f^\eps(t,\x^t) \rangle_{\x^s=\x}
&=f^\eps(s,\x)+\int_s^t \langle (\partial_r+\L)f^\eps(r,\x^r) \rangle_{\x^s=\x}
\d r\\
&=P_{s,t}^\eps a(\x)+\int_s^t \langle ({-}\L_r^\eps+\L)P_{r,t}^\eps a(\x^r)
\rangle_{\x^s=\x}\d r\\
&=P_{s,t}^\eps a(\x)-\int_s^t \eps h(r)\langle(b^\top
\nabla  P_{r,t}^\eps a)(\x^r)\rangle_{\x^s=\x}\d r\\
&=P_{s,t}^\eps a(\x)-\eps \int_s^t h(r)P_{r-s}(b^\top
\nabla P_{r,t}^\eps a)(\x) \d r.
\end{align*}
Applying this also with $\eps=0$ and $P_{s,t}^0=P_{t-s}$ and
taking the difference, we obtain the identity
\begin{equation}\label{eq:perturbationidentity}
P_{s,t}^\eps a(\x)-P_{t-s} a(\x)
=\eps \int_s^t h(r)P_{r-s}(b^\top \nabla P_{r,t}^\eps a)(\x) \d r.
\end{equation}

From the definition of $P_t f(\x)$ and
form of $\nabla P_t f(\x)$ in (\ref{eq:Ptspaceder}), we have
\begin{align}
P_{r-s}(b^\top \nabla P_{r,t}^\eps a)(\x)
&=\Big\langle (b^\top \nabla P_{r,t}^\eps a)(\x^{r-s})
\Big\rangle_{\x_0=\x},\label{eq:UI1}\\
\nabla P_{r-s}(b^\top \nabla P_{r,t}^\eps a)(\x)
&=\Bigg\langle \sum_{i=1}^m \partial_{x_i}[b^\top \nabla
P_{r,t}^\eps a](\x^{r-s})\v_i^{r-s}\Bigg\rangle_{\x_0=\x}.\label{eq:UI2}
\end{align}
Since $b \in \cB$ is Lipschitz by assumption, and
$P_{r,t}^\eps a \in \cA$ by Proposition \ref{prop:perturbedsolution}, we have
\[|(b^\top \nabla P_{r,t}^\eps a)(\x^{r-s})|,\,
|\partial_{x_i}[b^\top \nabla P_{r,t}^\eps a](\x^{r-s})| \leq C(1+\|\x^{r-s}\|_2)\]
for some $C>0$. Then these quantities are uniformly integrable over bounded
domains of $s \leq r \leq t$ and $\x$,
by \cite[Theorem II.2.1]{kunita1984stochastic}.
Furthermore $\|\v_i^{r-s}\|_2$ is bounded by
Proposition \ref{prop:langevinsolution}(b), so the integrands on the right sides
of both (\ref{eq:UI1}--\ref{eq:UI2}) are also uniformly integrable over these
domains. Then applying dominated convergence, we may
differentiate (\ref{eq:perturbationidentity}) in $\x$ under the integral to
obtain
\[\nabla P_{s,t}^\eps a(\x)-\nabla P_{t-s} a(\x)
=\eps \int_s^t h(r)\nabla P_{r-s}(b^\top \nabla P_{r,t}^\eps a)(\x) \d r,\]
and take the limit $\eps \to 0$ to get
$\nabla P_{s,t}^\eps a(\x) \to \nabla P_{t-s} a(\x)$. Applying this with $s=r$
to the right side of (\ref{eq:perturbationidentity}), and
taking the limit $\eps \to 0$ in (\ref{eq:perturbationidentity}) using uniform
integrability of (\ref{eq:UI1}), we arrive at
\[\lim_{\eps \to 0} \frac{P_{s,t}^\eps a(\x)-P_{t-s} a(\x)}{\eps}
=\int_s^t h(r)P_{r-s}(b^\top \nabla P_{t-r}a)(\x) \d r.\]
For $s=0$, this means
\[\lim_{\eps \to 0} \frac{1}{\eps}\Big(\langle a(\x^{t,\eps}) \rangle-\langle
a(\x^t) \rangle\Big)
=\int_0^t h(r)P_r(b^\top \nabla P_{t-r}a)(\x) \d r,\]
verifying that (\ref{eq:responsedef})
holds with response function $R(t,s)$ given by (\ref{eq:explicitresponse}).
Continuity of this function $R(t,s)$ in $(s,t)$ follows from the above uniform
integrability statements, together with continuity of
$t \mapsto \nabla P_t(\x)$ in $t$ as shown in Proposition
\ref{prop:langevinsolution}.

For uniqueness, observe that if $\tilde R(t,s)$ is any continuous function
different from $R(t,s)$, then they must differ on a subset of $(s,t)$ of
positive Lebesgue measure. Then there exists a continuous
bounded function $h:[0,\infty) \to \R$ such that $\int_0^t R(t,s)h(s)\d s
\neq \int_0^t \tilde R(t,s)h(s)\d s$, implying that
$\tilde R$ cannot satisfy (\ref{eq:responsedef}).
Thus this response function $R(t,s)$ is unique.
\end{proof}

\subsection{Discrete dynamics}

We record (elementary) analogues of the preceding results for discrete dynamics
\begin{equation}\label{eq:discrete_dynamics}
\x^{t+1}=\x^{t} + u(\x^t) + \sqrt{2}\,\M(\b^{t+1} - \b^t)
\end{equation}
where $\{\b^t\}_{t \in \Z_+}$ is a Gaussian process with $\b^0=0$
and independent increments $\b^{t+1}-\b^t \sim \N(0,\gamma\I)$, for some
$\gamma>0$. The following is an analogue of Proposition
\ref{prop:responsegeneral}.

\begin{proposition}\label{prop:responsegeneraldiscrete}
Suppose $u:\R^m \to \R^m$ is Lipschitz, and let $\{\x^t\}_{t \in \Z_+}$
be the solution of (\ref{eq:discrete_dynamics}) with initial condition
$\x^0=\x$. For any Lipschitz functions $a:\R^m \rightarrow\R$ and $b:\R^m \to \R^m$, define
\[R(t,s)=P_s\big(b^\top P(\nabla P_{t-s-1} a)\big)(\x)\]
where $P_t f(\x)=\E[f(\x^t) \mid \x^0=\x]$. Then for any $s,t \in \Z_+$
with $s<t$,
\[R(t,s)=\lim_{\eps \to 0}
\frac{1}{\eps}\Big(\E[a(\x^{t,\eps}) \mid \x^{0,\eps}=\x]
-\E[a(\x^t) \mid \x^0=\x]\Big)\]
where $\{\x^{t,\eps}\}_{t \in \Z_+}$ is the solution of the perturbed dynamics
\[\x^{t+1,\eps}=\x^{t,\eps} + u(\x^{t,\eps})
+\eps b(\x^{t,\eps})\1_{s=t}
+\sqrt{2}\,\M(\b^{t+1} - \b^t)\]
with the same initial condition $\x^{0,\eps}=\x$.
\end{proposition}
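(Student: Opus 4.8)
The plan is to track how the single-time forcing at step $s$ propagates, using the Markov property of the unperturbed chain. First I would unwind the perturbed recursion: since the forcing $\eps b(\x^{t,\eps})\1_{t=s}$ is nonzero only at $t=s$, and the two chains are driven by the same Brownian increments, one has $\x^{t,\eps}=\x^t$ for all $t\le s$, and at the next step the \emph{exact} identity $\x^{s+1,\eps}=\x^{s+1}+\eps\,b(\x^s)$ holds. For $t>s+1$ the perturbed chain then evolves by $t-s-1$ further \emph{unperturbed} steps from $\x^{s+1,\eps}$, driven by the increments $\b^{s+2}-\b^{s+1},\dots,\b^t-\b^{t-1}$, which are independent of $\cF_{s+1}:=\sigma(\b^1,\dots,\b^{s+1})$. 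Conditioning on $\cF_{s+1}$ and using stationarity of the increments therefore gives, for $t>s$,
\[
\E[a(\x^{t,\eps})\mid \x^0=\x]=\E\big[(P_{t-s-1}a)(\x^{s+1}+\eps\,b(\x^s))\,\big|\,\x^0=\x\big].
\]
Subtracting the $\eps=0$ instance of this identity isolates all the $\eps$-dependence inside $P_{t-s-1}a$ evaluated at the two nearby points $\x^{s+1}$ and $\x^{s+1}+\eps\,b(\x^s)$.

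Next I would differentiate in $\eps$ under the expectation. The difference quotient $\eps^{-1}\big[(P_{t-s-1}a)(\x^{s+1}+\eps\,b(\x^s))-(P_{t-s-1}a)(\x^{s+1})\big]$ converges pointwise to $b(\x^s)^\top\nabla(P_{t-s-1}a)(\x^{s+1})$ and is dominated by $\mathrm{Lip}(P_{t-s-1}a)\,\pnorm{b(\x^s)}{}\le C(1+\pnorm{\x^s}{})$, which is integrable since $\x^s$ is a Lipschitz image of $\x^0$ plus Gaussian increments and hence has a finite second moment. The one-step semigroup acts through the flow map $\x\mapsto\x^k(\x)$, which is continuously differentiable with derivative $\V^k(\x)$ solving the discrete first-variation recursion $\V^{k+1}=(\I+\d u(\x^k(\x)))\V^k$ with $\V^0=\I$; this gives the deterministic bound $\pnorm{\V^k}{\op}\le(1+\sup_{\y}\pnorm{\d u(\y)}{\op})^k$, so that $P_k a$ is continuously differentiable, with $\nabla P_k a(\x)=\E[\V^k(\x)^\top\nabla a(\x^k(\x))]$ uniformly bounded in $\x$ — the elementary discrete counterpart of Proposition \ref{prop:langevinsolution}, requiring no ellipticity. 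Dominated convergence then yields
\[
\lim_{\eps\to0}\frac1\eps\Big(\E[a(\x^{t,\eps})\mid\x^0=\x]-\E[a(\x^t)\mid\x^0=\x]\Big)
=\E\big[b(\x^s)^\top\nabla(P_{t-s-1}a)(\x^{s+1})\,\big|\,\x^0=\x\big].
\]
Finally I would condition the right side on $\cF_s$: since $b(\x^s)$ is $\cF_s$-measurable and $\x^{s+1}=\x^s+u(\x^s)+\sqrt2\,\M(\b^{s+1}-\b^s)$ is one unperturbed step from $\x^s$, we have $\E[\nabla(P_{t-s-1}a)(\x^{s+1})\mid\cF_s]=\big(P\nabla P_{t-s-1}a\big)(\x^s)$, whence the limit equals $\E[(b^\top P\nabla P_{t-s-1}a)(\x^s)\mid\x^0=\x]=P_s\big(b^\top P(\nabla P_{t-s-1}a)\big)(\x)=R(t,s)$, as claimed.

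The argument is genuinely elementary; the only point demanding care is the interchange of $\partial_\eps$ with the expectation, which I handle above via the deterministic bound on the discrete first-variation process $\V^k$ together with a crude $L^2$ bound on $\x^s$. One small subtlety is that $R(t,s)$ is well defined only once $P_{t-s-1}a$ is differentiable, which needs $u$ to be $C^1$ rather than merely Lipschitz; since the proposition is invoked only in the setting of Theorem \ref{thm:dmftresponse}, where Assumptions \ref{assump:prior}, \ref{assump:gradient} and the stated H\"older conditions make $u$ in (\ref{eq:def_M_u}) continuously differentiable with bounded Lipschitz derivative, the above applies verbatim there.
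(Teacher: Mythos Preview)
Your proof is correct and follows essentially the same approach as the paper's: both decompose the perturbed expectation as $P_s P^\eps P_{t-s-1}a(\x)$ (you do this via conditioning on $\cF_{s+1}$ and then $\cF_s$, the paper via semigroup notation), and both differentiate under the expectation using dominated convergence with the bound $\mathrm{Lip}(P_{t-s-1}a)\,\|b(\x^s)\|$. Your observation that the statement tacitly requires $u$ to be $C^1$ (so that $\nabla P_{t-s-1}a$ exists classically) is a fair point the paper glosses over, and your remark that this holds in the intended application is the right resolution.
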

\begin{proof}
Write as shorthand $P=P_1$. If $f$ is $L$-Lipschitz, then
(coupling the processes with initializations $\x,\y$ by the same $\{\b^t\}$)
\begin{align*}
|P f(\x)-P f(\y)|&=\Big|\E[f(\x+u(\x)-\sqrt{2}\M\,\b^1)]
-\E[f(\y+u(\y)-\sqrt{2}\M\,\b^1)]\Big|
\leq L(1+L_u)\|\x-\y\|
\end{align*}
where $L_u$ is the Lipschitz constant of $u$. Hence $Pf$ is Lipschitz, so $P_tf$
is Lipschitz for all $t \geq 0$.

Let $P_t^\eps$ be the Markov semigroup for the dynamics
\[\x^{t+1}=\x^t + u(\x^t)+\eps b(\x^t)+\sqrt{2}\,\M(\b^{t+1}-\b^t),\]
and write as shorthand $P^\eps=P_1^\eps$. Then by definition,
\[\E[a(\x^{t,\eps}) \mid \x^{0,\eps}=\x]
=P_s P^\eps P_{t-s-1}(\x),\]
so
\begin{equation}\label{eq:responsecalculation}
\lim_{\eps \to 0}
\frac{1}{\eps}\Big(\E[a(\x^{t,\eps}) \mid \x^{0,\eps}=\x]
-\E[a(\x^t) \mid \x^0=\x]\Big)
=\deps P_s P^\eps P_{t-s-1}a(\x).
\end{equation}
Note that for any $L$-Lipschitz function $f$, we have
\begin{equation}\label{eq:dPeps}
\partial_\eps P^\eps f(\x)=\partial_\eps \E[f(\x+u(\x)+\eps b(\x)+\b^1)]
=b(\x)^\top \E[\nabla f(\x+u(\x)+\eps b(\x)+\b^1)]
\end{equation}
where the derivative may be taken under the expectation by dominated
convergence. In particular,
\[\deps P^\eps f(\x)=b(\x)^\top \E[\nabla f(\x+u(\x)+\b^1)]
=b(\x)^\top P(\nabla f)(\x).\]
The derivative (\ref{eq:dPeps}) is also
bounded for all $\eps \geq 0$ by $L\|b(\x)\|$, which is
integrable under $P_s$ since $b$ is Lipschitz.
Then again by dominated convergence,
\[\deps P_s P^\eps P_{t-s-1}a(\x)=P_s \deps P^\eps P_{t-s-1}a(\x)
=P_s(b^\top P(\nabla P_{t-s-1}a))(\x),\]
and the result follows from applying this to (\ref{eq:responsecalculation}).
\end{proof}

The following is an analogue of the first statement of (\ref{eq:Ptspaceder}).

\begin{lemma}\label{lem:discrete_bismut}
Let $\{\x^t\}_{t \in \Z_+}$ be the solution to (\ref{eq:discrete_dynamics})
where $u(\cdot)$ is Lipschitz, and consider the first variation processes
\begin{align*}
\v_i^{t+1}&=\v_i^t+\sum_{j=1}^m \partial_j u_i(\x^t) \cdot \v_j^t
\end{align*}
with initializations $\v_i^0=\e_i$. Denote $P_t f(\x)=\E[f(\x^t) \mid
\x^0=\x]=\langle f(\x^t) \rangle$.
Then for any Lipschitz function $f:\R^{d+K}\rightarrow\R$, 
\begin{align*}
\nabla P_t f(\x) = \bigg\langle \sum_{j=1}^m \partial_j f(\x^t)\v_j^t
\bigg\rangle.
\end{align*}
\end{lemma}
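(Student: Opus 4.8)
The plan is a direct induction on $t$, using the pathwise chain rule together with differentiation under the expectation. In contrast with the continuous-time Proposition \ref{prop:langevinsolution}(c), no It\^o calculus or semigroup regularity theory is needed here, because the discrete dynamics (\ref{eq:discrete_dynamics}) is an explicit finite composition of maps. Fix a realization of the increments $\{\b^{t+1}-\b^t\}_{t \in \Z_+}$, and for each initial condition $\x^0=\x$ write $\x^t(\x)$ for the resulting solution. Each one-step map $\x \mapsto \x + u(\x) + \sqrt{2}\,\M(\b^{t+1}-\b^t)$ is Lipschitz with a \emph{deterministic} constant $1+L_u$, where $L_u$ is the Lipschitz constant of $u$ (the noise term cancels in the difference), so $\x \mapsto \x^t(\x)$ is $(1+L_u)^t$-Lipschitz uniformly over realizations, hence $\x \mapsto f(\x^t(\x))$ is $L_f(1+L_u)^t$-Lipschitz and $P_tf(\x)=\langle f(\x^t(\x))\rangle$ is Lipschitz. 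Iterating the variation recursion likewise gives a deterministic bound $\sum_i \|\v_i^t\| \leq C^t$ for a constant $C$ depending only on $m$ and $L_u$.

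First I would establish the pathwise identity $\nabla_\x x_i^t(\x)=\v_i^t$ by induction on $t$. The base case $t=0$ reads $\nabla_\x x_i^0 = \e_i = \v_i^0$. For the inductive step, differentiating $x_i^{t+1}(\x) = x_i^t(\x) + u_i(\x^t(\x)) + (\text{const})$ via the chain rule gives $\nabla_\x x_i^{t+1} = \nabla_\x x_i^t + \sum_{j=1}^m \partial_j u_i(\x^t)\,\nabla_\x x_j^t = \v_i^t + \sum_{j=1}^m \partial_j u_i(\x^t)\,\v_j^t = \v_i^{t+1}$, which is exactly the stated variation recursion. Consequently, still pathwise, $\nabla_\x[f(\x^t(\x))] = \sum_{j=1}^m \partial_j f(\x^t(\x))\,\nabla_\x x_j^t(\x) = \sum_{j=1}^m \partial_j f(\x^t)\,\v_j^t$. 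It then remains to differentiate under the expectation: for each $k$, the difference quotients $h^{-1}\big(f(\x^t(\x+h\e_k)) - f(\x^t(\x))\big)$ are bounded by $L_f(1+L_u)^t$ independently of $h$ and of the realization, so dominated convergence yields $\partial_k P_tf(\x) = \langle \partial_k[f(\x^t(\x))]\rangle = \big\langle \sum_{j=1}^m \partial_j f(\x^t)\,(\v_j^t)_k\big\rangle$, which is the claimed formula after collecting coordinates.

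The main obstacle is purely one of regularity bookkeeping: the pathwise chain rule above requires $f$ to be differentiable at $\x^t(\x)$ and $\x^t(\cdot)$ to be differentiable at $\x$, whereas $f$ (and, a priori, $u$) is only assumed Lipschitz, and $\M = \diag(\I_{d \times d},0_{K \times K})$ is rank-degenerate, so the conditional law of $\x^t(\x)$ concentrates on an affine subspace on which $f$ need not be differentiable in the ambient sense. In every invocation of this lemma, however, $u$ is continuously differentiable with bounded derivative — indeed $u \in \cB$, which follows from the H\"older conditions of Theorem \ref{thm:dmftresponse} — so each $\x \mapsto \x^t(\x)$ is $C^1$ and the variation recursion holds everywhere; and $f$ is a linear coordinate observable ($f=e_j$ or $f=x_i$), so the chain rule and the entire argument go through with genuine everywhere-defined derivatives. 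For the bare Lipschitz statement one would combine Rademacher's theorem with Fubini to obtain differentiability of $\x \mapsto f(\x^t(\x))$ at Lebesgue-a.e.\ $\x$ and a.e.\ realization, which yields the formula for a.e.\ $\x$ and suffices for all downstream uses; alternatively, stating the lemma with the mild extra hypotheses $u \in \cB$ and $f \in C^1$ makes it cleanly valid at every point.
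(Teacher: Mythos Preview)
Your proof is correct and follows essentially the same approach as the paper: identify the variation process $\v_i^t$ with the Jacobian $\nabla_\x x_i^t(\x)$ of the flow map via the shared recursion, apply the chain rule pathwise, and pass the gradient under the expectation by dominated convergence. Your treatment is in fact more careful than the paper's about the regularity issue (Lipschitz vs.\ differentiable), which the paper simply glosses over.
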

\begin{proof}
Stacking $\V^t=[\v_1^t,\ldots,\v_m^t]^\top \in \R^{m \times m}$ with initial
condition $\V^0=\I_m$, the evolution of $\V^t$ is
\[\V^{t+1}=[\I+\d u(\x^t)]\V^t\]
where $\d u$ is the derivative of $u(\cdot)$. Writing $\x^t(\x)$ for the
dependence of $\x^t$ on the initial condition $\x^0=\x$, and writing
$\d\x^t(\x)$ for its derivative in $\x$, by the chain rule we
have $\d \x^{t+1}(\x)=[\I+\d u(\x^t)]\d \x^t(\x)$, with initial condition
$\d\x^0(\x)=\I$. Thus $(\V^t)^\top=\d\x^t(\x)$ for all $t \geq 0$, so
\[\nabla_\x f(\x^t(\x))=[\d\x^t(\x)]^\top \nabla f(\x^t)
=\sum_{j=1}^m \partial_j f(\x^t)\v_j^t.\]
By dominated convergence we have $\nabla P_tf(\x)
=\nabla_\x \langle f(\x^t(\x)) \rangle=\langle \nabla_\x f(\x^t(\x)) \rangle$,
and the result follows.
\end{proof}

\bibliographystyle{unsrt}
\bibliography{eb_dmft}
\end{document}